\definecolor{wineRed}{rgb}{0.7,0,0.3}
\definecolor{grandBleu}{rgb}{0,0,0.8}
\definecolor{darkGreen}{rgb}{0,0.4,0}
\definecolor{blueViolet}{rgb}{0.4,0,1.0}
\definecolor{bloodOrange}{rgb}{0.85,0.05,0}
\definecolor{mycolor}{rgb}{0.8,0,0.2}
\definecolor{}{rgb}{0.8,0,0.2}
\newcommand{\KS}[1]{{#1}}
\newcommand{\MO}[1]{{#1}}
\DeclareMathAlphabet{\mathpzc}{OT1}{pzc}{m}{it}
\numberwithin{equation}{section}
\theoremstyle{plain}
\newtheorem{lem}{Lemma}
\newtheorem{mainthm}{Main Theorem}
\theoremstyle{definition}
\newtheorem{defn}{Definition}
\newtheorem{rem}{Remark}
\def\N{\mathbb{N}}
\def\R{\mathbb{R}}
\def\ds{\displaystyle}
\providecommand{\keywords}[1]
{
  \small	
  \textbf{Key words:} #1
}
\providecommand{\subclass}[1]
{
  \small	
  \textbf{AMS Subject Classification:} #1
}
\title{Structure-preserving scheme for 1D KWC system}
\author[1]{Makoto Okumura\thanks{okumura@konan-u.ac.jp}}
\author[2]{Shodai Kubota\thanks{skubota@cc.miyakonojo-nct.ac.jp}}
\author[3]{Ken Shirakawa\thanks{sirakawa@faculty.chiba-u.jp}}
\affil[1]{Department of Intelligence and Informatics, Konan University, 6--1 Nishiokamoto, Higashinada-Ku, Kobe-Shi, Hyogo, 658-0073, Japan}
\affil[2]{Department of General Education, National Institute of Technology, Miyakonojo College, 473--1, Yoshio-cho, Miyakonojo City, Miyazaki Prefecture, 885-8567, Japan}
\affil[3]{Department of Mathematics, Faculty of Education, Chiba University, 1--33, Yayoi-cho, Inage-ku, 263--8522, Chiba, Japan}
\date{}
\DeclareMathOperator{\diag}{\mathrm{diag}}
\begin{document}

\maketitle


\begin{abstract}
	In this paper, we consider a system of one-dimensional parabolic PDEs, known as the KWC system, as a phase-field model for grain boundary motion. 
A key feature of this system is that the equation for the crystalline orientation angle is described as a quasilinear diffusion equation with variable mobility. 
The goal of this paper is to establish a structure-preserving numerical scheme for the system, focusing on two main structural properties: $\sharp\,1)$ range preservation; and $\sharp\,2)$ energy dissipation. 
Under suitable assumptions, we construct a structure-preserving numerical scheme and address the following in the main theorems: (O) verification of the structural properties; (I) clarification of the convergence conditions; and (II) error estimate for the scheme.
\end{abstract}
\keywords{1D KWC system, Grain boundary motion, Structure-preserving scheme, error estimate, convergence of algorithm}\\
\subclass{
	35A15, 
	35K20, 
	65M12, 
	65M06, 
	74N20. 
}

\section*{Introduction}
Let $ (0, T) $ be a time-interval with a constant $ 0 < T < \infty $, and let $\Omega := (0, 1) \subset \R $ be a one-dimensional bounded domain with the boundary $ \Gamma := \partial \Omega =  \{ 0, 1 \} $. Additionally, we set $ Q := (0, T) \times \Omega $ and $ \Sigma := (0, T) \times \Gamma $. 
\medskip

Let $ \varepsilon \in (0, 1) $ be a fixed positive constant. In this paper, we consider the following system of parabolic PDEs, denoted by (S)$_{\varepsilon}$:
\medskip

~~~~\hypertarget{(S)}{(S)$_\varepsilon$}
\begin{equation}\label{1}
\left\{ \parbox{11.5cm}{
    $ \partial_{t} \eta - \kappa_0^2 \partial_x^2 \eta +c (\eta -1) +\kappa \eta \sqrt{\varepsilon^2 +|\partial_x \theta|^2} = 0 $ \quad in $ Q $,
\\[1ex]
    $ (-1)^{\ell -1} \partial_x \eta(t, \ell) = 0, \quad (t,\ell) \in \Sigma$,
\\[1ex]
$ \eta(0, x) = \eta_{0}(x),$ \quad $ x \in \Omega $;
}\right. 
\end{equation}
\begin{equation}\label{2}
\left\{\parbox{11.5cm}{
    $ \displaystyle \alpha_{0}(t, x) \partial_t \theta -\partial_x \! \left( \! \kappa \!\left( \frac{\eta^2}{2} +\delta_0 \right) \frac{\partial_x \theta}{\sqrt{\varepsilon^2 +|\partial_x \theta|^2}} +\nu^2 \partial_x \theta  \! \right) = 0 $ 
    \quad in $Q$,
\\[1ex]
    $ (-1)^{\ell -1} \partial_x \theta(t, \ell) = 0, \quad (t,\ell) \in \Sigma$,
    \\[1ex]
    $ \theta(0, x) = \theta_{0}(x),$ \quad $x \in \Omega $.
}\right. 
\end{equation}
This system is based on the Kobayashi--Warren--Carter system, which was originally proposed by Kobayashi--Warren--Carter \cite{MR1752970,MR1794359}, as a possible phase-field model of planar grain boundary motion. From these works, we can see that the system (S)$_\varepsilon$ is governed by the following \emph{free-energy,} denoted by $ \mathcal{F}_\varepsilon $:
\begin{align}\label{freeEnergyOrg}
    \mathcal{F}_\varepsilon : [{\eta}, {\theta}] \in & \KS{[H^1 (\Omega)]^2} \mapsto \mathcal{F}_\varepsilon({\eta}, {\theta}) := \frac{\kappa_0^2}{2} \int_\Omega |\partial_x {\eta}|^2 \, dx +\frac{c}{2}\int_\Omega (\eta -1)^2 \, dx
    \nonumber
    \\
    & +\kappa \int_\Omega \left( \frac{\eta^2}{2} +\delta_0 \right) \sqrt{\varepsilon^2 +|\partial_x \theta|^2} \, dx +\frac{\nu^2}{2} \int_\Omega |\partial_x {\theta}|^2 \, dx  \in [0, \infty].
\end{align}
The polycrystalline micro-structure, including the grain boundary, is described by a vector field $ \eta \bigl[ \cos \theta, \sin \theta \bigr] $ in $ Q $, consisting of two unknown variables $ \eta $ and $ \theta $. 
In this context, $ \eta = \eta(t, x) $ and $ \theta = \theta(t, x) $ are  supposed to be the order parameters of the orientation order and orientation angle of crystallization, respectively. 
The constant $ \kappa_0 > 0 $ represents the spatial diffusion of $ \eta $, while the constant $ c > 0 $ is associated with a perturbation to control the range of $ \eta $. 
Additionally, $ \alpha_0 = \alpha_0(t, x) $ is a fixed positive-valued Lipschitz function, $ \delta_0 \in (0, 1) $, $ \kappa > 0 $, and $ \nu > 0 $ are fixed constants to characterize the phase-change of crystalline orientation.

    From the practical viewpoint, the mathematical model (S)$_\varepsilon$ is required to satisfy the following structural properties:
    \begin{description}
        \item[{\boldmath (a) (the range-preserving)}]$ 0 \leq \eta \leq 1 $ and $ \inf \theta_0(\Omega) \leq \theta \leq \sup \theta_0(\Omega) $ over $ Q $;
        \item[{\boldmath (b) (the energy-dissipation)}]$ \mathcal{F}_\varepsilon(\eta(t), \theta(t)) \leq \mathcal{F}_\varepsilon(\eta(s), \theta(s)) $ for all $ 0 \leq s \leq t \leq T $.
    \end{description}
    These properties have been principal issues in mathematical analysis (cf. \cite{MR4218112,MR4395725,MR3082861,MR3203495,MR3268865,MR3670006,MR4177183}), and indeed, some previous works (cf. \cite{MR4218112,MR4395725,MR3268865,MR3670006,MR4177183}) have established the time-discrete approximation scheme fulfilling (a) and (b). However, from scientific viewpoint, the line of works have a lot of remaining issues in numerics,  such as numerical experiment, convergence rate, error estimate, and so on.  
\medskip

In view of these, we set the goal of this paper to establish an effective structure-preserving scheme for the KWC system (S)$_\varepsilon$. Consequently, the Main Theorems concerned with:
\begin{description}
    \item[\textmd{(O)}]verification of the structural properties  (a) and (b);
    \item[\textmd{(\,I\,)}]clarification of the convergence condition of the scheme;
    \item[\textmd{(II)}]error estimate for the scheme;
\end{description}
will be demonstrated as evidence backing the effectiveness of our scheme. 
\medskip

The content of this paper is as follows. 
The Main Theorems will be stated in Section 2 on the basis of the preliminaries prepared in Section 1. 
In Section 3, we will give the proofs of Main Theorems.

\section{Preliminaries} 
We begin by prescribing the notations used throughout this paper. 
\medskip

\noindent
\underline{\textbf{\textit{Basic notations.}}} 
For arbitrary $ r_0 $, $ s_0 \in [-\infty, \infty]$, we define:
\begin{equation*}
r_0 \vee s_0 := \max\{r_0, s_0 \}, 
\end{equation*}
and in particular, we set:
\begin{equation*}
    [r]^+ := r \vee 0. 
\end{equation*}    
For any dimension $ k \in \N $ and any set $ A $, we write:
\begin{equation*}
    [A]^k := \overbrace{A \times \cdots \times A}^{\mbox{$k$ times}}.
\end{equation*}

\noindent
\underline{\textbf{\textit{Abstract notations.}}}
For an abstract Banach space $ E $, we denote by $ |\cdot|_{E} $ the norm of $ E $, and denote by $ \langle \cdot, \cdot \rangle_E $ the duality pairing between $ E $ and its dual $ E^* $. Let $I_d : E \longrightarrow E $ be the identity map from $E$ onto $E$. In particular, when $ H $ is a Hilbert space, we denote by $ (\cdot,\cdot)_{H} $ the inner product of $ H $. 

For a proper functional $ \Psi : E \longrightarrow (-\infty, \infty] $ on a Banach space $ E $, we denote by $ D(\Psi) $ the domain of $ \Psi $, i.e., 
    $ D(\Psi) := \left\{ \begin{array}{l|l}
        z \in E & \Psi(z) < \infty
    \end{array} \right\} $.

For Banach spaces $ E_1, \dots, E_m $, with $ 1 < m \in \mathbb{N} $, let $ E_1 \times \dots \times E_m $ be the product Banach space endowed with the norm $ |\cdot|_{E_1 \times \cdots \times E_m} := |\cdot|_{E_1} + \cdots +|\cdot|_{E_m} $. However, when all $ H_1, \dots, H_m $ are Hilbert spaces, $ H_1 \times \dots \times H_m $ denotes the product Hilbert space endowed with the inner product $ (\cdot, \cdot)_{H_1 \times \cdots \times H_m} := (\cdot, \cdot)_{H_1} + \cdots +(\cdot, \cdot)_{H_m} $ and the norm $ |\cdot|_{H_1 \times \cdots \times H_m} := \bigl( |\cdot|_{H_1}^2 + \cdots +|\cdot|_{H_m}^2 \bigr)^{\frac{1}{2}} $. 
\medskip

\noindent 
\underline{\textbf{\textit{Basic notations of discrete operators.}}} 
Let $\Delta x$ be a space mesh size, i.e., $\Delta x := 1/K$, where $K$ is a positive integer. 
\begin{defn}[Average operators]
We define average operators $\mu_{k}^{+}$, $\mu_{k}^{-}$ concerning subscript $k$ by
\begin{equation*}
\mu_{k}^{+}f_{k} := \frac{f_{k} + f_{k+1}}{2}, \quad \mu_{k}^{-}f_{k} := \frac{f_{k} + f_{k-1}}{2}, 
\end{equation*}
for $\{f_{k}\}_{k=-s}^{K+s} \in \mathbb{R}^{K+1+2s}$, where $s \in \mathbb{N} \cup \{0\}$. 
\end{defn}

\begin{defn}[Difference operators] 
Let us define the difference operators $\delta_{k}^{+}$, $\delta_{k}^{-}$, $\delta_{k}^{\langle 1 \rangle}$, and $\delta_{k}^{\langle 2 \rangle}$ concerning subscript $k$ by 
\begin{align*}
\delta_{k}^{+}f_{k} := \frac{f_{k+1} - f_{k}}{\Delta x}, & & & \delta_{k}^{-}f_{k} := \frac{f_{k} - f_{k-1}}{\Delta x}, \\[-1pt]
\delta_{k}^{\langle 1 \rangle}f_{k} := \frac{f_{k+1} - f_{k-1}}{2\Delta x}, & & & \delta_{k}^{\langle 2 \rangle}f_{k} := \frac{f_{k+1} - 2f_{k} + f_{k-1}}{(\Delta x)^2},  
\end{align*}
for $\{f_{k}\}_{k=-s}^{K+s} \in \mathbb{R}^{K+1+2s}$, where $s \in \mathbb{N} \cup \{0\}$. 
\end{defn}

\begin{defn}[Summation operator] 
As a discretization of the integral, we define the summation operator $\sum _ {k=0}^{K}{}^{\prime \prime}$: $\mathbb{R}^{K+1+2s} \to \mathbb{R}$ by 
\begin{equation*}
\sum _ {k=0}^{K}{}^{\prime \prime} f_{k} := \frac{1}{2}f_{0} +\sum_{k=1}^{K-1}f_{k} +\frac{1}{2}f_{K} \quad \mbox{for\ } \{f_{k}\}_{k=-s}^{K+s} \in \mathbb{R}^{K+1+2s},\ \mbox{where\ } s \in \mathbb{N} \cup \{0\}. 
\end{equation*}
\end{defn}

\begin{defn}[Discrete norms]
We define the discrete $L^{\infty}$-norm $\|\cdot\|_{L_{\rm d}^{\infty}}$ and the discrete $L^{2}$-norm $\|\cdot\|_{L_{\rm d}^{2}}$ by 
\begin{equation*}
	\|\bm{f}\|_{L_{\rm d}^{\infty}} := \max_{k=0,1,\ldots,K}|f_{k}|, \quad 
    \|\bm{f}\|_{L_{\rm d}^{2}} := \sqrt{ \sum _ {k=0}^{K}{}^{\prime \prime}|f_{k}|^{2}\Delta x}
\end{equation*}
for $\bm{f} = \{f_{k}\}_{k=0}^{K} \in \mathbb{R}^{K+1}$. 
For $\bm{f} = \{f_{k}\}_{k=0}^{K} \in \mathbb{R}^{K+1}$, we define the discrete Dirichlet semi-norm $\|D\bm{f}\|$ of $\bm{f}$ by 
\begin{equation*}
\| D\bm{f}\| := \sqrt{ \sum _ {k=0}^{K-1}|\delta_{k}^{+}f_{k}|^{2}\Delta x}.  
\end{equation*}
\end{defn}

\begin{defn}[Difference quotient]
    For a function $F \in C^{1}(\Omega)$ and arbitrary $u, v \in \Omega$, the difference quotient $d F/d (u, v)$ of $F$ at $(u, v)$ is defined as follows: 
    \begin{numcases}
        {\frac{dF}{d (u, v)} :=}
        \frac{F(u) - F(v)}{u - v} & ($u \neq v$), \nonumber\\
        F'(v) & ($u = v$). \nonumber
    \end{numcases}
\end{defn}

\medskip

\section{Main Theorems}

We begin by setting up the assumptions and notations needed in our Main Theorems.
\begin{enumerate}
    \item[(A1)]
        \begin{itemize}
            \item $\varepsilon \in (0, 1)$, $ \delta_0 \in (0, 1) $, $ c > 0 $, $ \kappa_0 > 0 $, $ \kappa > 0 $, and $ \nu > 0 $ are fixed constants. 
                Also, $ \alpha_0 : Q \longrightarrow \mathbb{R} $ is a fixed Lipschitz continuous function with a Lipschitz constant $ L_{\alpha_0} \geq 0 $, such that $ \delta_0 := \inf \alpha_0(Q) > 0 $. 
            \item $ \gamma_\varepsilon \in C^\infty(\R) $ is a function defined by $ \gamma_\varepsilon(\vartheta) := \sqrt{\varepsilon^2 +|\vartheta|^2} $ with its derivative $ \gamma_\varepsilon'(\vartheta) := \vartheta/\sqrt{\varepsilon^2 +|\vartheta|^2} $, for all $ \vartheta \in \R $.
            \item $ \alpha : \mathbb{R} \longrightarrow (0, \infty) $ is a function defined by $\alpha(\eta) := (\eta^{2}/2) + \delta_{0}$ with its derivative $ \alpha'(\eta) = \eta $, for all $\eta \in \R$. 
            \item $ [\eta_0, \theta_0] \in \KS{[H^1(\Omega)]^2} $ is a pair of initial data, such that $ 0 \leq \eta_0(x) \leq 1  $ for a.e. $ x \in \Omega $. 
        \end{itemize}
\end{enumerate}
With these assumptions, we define the solution to the system (S)$_\varepsilon$ as follows. 
\begin{defn}\label{DefOfSolToSys}
    A pair of functions $ [\eta, \theta] \in L^2(0, T; [L^2(\Omega)]^2) $ is called a solution to the system (S)$_\varepsilon$, iff. $ [\eta, \theta] $ fulfills the following items. 
    \begin{enumerate}[({S}1)]
        \item $ [\eta, \theta] \in W^{1, 2}(0, T; [L^2(\Omega)]^2) \cap L^\infty(0, T; \KS{[H^1(\Omega)]^2})$, and $ [\eta(0), \theta(0)] = [\eta_0, \theta_0] $ in $ [L^2(\Omega)]^2 $.
        \item $ \ds \int_\Omega \bigl( \partial_t \eta(t) +c(\eta(t) -1) +\kappa \alpha'(\eta(t)) \gamma_\varepsilon(\partial_x \theta(t)) \bigr) \varphi  \, dx +\kappa_0^2 \int_\Omega \partial_x \eta(t) \partial_x \varphi \, dx = 0 $, 
            \\ 
            \hspace*{4ex}for all $ \varphi \in H^1(\Omega) $.
        \item $ \ds \int_\Omega  \alpha_0(t) \partial_t \theta(t)  \psi \, dx +\int_\Omega \bigl( \kappa \alpha(\eta(t)) \gamma_\varepsilon'(\partial_x \theta(t)) +\nu^2 \partial_x \theta(t) \bigr) \partial_x \psi(t) \, dx = 0 $, 
            \\
            \hspace*{4ex}for all $ \psi \in \MO{H^1}(\Omega)$.
    \end{enumerate}
\end{defn}

Next, we present the numerical scheme designed to approximate the solution of the governing equations in system (S)$_\varepsilon$. 
The scheme is derived using a finite difference approach to discretize both space and time. 
In this scheme, the time step size is denoted by $ \Delta t $, and the unknowns, $H_k^{(j)}$ and $\Theta_k^{(j)}$, represent the discretized state variables $ \eta $ and $ \theta $ at time step $j \in \{0, 1, \dots \}$.

\subsection{Proposed scheme}
The concrete form of our scheme is as follows: for $j=0,1,\ldots$,
\begin{gather}
\begin{split}
\frac{ H_{k}^{(j+1)} - H_{k}^{(j)} }{\Delta t} 
	= & \kappa_{0}^{2}\delta_{k}^{\langle 2 \rangle}H_{k}^{(j+1)} - c\left(H_{k}^{(j+1)} - 1 \right) \\
	& - \kappa H_{k}^{(j+1)}\frac{ \gamma_{\varepsilon}\bigl( \delta_{k}^{+}\Theta_{k}^{(j)}\bigr) + \gamma_{\varepsilon}\bigl( \delta_{k}^{-}\Theta_{k}^{(j)}\bigr)  }{2} \quad (k=0, \ldots, K), 
\end{split} \label{sc1}\\[-2pt]
\begin{split}
\alpha_{0, k}^{(j+1)} \frac{ \Theta_{k}^{(j+1)} \! - \! \Theta_{k}^{(j)} }{\Delta t} 
	= & \frac{\kappa}{2}\!\left\{ \delta_{k}^{+}\!\left(\! \alpha\!\left(\! H_{k}^{(j+1)} \!\right)\!\gamma_{\varepsilon}' (\delta_{k}^{-}\Theta_{k}^{(j+1)}) \!\right) \! + \! \delta_{k}^{-}\!\left(\! \alpha\!\left(\! H_{k}^{(j+1)} \!\right)\! \gamma_{\varepsilon}' (\delta_{k}^{+}\Theta_{k}^{(j+1)}) \!\right) \!\right\} \\
	& + \nu^{2}\delta_{k}^{\langle 2 \rangle}\Theta_{k}^{(j+1)} \quad (k=0, \ldots, K), 
\end{split} \label{sc2}
\end{gather}
\begin{gather}
	\delta_{k}^{\langle 1 \rangle}H_{k}^{(j)} = 0 \quad (k=0, K), \label{bc1}\\[-2pt]
	\delta_{k}^{\langle 1 \rangle}\Theta_{k}^{(j)} = 0 \quad (k=0, K). \label{bc2} 
\end{gather}
The notations used in the above scheme are defined below. 

\begin{defn}
    \label{Def.TBA}
    Let $K$ be a natural number.
    We define $H_{k}^{(j)}$ and $\Theta_{k}^{(j)} \ (k \! = \! -1, 0, \ldots, K, K+1,\ j = 0,1,\ldots)$ to be the approximation to $\eta(t,x)$ and $\theta(t,x)$ at time $t = j\Delta t$ and location $x=k\Delta x$, respectively. 
    As a side note, 
    $\Delta t$ is a time mesh size. 
    They are also written in vector as 
    \begin{gather*}
        \bm{H}^{(j)} := (H_{0}^{(j)},\ldots,H_{K}^{(j)})^{\top} \quad \mbox{or} \quad \bm{H}^{(j)} := (H_{-1}^{(j)}, H_{0}^{(j)},\ldots,H_{K}^{(j)},H_{K+1}^{(j)})^{\top}, \\
        \bm{\Theta}^{(j)} := (\Theta_{0}^{(j)},\ldots,\Theta_{K}^{(j)})^{\top} \quad \mbox{or} \quad \bm{\Theta}^{(j)} := (\Theta_{-1}^{(j)}, \Theta_{0}^{(j)},\ldots,\Theta_{K}^{(j)},\Theta_{K+1}^{(j)})^{\top}. 
    \end{gather*}
    Guess the meanings of $\bm{H}^{(j)}$ and $\bm{\Theta}^{(j)}$ from the context. 
    Hereafter, in this paper, $\bm{H}^{(j)}$ and $\bm{\Theta}^{(j)}$ are obtained by the proposed scheme \eqref{sc1}--\eqref{bc2}. 
    $H_{-1}^{(j)}$, $H_{K+1}^{(j)}$, $\Theta_{-1}^{(j)}$, and $\Theta_{K+1}^{(j)}$ are artificial quantities and are determined by the imposed discrete boundary conditions \eqref{bc1}--\eqref{bc2}. 
    Namely, 
    \begin{gather*}
        H_{-1}^{(j)} = H_{1}^{(j)}, \quad H_{K+1}^{(j)} = H_{K-1}^{(j)} \quad (j = 0,1, \ldots), \\
        \Theta_{-1}^{(j)} = \Theta_{1}^{(j)}, \quad \Theta_{K+1}^{(j)} = \Theta_{K-1}^{(j)} \quad (j = 0,1, \ldots). 
    \end{gather*}
    Furthermore, we define $\alpha_{0, k}^{(j)}$ by $\alpha_{0, k}^{(j)} := \alpha_{0}(j\Delta t,k\Delta x) \ (k = 0, 1, \ldots, K,\ j = 0,1,\ldots)$. 
\end{defn}

Fix a natural number $N \in \mathbb{N}$. 
We compute $(\bm{H}^{(j)},\bm{\Theta}^{(j)})$ up to $j=N$ by our proposed scheme \eqref{sc1}--\eqref{bc2} and estimate the error between it and the solution to the problem \eqref{1}--\eqref{2} up to $T=N\Delta t$.
Moreover, 
    we extend the solution $\eta$ in $\overline{Q}$ to $\tilde{\eta}$ in $[0,T] \times [-\Delta x, 1 + \Delta x]$ as follows: 
\begin{equation}
    \tilde{\eta}(t,x) := \begin{cases}
    \displaystyle \eta(t,-x) 
    & (-\Delta x \leq x < 0), \\
    \eta(t,x) & (0 \leq x \leq 1), \\
    \displaystyle \eta(t,2 - x) 
    & (1 < x \leq 1 + \Delta x), 
    \end{cases} \label{def_ext}
\end{equation}
for $t \in [0,T]$. 
Similarly, an extension $\tilde{\theta}$ of $\theta$ is also defined.
From the direct calculation, we can check that the following properties hold: 
if $\partial_{t}^{l}\partial_{x}^{m}\eta, \partial_{t}^{l}\partial_{x}^{m}\theta \in C(\overline{Q}) \ (0 \leq l \leq 1,\ 0 \leq m \leq 2,\ l + m \leq 2)$, then 
\begin{gather*}
    \partial_{t}^{l}\partial_{x}^{m}\tilde{\eta}, \partial_{t}^{l}\partial_{x}^{m}\tilde{\theta} \in C([0,T] \times [-\Delta x,1 + \Delta x]) \quad (0 \leq l \leq 1,\ 0 \leq m \leq 2,\ l + m \leq 2), \\
    \tilde{\eta}(t,x) = \eta(t,x) \quad \mbox{for\ all\ } x \in [0,1], \\
    \tilde{\eta}(t,-\Delta x) = \tilde{\eta}(t,\Delta x), \quad \tilde{\eta}(t,1 + \Delta x) = \tilde{\eta}(t,1 - \Delta x), \\
    \tilde{\theta}(t,x) = \theta(t,x) \quad \mbox{for\ all\ } x \in [0,1], \\
    \tilde{\theta}(t,-\Delta x) = \tilde{\theta}(t,\Delta x), \quad \tilde{\theta}(t, 1 + \Delta x) = \tilde{\theta}(t, 1 - \Delta x). 
\end{gather*}

Let $H_{k}^{(0)} = \tilde{\eta}(0,k\Delta x), \Theta_{k}^{(0)} = \tilde{\theta}(0,k\Delta x) \ (k = -1,0,\ldots, K,K+1)$. 
In addition, we define the errors $e_{\eta,k}^{(j)}$ and $e_{\theta,k}^{(j)}$ by 
\begin{gather*}
e_{\eta,k}^{(j)} := H_{k}^{(j)} - \tilde{\eta}(j\Delta t, k\Delta x) \quad (k=-1,0,\ldots, K,K+1,\ j=0,1,\ldots, N), \\
e_{\theta,k}^{(j)} := \Theta_{k}^{(j)} - \tilde{\theta}(j\Delta t, k\Delta x) \quad (k = -1,0,\ldots, K,K + 1,\ j = 0,1,\ldots, N).
\end{gather*}
Additionally, we use the following vector notations: 
\begin{gather*}
    \bm{e}_{\eta}^{(j)} := (e_{\eta,-1}^{(j)}, e_{\eta,0}^{(j)},\ldots,e_{\eta,K}^{(j)},e_{\eta,K+1}^{(j)})^{\top} \in \mathbb{R}^{K+3}, \\ 
    \bm{e}_{\theta}^{(j)} := (e_{\theta,-1}^{(j)}, e_{\theta,0}^{(j)},\ldots,e_{\theta,K}^{(j)},e_{\theta,K+1}^{(j)})^{\top} \in \mathbb{R}^{K+3},
\end{gather*}
for $ j = 0, 1, \dots, N$.
For simplicity, we use the expression $\tilde{\eta}_{k}^{(j)} := \tilde{\eta}(j\Delta t, k\Delta x)$ and $\tilde{\theta}_{k}^{(j)} := \tilde{\theta}(j\Delta t, k\Delta x)$ from now on.  
Also, the expressions $\delta_{k}^{\ast}f_{l}$ means $\left. \delta_{k}^{\ast}f_{k} \right|_{k=l}$, where the symbol ``$\ast$" denotes $+$, $\langle 1 \rangle$, or $\langle 2 \rangle$. 

\begin{rem}
    We remark on the existence of a solution to the scheme \eqref{sc1}--\eqref{bc2}. 
    For any given $(\bm{H}^{(j)},\bm{\Theta}^{(j)})$, if $\Delta t$ satisfies 
    \begin{equation}
        \Delta t < \frac{\nu^{2}\varepsilon^{2}}{4\kappa^{2}\delta_{0}\left(\delta_{0} + \dfrac{1}{2}\right)^{2}}(\Delta x)^{2}, \label{exist_cond}
    \end{equation}
    then there exists a unique solution $(\bm{H}^{(j + 1)},\bm{\Theta}^{(j + 1)})$ satisfying \eqref{sc1}--\eqref{sc2} with \eqref{bc1}--\eqref{bc2}.
    
    More precisely, the existence and uniqueness of $\bm{H}^{(j + 1)}$ are unconditionally obtained from the regularity of the coefficient matrix of $\bm{H}^{(j + 1)}$ 
    since scheme \eqref{sc1} is linear and implicit (cf. Lemma \ref{exist1}). 
    On the other hand, the existence and uniqueness of $\bm{\Theta}^{(j + 1)}$ can be obtained by the method of Banach's fixed point theorem, e.g., as in the first author's results \cite{MO2018JIAM, MO2020DCDS}, 
    since scheme \eqref{sc2} is nonlinear and implicit (cf. Lemma \ref{exist2}). 
    Note that the condition \eqref{exist_cond} above is a sufficient condition for the existence and uniqueness of $\bm{\Theta}^{(j + 1)}$.
\end{rem}

\begin{rem}
    \eqref{exist_cond} is only a sufficient condition, and a weaker assumption, i.e., a coarser $\Delta t$, could still yield a solvability result.
\end{rem}

Henceforth, we discuss under the assumption of condition \eqref{exist_cond} above. 
Now, the goal of this paper is to prove the following Main Theorems.
\begin{mainthm}
    Let us assume (A1) and assume that {$ \bigl\{ (\bm{H}^{(j)},\bm{\Theta}^{(j)}) \bigr\}_{j = 0}^{N} $ is a sequence consisting of the solutions $ (\bm{H}^{(j)},\bm{\Theta}^{(j)}) $, $ j = 0, 1, \ldots, N $, to the proposed scheme \eqref{sc1}--\eqref{bc2}.} 
    Then, the followings hold: 
    \begin{enumerate}
        \item[(O)] 
            \begin{enumerate}
                \item[(a)]if the initial data satisfies that
        \begin{gather}
            0 \leq H_{k}^{(0)} \leq 1 \quad (k = 0,1,\ldots,K), \quad \left|\Theta_{k}^{(0)}\right| \leq \xi_{0} \quad (k = 0,1,\ldots,K) 
            \notag\\
            \mbox{with a constant  $\xi_{0} > 0$ independent of $k$ and $j$,} \label{ass_bd}
        \end{gather}
        then it holds that:
        \begin{equation}
            0 \leq H_{k}^{(j)} \leq 1 \quad (k = 0,1,\ldots,K), \quad \left|\Theta_{k}^{(j)}\right| \leq \xi_{0} \quad (k = 0,1,\ldots,K). 
            \label{bound}
        \end{equation}
    \item[(b)] 
        we define discrete global energy $\mathscr{F}_{1, \rm d}$: $\mathbb{R}^{K+3} \times \mathbb{R}^{K+3} \to \mathbb{R}$ as follows: 
        \begin{align*}
        \mathscr{F}_{1, \rm d}(\bm{H}, \bm{\Theta}) \! 
            := & \frac{\kappa_{0}^{2}}{2}\sum_{k=0}^{K}{}^{\prime\prime}\frac{ \left| \delta_{k}^{+}H_{k} \right|^{2} + \left| \delta_{k}^{-}H_{k} \right|^{2} }{2}\Delta x + \frac{c}{2}\sum_{k=0}^{K}{}^{\prime\prime}\left| H_{k} - 1 \right|^{2} \Delta x  \\[-1pt]
            & + \! \frac{ \nu^{2} }{2}\!\sum_{k=0}^{K}\!{}^{\prime\prime}\frac{ \left| \delta_{k}^{+}\Theta_{k} \right|^{2} \!\! + \! \left| \delta_{k}^{-}\Theta_{k} \right|^{2} }{2}\Delta x 
            \\[-1pt]
            & + \! \kappa\!\sum_{k=0}^{K}\!{}^{\prime\prime}\!\alpha\!\left( H_{k} \right)\!\frac{ \gamma_{\varepsilon}( \delta_{k}^{+}\Theta_{k}) \! + \! \gamma_{\varepsilon}( \delta_{k}^{-}\Theta_{k}) }{2} \Delta x, \\
            & \qquad\qquad \mbox{for}\ (\bm{H}, \bm{\Theta}) = (\{H_{k}\}_{k = -1}^{K+1}, \{\Theta_{k}\}_{k = -1}^{K+1}) \in \mathbb{R}^{K+3} \times \mathbb{R}^{K+3},
        \end{align*}
        then, the following dissipative property holds: 
        \begin{align}
            & \frac{ \mathscr{F}_{1, \rm d}(\bm{H}^{(j+1)}, \bm{\Theta}^{(j+1)}) - \mathscr{F}_{1, \rm d}(\bm{H}^{(j)}, \bm{\Theta}^{(j)}) }{\Delta t} \nonumber\\
            \leq & - \sum_{k=0}^{K}{}^{\prime\prime}\left| \frac{ H_{k}^{(j+1)} - H_{k}^{(j)} }{\Delta t} \right|^{2}\Delta x - \sum_{k=0}^{K}{}^{\prime\prime}\alpha_{0,k}^{(j+1)} \left| \frac{ \Theta_{k}^{(j+1)} - \Theta_{k}^{(j)} }{\Delta t} \right|^{2}\Delta x \leq 0, 
            \label{d_en_ineq}
        \end{align}
        for $ j = 0, 1, \dots, N -1 $. 
            \end{enumerate}
        \item[(\,I\,)]In addition,  let us assume the following condition.
            \begin{enumerate}
                \item[(A2)] $\partial_{t}^{l}\partial_{x}^{m}\eta, \partial_{t}^{l}\partial_{x}^{m}\theta \in C(\overline{Q}) \ (0 \leq l \leq 1,\ 0 \leq m \leq 2,\ l + m \leq 2)$. 
                
            \end{enumerate}
            Under (A1) and (A2), if $\Delta t$ satisfies
        \begin{equation}
            \Delta t < \frac{1}{3a}, \label{dt_ass} 
        \end{equation}
        where $a := \max\{2(\kappa C_{1}/\nu )^{2} - (2\kappa\varepsilon + c), (L_{\alpha_{0}} + 1)/\delta_{0}\}$ 
            and also $L_{\alpha_{0}} \geq 0$ is the Lipschitz constant of $\alpha_{0}$ as defined in (A1), 
            and $C_{1}$ is the constant as defined in \eqref{eta_bound},  
        then it holds that: 
        \begin{gather}
            \sup_{0 \leq j \leq N} \bigl\| \bm{e}_\eta^{(j)} \bigr\|_{L_d^2} +\sup_{0 \leq j \leq N} \bigl\| \bm{e}_\theta^{(j)} \bigr\|_{L_d^2} \to 0, \mbox{ as $ \Delta x, \Delta t \to 0 $.}
            \label{conv}
        \end{gather}
    \end{enumerate}
\end{mainthm}
\begin{mainthm}
    In addition to (A1), (A2), and \eqref{dt_ass}, let us assume the following condition. 
    \begin{enumerate}
        \item[(A3)] $\partial_{t}\eta$, $\partial_{x}^{2}\eta$, $\partial_{t}\theta$, and $\partial_{x}^{2}\theta$ are H\"{o}lder continuous functions of order $\sigma \in (0,1)$.
    \end{enumerate}
    Then, the following item holds. 
    \begin{enumerate}
        \item[(II)]There exists a constant $C$ independent of $k$ and $j$ such that 
    \begin{gather}
        \left\|\bm{e}_{\eta}^{(j)}\right\|_{L_{\rm d}^{2}} 
        \leq C\bigl( (\Delta x)^{\sigma} + (\Delta t)^{\sigma} + \Delta t + \Delta x + (\Delta x)^{2} \bigr) \quad (j = 1, \ldots, N), 
        \label{e_err_est}\\
        \left\|\bm{e}_{\theta}^{(j)}\right\|_{L_{\rm d}^{2}}
        \leq C\bigl( (\Delta x)^{\sigma} + (\Delta t)^{\sigma} + \Delta t + \Delta x + (\Delta x)^{2} \bigr) \quad (j = 1, \ldots, N). 
        \label{t_err_est}
    \end{gather}
    \end{enumerate}
\end{mainthm}

\begin{rem}
    In other words, from \eqref{e_err_est} and \eqref{t_err_est}, the convergence rates of the solutions to our proposed scheme \eqref{sc1}--\eqref{bc2} are $O((\Delta x)^{\sigma} + (\Delta t)^{\sigma})$.
\end{rem}

\section{Proofs of Main Theorems}

\subsection{Proof of Main Theorem 1}

\subsubsection{Proof of (O)(a)}
First, we estimate $\bm{H}^{(j+1)}$. 
It is seen that 
\begin{equation}
\frac{1}{\Delta t}(0-0)
	\leq \kappa_{0}^{2}\delta_{k}^{\langle 2 \rangle}0 - c\left(0 - 1 \right) - \kappa \cdot 0 \cdot \frac{ \gamma_{\varepsilon}\bigl( \delta_{k}^{+}\Theta_{k}^{(j)}\bigr) + \gamma_{\varepsilon}\bigl( \delta_{k}^{-}\Theta_{k}^{(j)}\bigr)  }{2} \quad (k=0, \ldots, K). \label{eta_tool_0}
\end{equation}
Substracing \eqref{sc1} from \eqref{eta_tool_0}, we obtain 
\begin{align}
\frac{1}{\Delta t}\left\{\! \left(-H_{k}^{(j+1)}\right) \! - \! \left(-H_{k}^{(j)}\right) \!\right\}
	\leq & \kappa_{0}^{2}\delta_{k}^{\langle 2 \rangle}\left(-H_{k}^{(j+1)}\right)
	-c\left(-H_{k}^{(j+1)}\right) \nonumber\\
	& - \kappa \!\left(\! -H_{k}^{(j+1)} \!\right) \frac{ \gamma_{\varepsilon}\bigl( \delta_{k}^{+}\Theta_{k}^{(j)}\bigr) \! + \! \gamma_{\varepsilon}\bigl( \delta_{k}^{-}\Theta_{k}^{(j)}\bigr)  }{2} \quad (k=0, \ldots, K). \label{eta_tool_0-sc1}
\end{align}
Let us multiply both sides of \eqref{eta_tool_0-sc1} by $[-H_{k}^{(j+1)}]^{+}$ and sum each of the resulting equality over $k = 0,1, \ldots, K$. 
Then, by using the summation by parts formula (Lemma \ref{sbp5}) and \eqref{bc1}, we have  
\begin{align*}
& \frac{1}{\Delta t}\sum_{k=0}^{K}{}^{\prime\prime}\left| [-H_{k}^{(j+1)}]^{+}\right|^{2}\Delta x 
+ \frac{1}{\Delta t}\sum_{k=0}^{K}{}^{\prime\prime}H_{k}^{(j)}[-H_{k}^{(j+1)}]^{+}\Delta x \\
	\leq & \kappa_{0}^{2}\sum_{k=0}^{K}{}^{\prime\prime}\left\{\delta_{k}^{\langle 2 \rangle}\left(-H_{k}^{(j+1)}\right)\right\}[-H_{k}^{(j+1)}]^{+}\Delta x
	- c\sum_{k=0}^{K}{}^{\prime\prime}\left| [-H_{k}^{(j+1)}]^{+}\right|^{2}\Delta x \\
	& - \kappa\sum_{k=0}^{K}{}^{\prime\prime}\left| [-H_{k}^{(j+1)}]^{+}\right|^{2}\frac{ \gamma_{\varepsilon}\bigl( \delta_{k}^{+}\Theta_{k}^{(j)}\bigr) + \gamma_{\varepsilon}\bigl( \delta_{k}^{-}\Theta_{k}^{(j)}\bigr)  }{2}\Delta x \\
	\leq & -\kappa_{0}^{2}\sum_{k=0}^{K-1}\left\{\delta_{k}^{+}\left(-H_{k}^{(j+1)}\right)\right\}\left\{\delta_{k}^{+}[-H_{k}^{(j+1)}]^{+}\right\}\Delta x 
	\leq -\kappa_{0}^{2}\sum_{k=0}^{K-1}\left|\delta_{k}^{+}[-H_{k}^{(j+1)}]^{+}\right|^{2}\Delta x
	\leq 0. 
\end{align*}
From the above inequality and the assumptions \eqref{ass_bd}, it is seen that 
\begin{equation*}
	\frac{1}{\Delta t}\sum_{k=0}^{K}{}^{\prime\prime}\left| [-H_{k}^{(j+1)}]^{+}\right|^{2}\Delta x \leq 0. 
\end{equation*}
Namely, we have $0 \leq H_{k}^{(j+1)} \ (k=0, \ldots, K)$. 
It follows from this inequality and \eqref{sc1} that 
\begin{equation}
	\frac{ H_{k}^{(j+1)} - H_{k}^{(j)} }{\Delta t} 
	\leq \kappa_{0}^{2}\delta_{k}^{\langle 2 \rangle}H_{k}^{(j+1)} - c\left(H_{k}^{(j+1)} - 1 \right) \quad (k=0, \ldots, K). \label{eta_tool_sc1}
\end{equation}
Furthermore, it holds that 
\begin{equation}
\frac{1}{\Delta t}(1-1)
	= \kappa_{0}^{2}\delta_{k}^{\langle 2 \rangle}1 - c\left(1 - 1 \right). \label{eta_tool_1}
\end{equation}
Substracing \eqref{eta_tool_1} from \eqref{eta_tool_sc1}, we obtain 
\begin{equation}
\frac{1}{\Delta t}\left\{\! \left(H_{k}^{(j+1)} - 1\right) \! - \! \left(H_{k}^{(j)} - 1\right) \!\right\}
	\leq \kappa_{0}^{2}\delta_{k}^{\langle 2 \rangle}\!\left(H_{k}^{(j+1)} - 1\right)
	-c\!\left(H_{k}^{(j+1)} - 1\right) \quad (k=0, \ldots, K). \label{eta_tool_sc1-1}
\end{equation}
Let us multiply both sides of \eqref{eta_tool_sc1-1} by $[H_{k}^{(j+1)} - 1]^{+}$ and sum each of the resulting equality over $k = 0,1, \ldots, K$. 
Then, by using the summation by parts formula (Lemma \ref{sbp5}) and \eqref{bc1}, we have  
\begin{align*}
& \frac{1}{\Delta t}\sum_{k=0}^{K}{}^{\prime\prime}\left| [H_{k}^{(j+1)} - 1]^{+}\right|^{2}\Delta x 
+ \frac{1}{\Delta t}\sum_{k=0}^{K}{}^{\prime\prime}\left( 1 - H_{k}^{(j)}\right)[H_{k}^{(j+1)} - 1]^{+}\Delta x \\
	\leq & \kappa_{0}^{2}\sum_{k=0}^{K}{}^{\prime\prime}\left\{\delta_{k}^{\langle 2 \rangle}\left(H_{k}^{(j+1)} - 1\right)\right\}[H_{k}^{(j+1)} - 1]^{+}\Delta x
	- c\sum_{k=0}^{K}{}^{\prime\prime}\left| [H_{k}^{(j+1)} - 1]^{+}\right|^{2}\Delta x \\
	\leq & - \! \kappa_{0}^{2}\sum_{k=0}^{K-1}\!\left\{\!\delta_{k}^{+}\!\left(\! H_{k}^{(j+1)} - 1 \!\right)\!\right\}\!\left\{\delta_{k}^{+}[H_{k}^{(j+1)} - 1]^{+}\right\}\Delta x 
	\leq -\kappa_{0}^{2}\sum_{k=0}^{K-1}\left|\delta_{k}^{+}[H_{k}^{(j+1)} - 1]^{+}\right|^{2}\Delta x
	\leq 0. 
\end{align*}
In other words, we see from the above inequality and the assumptions \eqref{ass_bd} that $H_{k}^{(j+1)} \leq 1 \ (k=0, \ldots, K)$. 
As a result, we obtain $0 \leq H_{k}^{(j+1)} \leq 1 \ (k=0, \ldots, K)$. 

Next, we estimate $\bm{\Theta}^{(j+1)}$. 
It follows from \eqref{sc2} that  
\begin{align}
& \alpha_{0,k}^{(j+1)}\frac{ \Theta_{k}^{(j+1)} - \Theta_{k}^{(j)} }{\Delta t} - \nu^{2}\delta_{k}^{\langle 2 \rangle}\Theta_{k}^{(j+1)} \nonumber\\
	= & \frac{\kappa}{2}\left\{ \delta_{k}^{+}\left( \alpha\left( H_{k}^{(j+1)} \right)\gamma_{\varepsilon}' (\delta_{k}^{-}\Theta_{k}^{(j+1)}) \right) + \delta_{k}^{-}\left( \alpha\left( H_{k}^{(j+1)} \right)\gamma_{\varepsilon}'(\delta_{k}^{+}\Theta_{k}^{(j+1)})\right) \right\} \quad (k=0, \ldots, K). \label{theta_tool_sc2}
\end{align}
Besides, it is seen that 
\begin{align}
\alpha_{0,k}^{(j+1)}\frac{ \xi_{0} - \xi_{0} }{\Delta t} - \nu^{2}\delta_{k}^{\langle 2 \rangle}\xi_{0} 
	= & \frac{\kappa}{2}\left\{ \delta_{k}^{+}\left( \alpha\left( H_{k}^{(j+1)} \right)\gamma_{\varepsilon}' (\delta_{k}^{-}\xi_{0}) \right) + \delta_{k}^{-}\left( \alpha\left( H_{k}^{(j+1)} \right)\gamma_{\varepsilon}'(\delta_{k}^{+}\xi_{0})\right) \right\} \nonumber\\
	= & \frac{\kappa}{2}\left\{ \delta_{k}^{+}\left( \alpha\left( H_{k}^{(j+1)} \right)\gamma_{\varepsilon}' (0) \right) + \delta_{k}^{-}\left( \alpha\left( H_{k}^{(j+1)} \right)\gamma_{\varepsilon}'(0)\right) \right\} \nonumber\\
	(= & 0) \quad (k=0, \ldots, K). \label{theta_tool_xi0}
\end{align}
Substracing \eqref{theta_tool_xi0} from \eqref{theta_tool_sc2}, we obtain 
\begin{align}
	& \alpha_{0,k}^{(j+1)}\frac{ (\Theta_{k}^{(j+1)} - \xi_{0}) - (\Theta_{k}^{(j)} - \xi_{0}) }{\Delta t} - \nu^{2}\delta_{k}^{\langle 2 \rangle}(\Theta_{k}^{(j+1)} - \xi_{0}) \nonumber\\
	= & \frac{\kappa}{2}\left\{ \delta_{k}^{+}\left( \alpha\left( H_{k}^{(j+1)} \right)\gamma_{\varepsilon}' (\delta_{k}^{-}\Theta_{k}^{(j+1)}) \right) + \delta_{k}^{-}\left( \alpha\left( H_{k}^{(j+1)} \right)\gamma_{\varepsilon}'(\delta_{k}^{+}\Theta_{k}^{(j+1)})\right) \right\} \quad (k=0, \ldots, K). \label{theta_tool_xi0-sc2}
\end{align}
Let us multiply both sides of \eqref{theta_tool_xi0-sc2} by $[\Theta_{k}^{(j+1)} - \xi_{0}]^{+}$ and sum each of the resulting equality over $k = 0,1, \ldots, K$. 
Then, by using the summation by parts formula (Lemma \ref{sbp2}), we can transform the right-hand side of the resulting equality as follows:
\begin{align}
& \frac{\kappa}{2}\sum_{k=0}^{K}{}^{\prime\prime}\left\{ \delta_{k}^{+}\left( \alpha\left( H_{k}^{(j+1)} \right)\gamma_{\varepsilon}' (\delta_{k}^{-}\Theta_{k}^{(j+1)}) \right)\right\}[\Theta_{k}^{(j+1)} - \xi_{0}]^{+}\Delta x \notag\\
& + \frac{\kappa}{2}\sum_{k=0}^{K}{}^{\prime\prime}\left\{ \delta_{k}^{-}\left( \alpha\left( H_{k}^{(j+1)} \right)\gamma_{\varepsilon}' (\delta_{k}^{+}\Theta_{k}^{(j+1)}) \right)\right\}[\Theta_{k}^{(j+1)} - \xi_{0}]^{+}\Delta x \notag\\
	= & \frac{\kappa}{2}\left\{ -\sum_{k=1}^{K} \alpha\left( H_{k}^{(j+1)} \right)\gamma_{\varepsilon}' (\delta_{k}^{-}\Theta_{k}^{(j+1)}) \delta_{k}^{-}[\Theta_{k}^{(j+1)} - \xi_{0}]^{+}\Delta x \right. \notag\\
	& \left. + \left[ \alpha\left( H_{k}^{(j+1)} \right)\gamma_{\varepsilon}' (\delta_{k}^{-}\Theta_{k}^{(j+1)}) \mu_{k}^{-}[\Theta_{k}^{(j+1)} - \xi_{0}]^{+} \right]_{0}^{K}\right\} \notag\\
	& + \frac{\kappa}{2}\left\{ -\sum_{k=0}^{K-1} \alpha\left( H_{k}^{(j+1)} \right)\gamma_{\varepsilon}' (\delta_{k}^{+}\Theta_{k}^{(j+1)}) \delta_{k}^{+}[\Theta_{k}^{(j+1)} - \xi_{0}]^{+}\Delta x \right. \notag\\
	& \left. + \left[ \alpha\left( H_{k}^{(j+1)} \right)\gamma_{\varepsilon}' (\delta_{k}^{+}\Theta_{k}^{(j+1)}) \mu_{k}^{+}[\Theta_{k}^{(j+1)} - \xi_{0}]^{+} \right]_{0}^{K}\right\}. \label{theta_tool_sum}
\end{align}
Here, it holds from \eqref{bc2} that
\begin{gather*}
\left. \delta_{k}^{+}\Theta_{k}^{(j+1)} \right|_{k=K} = \left. - \delta_{k}^{-}\Theta_{k}^{(j+1)} \right|_{k=K}, \quad 
\left. \delta_{k}^{-}\Theta_{k}^{(j+1)} \right|_{k=0} = \left. - \delta_{k}^{+}\Theta_{k}^{(j+1)} \right|_{k=0}, \\
\left. \mu_{k}^{+}[\Theta_{k}^{(j+1)} \! - \! \xi_{0}]^{+}\right|_{k=K} \! = \! \left. \mu_{k}^{-}[\Theta_{k}^{(j+1)} \! - \! \xi_{0}]^{+}\right|_{k=K} \! , \quad 
\left. \mu_{k}^{-}[\Theta_{k}^{(j+1)} \! - \! \xi_{0}]^{+}\right|_{k=0} \! = \! \left. \mu_{k}^{+}[\Theta_{k}^{(j+1)} \! - \! \xi_{0}]^{+}\right|_{k=0}. 
\end{gather*}
From the above and the fact that $\gamma_{\varepsilon}'$ is an odd function, we check that 
\begin{align*}
	& \mbox{(The right-hand side of \eqref{theta_tool_sum})} \\
	= & -\frac{\kappa}{2}\sum_{k=1}^{K} \alpha\left( H_{k}^{(j+1)} \right)\gamma_{\varepsilon}' (\delta_{k}^{-}\Theta_{k}^{(j+1)}) \delta_{k}^{-}[\Theta_{k}^{(j+1)} - \xi_{0}]^{+}\Delta x \\
	& -\frac{\kappa}{2}\sum_{k=0}^{K-1} \alpha\left( H_{k}^{(j+1)} \right)\gamma_{\varepsilon}' (\delta_{k}^{+}\Theta_{k}^{(j+1)}) \delta_{k}^{+}[\Theta_{k}^{(j+1)} - \xi_{0}]^{+}\Delta x \\
	= & -\frac{\kappa}{2}\sum_{k=1}^{K} \alpha\left( H_{k}^{(j+1)} \right)\gamma_{\varepsilon}' (\delta_{k}^{+}\Theta_{k-1}^{(j+1)}) \delta_{k}^{+}[\Theta_{k-1}^{(j+1)} - \xi_{0}]^{+}\Delta x \\
	& -\frac{\kappa}{2}\sum_{k=0}^{K-1} \alpha\left( H_{k}^{(j+1)} \right)\gamma_{\varepsilon}' (\delta_{k}^{+}\Theta_{k}^{(j+1)}) \delta_{k}^{+}[\Theta_{k}^{(j+1)} - \xi_{0}]^{+}\Delta x \\
	= & -\frac{\kappa}{2}\sum_{k=0}^{K-1} \left\{ \alpha\left( H_{k+1}^{(j+1)} \right) + \alpha\left( H_{k}^{(j+1)} \right) \right\} \gamma_{\varepsilon}' (\delta_{k}^{+}\Theta_{k}^{(j+1)}) \delta_{k}^{+}[\Theta_{k}^{(j+1)} - \xi_{0}]^{+}\Delta x \\
	= & -\frac{\kappa}{2}\sum_{k=0}^{K-1} \frac{ \alpha\left( H_{k+1}^{(j+1)} \right) + \alpha\left( H_{k}^{(j+1)} \right) }{\gamma_{\varepsilon} (\delta_{k}^{+}\Theta_{k}^{(j+1)})}\left\{\delta_{k}^{+}\left(\Theta_{k}^{(j+1)} - \xi_{0}\right) \right\}\delta_{k}^{+}[\Theta_{k}^{(j+1)} - \xi_{0}]^{+}\Delta x \\
	\leq & -\frac{\kappa}{2}\sum_{k=0}^{K-1} \frac{ \alpha\left( H_{k+1}^{(j+1)} \right) + \alpha\left( H_{k}^{(j+1)} \right) }{\gamma_{\varepsilon} (\delta_{k}^{+}\Theta_{k}^{(j+1)})}\left| \delta_{k}^{+}[\Theta_{k}^{(j+1)} - \xi_{0}]^{+} \right|^{2}\Delta x 
	\leq 0.
\end{align*}
Meanwhile, it follows from (A1), the summation by parts formula (Lemma \ref{sbp5}), and \eqref{bc2} that 
\begin{align*}
& \frac{1}{\Delta t}\sum_{k=0}^{K}{}^{\prime\prime}\alpha_{0,k}^{(j+1)}\left| [\Theta_{k}^{(j+1)} - \xi_{0}]^{+}\right|^{2}\Delta x 
+ \frac{1}{\Delta t}\sum_{k=0}^{K}{}^{\prime\prime}\alpha_{0,k}^{(j+1)}\left( \xi_{0} - \Theta_{k}^{(j)} \right)[\Theta_{k}^{(j+1)} - \xi_{0}]^{+}\Delta x \\
& - \nu^{2}\sum_{k=0}^{K}{}^{\prime\prime}\left\{ \delta_{k}^{\langle 2 \rangle}\left(\Theta_{k}^{(j+1)} - \xi_{0}\right) \right\}[\Theta_{k}^{(j+1)} - \xi_{0}]^{+}\Delta x \\
	\geq & \frac{\delta_{0}}{\Delta t}\sum_{k=0}^{K}{}^{\prime\prime}\!\left| [\Theta_{k}^{(j+1)} \! - \xi_{0}]^{+}\right|^{2}\!\Delta x 
	+ \frac{\delta_{0}}{\Delta t}\sum_{k=0}^{K}{}^{\prime\prime}\!\left( \xi_{0} - \Theta_{k}^{(j)} \right)\![\Theta_{k}^{(j+1)} - \xi_{0}]^{+}\Delta x \\ 
	& + \nu^{2}\sum_{k=0}^{K-1}\left| \delta_{k}^{+}[\Theta_{k}^{(j+1)} \! - \xi_{0}]^{+} \right|^{2}\!\Delta x. 
\end{align*}
From the above inequalities and the assumptions \eqref{ass_bd}, we obtain  
\begin{equation*}
	\frac{\delta_{0}}{\Delta t}\sum_{k=0}^{K}{}^{\prime\prime}\left| [\Theta_{k}^{(j+1)} - \xi_{0}]^{+}\right|^{2}\Delta x 
	\leq -\nu^{2}\sum_{k=0}^{K-1}\left| \delta_{k}^{+}[\Theta_{k}^{(j+1)} - \xi_{0}]^{+} \right|^{2}\Delta x
	\leq 0.
\end{equation*}
Namely, it holds that $\Theta_{k}^{(j+1)} \leq \xi_{0} \ (k=0, \ldots, K)$. 
Similarly, we have $-\xi_{0} \leq \Theta_{k}^{(j+1)} \ (k=0, \ldots, K)$. 
Thus, we conclude that $|\Theta_{k}^{(j+1)}| \leq \xi_{0} \ (k=0, \ldots, K)$. 

\subsubsection{Proof of (O)(b)}
Multiplying both sides of \eqref{sc1} by $H_{k}^{(j + 1)} - H_{k}^{(j)}$ and summing each of the resulting equality over $k = 0,1, \ldots, K$, we have 
\begin{align*}
	0 = & \frac{1}{\Delta t}\sum_{k=0}^{K}{}^{\prime\prime}\left|H_{k}^{(j + 1)} - H_{k}^{(j)}\right|^{2}\Delta x 
		- \kappa_{0}^{2}\sum_{k=0}^{K}{}^{\prime\prime}\left(\delta_{k}^{\langle 2 \rangle}H_{k}^{(j + 1)}\right)\left(H_{k}^{(j + 1)} - H_{k}^{(j)}\right)\Delta x \\
	    &   + c\sum_{k=0}^{K}{}^{\prime\prime}\left(H_{k}^{(j + 1)} - 1\right)\left(H_{k}^{(j + 1)} - H_{k}^{(j)}\right)\Delta x \\
	    &	+ \kappa\sum_{k=0}^{K}{}^{\prime\prime}H_{k}^{(j + 1)}\left(H_{k}^{(j + 1)} - H_{k}^{(j)}\right)\frac{\gamma_{\varepsilon}\bigl( \delta_{k}^{+}\Theta_{k}^{(j)}\bigr) + \gamma_{\varepsilon}\bigl( \delta_{k}^{-}\Theta_{k}^{(j)}\bigr)}{2}\Delta x.
\end{align*}
By using the summation by parts formula (Lemma \ref{sbp5}), \eqref{bc1}, and the following inequality: 
\begin{equation}
	a(a - b) \geq \frac{1}{2}a^{2} - \frac{1}{2}b^{2} \quad \mbox{for all} \ a, b \in \mathbb{R}, \label{tool_ineq}
\end{equation}
we obtain 
\begin{align*}
	- \kappa_{0}^{2}\sum_{k=0}^{K}{}^{\prime\prime}\!\left(\delta_{k}^{\langle 2 \rangle}H_{k}^{(j + 1)}\right)\!\!\left(\! H_{k}^{(j + 1)} \! - \! H_{k}^{(j)}\!\right)\Delta x 
	= & \kappa_{0}^{2}\sum_{k=0}^{K-1}\left(\delta_{k}^{+}H_{k}^{(j + 1)}\right)\!\left\{\delta_{k}^{+}\!\left(H_{k}^{(j + 1)} - H_{k}^{(j)}\right)\right\}\Delta x \\
	& - \kappa_{0}^{2}\left[\left(\delta_{k}^{\langle 1 \rangle}H_{k}^{(j + 1)}\right)\left(H_{k}^{(j + 1)} - H_{k}^{(j)}\right)\right]_{0}^{K} \\
	= & \kappa_{0}^{2}\sum_{k=0}^{K-1}\left(\delta_{k}^{+}H_{k}^{(j + 1)}\right)\left(\delta_{k}^{+}H_{k}^{(j + 1)} - \delta_{k}^{+}H_{k}^{(j)}\right)\Delta x \\
	\geq & \frac{\kappa_{0}^{2}}{2}\sum_{k=0}^{K-1}\left|\delta_{k}^{+}H_{k}^{(j + 1)}\right|^{2}\Delta x - \frac{\kappa_{0}^{2}}{2}\sum_{k=0}^{K-1}\left|\delta_{k}^{+}H_{k}^{(j)}\right|^{2}\Delta x. 
\end{align*}
Similarly, it follows from \eqref{tool_ineq} that 
\begin{gather*}
	c\sum_{k=0}^{K}{}^{\prime\prime}\!\left(\! H_{k}^{(j + 1)} \! - \! 1 \!\right)\!\!\left(\! H_{k}^{(j + 1)} \! - \! H_{k}^{(j)}\!\right)\Delta x 
	\geq \frac{c}{2}\sum_{k=0}^{K}{}^{\prime\prime}\left|H_{k}^{(j + 1)} - 1\right|^{2}\Delta x 
	     - \frac{c}{2}\sum_{k=0}^{K}{}^{\prime\prime}\left|H_{k}^{(j)} - 1\right|^{2}\Delta x, \\
	\begin{split}
		& \kappa\sum_{k=0}^{K}{}^{\prime\prime}H_{k}^{(j + 1)}\left(H_{k}^{(j + 1)} - H_{k}^{(j)}\right)\frac{\gamma_{\varepsilon}\bigl( \delta_{k}^{+}\Theta_{k}^{(j)}\bigr) + \gamma_{\varepsilon}\bigl( \delta_{k}^{-}\Theta_{k}^{(j)}\bigr)}{2}\Delta x \\
		\geq & \kappa\!\sum_{k=0}^{K}\!{}^{\prime\prime}\alpha\!\left(\!H_{k}^{(j + 1)}\!\right)\frac{\gamma_{\varepsilon}\bigl( \delta_{k}^{+}\Theta_{k}^{(j)}\bigr) \! + \! \gamma_{\varepsilon}\bigl( \delta_{k}^{-}\Theta_{k}^{(j)}\bigr)}{2}\Delta x
		\! - \! \kappa\!\sum_{k=0}^{K}\!{}^{\prime\prime}\alpha\!\left(\!H_{k}^{(j)}\!\right)\frac{\gamma_{\varepsilon}\bigl( \delta_{k}^{+}\Theta_{k}^{(j)}\bigr) \! + \! \gamma_{\varepsilon}\bigl( \delta_{k}^{-}\Theta_{k}^{(j)}\bigr)}{2}\Delta x.
	\end{split}
\end{gather*}
From the above, we obtain 
\begin{align*}
	0 \! \geq & \frac{1}{\Delta t}\sum_{k=0}^{K}{}^{\prime\prime}\left|H_{k}^{(j + 1)} - H_{k}^{(j)}\right|^{2}\Delta x 
		+ \frac{\kappa_{0}^{2}}{2}\sum_{k=0}^{K-1}\left|\delta_{k}^{+}H_{k}^{(j + 1)}\right|^{2}\Delta x - \frac{\kappa_{0}^{2}}{2}\sum_{k=0}^{K-1}\left|\delta_{k}^{+}H_{k}^{(j)}\right|^{2}\Delta x \\
	    &   + \frac{c}{2}\sum_{k=0}^{K}{}^{\prime\prime}\left|H_{k}^{(j + 1)} - 1\right|^{2}\Delta x 
	    	- \frac{c}{2}\sum_{k=0}^{K}{}^{\prime\prime}\left|H_{k}^{(j)} - 1\right|^{2}\Delta x \\
	    &	+ \!\kappa\!\sum_{k=0}^{K}\!{}^{\prime\prime}\!\alpha\!\left(\!H_{k}^{(j + 1\!)}\!\right)\!\frac{\gamma_{\varepsilon}\!\bigl( \delta_{k}^{+}\Theta_{k}^{(j)}\bigr) \!\! + \! \gamma_{\varepsilon}\!\bigl( \delta_{k}^{-}\Theta_{k}^{(j)}\bigr)}{2}\Delta x
	    \! - \! \kappa\!\sum_{k=0}^{K}\!{}^{\prime\prime}\!\alpha\!\left(\!H_{k}^{(j)}\!\right)\!\frac{\gamma_{\varepsilon}\!\bigl( \delta_{k}^{+}\Theta_{k}^{(j)}\bigr) \!\! + \! \gamma_{\varepsilon}\!\bigl( \delta_{k}^{-}\Theta_{k}^{(j)}\bigr)}{2}\Delta x.
\end{align*}
Moreover, multiplying both sides of \eqref{sc2} by $\Theta_{k}^{(j + 1)} - \Theta_{k}^{(j)}$ and summing each of the resulting equality over $k = 0,1, \ldots, K$, we have 
\begin{align*}
	0 = & \frac{1}{\Delta t}\sum_{k=0}^{K}{}^{\prime\prime}\alpha_{0,k}^{(j + 1)}\left|\Theta_{k}^{(j + 1)} - \Theta_{k}^{(j)}\right|^{2}\Delta x 
		- \nu^{2}\sum_{k=0}^{K}{}^{\prime\prime}\left(\delta_{k}^{\langle 2 \rangle}\Theta_{k}^{(j + 1)}\right)\left(\Theta_{k}^{(j + 1)} - \Theta_{k}^{(j)}\right)\Delta x \\
	    &	- \! \frac{\kappa}{2}\sum_{k=0}^{K}\!{}^{\prime\prime}\!\left\{ \delta_{k}^{+}\!\left(\! \alpha\!\left(\! H_{k}^{(j+1\! )} \!\right)\! \gamma_{\varepsilon}' (\delta_{k}^{-}\Theta_{k}^{(j+1)}) \!\right) \!+ \! \delta_{k}^{-}\!\left(\! \alpha\!\left(\! H_{k}^{(j+1\! )} \!\right)\! \gamma_{\varepsilon}' (\delta_{k}^{+}\Theta_{k}^{(j+1)}) \!\right) \!\right\} \! \left(\! \Theta_{k}^{(j + 1)} \! - \! \Theta_{k}^{(j)} \!\right)\Delta x.
\end{align*}
In this part, we consider the third term in the right hand side of the above equality. 
We obtain the following equality by the summation by parts formula (Lemma \ref{sbp1}): 
\begin{align*}
	& - \frac{\kappa}{2}\sum_{k=0}^{K}\!{}^{\prime\prime}\!\left\{ \delta_{k}^{+}\!\left(\! \alpha\!\left(\! H_{k}^{(j+1\! )} \!\right)\! \gamma_{\varepsilon}' (\delta_{k}^{-}\Theta_{k}^{(j+1)}) \!\right) \!\right\} \! \left(\! \Theta_{k}^{(j + 1)} \! - \! \Theta_{k}^{(j)} \!\right)\Delta x \\
	= & \frac{\kappa}{2}\sum_{k=0}^{K}\!{}^{\prime\prime} \! \alpha\!\left(\! H_{k}^{(j+1\! )} \!\right)\! \gamma_{\varepsilon}' (\delta_{k}^{-}\Theta_{k}^{(j+1)}) \delta_{k}^{-}\!\left(\! \Theta_{k}^{(j + 1)} \! - \! \Theta_{k}^{(j)} \!\right)\Delta x \\
	& \underset{\rm =: (d.B.T.)}{\underline{- \frac{\kappa}{4}\left[ \alpha\!\left(\! H_{k+1}^{(j+1\! )} \!\right)\! \gamma_{\varepsilon}' (\delta_{k}^{-}\Theta_{k+1}^{(j+1)}) \! \left(\! \Theta_{k}^{(j + 1)} \! - \! \Theta_{k}^{(j)} \!\right) + \alpha\!\left(\! H_{k}^{(j+1\! )} \!\right)\! \gamma_{\varepsilon}' (\delta_{k}^{-}\Theta_{k}^{(j+1)}) \! \left(\! \Theta_{k-1}^{(j + 1)} \! - \! \Theta_{k-1}^{(j)} \!\right) \right]_{0}^{K} }} \\
	= & \frac{\kappa}{2}\sum_{k=0}^{K}\!{}^{\prime\prime} \alpha\!\left(\! H_{k}^{(j+1\! )} \!\right)\! \frac{\delta_{k}^{-}\Theta_{k}^{(j+1)}}{ \gamma_{\varepsilon}(\delta_{k}^{-}\Theta_{k}^{(j+1)}) } \! \left(\! \delta_{k}^{-}\Theta_{k}^{(j + 1)} \! - \! \delta_{k}^{-}\Theta_{k}^{(j)} \!\right)\Delta x + \mbox{(d.B.T.)}. 
\end{align*}
Besides, using the following inequality: 
\begin{equation*}
	\frac{a}{\gamma_{\varepsilon}(a)}(a - b) \geq \gamma_{\varepsilon}(a) - \gamma_{\varepsilon}(b) \quad \mbox{for all} \ a, b \in \mathbb{R}, 
\end{equation*}
we have 
\begin{align*}
	& - \frac{\kappa}{2}\sum_{k=0}^{K}\!{}^{\prime\prime}\!\left\{ \delta_{k}^{+}\!\left(\! \alpha\!\left(\! H_{k}^{(j+1\! )} \!\right)\! \gamma_{\varepsilon}' (\delta_{k}^{-}\Theta_{k}^{(j+1)}) \!\right) \!\right\} \! \left(\! \Theta_{k}^{(j + 1)} \! - \! \Theta_{k}^{(j)} \!\right)\Delta x \\
	\geq & \frac{\kappa}{2}\sum_{k=0}^{K}\!{}^{\prime\prime}\!\alpha\!\left(\!H_{k}^{(j + 1\!)}\!\right)\!\gamma_{\varepsilon}\bigl( \delta_{k}^{-}\Theta_{k}^{(j+1)}\bigr) \Delta x 
	- \frac{\kappa}{2}\sum_{k=0}^{K}\!{}^{\prime\prime}\!\alpha\!\left(\!H_{k}^{(j + 1\!)}\!\right)\!\gamma_{\varepsilon}\bigl( \delta_{k}^{-}\Theta_{k}^{(j)}\bigr) \Delta x + \mbox{(d.B.T.)}.
\end{align*}
In the same manner as above, it holds that 
\begin{align*}
	& - \frac{\kappa}{2}\sum_{k=0}^{K}\!{}^{\prime\prime}\!\left\{ \delta_{k}^{-}\!\left(\! \alpha\!\left(\! H_{k}^{(j+1\! )} \!\right)\! \gamma_{\varepsilon}' (\delta_{k}^{+}\Theta_{k}^{(j+1)}) \!\right) \!\right\} \! \left(\! \Theta_{k}^{(j + 1)} \! - \! \Theta_{k}^{(j)} \!\right)\Delta x \\
	\geq & \frac{\kappa}{2}\sum_{k=0}^{K}\!{}^{\prime\prime}\!\alpha\!\left(\!H_{k}^{(j + 1\!)}\!\right)\!\gamma_{\varepsilon}\bigl( \delta_{k}^{+}\Theta_{k}^{(j+1)}\bigr) \Delta x 
	- \frac{\kappa}{2}\sum_{k=0}^{K}\!{}^{\prime\prime}\!\alpha\!\left(\!H_{k}^{(j + 1\!)}\!\right)\!\gamma_{\varepsilon}\bigl( \delta_{k}^{+}\Theta_{k}^{(j)}\bigr) \Delta x + \mbox{(d.B.T.)}.
\end{align*}
Since $\gamma_{\varepsilon}'$ is an odd function, it follows from \eqref{bc1} and \eqref{bc2} that $\mbox{(d.B.T.)} = 0$. 
Therefore, we obtain 
\begin{align*}
	0 \! \geq & \frac{1}{\Delta t}\sum_{k=0}^{K}{}^{\prime\prime}\alpha_{0,k}^{(j + 1)}\left|\Theta_{k}^{(j + 1)} - \Theta_{k}^{(j)}\right|^{2}\Delta x
		+ \frac{\nu^{2}}{2}\sum_{k=0}^{K-1}\left|\delta_{k}^{+}\Theta_{k}^{(j + 1)}\right|^{2}\Delta x - \frac{\nu^{2}}{2}\sum_{k=0}^{K-1}\left|\delta_{k}^{+}\Theta_{k}^{(j)}\right|^{2}\Delta x \\
	    &	+ \!\kappa\!\sum_{k=0}^{K}\!{}^{\prime\prime}\!\alpha\!\left(\!H_{k}^{(j + 1\!)}\!\right)\!\frac{\gamma_{\varepsilon}\bigl( \delta_{k}^{+}\Theta_{k}^{(j+1)}\bigr) + \gamma_{\varepsilon}\bigl( \delta_{k}^{-}\Theta_{k}^{(j+1)}\bigr)}{2}\Delta x \\
	    & 	- \! \kappa\!\sum_{k=0}^{K}\!{}^{\prime\prime}\!\alpha\!\left(\!H_{k}^{(j+1\!)}\!\right)\!\frac{\gamma_{\varepsilon}\bigl( \delta_{k}^{+}\Theta_{k}^{(j)}\bigr) + \gamma_{\varepsilon}\bigl( \delta_{k}^{-}\Theta_{k}^{(j)}\bigr)}{2}\Delta x.
\end{align*}
Here, for any $\{f_{k}\}_{k=-1}^{K+1} \in \mathbb{R}^{K+3}$ satisfying the discrete homogeneous Neumann boundary condition $\delta_{k}^{\langle 1 \rangle}f_{k} = 0 \ (k = 0,K)$, 
the following equality holds: 
\begin{equation*}
	\sum _ {k=0}^{K}{}^{\prime \prime}\frac{ |\delta_{k}^{+} f_{k}|^{2} + | \delta_{k}^{-} f_{k} |^{2} }{2}\Delta x = \sum _ {k=0}^{K-1} |\delta_{k}^{+} f_{k}|^{2} \Delta x. \vspace{-1mm} 
\end{equation*}
Consequently, we conclude that 
\begin{align*}
        0 \geq & \mathscr{F}_{1, \rm d}(\bm{H}^{(j+1)}, \bm{\Theta}^{(j+1)}) - \mathscr{F}_{1, \rm d}(\bm{H}^{(j)}, \bm{\Theta}^{(j)}) \\ 
	& + \frac{1}{\Delta t}\sum_{k=0}^{K}{}^{\prime\prime}\left|H_{k}^{(j + 1)} - H_{k}^{(j)}\right|^{2}\Delta x + \frac{1}{\Delta t}\sum_{k=0}^{K}{}^{\prime\prime}\alpha_{0,k}^{(j + 1)}\left|\Theta_{k}^{(j + 1)} - \Theta_{k}^{(j)}\right|^{2}\Delta x. 
\end{align*}

\subsubsection{Proof of (I)}
From (A1) and (A2), we see that 
\begin{equation*}
	\left|\xi_{i, k}^{(j+1)}\right| \to 0, \ \mbox{as} \ \Delta x, \Delta t \to 0 \quad (i = 1,2,\ldots, 6), 
\end{equation*}
where $\xi_{i, k}^{(j+1)} \ (k = 0,1,\ldots, K,\ j = 0,1,\ldots, N - 1,\ i = 1,2,\ldots, 6)$ are residual terms defined by \eqref{xi1}--\eqref{xi6}. 
Thus, we obtain from Lemma \ref{e_eta_theta_est} that 
\begin{equation*}
        \sup_{0 \leq n \leq N} \bigl\| \bm{e}_\eta^{(j)} \bigr\|_{L_{\rm d}^2} +\sup_{0 \leq n \leq N} \bigl\| \bm{e}_\theta^{(j)} \bigr\|_{L_{\rm d}^2} \to 0, \mbox{ as $ \Delta x, \Delta t \to 0 $.}
\end{equation*}

\subsection{Proof of Main Theorem 2}
Firstly, we estimate $\bm{\xi}_{1}^{(j+1)} = \{\xi_{1, k}^{(j+1)}\}_{k = 0}^{K} \ \in \mathbb{R}^{K+1}$ and $\bm{\xi}_{4}^{(j+1)} = \{\xi_{4, k}^{(j+1)}\}_{k = 0}^{K} \ \in \mathbb{R}^{K+1}$. 
It follows from the Taylor theorem and (A3) that
\begin{equation}
	\left|\xi_{1, k}^{(j+1)}\right| 
	= \left| \frac{ \eta_{k}^{(j + 1)} - \eta_{k}^{(j)} }{\Delta t} - \partial_{t} \eta_{k}^{(j + 1)} \right|
	\leq C(\Delta t)^{\sigma} \quad (k=0, \ldots, K). \label{xi1_est}
\end{equation}
As a remark, throughout this proof, we need the reader to keep in mind that the meaning of $C$ changes from line to line, whereas $C$ always denote those constants. 
Similarly, we see from the Taylor theorem, (A1), and (A3) that  
\begin{equation}
	\left|\xi_{4, k}^{(j+1)}\right| 
	= \left|\alpha_{0,k}^{(j+1)}\right| \left| \frac{ \theta_{k}^{(j + 1)} - \theta_{k}^{(j)} }{\Delta t} - \partial_{t} \theta_{k}^{(j + 1)} \right| 
	\leq C(\Delta t)^{\sigma} \quad (k=0, \ldots, K). \label{xi4_est}
\end{equation}
Next, we estimate $\bm{\xi}_{2}^{(j+1)} = \{\xi_{2, k}^{(j+1)}\}_{k = 0}^{K} \ \in \mathbb{R}^{K+1}$ and $\bm{\xi}_{5}^{(j+1)} = \{\xi_{5, k}^{(j+1)}\}_{k = 0}^{K} \ \in \mathbb{R}^{K+1}$. 
For any $t \in [0,T]$ and $k=0,\ldots, K$, applying the Taylor theorem to $\tilde{\eta}$, there exists $r_{1} \in (0,1)$ such that  
\begin{align}
& \frac{ \tilde{\eta}(t,(k + 1)\Delta x) -2\tilde{\eta}(t,k\Delta x) + \tilde{\eta}(t,(k - 1)\Delta x) }{(\Delta x)^{2}} \nonumber\\
	= & \frac{1}{2}\left\{ \partial_{x}^{2}\tilde{\eta}( t, (k + r_{1})\Delta x ) 
	+ \partial_{x}^{2}\tilde{\eta}( t, (k - r_{1})\Delta x ) \right\}. \label{eta_avg}
\end{align}
From the definition \eqref{def_ext} of $\tilde{\eta}$, it holds that $\partial_{x}^{2}\tilde{\eta}(t, x) = \partial_{x}^{2}\eta(t, -x)$ for all $x \in (-\Delta x,0)$. 
Hence, we have $\partial_{x}^{2}\tilde{\eta}(t, -r_{1}\Delta x) = \partial_{x}^{2}\eta(t,r_{1}\Delta x)$. 
From a similar observation, we obtain $\partial_{x}^{2}\tilde{\eta}(t, (K+r_{1})\Delta x) = \partial_{x}^{2}\tilde{\eta}(t, (K-r_{1})\Delta x)$. 
Namely, substituting $( j + 1 )\Delta t$ into $t$ in \eqref{eta_avg}, we get 
\begin{equation*}
\delta_{k}^{\langle 2 \rangle}\tilde{\eta}_{k}^{ (j + 1) }  
	= \begin{cases}
		\displaystyle \partial_{x}^{2}\eta_{r_{1}}^{ (j + 1) } & (k=0), \\[5pt]
		\displaystyle \frac{1}{2}\left( \partial_{x}^{2}\eta_{k+r_{1}}^{ (j + 1) } + \partial_{x}^{2}\eta_{k-r_{1}}^{ (j + 1) } \right) & (k=1,\ldots, K-1), \\[5pt]
		\displaystyle \partial_{x}^{2}\eta_{K-r_{1}}^{ ( j + 1) }, & (k=K). 
	\end{cases}
\end{equation*} 
Hence, we conclude that 
\begin{equation}
	\left|\xi_{2, k}^{(j+1)}\right| = \kappa_{0}^{2}\left| \delta_{k}^{\langle 2 \rangle}\tilde{\eta}_{k}^{(j+1)} - \partial_{x}^{2}\eta_{k}^{( j + 1)} \right| \leq C(\Delta x)^{\sigma} \quad (k=0, \ldots, K).
	\label{xi2_est}
\end{equation}
In the same manner, as described above, we have
\begin{equation}
	\left|\xi_{5, k}^{(j+1)}\right| = \nu^{2}\left| \delta_{k}^{\langle 2 \rangle}\tilde{\theta}_{k}^{(j + 1)} - \partial_{x}^{2} \theta_{k}^{(j + 1)} \right| \leq C(\Delta x)^{\sigma} \quad (k=0, \ldots, K).
	\label{xi5_est}
\end{equation}
Lastly, we estimate $\bm{\xi}_{3}^{(j+1)} = \{\xi_{3, k}^{(j+1)}\}_{k = 0}^{K} \ \in \mathbb{R}^{K+1}$ and $\bm{\xi}_{6}^{(j+1)} = \{\xi_{6, k}^{(j+1)}\}_{k = 0}^{K} \ \in \mathbb{R}^{K+1}$. 
By using (A2), we have 
\begin{align*}
	\left|\xi_{3, k}^{(j+1)}\right| 
	= & \kappa\left|\eta_{k}^{( j + 1)}\left(\frac{\gamma_{\varepsilon}\bigl( \delta_{k}^{+}\tilde{\theta}_{k}^{(j)}\bigr) + \gamma_{\varepsilon}\bigl( \delta_{k}^{-}\tilde{\theta}_{k}^{(j)}\bigr)}{2} - \gamma_{\varepsilon}\bigl( \partial_{x}\theta_{k}^{(j+1)}\bigr) \right)\right| \\
	\leq & \kappa\frac{\left|\eta_{k}^{( j + 1)}\right|}{2}
	\left(\left|\gamma_{\varepsilon}\bigl( \delta_{k}^{+}\tilde{\theta}_{k}^{(j)}\bigr) - \gamma_{\varepsilon}\bigl( \partial_{x}\theta_{k}^{(j+1)}\bigr) \right| 
	+ \left|\gamma_{\varepsilon}\bigl( \delta_{k}^{-}\tilde{\theta}_{k}^{(j)}\bigr) - \gamma_{\varepsilon}\bigl( \partial_{x}\theta_{k}^{(j+1)}\bigr) \right| \right) \\
	\leq & \frac{C_{1}\kappa}{2}
	\left(\left|\gamma_{\varepsilon}\bigl( \delta_{k}^{+}\tilde{\theta}_{k}^{(j)}\bigr) \! - \! \gamma_{\varepsilon}\bigl( \partial_{x}\theta_{k}^{(j+1)}\bigr) \right| 
	\! + \! \left|\gamma_{\varepsilon}\bigl( \delta_{k}^{-}\tilde{\theta}_{k}^{(j)}\bigr) \! - \! \gamma_{\varepsilon}\bigl( \partial_{x}\theta_{k}^{(j+1)}\bigr) \right| \right) \quad (k=0, \ldots, K),
\end{align*}
where $C_{1}$ is a constant defined by \eqref{eta_bound}.
Additionally, we see from the property of the difference quotient that
\begin{gather*}
	\gamma_{\varepsilon}\bigl( \delta_{k}^{+}\tilde{\theta}_{k}^{(j)}\bigr) - \gamma_{\varepsilon}\bigl( \partial_{x}\theta_{k}^{(j+1)}\bigr) 
	= \frac{ d\gamma_{\varepsilon} }{ d(\delta_{k}^{+}\tilde{\theta}_{k}^{(j)}, \partial_{x}\theta_{k}^{(j+1)}) }\left(\delta_{k}^{+}\tilde{\theta}_{k}^{(j)} - \partial_{x}\theta_{k}^{(j+1)}\right) \quad (k=0, \ldots, K), \\
	\gamma_{\varepsilon}\bigl( \delta_{k}^{-}\tilde{\theta}_{k}^{(j)}\bigr) - \gamma_{\varepsilon}\bigl( \partial_{x}\theta_{k}^{(j+1)}\bigr) 
	= \frac{ d\gamma_{\varepsilon} }{ d(\delta_{k}^{-}\tilde{\theta}_{k}^{(j)}, \partial_{x}\theta_{k}^{(j+1)}) }\left(\delta_{k}^{-}\tilde{\theta}_{k}^{(j)} - \partial_{x}\theta_{k}^{(j+1)}\right) \quad (k=0, \ldots, K).
\end{gather*}
Moreover, by using the following inequality: 
\begin{equation}
	\left| \frac{ d\gamma_{\varepsilon} }{ d(a, b) } \right| \leq 1 \quad \mbox{for all} \ a,b \in \mathbb{R}, \label{diff_f}
\end{equation}
we obtain  
\begin{equation*}
	\left|\xi_{3, k}^{(j+1)}\right| \leq \frac{C_{1}\kappa}{2}\left(\left|\delta_{k}^{+}\tilde{\theta}_{k}^{(j)} - \partial_{x}\theta_{k}^{(j+1)}\right| + \left|\delta_{k}^{-}\tilde{\theta}_{k}^{(j)} - \partial_{x}\theta_{k}^{(j+1)}\right| \right) 
	\quad (k=0, \ldots, K). 
\end{equation*}
Applying the Taylor theorem to $\tilde{\theta}$, we see from the definition of $\tilde{\theta}$ and (A2) that 
\begin{gather*}
	\left|\delta_{k}^{+}\tilde{\theta}_{k}^{(j)} - \partial_{x}\theta_{k}^{(j+1)}\right| 
	\leq C(\Delta t + \Delta x) \quad (k=0, \ldots, K), \\
	\left|\delta_{k}^{-}\tilde{\theta}_{k}^{(j)} - \partial_{x}\theta_{k}^{(j+1)}\right| 
	\leq C(\Delta t + \Delta x) \quad (k=0, \ldots, K).
\end{gather*}
From the above, we have 
\begin{equation}
	\left|\xi_{3, k}^{(j+1)}\right| \leq C(\Delta t + \Delta x) \quad (k=0, \ldots, K).
	\label{xi3_est}
\end{equation}
Next, we estimate $\bm{\xi}_{6}^{(j+1)}$. 
For the purpose, we consider the following difference:  
\begin{equation*}
	\left\{\delta_{k}^{+}\left( \alpha( \tilde{\eta}_{k}^{(j+1)} ) \gamma_{\varepsilon}'(\delta_{k}^{-}\tilde{\theta}_{k}^{(j+1)}) \right) + \delta_{k}^{-}\left( \alpha( \tilde{\eta}_{k}^{(j+1)} ) \gamma_{\varepsilon}'(\delta_{k}^{+}\tilde{\theta}_{k}^{(j+1)}) \right)\right\} - 2\partial_{x}\left( \alpha( \eta_{k}^{(j+1)} ) \gamma_{\varepsilon}'(\partial_{x}\theta_{k}^{(j+1)}) \right). 
\end{equation*}
For reasons of space limitation, the superscript $(j+1)$ is omitted hereafter.
Here, we see from Lemma \ref{prod} that 
\begin{gather*}
	\begin{split}
		\delta_{k}^{+}\left( \alpha( \tilde{\eta}_{k} ) \gamma_{\varepsilon}'(\delta_{k}^{-}\tilde{\theta}_{k}) \right) 
			= & \frac{1}{2}\delta_{k}^{+}\left( \alpha( \tilde{\eta}_{k} ) \right) \left(\gamma_{\varepsilon}'(\delta_{k}^{-}\tilde{\theta}_{k+1}) + \gamma_{\varepsilon}'(\delta_{k}^{-}\tilde{\theta}_{k})\right) 
			+ \mu_{k}^{+}\left( \alpha( \tilde{\eta}_{k} ) \right)\delta_{k}^{+}\left(  \gamma_{\varepsilon}'(\delta_{k}^{-}\tilde{\theta}_{k}) \right) \\
			= & \frac{1}{2}\delta_{k}^{+}\left( \alpha( \tilde{\eta}_{k} ) \right) \left(\gamma_{\varepsilon}'(\delta_{k}^{+}\tilde{\theta}_{k}) + \gamma_{\varepsilon}'(\delta_{k}^{-}\tilde{\theta}_{k})\right) 
			+ \mu_{k}^{+}\left( \alpha( \tilde{\eta}_{k} ) \right)\delta_{k}^{+}\left(  \gamma_{\varepsilon}'(\delta_{k}^{-}\tilde{\theta}_{k}) \right),
	\end{split}\\
	\begin{split}
		\delta_{k}^{-}\left( \alpha( \tilde{\eta}_{k} ) \gamma_{\varepsilon}'(\delta_{k}^{+}\tilde{\theta}_{k}) \right) 
			= & \frac{1}{2}\delta_{k}^{-}\left( \alpha( \tilde{\eta}_{k} ) \right) \left(\gamma_{\varepsilon}'(\delta_{k}^{+}\tilde{\theta}_{k}) + \gamma_{\varepsilon}'(\delta_{k}^{+}\tilde{\theta}_{k-1})\right) 
			+ \mu_{k}^{-}\left( \alpha( \tilde{\eta}_{k} ) \right)\delta_{k}^{-}\left(  \gamma_{\varepsilon}'(\delta_{k}^{+}\tilde{\theta}_{k}) \right) \\
			= & \frac{1}{2}\delta_{k}^{-}\left( \alpha( \tilde{\eta}_{k} ) \right) \left(\gamma_{\varepsilon}'(\delta_{k}^{+}\tilde{\theta}_{k}) + \gamma_{\varepsilon}'(\delta_{k}^{-}\tilde{\theta}_{k})\right) 
			+ \mu_{k}^{-}\left( \alpha( \tilde{\eta}_{k} ) \right)\delta_{k}^{-}\left(  \gamma_{\varepsilon}'(\delta_{k}^{+}\tilde{\theta}_{k}) \right)
	\end{split}	
\end{gather*}
for $k=0, \ldots, K$. 
From the above, we obtain 
\begin{align*}
& \delta_{k}^{+}\left( \alpha( \tilde{\eta}_{k} ) \gamma_{\varepsilon}'(\delta_{k}^{-}\tilde{\theta}_{k}) \right) + \delta_{k}^{-}\left( \alpha( \tilde{\eta}_{k} ) \gamma_{\varepsilon}'(\delta_{k}^{+}\tilde{\theta}_{k}) \right) \\
	= & \delta_{k}^{\langle 1 \rangle}\left( \alpha( \tilde{\eta}_{k} ) \right) \left(\gamma_{\varepsilon}'(\delta_{k}^{+}\tilde{\theta}_{k}) + \gamma_{\varepsilon}'(\delta_{k}^{-}\tilde{\theta}_{k})\right) 
	+ \mu_{k}^{+}\!\left( \alpha( \tilde{\eta}_{k} ) \right)\!\delta_{k}^{+}\!\left(  \gamma_{\varepsilon}'(\delta_{k}^{-}\tilde{\theta}_{k}) \right) 
	+ \mu_{k}^{-}\!\left( \alpha( \tilde{\eta}_{k} ) \right)\!\delta_{k}^{-}\!\left(  \gamma_{\varepsilon}'(\delta_{k}^{+}\tilde{\theta}_{k}) \right)
\end{align*}
for $k=0, \ldots, K$. 
Thus, we have 
\begin{align}
	\left|\xi_{6, k}\right|
	\leq & \frac{\kappa}{2}\left(
		\left|\delta_{k}^{\langle 1 \rangle}\left( \alpha( \tilde{\eta}_{k} ) \right) \left(\gamma_{\varepsilon}'(\delta_{k}^{+}\tilde{\theta}_{k}) + \gamma_{\varepsilon}'(\delta_{k}^{-}\tilde{\theta}_{k})\right) - 2\partial_{x} \alpha( \eta_{k} ) \gamma_{\varepsilon}'(\partial_{x}\theta_{k})\right|
	\right. \notag\\
	& + \left|
		\mu_{k}^{+}\left( \alpha( \tilde{\eta}_{k} ) \right)\delta_{k}^{+}\left(  \gamma_{\varepsilon}'(\delta_{k}^{-}\tilde{\theta}_{k}) \right) 
		+ \mu_{k}^{-}\left( \alpha( \tilde{\eta}_{k} ) \right)\delta_{k}^{-}\left(  \gamma_{\varepsilon}'(\delta_{k}^{+}\tilde{\theta}_{k}) \right) 
		- 2\alpha( \eta_{k} )\partial_{x}\gamma_{\varepsilon}'(\partial_{x}\theta_{k})
	\right| 
	\Bigr) \label{xi6_1} 
\end{align}
for $k=0, \ldots, K$. 
Next, we evaluate the first term on the right-hand side of \eqref{xi6_1}. 
Here, we see from (A2) and the mean value theorem that $| \delta_{k}^{\langle 1 \rangle}\left( \alpha( \tilde{\eta}_{k} ) \right) | \leq C \ (k=0, \ldots, K)$.
Hence, we obtain from the above inequality and the following inequality: $|\gamma_{\varepsilon}'(u)| \leq 1$ that 
\begin{align*}
	& \left|
		\delta_{k}^{\langle 1 \rangle}\left( \alpha( \tilde{\eta}_{k} ) \right) \left(\gamma_{\varepsilon}'(\delta_{k}^{+}\tilde{\theta}_{k}) + \gamma_{\varepsilon}'(\delta_{k}^{-}\tilde{\theta}_{k})\right) 
		- 2\partial_{x} \alpha( \eta_{k} ) \gamma_{\varepsilon}'(\partial_{x}\theta_{k})
	\right| \\ 
	\leq & 2\left|
		\delta_{k}^{\langle 1 \rangle}\left( \alpha( \tilde{\eta}_{k} ) \right) 
		- \partial_{x} \alpha( \eta_{k} ) 
	\right| 
	\left| \gamma_{\varepsilon}'(\partial_{x}\theta_{k}) \right|
	+ \left| \delta_{k}^{\langle 1 \rangle}\left( \alpha( \tilde{\eta}_{k} ) \right) \right|
	\left|
		\gamma_{\varepsilon}'(\delta_{k}^{+}\tilde{\theta}_{k}) + \gamma_{\varepsilon}'(\delta_{k}^{-}\tilde{\theta}_{k}) 
		- 2\gamma_{\varepsilon}'(\partial_{x}\theta_{k})
	\right| \\
	\leq & C\left(
		\left|
			\delta_{k}^{\langle 1 \rangle}\left( \alpha( \tilde{\eta}_{k} ) \right) 
			- \partial_{x} \alpha( \eta_{k} ) 
		\right| 
		+ \left|
			\gamma_{\varepsilon}'(\delta_{k}^{+}\tilde{\theta}_{k}) + \gamma_{\varepsilon}'(\delta_{k}^{-}\tilde{\theta}_{k}) 
			- 2\gamma_{\varepsilon}'(\partial_{x}\theta_{k})
		\right| 
	\right) \quad (k=0, \ldots, K).
\end{align*}
Moreover, we see from the chain rule, the mean value theorem, and (A2) that 
\begin{align*}
	\left| \delta_{k}^{\langle 1 \rangle}\!\left( \alpha( \tilde{\eta}_{k} ) \right) - \partial_{x} \alpha( \eta_{k} ) \right|
	= & \left| \frac{ \tilde{\eta}_{k+1} + \tilde{\eta}_{k-1}}{2}\delta_{k}^{\langle 1 \rangle}\tilde{\eta}_{k} - \eta_{k} \partial_{x} \eta_{k} \right| \\
	\leq & \left|\frac{ \tilde{\eta}_{k+1} + \tilde{\eta}_{k-1}}{2} - \eta_{k}\right| \left|\delta_{k}^{\langle 1 \rangle}\tilde{\eta}_{k} \right| 
	+ \left|\eta_{k}\right| \left|\delta_{k}^{\langle 1 \rangle}\tilde{\eta}_{k} - \partial_{x} \eta_{k}\right| \\ 
	\leq & C\left(
		\left|\frac{\tilde{\eta}_{k+1} + \tilde{\eta}_{k-1}}{2} - \eta_{k}\right| 
		+ \left|\delta_{k}^{\langle 1 \rangle}\tilde{\eta}_{k} - \partial_{x} \eta_{k}\right|
		\right) 
		\quad (k=0, \ldots, K). 
\end{align*}
Additionally, substituting $t=(j + 1)\Delta t$ in \eqref{eta_avg} and using (A2), we get
\begin{equation*}
	\left|\frac{\tilde{\eta}_{k+1} + \tilde{\eta}_{k-1}}{2} - \eta_{k}\right| 
	\leq \frac{(\Delta x)^{2}}{4}\left( \left|\partial_{x}^{2} \tilde{\eta}_{k + r_{1} }\right| + \left|\partial_{x}^{2} \tilde{\eta}_{k - r_{1} }\right| \right) 
	\leq C(\Delta x)^{2} \quad (k=0, \ldots, K). 
\end{equation*}
On another note, for $k=0, \ldots, K$, applying the Taylor theorem to $\tilde{\eta}$, there exists $r_{2} \in (0,1)$ such that 
\begin{equation*}
	\frac{\tilde{\eta}_{k+1} - \tilde{\eta}_{k-1}}{2\Delta x} - \partial_{x}\tilde{\eta}_{k} 
	= \frac{\Delta x}{4}\left( \partial_{x}^{2} \tilde{\eta}_{k + r_{2} } - \partial_{x}^{2} \tilde{\eta}_{k - r_{2} } \right) \quad (k=0, \ldots, K). 
\end{equation*}
Namely, we see from the definition of $\tilde{\eta}$ and (A2) that 
\begin{align*}
	\left|\delta_{k}^{\langle 1 \rangle}\tilde{\eta}_{k} - \partial_{x} \eta_{k}\right| 
	= \left|\frac{\tilde{\eta}_{k+1} - \tilde{\eta}_{k-1}}{2\Delta x} - \partial_{x}\tilde{\eta}_{k}\right| 
	\leq & \frac{\Delta x}{4}\left( \left|\partial_{x}^{2} \tilde{\eta}_{k + r_{2} } \right|
	+ \left|\partial_{x}^{2} \tilde{\eta}_{k - r_{2} }\right| \right) \\
	\leq & C\Delta x \quad (k=0, \ldots, K). 
\end{align*}
As a result, we have 
\begin{equation*}
	\left| \delta_{k}^{\langle 1 \rangle}\!\left( \alpha( \tilde{\eta}_{k} ) \right) - \partial_{x} \alpha( \eta_{k} ) \right| 
	\leq C\left( \Delta x + (\Delta x)^{2} \right) \quad (k=0, \ldots, K). 
\end{equation*}
Next, we consider the following difference:  
\begin{align*}
& \gamma_{\varepsilon}'(\delta_{k}^{+}\tilde{\theta}_{k}) + \gamma_{\varepsilon}'(\delta_{k}^{-}\tilde{\theta}_{k}) - 2\gamma_{\varepsilon}'(\partial_{x}\theta_{k}) \\
	= & \frac{\delta_{k}^{+}\tilde{\theta}_{k}}{ \gamma_{\varepsilon}(\delta_{k}^{+}\tilde{\theta}_{k}) }
	 - \frac{\partial_{x}\theta_{k}}{ \gamma_{\varepsilon}(\partial_{x}\theta_{k}) }
	 + \frac{\delta_{k}^{-}\tilde{\theta}_{k}}{ \gamma_{\varepsilon}(\delta_{k}^{-}\tilde{\theta}_{k}) }
	 - \frac{\partial_{x}\theta_{k}}{ \gamma_{\varepsilon}(\partial_{x}\theta_{k}) } \\
	= & \frac{\delta_{k}^{+}\tilde{\theta}_{k} \! - \! \partial_{x}\theta_{k}}{ \gamma_{\varepsilon}(\delta_{k}^{+}\tilde{\theta}_{k}) } \! + \! \partial_{x}\theta_{k} \! \left(\! \frac{1}{ \gamma_{\varepsilon}(\delta_{k}^{+}\tilde{\theta}_{k}) } \! - \! \frac{1}{ \gamma_{\varepsilon}(\partial_{x}\theta_{k}) } \!\right) \\
	& + \frac{\delta_{k}^{-}\tilde{\theta}_{k} \! - \! \partial_{x}\theta_{k}}{ \gamma_{\varepsilon}(\delta_{k}^{-}\tilde{\theta}_{k}) } \! + \! \partial_{x}\theta_{k} \! \left(\! \frac{1}{ \gamma_{\varepsilon}(\delta_{k}^{-}\tilde{\theta}_{k}) } \! - \! \frac{1}{ \gamma_{\varepsilon}(\partial_{x}\theta_{k}) } \!\right) \quad (k=0, \ldots, K). 
\end{align*}
By direct calculation, we see that 
\begin{align*}
	\frac{1}{ \gamma_{\varepsilon}(\delta_{k}^{\ast}\tilde{\theta}_{k}) } - \frac{1}{ \gamma_{\varepsilon}(\partial_{x}\theta_{k}) } 
	= & \frac{ \gamma_{\varepsilon}(\partial_{x}\theta_{k}) - \gamma_{\varepsilon}(\delta_{k}^{\ast}\tilde{\theta}_{k}) }{ \gamma_{\varepsilon}(\delta_{k}^{\ast}\tilde{\theta}_{k}) \gamma_{\varepsilon}(\partial_{x}\theta_{k}) } \\
	= & \frac{ \left(\gamma_{\varepsilon}(\partial_{x}\theta_{k}) \right)^{2} - \left(\gamma_{\varepsilon}(\delta_{k}^{\ast}\tilde{\theta}_{k}) \right)^{2} }{ \gamma_{\varepsilon}(\delta_{k}^{\ast}\tilde{\theta}_{k}) \gamma_{\varepsilon}(\partial_{x}\theta_{k}) \left(\gamma_{\varepsilon}(\partial_{x}\theta_{k}) + \gamma_{\varepsilon}(\delta_{k}^{\ast}\tilde{\theta}_{k})\right) } \\
	= & \frac{ \partial_{x}\theta_{k} + \delta_{k}^{\ast}\tilde{\theta}_{k} }{ \gamma_{\varepsilon}(\delta_{k}^{\ast}\tilde{\theta}_{k}) \gamma_{\varepsilon}(\partial_{x}\theta_{k}) \left(\gamma_{\varepsilon}(\partial_{x}\theta_{k}) \! + \! \gamma_{\varepsilon}(\delta_{k}^{\ast}\tilde{\theta}_{k})\right) } \left(\partial_{x}\theta_{k} - \delta_{k}^{\ast}\tilde{\theta}_{k}\right)
\end{align*}
for $k=0, \ldots, K$, where the symbol ``$\ast$'' denotes $+$ or $-$. 
From the above, we obtain from (A2) and the inequality $1/\gamma_{\varepsilon}(u) \leq 1/\varepsilon$ that 
\begin{align*}
	& \left|\gamma_{\varepsilon}'(\delta_{k}^{+}\tilde{\theta}_{k}) + \gamma_{\varepsilon}'(\delta_{k}^{-}\tilde{\theta}_{k}) - 2\gamma_{\varepsilon}'(\partial_{x}\theta_{k})\right| \\
	\leq & \frac{ |\delta_{k}^{+}\tilde{\theta}_{k} \! - \! \partial_{x}\theta_{k}| }{ \gamma_{\varepsilon}(\delta_{k}^{+}\tilde{\theta}_{k}) } \! + \! |\partial_{x}\theta_{k}| \! \left| \frac{1}{ \gamma_{\varepsilon}(\delta_{k}^{+}\tilde{\theta}_{k}) } \! - \! \frac{1}{ \gamma_{\varepsilon}(\partial_{x}\theta_{k}) } \right| 
	+ \frac{ |\delta_{k}^{-}\tilde{\theta}_{k} \! - \! \partial_{x}\theta_{k}| }{ \gamma_{\varepsilon}(\delta_{k}^{-}\tilde{\theta}_{k}) } \! + \! |\partial_{x}\theta_{k}| \! \left| \frac{1}{ \gamma_{\varepsilon}(\delta_{k}^{-}\tilde{\theta}_{k}) } \! - \! \frac{1}{ \gamma_{\varepsilon}(\partial_{x}\theta_{k}) } \right| \\
	\leq & \frac{1}{\varepsilon}|\delta_{k}^{+}\tilde{\theta}_{k} \! - \! \partial_{x}\theta_{k}| \! + \! C \left| \frac{1}{ \gamma_{\varepsilon}(\delta_{k}^{+}\tilde{\theta}_{k}) } \! - \! \frac{1}{ \gamma_{\varepsilon}(\partial_{x}\theta_{k}) } \right| 
	+ \frac{1}{\varepsilon}|\delta_{k}^{-}\tilde{\theta}_{k} \! - \! \partial_{x}\theta_{k}| \! + \! C \left| \frac{1}{ \gamma_{\varepsilon}(\delta_{k}^{-}\tilde{\theta}_{k}) } \! - \! \frac{1}{ \gamma_{\varepsilon}(\partial_{x}\theta_{k}) } \right| \\
	\leq & C\left(
		|\delta_{k}^{+}\tilde{\theta}_{k} - \partial_{x}\theta_{k}| 
		+ |\delta_{k}^{-}\tilde{\theta}_{k} - \partial_{x}\theta_{k}|
	\right) \quad (k=0, \ldots, K). 
\end{align*}
Here, 
applying the Taylor theorem to $\tilde{\theta}$, we see from the definition of $\tilde{\theta}$ and (A2) that 
\begin{equation*}
	\left|\delta_{k}^{+}\tilde{\theta}_{k} - \partial_{x}\theta_{k}\right| 
	\leq C\Delta x \quad (k=0, \ldots, K), \quad 
	\left|\delta_{k}^{-}\tilde{\theta}_{k} - \partial_{x}\theta_{k}\right| 
	\leq C\Delta x \quad (k=0, \ldots, K).
\end{equation*}
Therefore, we have 
\begin{equation*}
	\left|\gamma_{\varepsilon}'(\delta_{k}^{+}\tilde{\theta}_{k}) + \gamma_{\varepsilon}'(\delta_{k}^{-}\tilde{\theta}_{k}) - 2\gamma_{\varepsilon}'(\partial_{x}\theta_{k})\right|
	\leq C\Delta x \quad (k=0, \ldots, K).
\end{equation*}
From the above, we estimate the first term on the right-hand side of \eqref{xi6_1} as follows: 
\begin{equation*}
	\left|
		\delta_{k}^{\langle 1 \rangle} \! \left( \alpha( \tilde{\eta}_{k} ) \right) 
		\! \left(\gamma_{\varepsilon}'(\delta_{k}^{+}\tilde{\theta}_{k}) \! + \! \gamma_{\varepsilon}'(\delta_{k}^{-}\tilde{\theta}_{k})\right) 
		\! - \! 2\partial_{x} \alpha( \eta_{k} ) \gamma_{\varepsilon}'(\partial_{x}\theta_{k})
	\right| 
	\leq C\!\left( \Delta x \! + \! (\Delta x)^{2} \right) \quad (k=0, \ldots, K). 
\end{equation*}
Lastly, we evaluate the second term on the right-hand side of \eqref{xi6_1}. 
By direct calculation, we have 
\begin{align*}
	& \left| \mu_{k}^{+}\!\left( \alpha( \tilde{\eta}_{k} ) \right)\!\delta_{k}^{+}\!\left(  \gamma_{\varepsilon}'(\delta_{k}^{-}\tilde{\theta}_{k}) \right) 
	+ \mu_{k}^{-}\!\left( \alpha( \tilde{\eta}_{k} ) \right)\!\delta_{k}^{-}\!\left(  \gamma_{\varepsilon}'(\delta_{k}^{+}\tilde{\theta}_{k}) \right) 
	- 2\alpha( \eta_{k} )\partial_{x}\gamma_{\varepsilon}'(\partial_{x}\theta_{k}) 
	\right| \\
	\leq & \left|
		\mu_{k}^{+}\!\left( \alpha( \tilde{\eta}_{k} ) \right)\!\delta_{k}^{+}\!\left(  \gamma_{\varepsilon}'(\delta_{k}^{-}\tilde{\theta}_{k}) \right) 
		- \alpha( \eta_{k} )\partial_{x}\gamma_{\varepsilon}'(\partial_{x}\theta_{k}) 
	\right| \\
	& + \left|
		\mu_{k}^{-}\!\left( \alpha( \tilde{\eta}_{k} ) \right)\!\delta_{k}^{-}\!\left(  \gamma_{\varepsilon}'(\delta_{k}^{+}\tilde{\theta}_{k}) \right) 
		- \alpha( \eta_{k} )\partial_{x}\gamma_{\varepsilon}'(\partial_{x}\theta_{k}) 
	\right|
	\quad (k=0, \ldots, K).
\end{align*}
Moreover, we see from (A2), the following inequality: $\gamma_{\varepsilon}''(u) \leq 1/\varepsilon$, and \eqref{diff_f} that 
\begin{align*}
	& \left| \mu_{k}^{+}\!\left( \alpha( \tilde{\eta}_{k} ) \right)\delta_{k}^{+}\!\left(  \gamma_{\varepsilon}'(\delta_{k}^{-}\tilde{\theta}_{k}) \right) - \alpha( \eta_{k} )\partial_{x}\gamma_{\varepsilon}'(\partial_{x}\theta_{k}) \right| \\
	\leq & \left| \mu_{k}^{+}\!\left( \alpha( \tilde{\eta}_{k} ) \right) \right| 
	\left|\delta_{k}^{+}\!\left(  \gamma_{\varepsilon}'(\delta_{k}^{-}\tilde{\theta}_{k}) \right) - \partial_{x}\gamma_{\varepsilon}'(\partial_{x}\theta_{k}) \right| 
	+ \left|\mu_{k}^{+}\!\left( \alpha( \tilde{\eta}_{k} ) \right) - \alpha( \eta_{k} )\right| |\partial_{x}\gamma_{\varepsilon}'(\partial_{x}\theta_{k})| \\
	\leq & C \left|
		\frac{d\gamma_{\varepsilon}'}{d(\delta_{k}^{-}\tilde{\theta}_{k+1}, \delta_{k}^{-}\tilde{\theta}_{k})}\delta_{k}^{\langle 2\rangle}\tilde{\theta}_{k}
		- \gamma_{\varepsilon}''(\partial_{x}\theta_{k})\partial_{x}^{2}\theta_{k} 
		\right| 
	+ \frac{1}{4}|\tilde{\eta}_{k+1} + \tilde{\eta}_{k}| |\tilde{\eta}_{k+1} - \tilde{\eta}_{k}| 
	\left|\gamma_{\varepsilon}''(\partial_{x}\theta_{k}) \right| \left|\partial_{x}^{2}\theta_{k} \right| \\
	\leq & C\left(
		\left|\frac{d\gamma_{\varepsilon}'}{d(\delta_{k}^{-}\tilde{\theta}_{k+1}, \delta_{k}^{-}\tilde{\theta}_{k})}\right| 
		\left| \delta_{k}^{\langle 2\rangle}\tilde{\theta}_{k} - \partial_{x}^{2}\theta_{k}\right| 
		+ \left| \frac{d\gamma_{\varepsilon}'}{d(\delta_{k}^{-}\tilde{\theta}_{k+1}, \delta_{k}^{-}\tilde{\theta}_{k})} - \gamma_{\varepsilon}''(\partial_{x}\theta_{k})\right| 
		\left|\partial_{x}^{2}\theta_{k} \right|
	\right) \\
	& + C|\tilde{\eta}_{k+1} - \tilde{\eta}_{k}| \\
	\leq & C\left(
		\left| \delta_{k}^{\langle 2\rangle}\tilde{\theta}_{k} - \partial_{x}^{2}\theta_{k}\right| 
		+ \left| \frac{d\gamma_{\varepsilon}'}{d(\delta_{k}^{-}\tilde{\theta}_{k+1}, \delta_{k}^{-}\tilde{\theta}_{k})} - \gamma_{\varepsilon}''(\partial_{x}\theta_{k})\right| 
		+ |\tilde{\eta}_{k+1} - \tilde{\eta}_{k}| 
	\right) \quad (k=0, \ldots, K).
\end{align*}
Additionally, applying the Taylor theorem to $\tilde{\eta}$, we see from the definition of $\tilde{\eta}$ and (A2) that 
\begin{equation*}
	|\tilde{\eta}_{k + 1} - \tilde{\eta}_{k}| \leq C\Delta x \quad (k=0, \ldots, K). 
\end{equation*}
Furthermore, it holds from Lemma \ref{lem:4.2} that 
\begin{align*}
	\frac{d\gamma_{\varepsilon}'}{d(\delta_{k}^{-}\tilde{\theta}_{k+1}, \delta_{k}^{-}\tilde{\theta}_{k})} \! - \! \gamma_{\varepsilon}''(\partial_{x}\theta_{k}) 
	= & \frac{d\gamma_{\varepsilon}'}{d(\delta_{k}^{-}\tilde{\theta}_{k+1}, \delta_{k}^{-}\tilde{\theta}_{k})} - \frac{d\gamma_{\varepsilon}'}{d(\partial_{x}\theta_{k}, \partial_{x}\theta_{k})} \\
	= & \frac{1}{2}\overline{\gamma_{\varepsilon}'}^{\prime\prime}(\delta_{k}^{-}\tilde{\theta}_{k+1},\partial_{x}\theta_{k}; \delta_{k}^{-}\tilde{\theta}_{k},\partial_{x}\theta_{k})(\delta_{k}^{-}\tilde{\theta}_{k+1} \! - \! \partial_{x}\theta_{k}) \\
	& + \frac{1}{2}\overline{\gamma_{\varepsilon}'}^{\prime\prime}\!(\delta_{k}^{-}\tilde{\theta}_{k},\partial_{x}\theta_{k}; \delta_{k}^{-}\tilde{\theta}_{k+1},\partial_{x}\theta_{k})(\delta_{k}^{-}\tilde{\theta}_{k} \! - \! \partial_{x}\theta_{k}) \\
	= & \frac{1}{2}\overline{\gamma_{\varepsilon}'}^{\prime\prime}(\delta_{k}^{-}\tilde{\theta}_{k+1},\partial_{x}\theta_{k}; \delta_{k}^{-}\tilde{\theta}_{k},\partial_{x}\theta_{k})(\delta_{k}^{+}\tilde{\theta}_{k} \! - \! \partial_{x}\theta_{k}) \\
	& + \frac{1}{2}\overline{\gamma_{\varepsilon}'}^{\prime\prime}\!(\delta_{k}^{-}\tilde{\theta}_{k},\partial_{x}\theta_{k}; \delta_{k}^{-}\tilde{\theta}_{k+1},\partial_{x}\theta_{k})(\delta_{k}^{-}\tilde{\theta}_{k} \! - \! \partial_{x}\theta_{k})
\end{align*}
for $k=0, \ldots, K$. 
In addition, it follows from Lemma \ref{lem:4.1} that 
\begin{equation*}
	|\overline{\gamma_{\varepsilon}'}^{\prime\prime}(s_{1}, s_{2}; s_{3}, s_{4})| \leq \sup_{s \in \mathbb{R}}|\gamma_{\varepsilon}'''(s)| \quad \mbox{for all} \ s_{1}, s_{2}, s_{3}, s_{4} \in \mathbb{R}.
\end{equation*}
Thus, from the following inequality: 
\begin{equation*}
	|\gamma_{\varepsilon}'''(u)| 
	= \frac{3\varepsilon^{2}|u|}{(\varepsilon^{2} + u^{2})^{\frac{5}{2}}}
	\leq \frac{3\varepsilon^{2}(\varepsilon^{2} + u^{2})^{\frac{1}{2}}}{(\varepsilon^{2} + u^{2})^{\frac{5}{2}}}
	= \frac{3\varepsilon^{2}}{(\varepsilon^{2} + u^{2})^{2}}
	\leq \frac{3\varepsilon^{2}}{\varepsilon^{4}} 
	= \frac{3}{\varepsilon^{2}} \quad \mbox{for all} \ u \in \mathbb{R}, 
\end{equation*}
we obtain 
\begin{equation*}
	\left| \frac{d\gamma_{\varepsilon}'}{d(\delta_{k}^{-}\tilde{\theta}_{k+1}, \delta_{k}^{-}\tilde{\theta}_{k})} \! - \! \gamma_{\varepsilon}''(\partial_{x}\theta_{k})\right| 
	\! \leq \! \frac{3}{2\varepsilon^{2}}\!\left(
		\left|\delta_{k}^{+}\tilde{\theta}_{k} \! - \! \partial_{x}\theta_{k}\right| 
		\! + \! \left|\delta_{k}^{-}\tilde{\theta}_{k} \! - \! \partial_{x}\theta_{k}\right| 
	\right)
	\! \leq \! C\Delta x \quad (k=0, \ldots, K). 
\end{equation*}
From the above, we estimate the second term on the right-hand side of \eqref{xi6_1} as follows: 
\begin{align*}
	& \left| \mu_{k}^{+}\!\left( \alpha( \tilde{\eta}_{k} ) \right)\!\delta_{k}^{+}\!\left(  \gamma_{\varepsilon}'(\delta_{k}^{-}\tilde{\theta}_{k}) \right) 
	+ \mu_{k}^{-}\!\left( \alpha( \tilde{\eta}_{k} ) \right)\!\delta_{k}^{-}\!\left(  \gamma_{\varepsilon}'(\delta_{k}^{+}\tilde{\theta}_{k}) \right) 
	- 2\alpha( \eta_{k} )\partial_{x}\gamma_{\varepsilon}'(\partial_{x}\theta_{k}) 
	\right| \\
	\leq & C\left( (\Delta x)^{\sigma} + \Delta x \right) \quad (k=0, \ldots, K). 
\end{align*}
Therefore, we estimate $\bm{\xi}_{6}^{(j+1)}$ as follows: 
\begin{equation}
	\left|\xi_{6, k}^{(j+1)}\right| \leq C\left( (\Delta x)^{\sigma} + \Delta x + (\Delta x)^{2} \right) \quad (k=0, \ldots, K).
	\label{xi6_est}
\end{equation}
Hence, we obtain from \eqref{xi1_est}, \eqref{xi2_est}, and \eqref{xi3_est} that 
\begin{align*}
	\left\| \bm{\xi}_{1-3}^{(j + 1)} \right\|_{ L_{\rm d}^{2} }^{2} 
	\leq & \left(\left\| \bm{\xi}_{1}^{(j + 1)} \right\|_{ L_{\rm d}^{2} } + \left\| \bm{\xi}_{2}^{(j + 1)} \right\|_{ L_{\rm d}^{2} } + \left\| \bm{\xi}_{3}^{(j + 1)} \right\|_{ L_{\rm d}^{2} } \right)^{2} \\
	\leq & C^{2}\left(
		(\Delta t)^{\sigma} + (\Delta x)^{\sigma} + \Delta t + \Delta x + (\Delta x)^{2}
	\right)^{2} \quad (j = 0,1,\ldots, N - 1), 
\end{align*}
where $\bm{\xi}_{1-3}^{(j + 1)} := \{\xi_{1-3,k}^{(j + 1)}\}_{k=0}^{K} = \bm{\xi}_{1}^{(j + 1)} + \bm{\xi}_{2}^{(j + 1)} + \bm{\xi}_{3}^{(j + 1)} \ \in \mathbb{R}^{K+1}$. 
Similarly, we have from \eqref{xi4_est}, \eqref{xi5_est}, and \eqref{xi6_est} that 
\begin{align*}
	\left\| \bm{\xi}_{4-6}^{(j + 1)} \right\|_{ L_{\rm d}^{2} }^{2} 
	\leq & \left(\left\| \bm{\xi}_{4}^{(j + 1)} \right\|_{ L_{\rm d}^{2} } + \left\| \bm{\xi}_{5}^{(j + 1)} \right\|_{ L_{\rm d}^{2} } + \left\| \bm{\xi}_{6}^{(j + 1)} \right\|_{ L_{\rm d}^{2} } \right)^{2} \\
	\leq & C^{2}\left(
		(\Delta t)^{\sigma} + (\Delta x)^{\sigma} + \Delta t + \Delta x + (\Delta x)^{2}
	\right)^{2} \quad (j = 0,1,\ldots, N - 1), 
\end{align*}
where $\bm{\xi}_{4-6}^{(j + 1)} := \{\xi_{4-6,k}^{(j + 1)}\}_{k=0}^{K} = \bm{\xi}_{4}^{(j + 1)} + \bm{\xi}_{5}^{(j + 1)} + \bm{\xi}_{6}^{(j + 1)} \ \in \mathbb{R}^{K+1}$. 
As a result, we see from the above estimates and Lemma \ref{e_eta_theta_est} that 
\begin{align*}
	\left\| \bm{e}_{\eta}^{(j)}\right\|_{ L_{\rm d}^{2} }^{2} 
	+ \left\| \sqrt{\bm{\alpha_{0}^{(j)}}}\bm{e}_{\theta}^{(j)}\right\|_{ L_{\rm d}^{2} }^{2} 
	\leq & C\sum_{i=0}^{j-1}\left( \left\| \bm{\xi}_{1-3}^{(j-i)} \right\|_{L_{\rm d}^{2}}^{2} + \left\| \bm{\xi}_{4-6}^{(j-i)} \right\|_{L_{\rm d}^{2}}^{2}\right)\Delta t \\
	\leq & C\left(
		(\Delta t)^{\sigma} + (\Delta x)^{\sigma} + \Delta t + \Delta x + (\Delta x)^{2}
	\right)^{2}\sum_{i=0}^{N-1} \Delta t\\
	= & CT\left(
		(\Delta t)^{\sigma} + (\Delta x)^{\sigma} + \Delta t + \Delta x + (\Delta x)^{2}
	\right)^{2} \quad (j=1,\ldots,N),
\end{align*} 
where $\sqrt{\bm{\alpha_{0}^{(j)}}}\bm{e}_{\theta}^{(j)}$ is defined by \eqref{vec_a_th} in Lemma \ref{e_theta_est}. 
In other words, we conclude that 
\begin{equation*}
	\sqrt{
		\left\| \bm{e}_{\eta}^{(j)}\right\|_{ L_{\rm d}^{2} }^{2} 
		+ \left\| \sqrt{\bm{\alpha_{0}^{(j)}}}\bm{e}_{\theta}^{(j)}\right\|_{ L_{\rm d}^{2} }^{2}
	}
	\leq C \left(
		(\Delta t)^{\sigma} + (\Delta x)^{\sigma} + \Delta t + \Delta x + (\Delta x)^{2}
	\right) \quad (j=1,\ldots,N). 
\end{equation*}
Here, since it holds from (A1) that $\alpha_{0} \geq \delta_{0}$, we obtain \eqref{e_err_est} and \eqref{t_err_est}. 

\section{Computation examples}
In this section, we demonstrate through computation examples that the numerical solution of our proposed scheme is efficient and that the scheme inherits the dissipative property from the original problem in a discrete sense. 
To obtain the next time-step for the numerical solutions to our nonlinear scheme, we use the package “NLsolve” in Julia, a computer language used in our numerical computations.

We choose $K = 50$ so that $\Delta x = 1/50$. 
Besides, we fix the parameters $\varepsilon = 0.01$, $\delta_{0} = 0.01$, $\nu = 0.01$, $\kappa_{0} = 0.01$, $c = 1.0$, and $\kappa = 0.1$. 

\subsection{Computation example 1}
As the initial condition, we consider 
\begin{gather*}
  \eta(0,x) = \eta_{0}(x) 
    = \begin{cases}
	A\cosh(x) + 1 & (0 \leq x < 0.5),\\
	B\cosh(x - 1) + 1 & (0.5 \leq x \leq 1),
      \end{cases}\\
      \theta(0,x) = \theta_{0}(x) = 0.5\pi x - 0.25\pi, 
\end{gather*}
where 
\begin{gather*}
	A = B = -\frac{1}{\cosh(0.5)}\frac{|\theta_{2} - \theta_{1}|}{|\theta_{2} - \theta_{1}| + \tanh(0.5) + \tanh(0.5)}, \\
	\theta_{1} = -0.25\pi, \quad \theta_{2} = 0.25\pi.
\end{gather*}
Also, we choose $\Delta t = 0.06$. 

\begin{figure}[H]
	\begin{minipage}{0.49\hsize}
		\centering
		\includegraphics[width=50mm]{./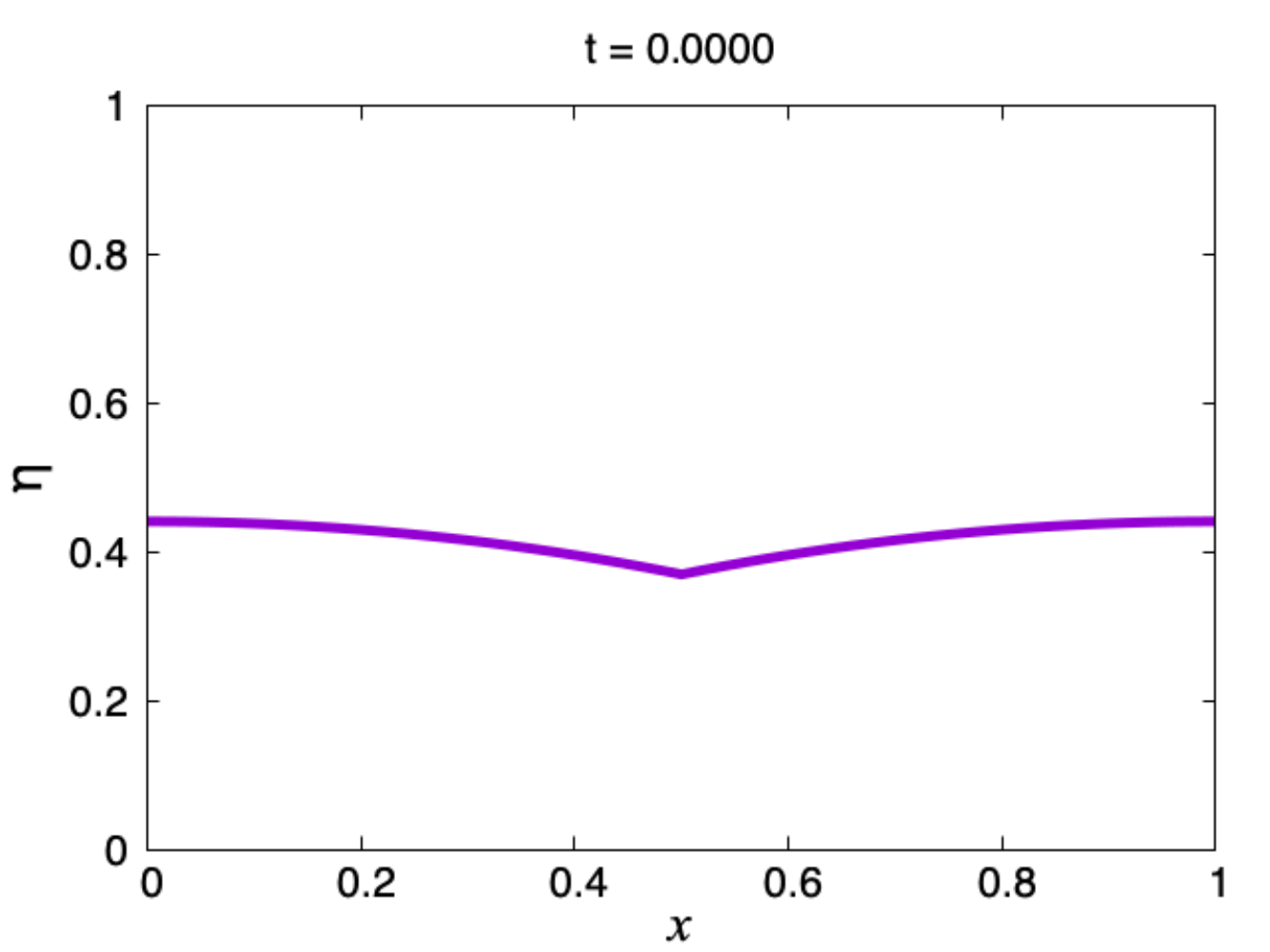}
	\end{minipage}
	\begin{minipage}{0.49\hsize}
		\centering
		\includegraphics[width=50mm]{./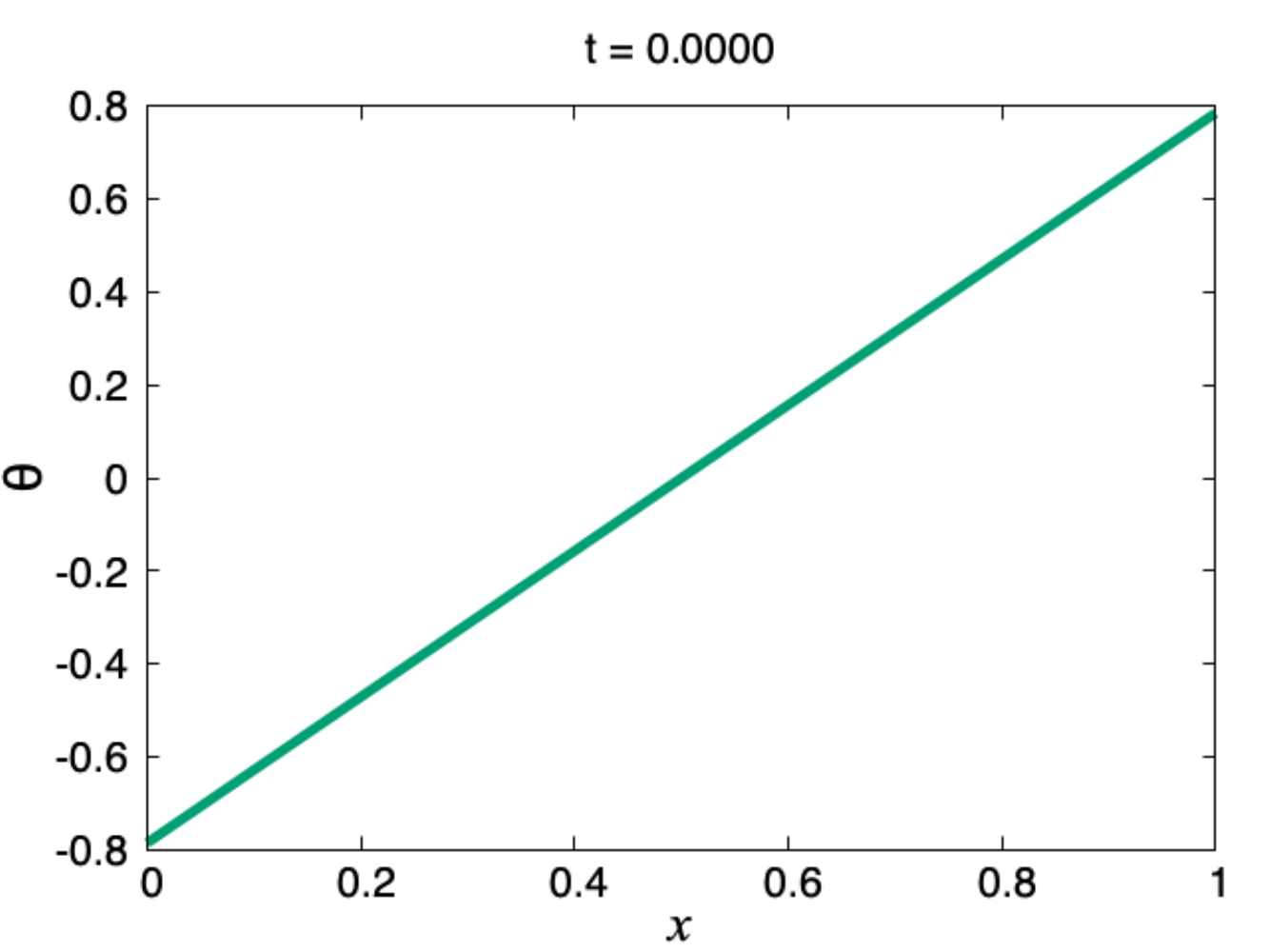}
	\end{minipage}
	\caption{\protect\raggedright The initial data}
\end{figure} 
Figures \ref{fig:eta_init1}--\ref{fig:theta_init1} show the time development of the solutions to our scheme, respectively. 
Figure \ref{fig:energy_init1} shows the time development of $\mathscr{F}_{1, \rm d}$. 
This graph shows that the energy decreases numerically.
\begin{figure}[H]
	\setlength\abovecaptionskip{0pt}
	\begin{minipage}{0.24\hsize}
		\centering
		\includegraphics[width=38mm]{./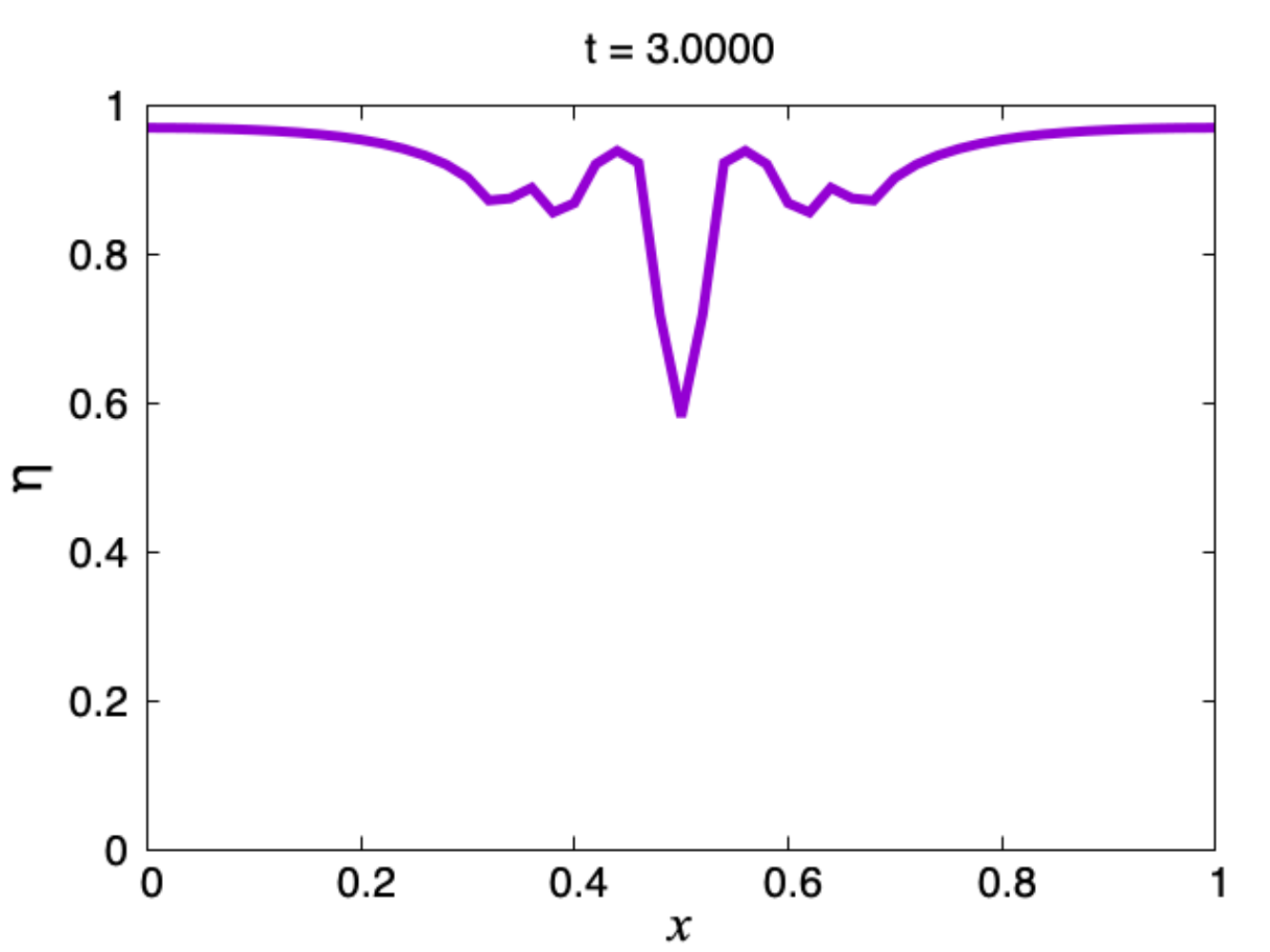}
	\end{minipage}
	\begin{minipage}{0.24\hsize}
		\centering
		\includegraphics[width=38mm]{./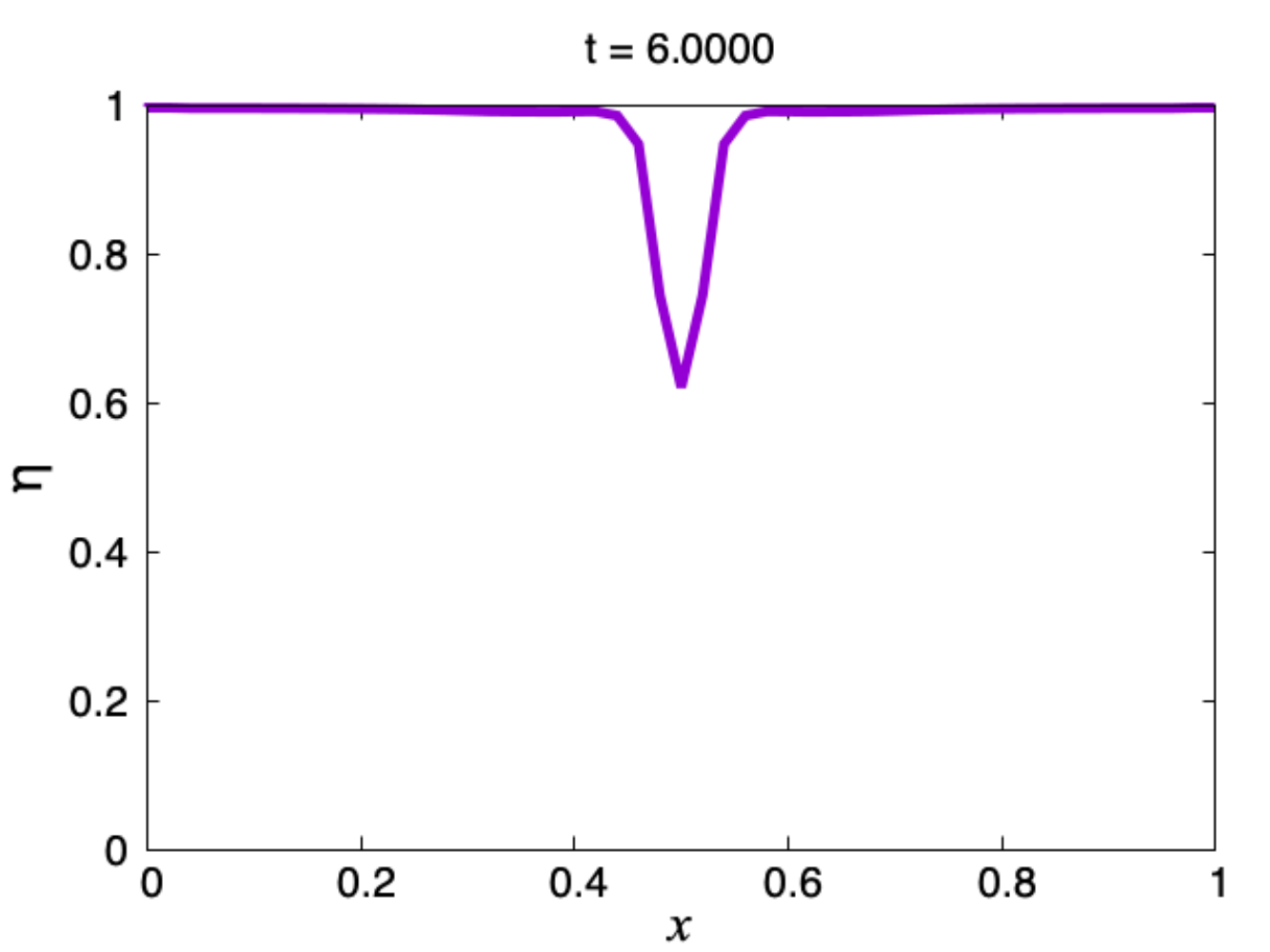}
	\end{minipage}
	\begin{minipage}{0.24\hsize}
		\centering
		\includegraphics[width=38mm]{./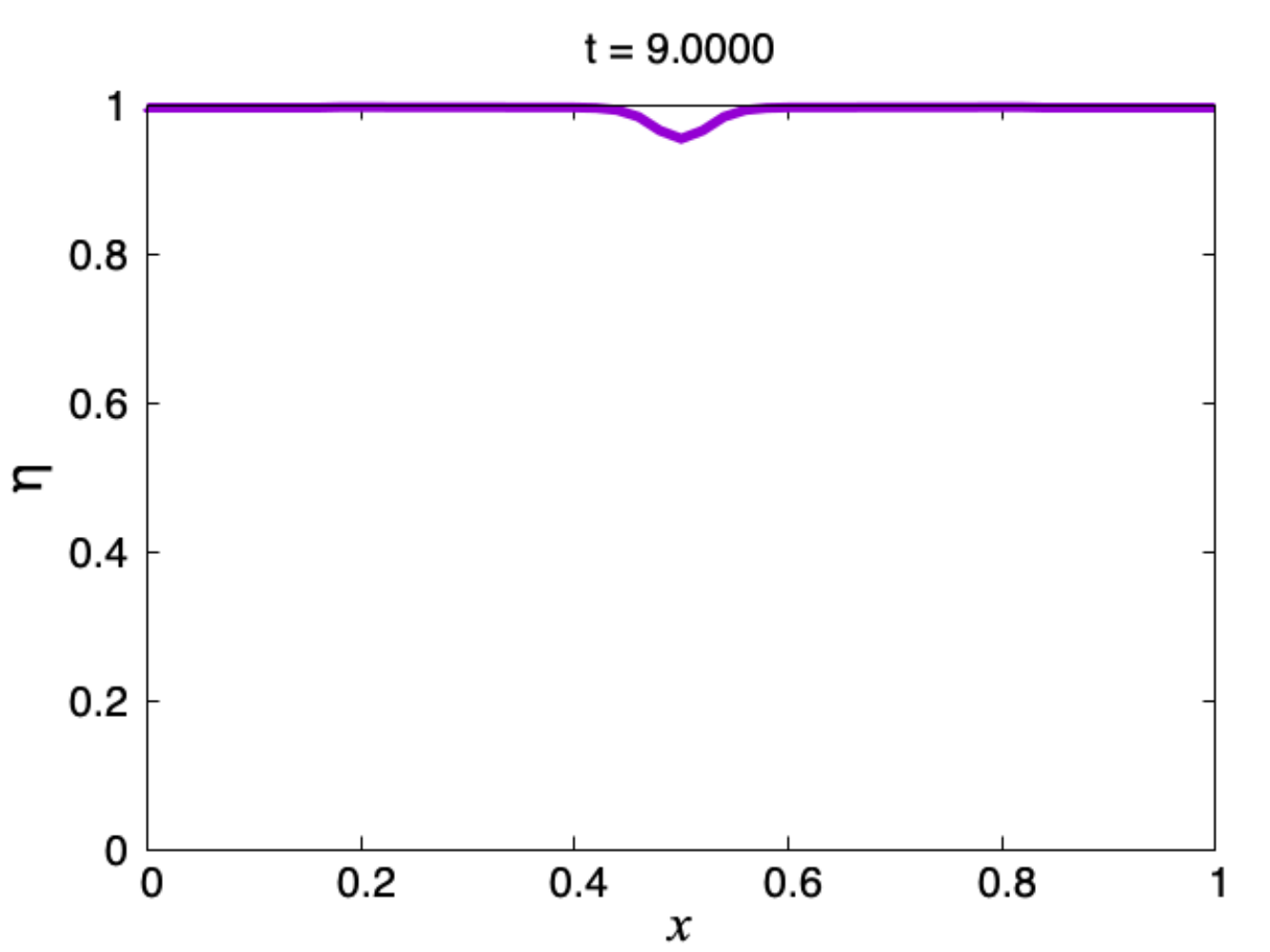}
	\end{minipage}
	\begin{minipage}{0.24\hsize}
		\centering
		\includegraphics[width=38mm]{./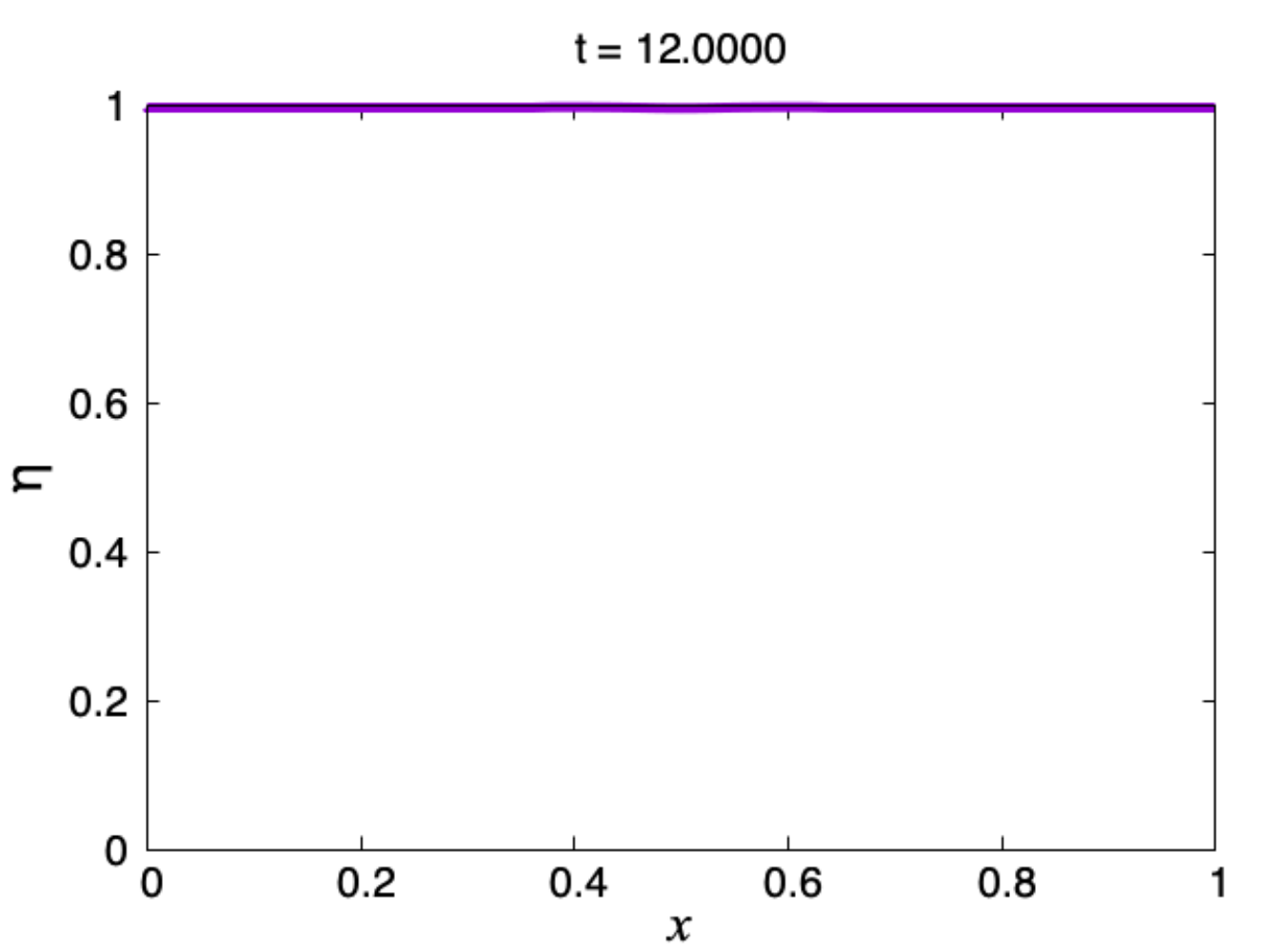}
	\end{minipage}
	\caption{\protect\raggedright Numerical solutions $\bm{H}^{(j)}$ to our scheme at time $t = 3.0$, $t = 6.0$, $t = 9.0$, $t = 12.0$}
	\label{fig:eta_init1}
\end{figure} 

\begin{figure}[H]
	\setlength\abovecaptionskip{0pt}
	\begin{minipage}{0.24\hsize}
		\centering
		\includegraphics[width=38mm]{./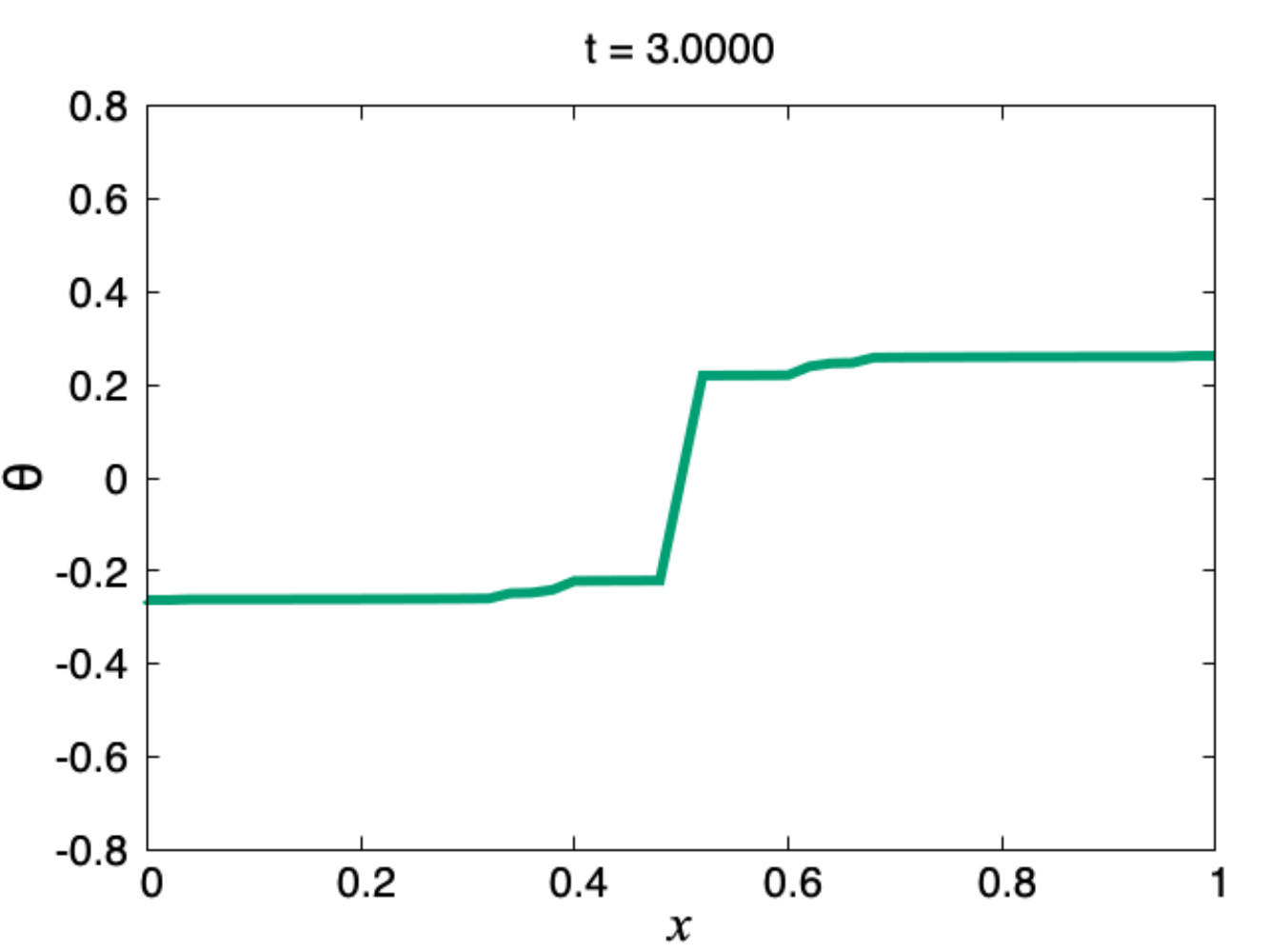}
	\end{minipage}
	\begin{minipage}{0.24\hsize}
		\centering
		\includegraphics[width=38mm]{./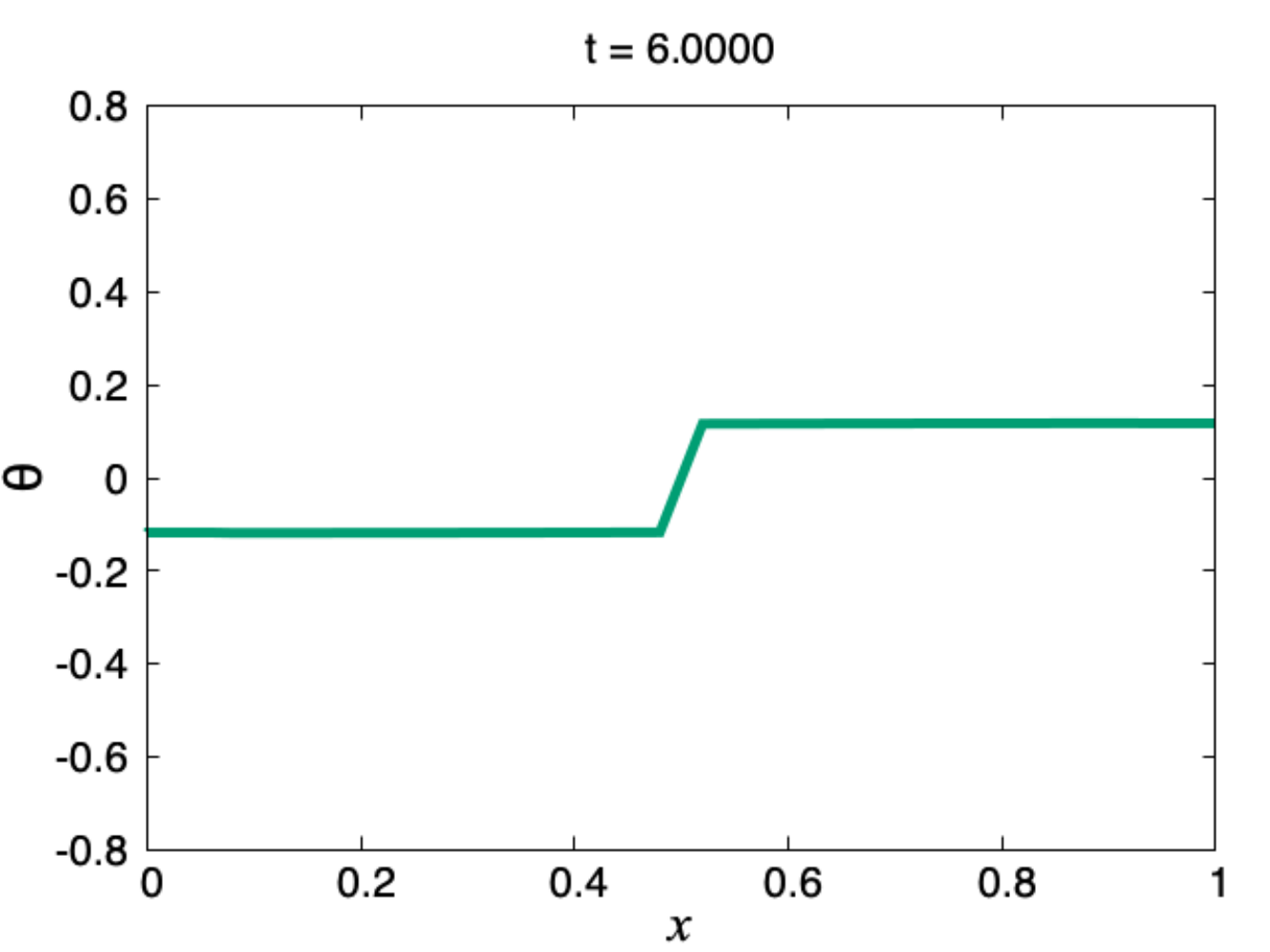}
	\end{minipage}
	\begin{minipage}{0.24\hsize}
		\centering
		\includegraphics[width=38mm]{./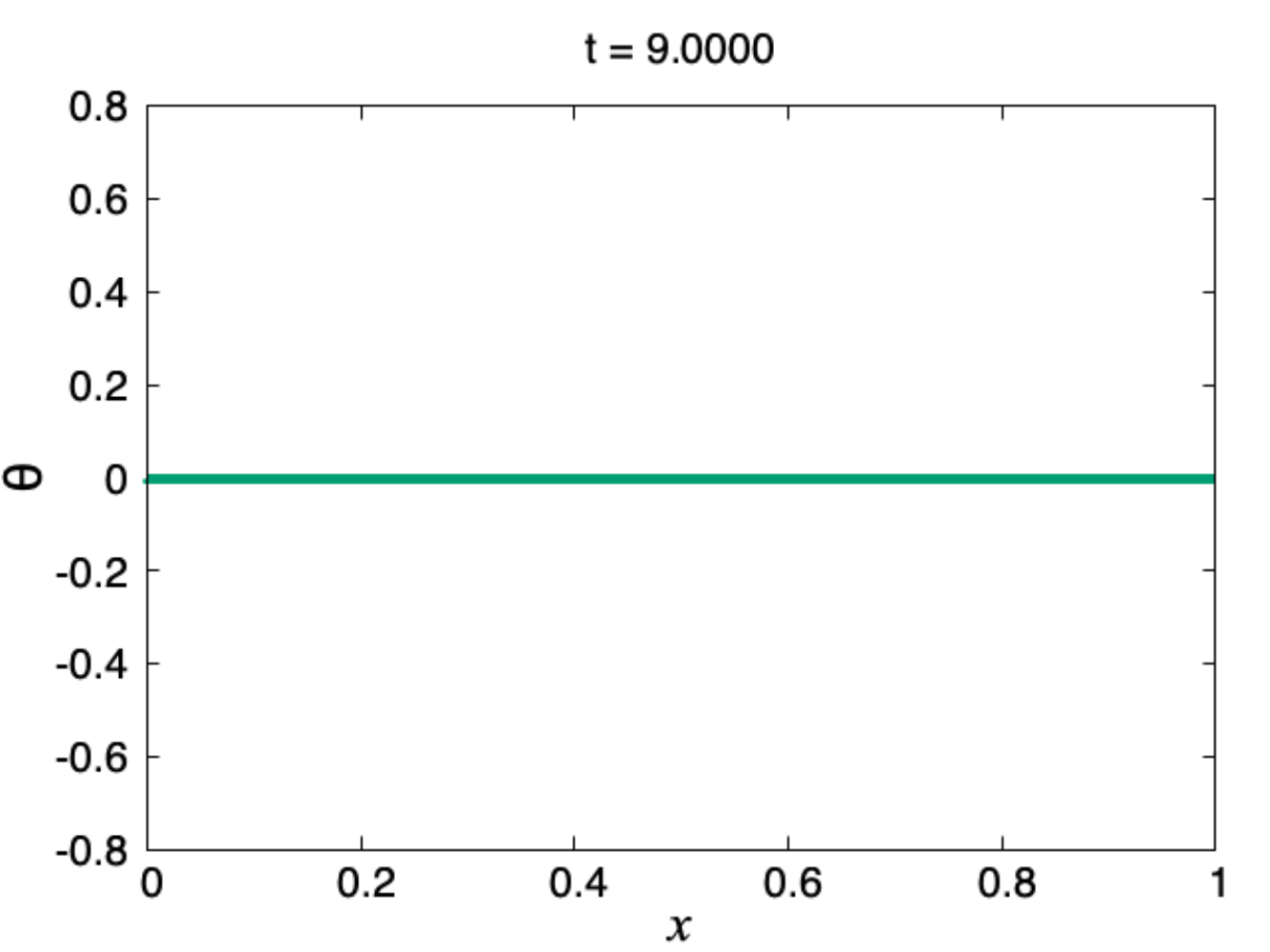}
	\end{minipage}
	\begin{minipage}{0.24\hsize}
		\centering
		\includegraphics[width=38mm]{./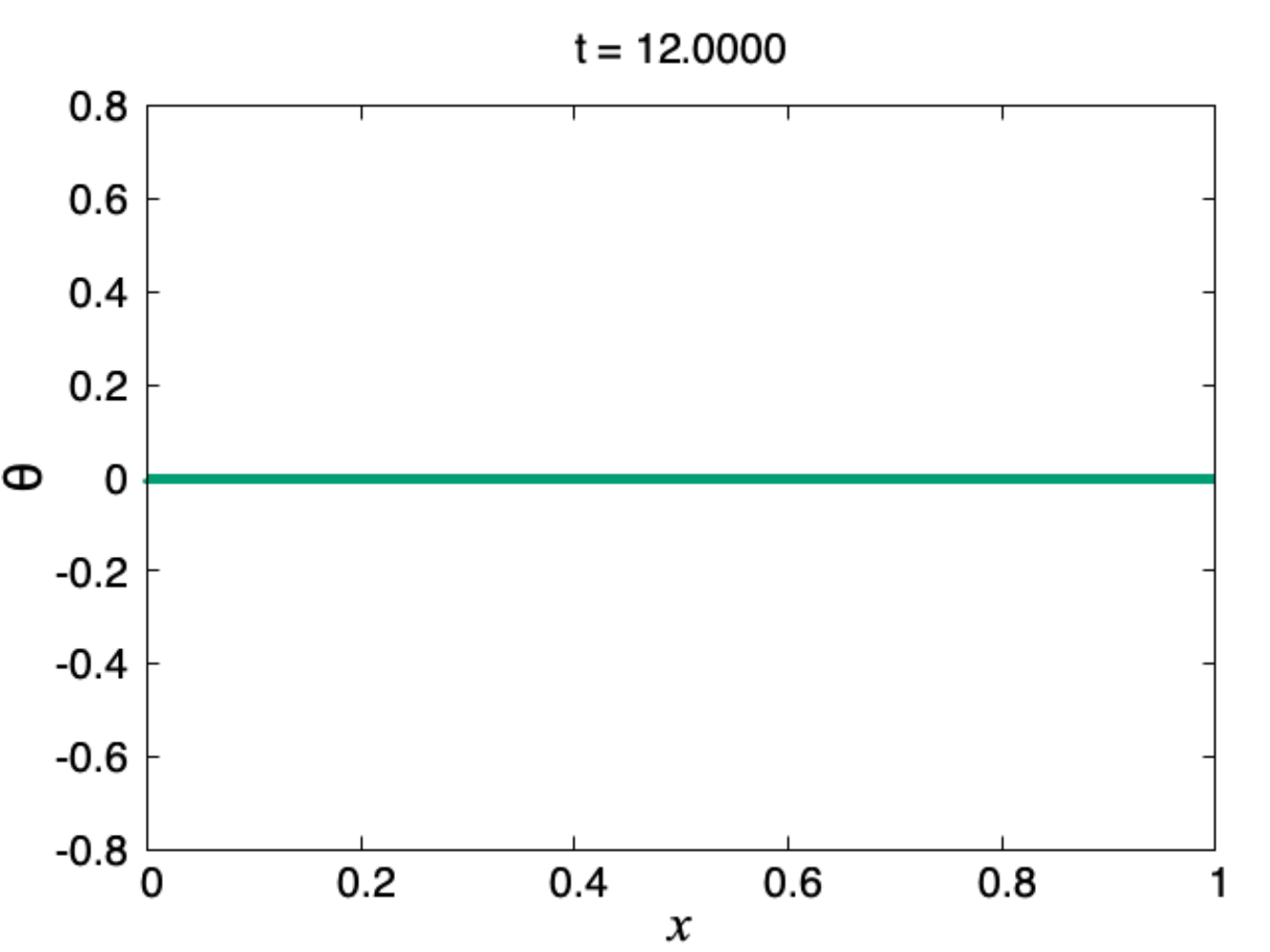}
	\end{minipage}
	\caption{\protect\raggedright Numerical solutions $\bm{\Theta}^{(j)}$ to our scheme at time $t = 3.0$, $t = 6.0$, $t = 9.0$, $t = 12.0$}
	\label{fig:theta_init1}
\end{figure} 

\begin{figure}[H]
	\centering
	\includegraphics[width=60mm]{./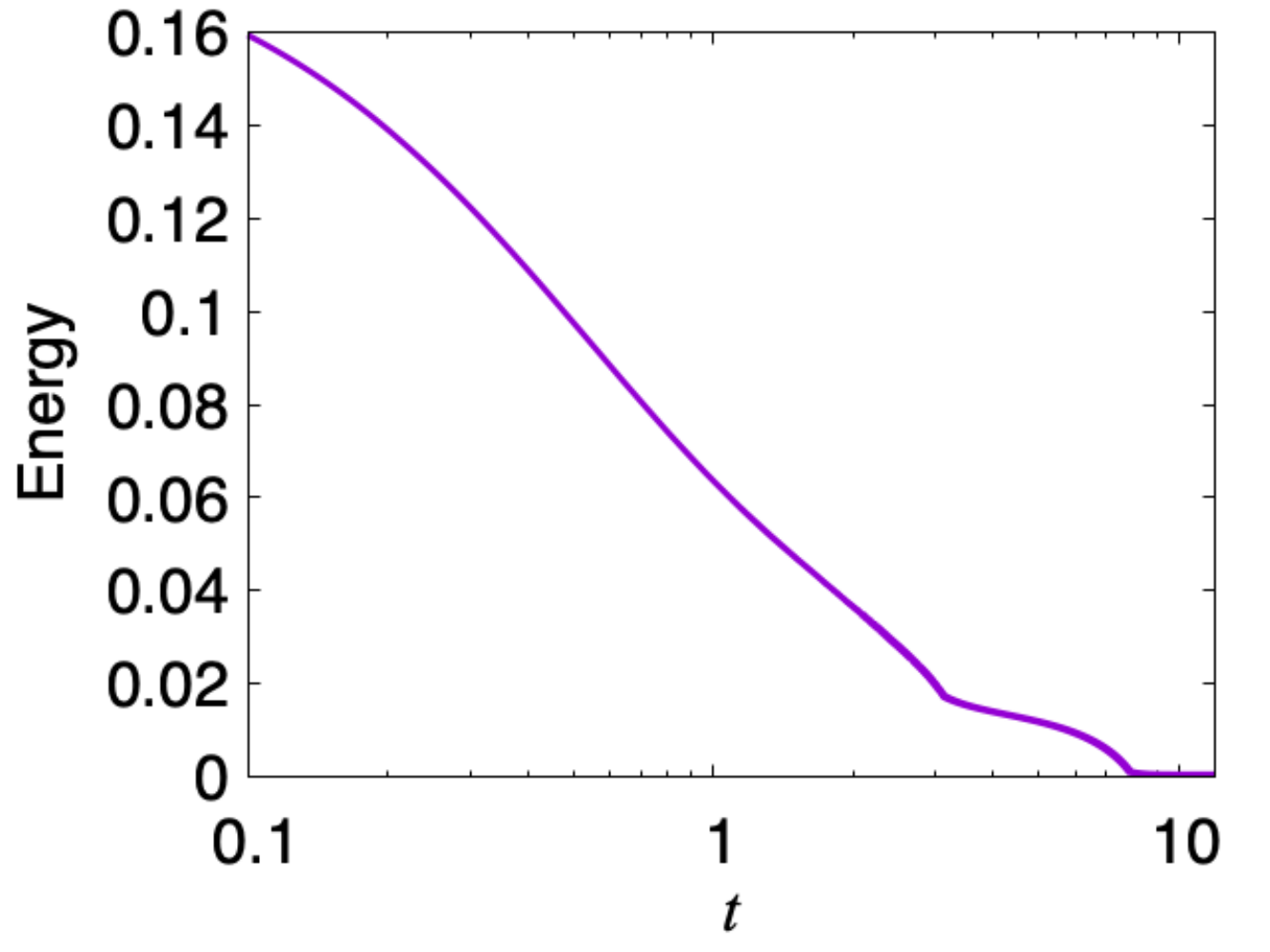}
	\caption{\protect\raggedright Time development of energy (The time axis is in log-scale)}
	\label{fig:energy_init1}
\end{figure}

\subsection{Computation example 2}
As the initial condition, we consider 
\begin{gather*}
  \eta(0,x) = \eta_{0}(x) 
    = \begin{cases}
	A\cosh(x) + 1 & (0 \leq x < 0.25),\\
	B\cosh(x - 1) + 1 & (0.25 \leq x \leq 1),
      \end{cases}\\
      \theta(0,x) = \theta_{0}(x) = 0.5\pi x - 0.25\pi, 
\end{gather*}
where 
\begin{gather*}
	A = -\frac{1}{\cosh(0.25)}\frac{|\theta_{2} - \theta_{1}|}{|\theta_{2} - \theta_{1}| + \tanh(0.25) + \tanh(0.75)}, \\
	B = -\frac{1}{\cosh(0.75)}\frac{|\theta_{2} - \theta_{1}|}{|\theta_{2} - \theta_{1}| + \tanh(0.25) + \tanh(0.75)}, \\
	\theta_{1} = -0.25\pi, \quad \theta_{2} = 0.25\pi.
\end{gather*}
Also, we choose $\Delta t = 0.075$.

\begin{figure}[H]
	\begin{minipage}{0.49\hsize}
		\centering
		\includegraphics[width=50mm]{./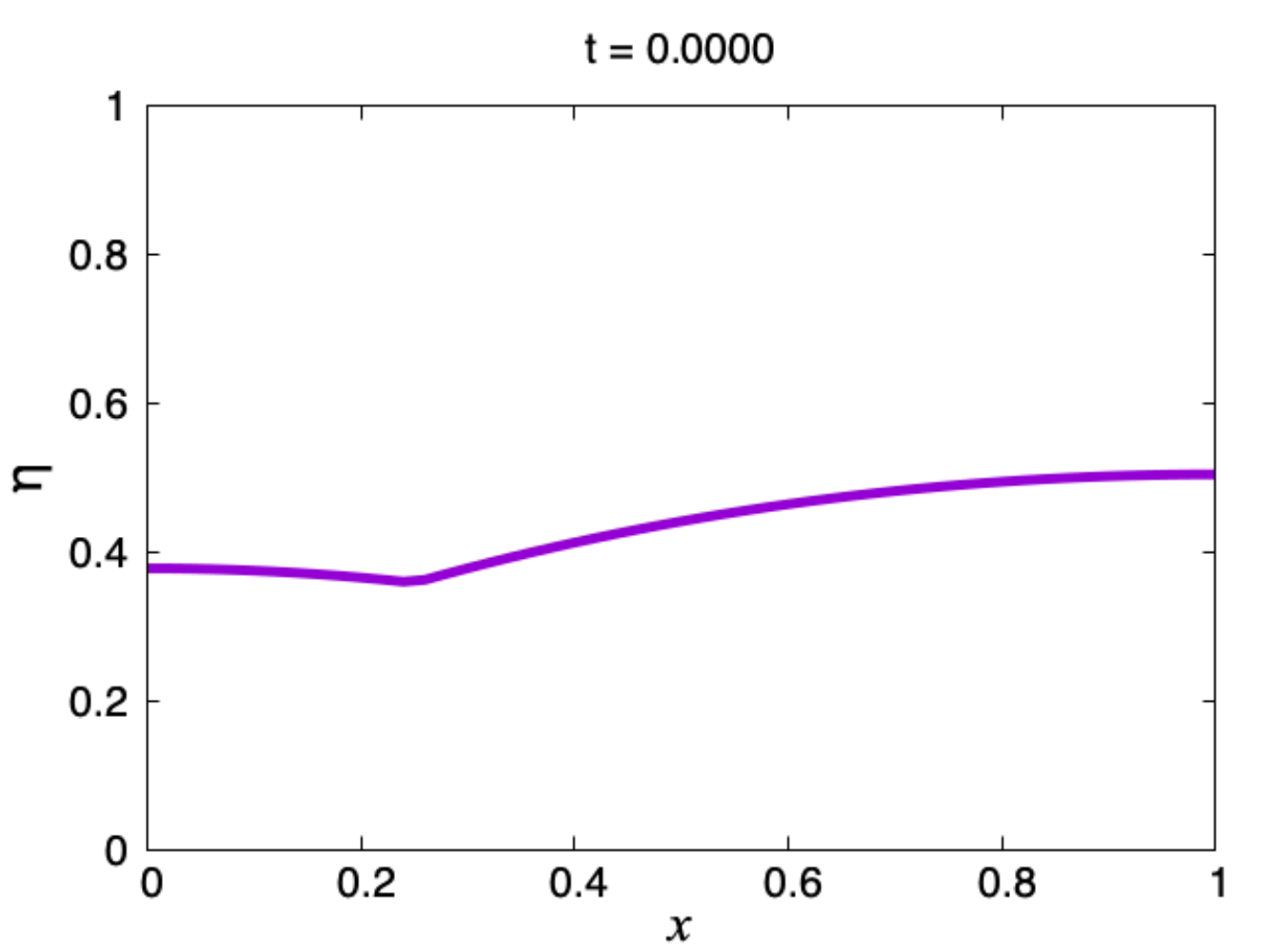}
	\end{minipage}
	\begin{minipage}{0.49\hsize}
		\centering
		\includegraphics[width=50mm]{./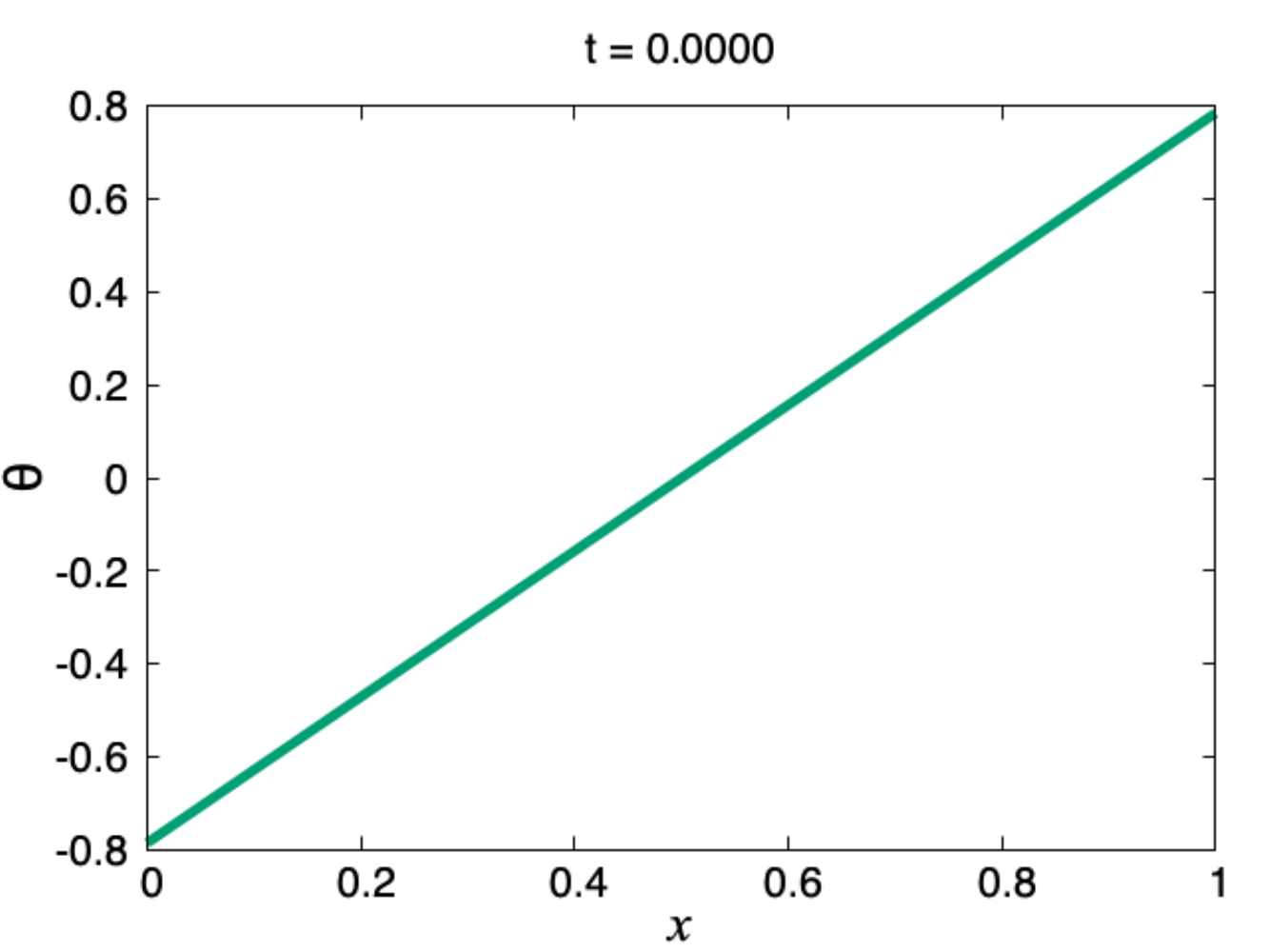}
	\end{minipage}
	\caption{\protect\raggedright The initial data}
\end{figure} 
Figures \ref{fig:eta_init2}--\ref{fig:theta_init2} show the time development of the solutions to our scheme, respectively. 
Figure \ref{fig:energy_init2} shows the time development of $\mathscr{F}_{1, \rm d}$. 
This graph shows that the energy decreases numerically.
\begin{figure}[H]
	\setlength\abovecaptionskip{0pt}
	\begin{minipage}{0.24\hsize}
		\centering
		\includegraphics[width=38mm]{./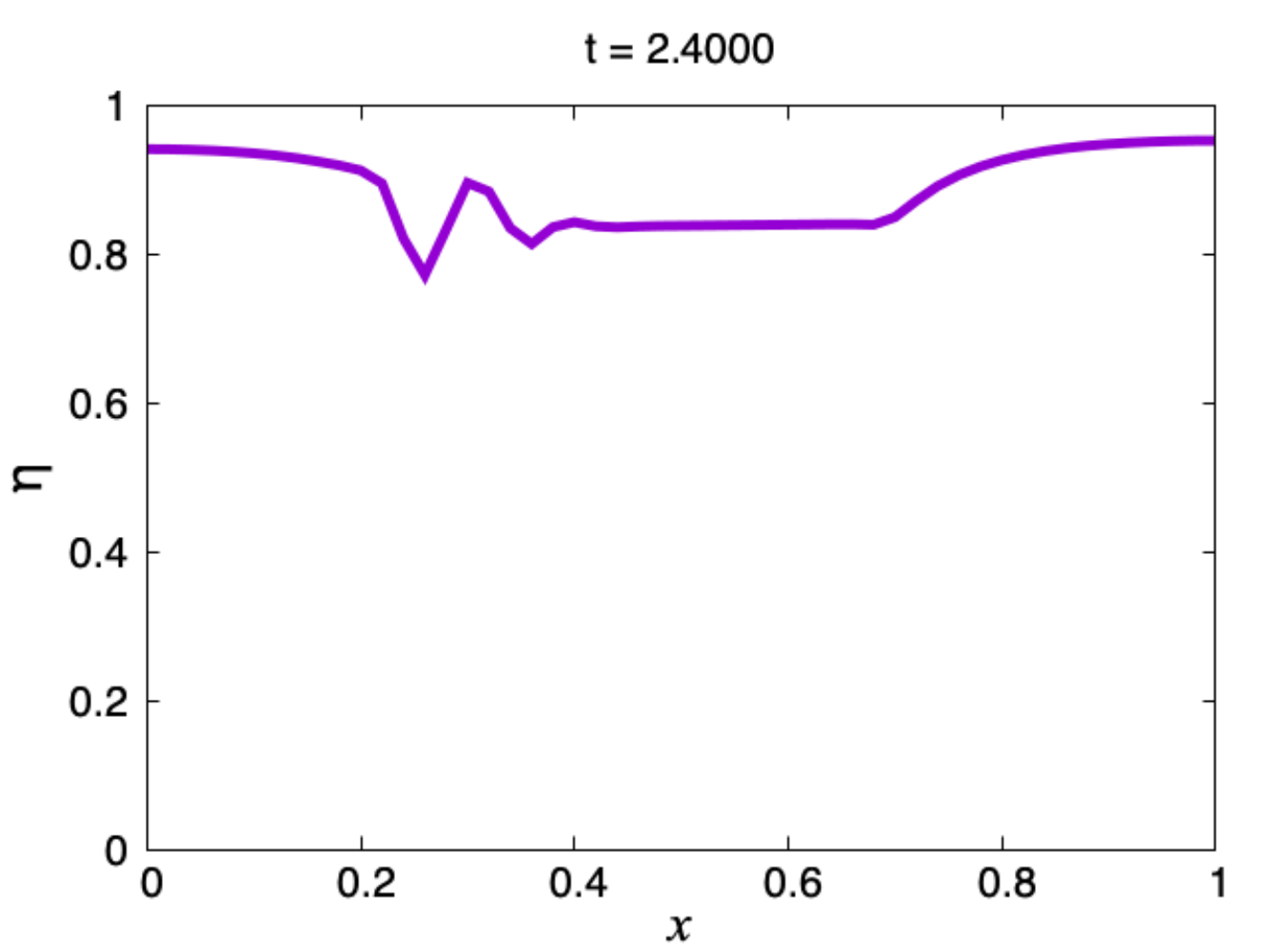}
	\end{minipage}
	\begin{minipage}{0.24\hsize}
		\centering
		\includegraphics[width=38mm]{./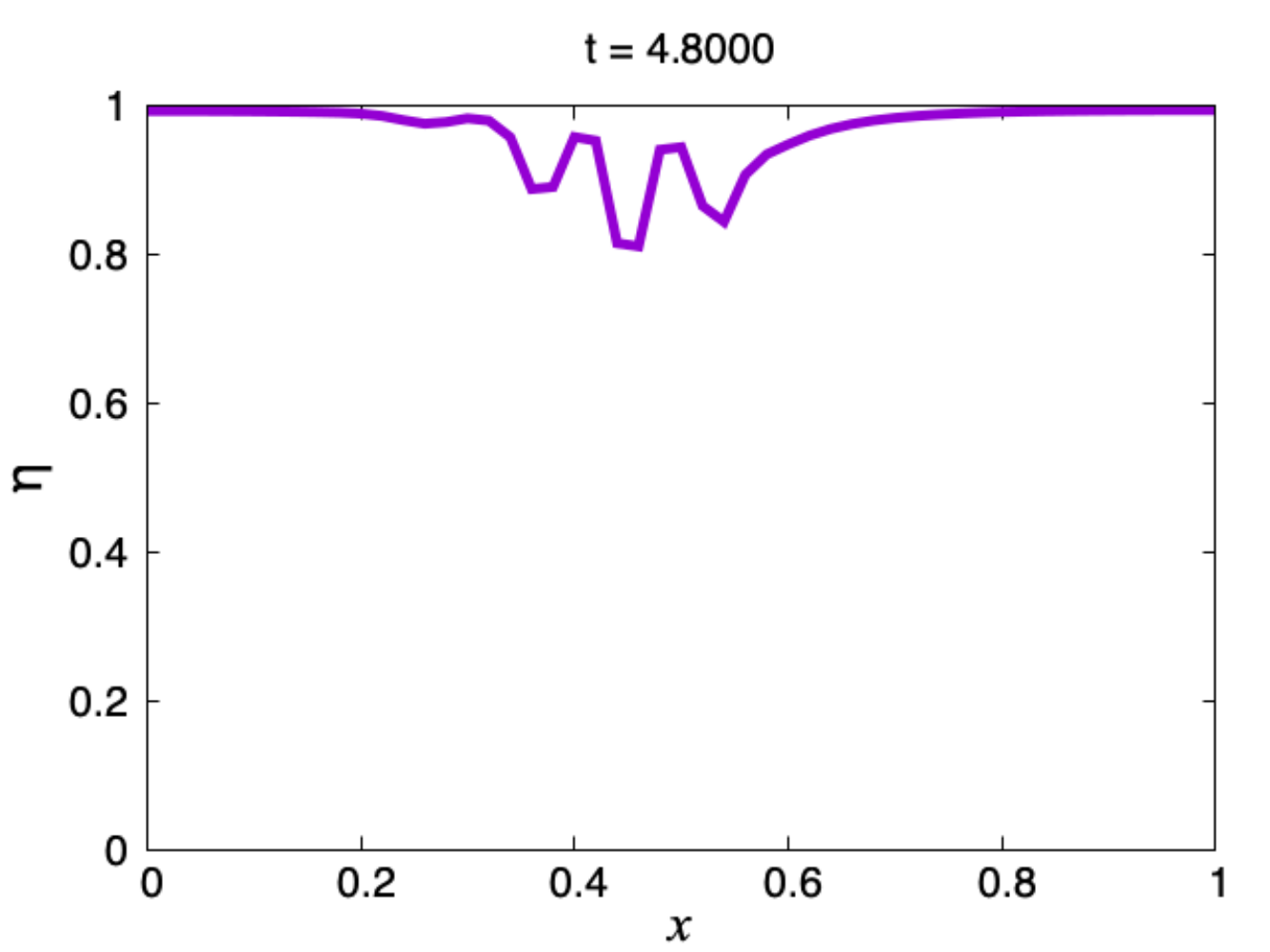}
	\end{minipage}
	\begin{minipage}{0.24\hsize}
		\centering
		\includegraphics[width=38mm]{./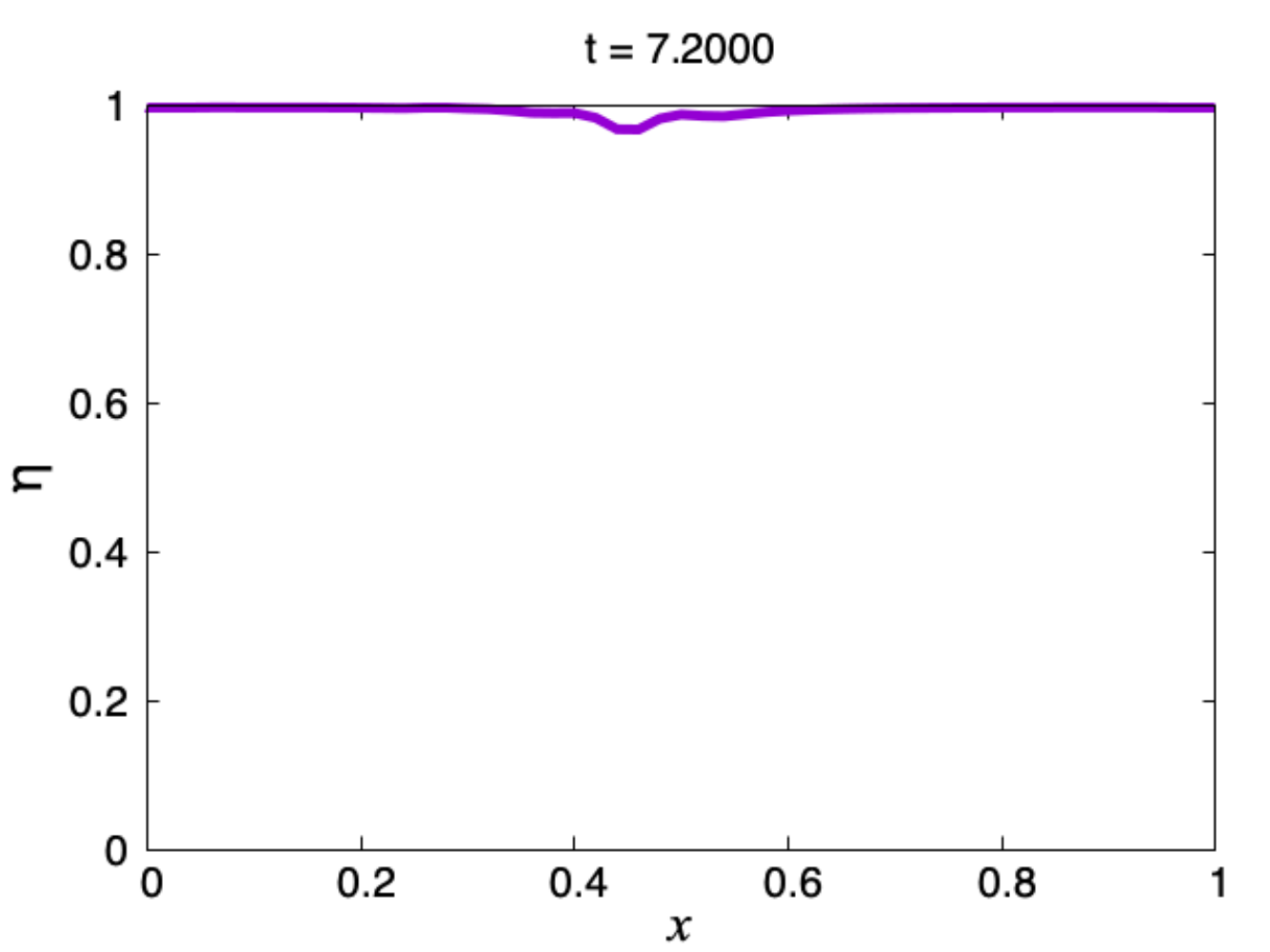}
	\end{minipage}
	\begin{minipage}{0.24\hsize}
		\centering
		\includegraphics[width=38mm]{./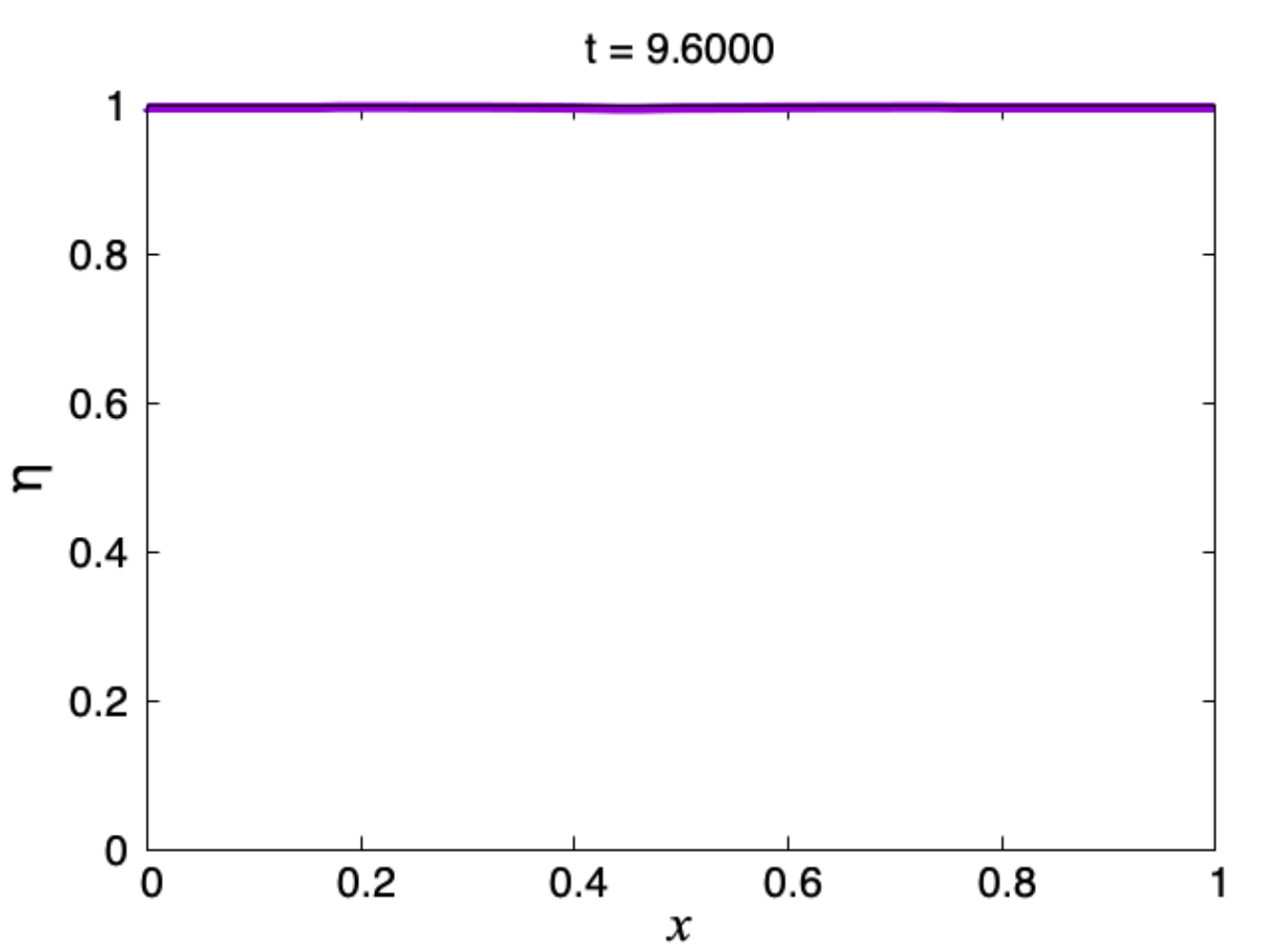}
	\end{minipage}
	\caption{\protect\raggedright Numerical solutions $\bm{H}^{(j)}$ to our scheme at time $t = 2.4$, $t = 4.8$, $t = 7.2$, $t = 9.6$}
	\label{fig:eta_init2}
\end{figure} 

\begin{figure}[H]
	\setlength\abovecaptionskip{0pt}
	\begin{minipage}{0.24\hsize}
		\centering
		\includegraphics[width=38mm]{./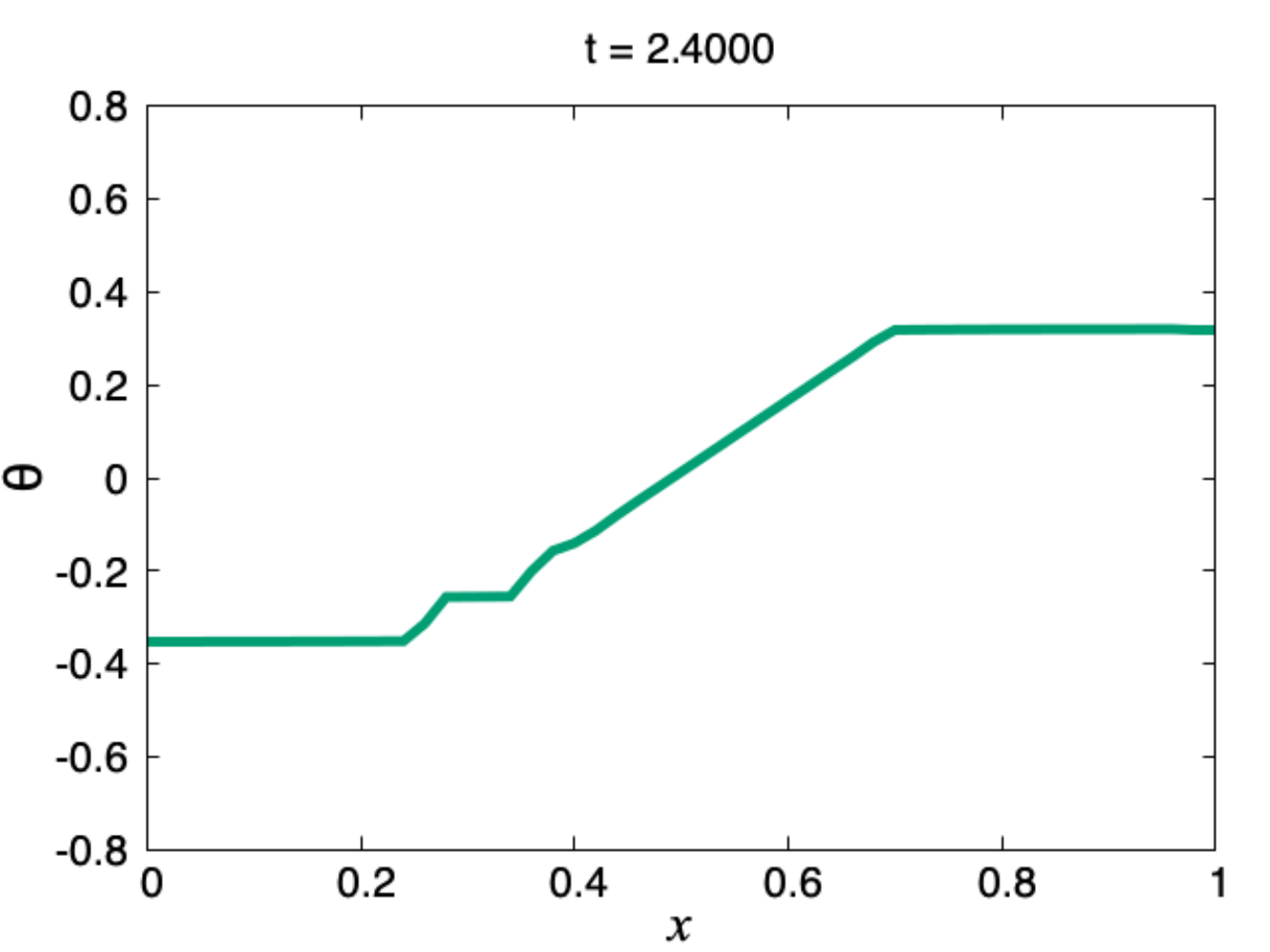}
	\end{minipage}
	\begin{minipage}{0.24\hsize}
		\centering
		\includegraphics[width=38mm]{./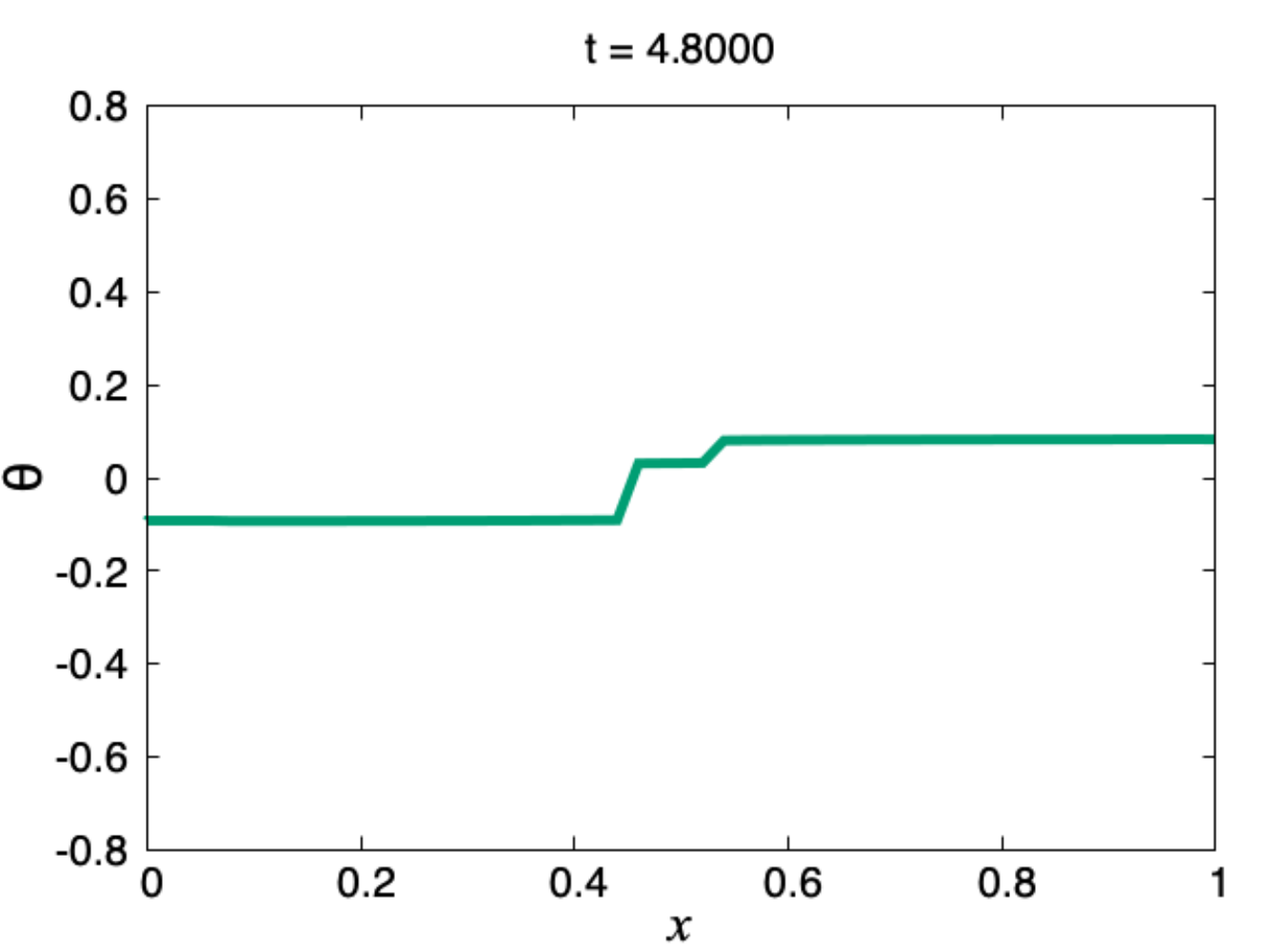}
	\end{minipage}
	\begin{minipage}{0.24\hsize}
		\centering
		\includegraphics[width=38mm]{./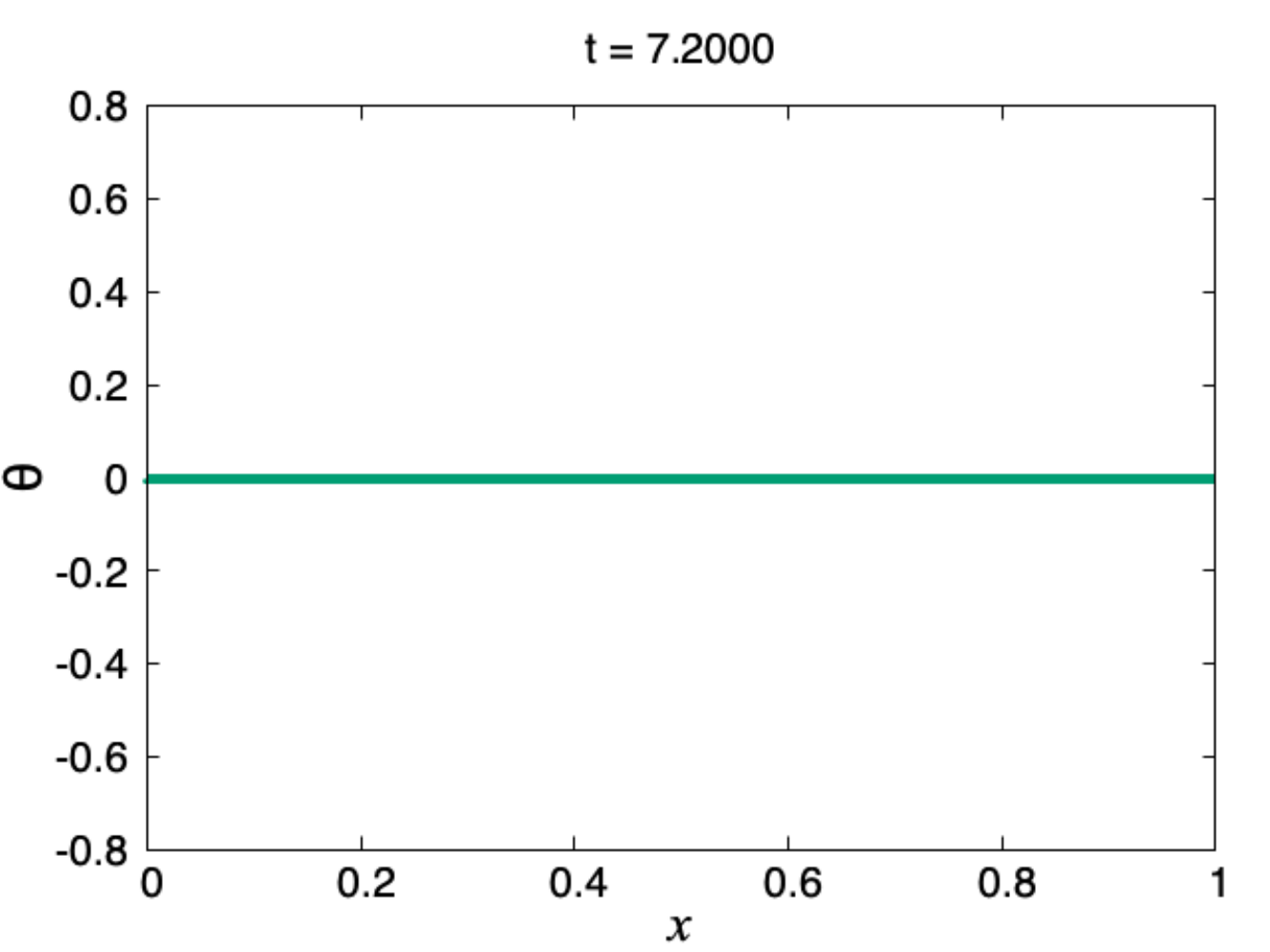}
	\end{minipage}
	\begin{minipage}{0.24\hsize}
		\centering
		\includegraphics[width=38mm]{./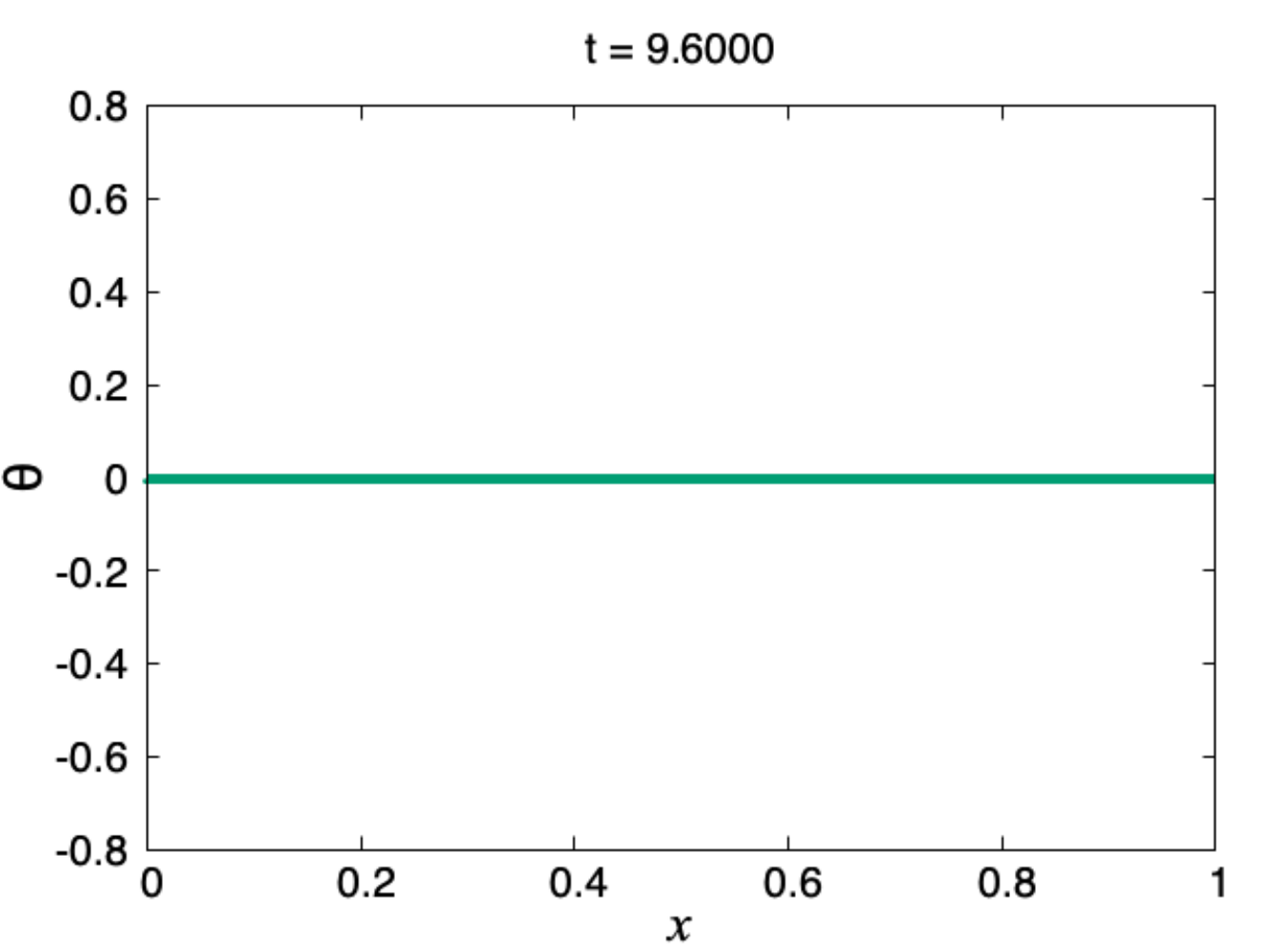}
	\end{minipage}
	\caption{\protect\raggedright Numerical solutions $\bm{\Theta}^{(j)}$ to our scheme at time $t = 2.4$, $t = 4.8$, $t = 7.2$, $t = 9.6$}
	\label{fig:theta_init2}
\end{figure} 

\begin{figure}[H]
	\centering
	\includegraphics[width=60mm]{./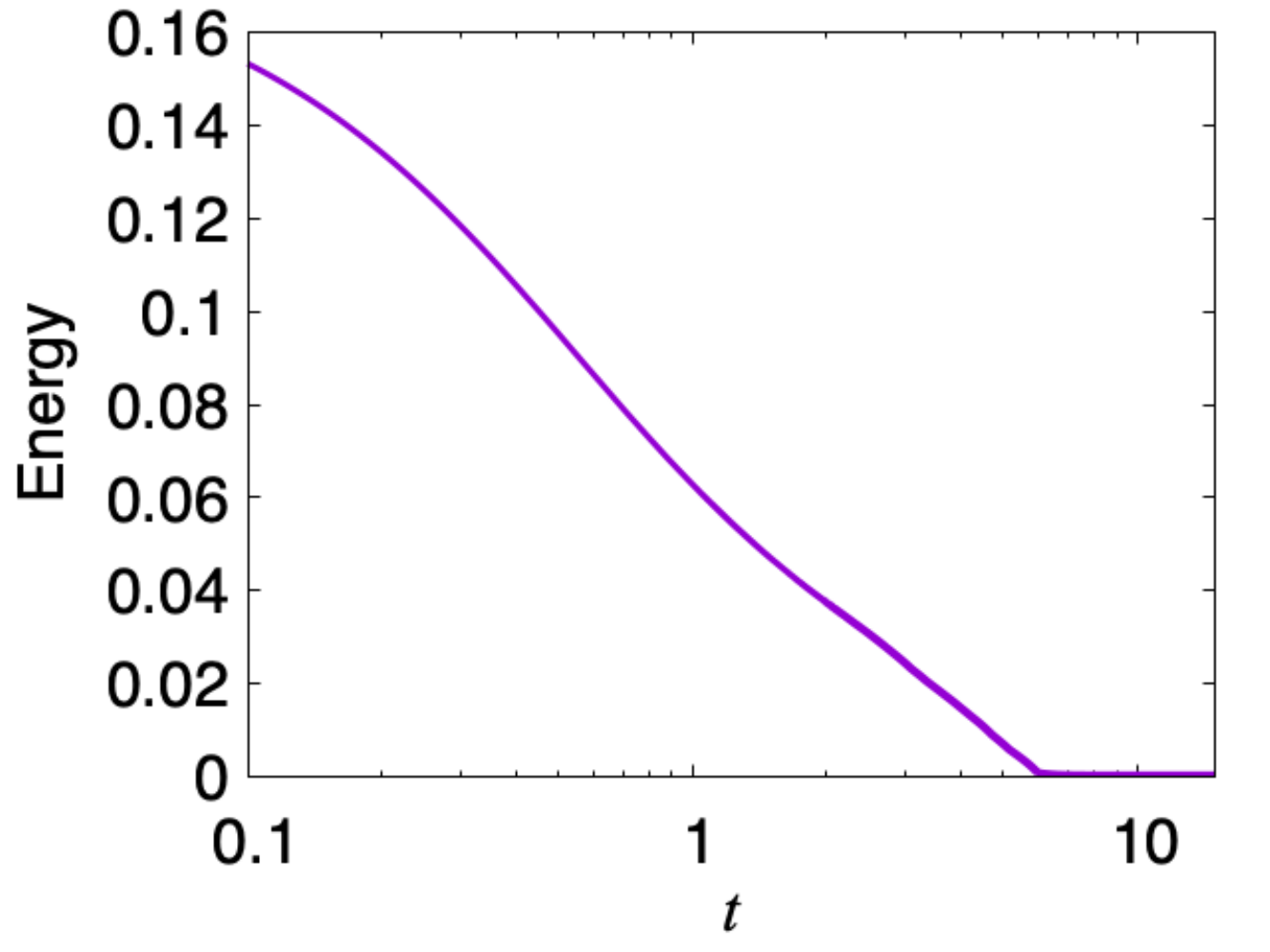}
	\caption{\protect\raggedright Time development of energy (The time axis is in log-scale)}
	\label{fig:energy_init2}
\end{figure}

\subsection{Computation example 3}
As the initial condition, we consider 
\begin{gather*}
  \eta(0,x) = \eta_{0}(x) 
    = \begin{cases}
	A\cosh(x) + 1 & (0 \leq x < 0.25),\\
	B\cosh(x - 1) + 1 & (0.25 \leq x \leq 1),
      \end{cases}\\
      \theta(0,x) = 
      \begin{cases}
	-0.25\pi x - 0.1\pi & (0 \leq x < 0.6),\\
	0.125\pi x + 0.125\pi & (0.6 \leq x \leq 1),
      \end{cases} 
\end{gather*}
where 
\begin{gather*}
	A = -\frac{1}{\cosh(0.25)}\frac{|\theta_{2} - \theta_{1}|}{|\theta_{2} - \theta_{1}| + \tanh(0.25) + \tanh(0.75)}, \\
	B = -\frac{1}{\cosh(0.75)}\frac{|\theta_{2} - \theta_{1}|}{|\theta_{2} - \theta_{1}| + \tanh(0.25) + \tanh(0.75)}, \\
	\theta_{1} = -0.25\pi, \quad \theta_{2} = 0.25\pi.
\end{gather*}
Also, we choose $\Delta t = 0.1414$.

\begin{figure}[H]
	\begin{minipage}{0.49\hsize}
		\centering
		\includegraphics[width=50mm]{./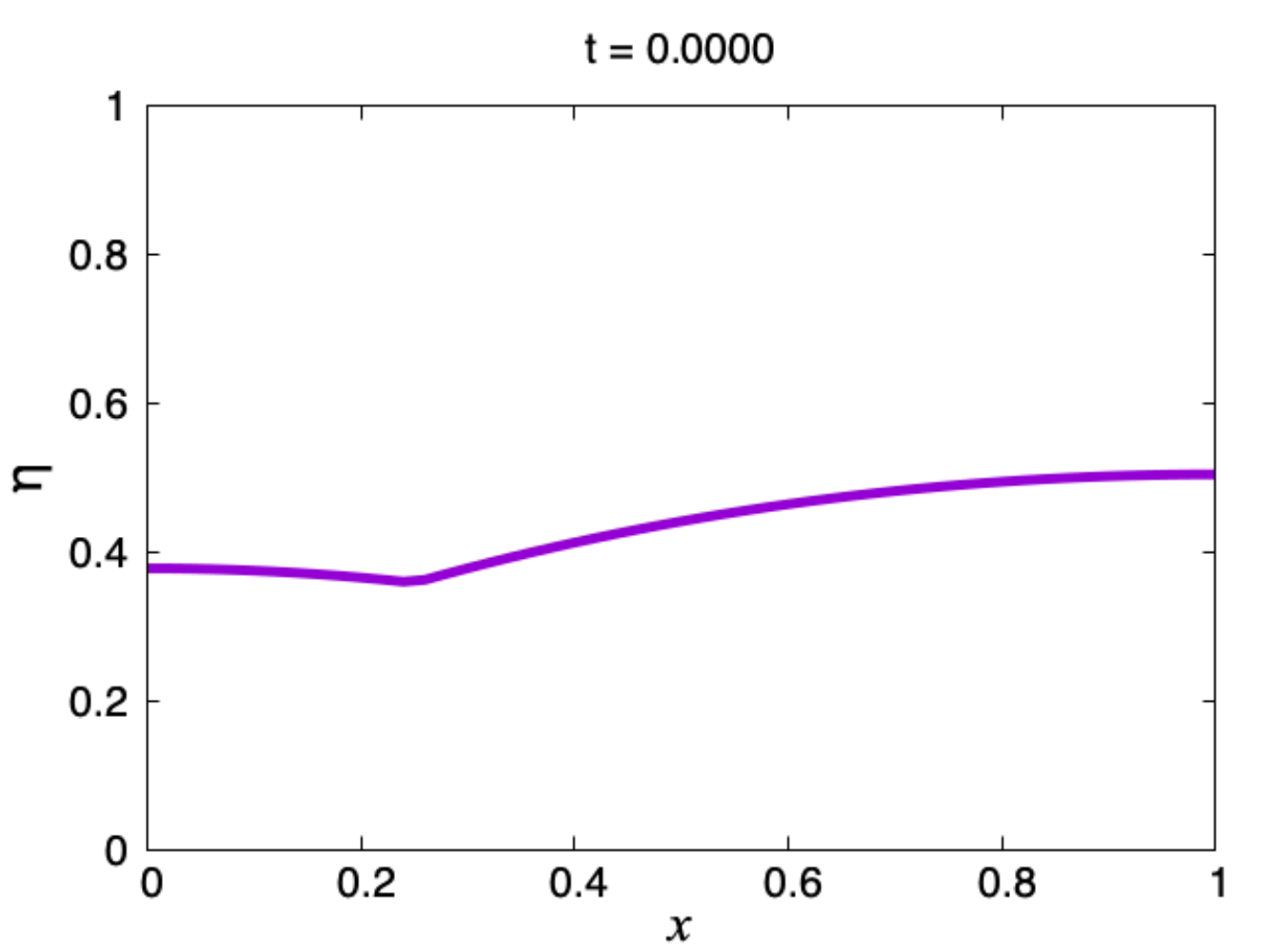}
	\end{minipage}
	\begin{minipage}{0.49\hsize}
		\centering
		\includegraphics[width=50mm]{./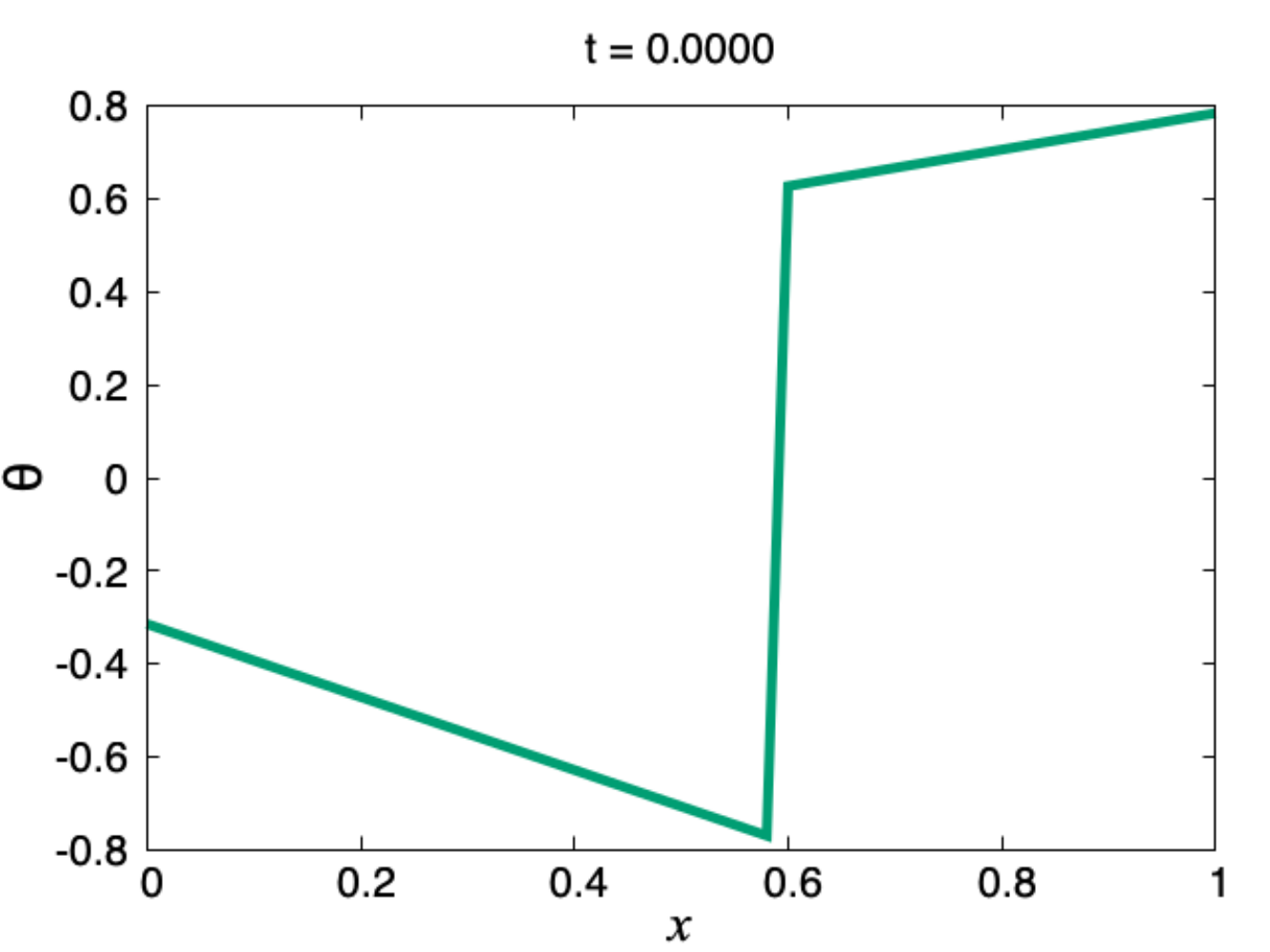}
	\end{minipage}
	\caption{\protect\raggedright The initial data}
\end{figure} 
Figures \ref{fig:eta_init3}--\ref{fig:theta_init3} show the time development of the solutions to our scheme, respectively. 
Figure \ref{fig:energy_init3} shows the time development of $\mathscr{F}_{1, \rm d}$. 
This graph shows that the energy decreases numerically. 
\begin{figure}[H]
	\setlength\abovecaptionskip{0pt}
	\begin{minipage}{0.24\hsize}
		\centering
		\includegraphics[width=38mm]{./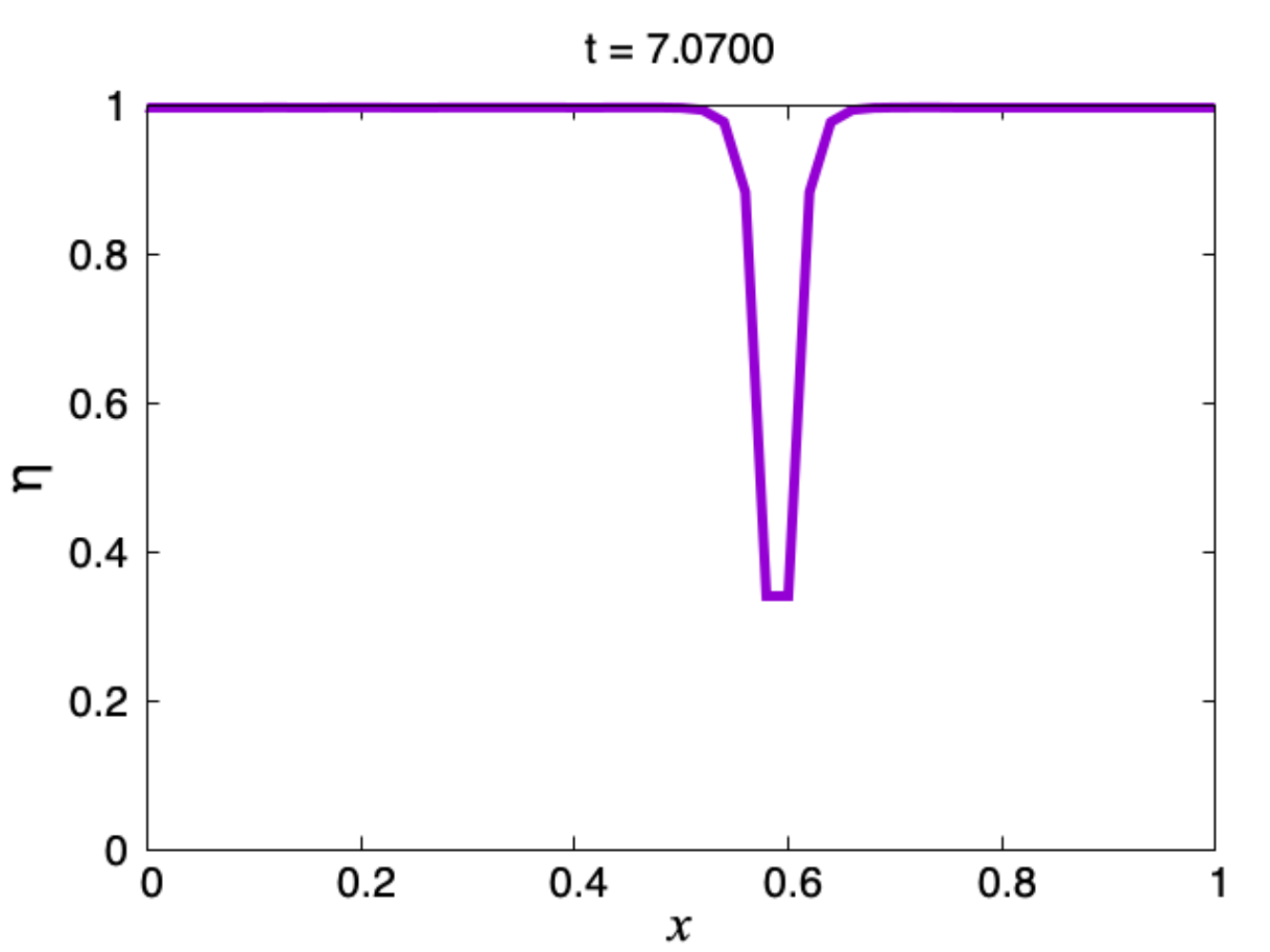}
	\end{minipage}
	\begin{minipage}{0.24\hsize}
		\centering
		\includegraphics[width=38mm]{./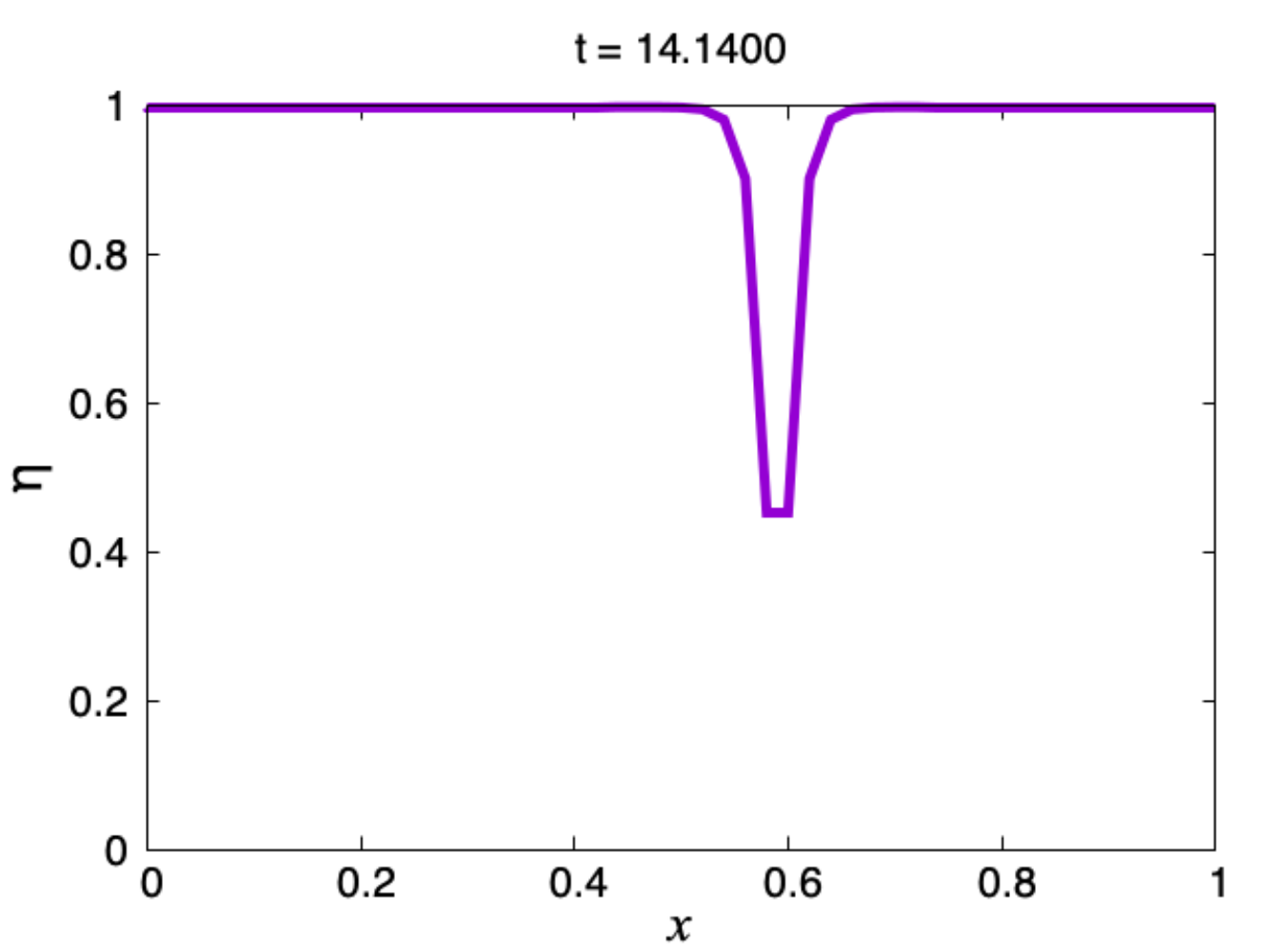}
	\end{minipage}
	\begin{minipage}{0.24\hsize}
		\centering
		\includegraphics[width=38mm]{./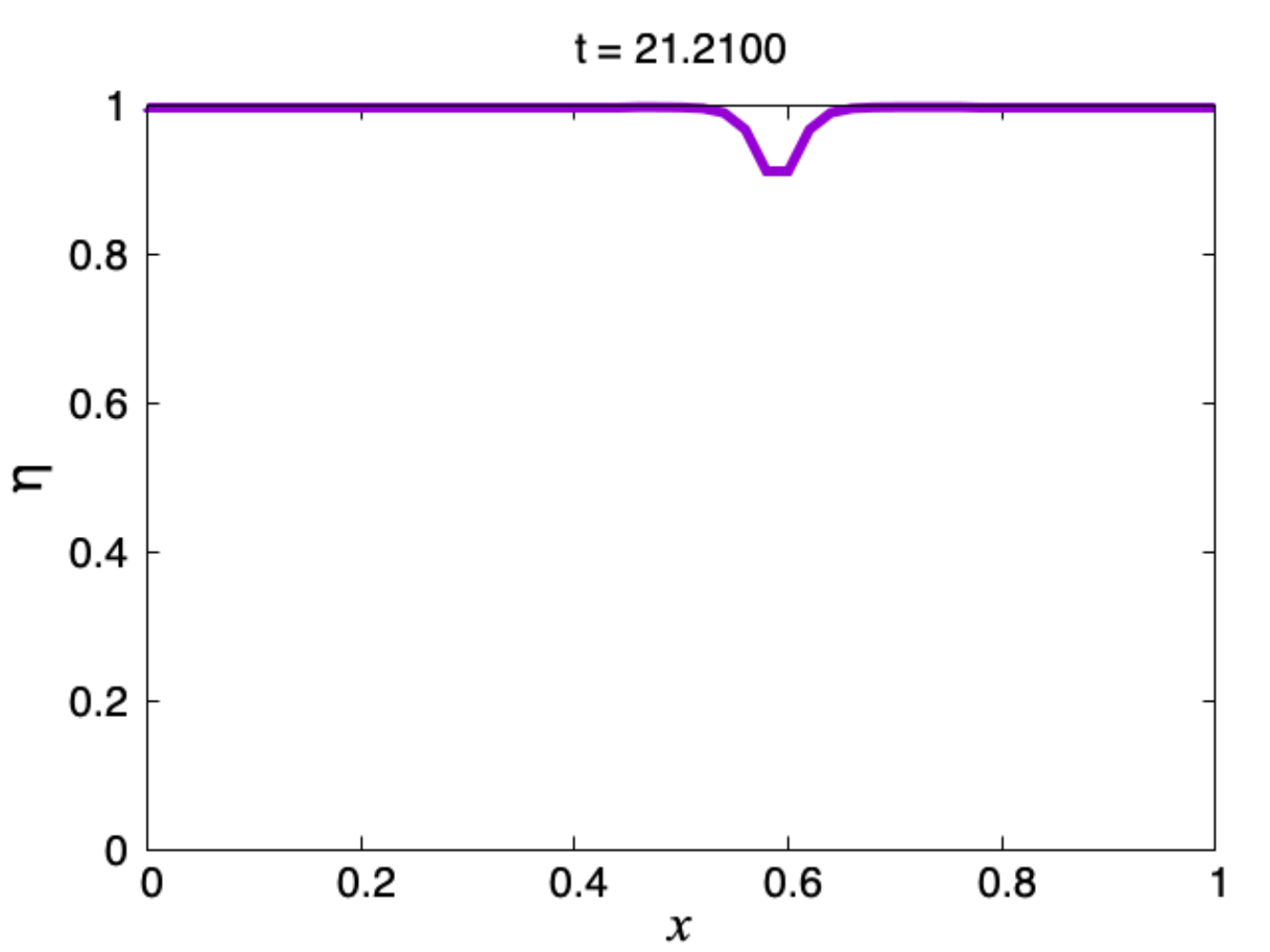}
	\end{minipage}
	\begin{minipage}{0.24\hsize}
		\centering
		\includegraphics[width=38mm]{./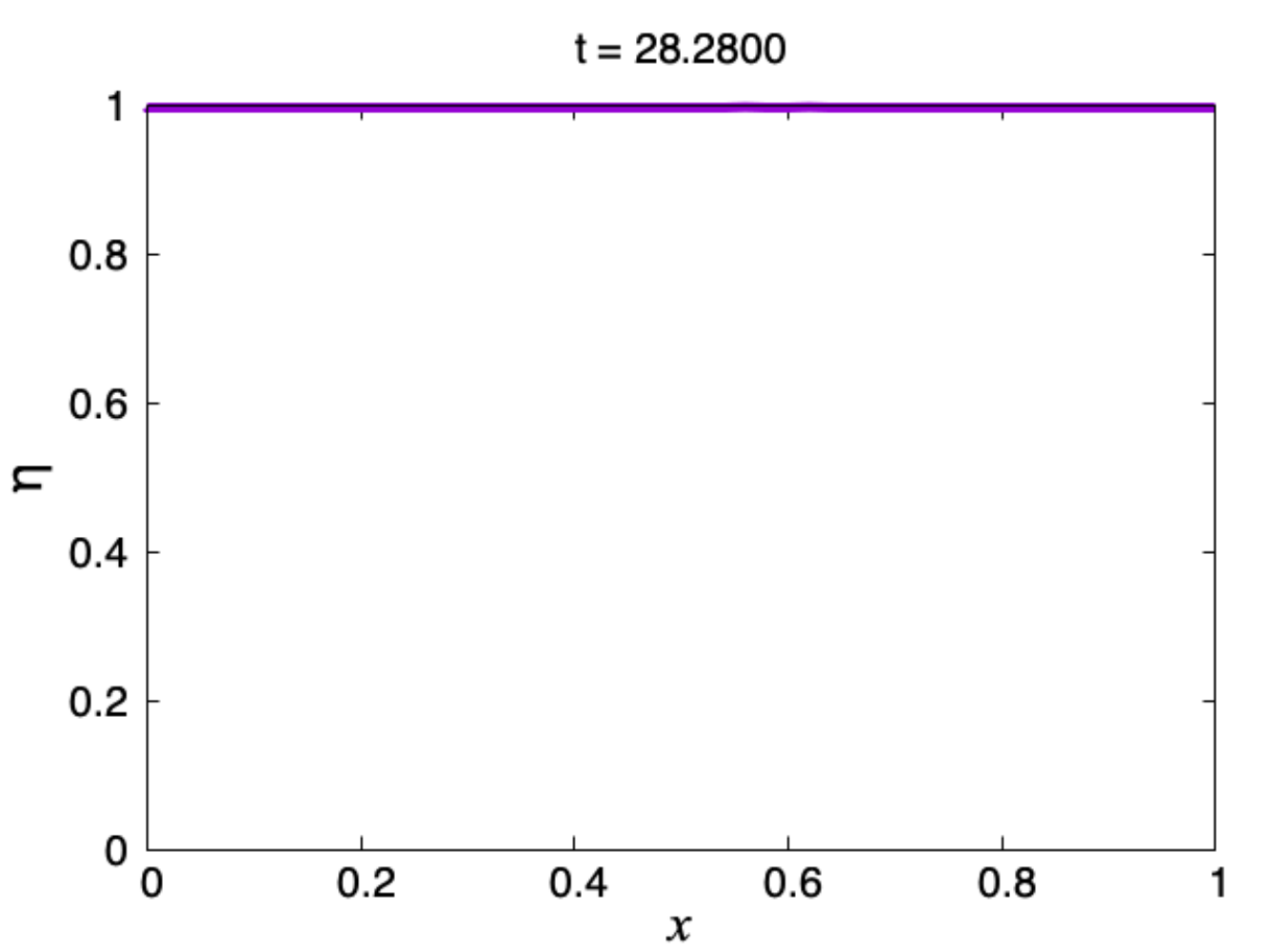}
	\end{minipage}
	\caption{\protect\raggedright Numerical solutions $\bm{H}^{(j)}$ to our scheme at time $t = 7.07$, $t = 14.14$, $t = 21.21$, $t = 28.28$}
	\label{fig:eta_init3}
\end{figure} 

\begin{figure}[H]
	\setlength\abovecaptionskip{0pt}
	\begin{minipage}{0.24\hsize}
		\centering
		\includegraphics[width=38mm]{./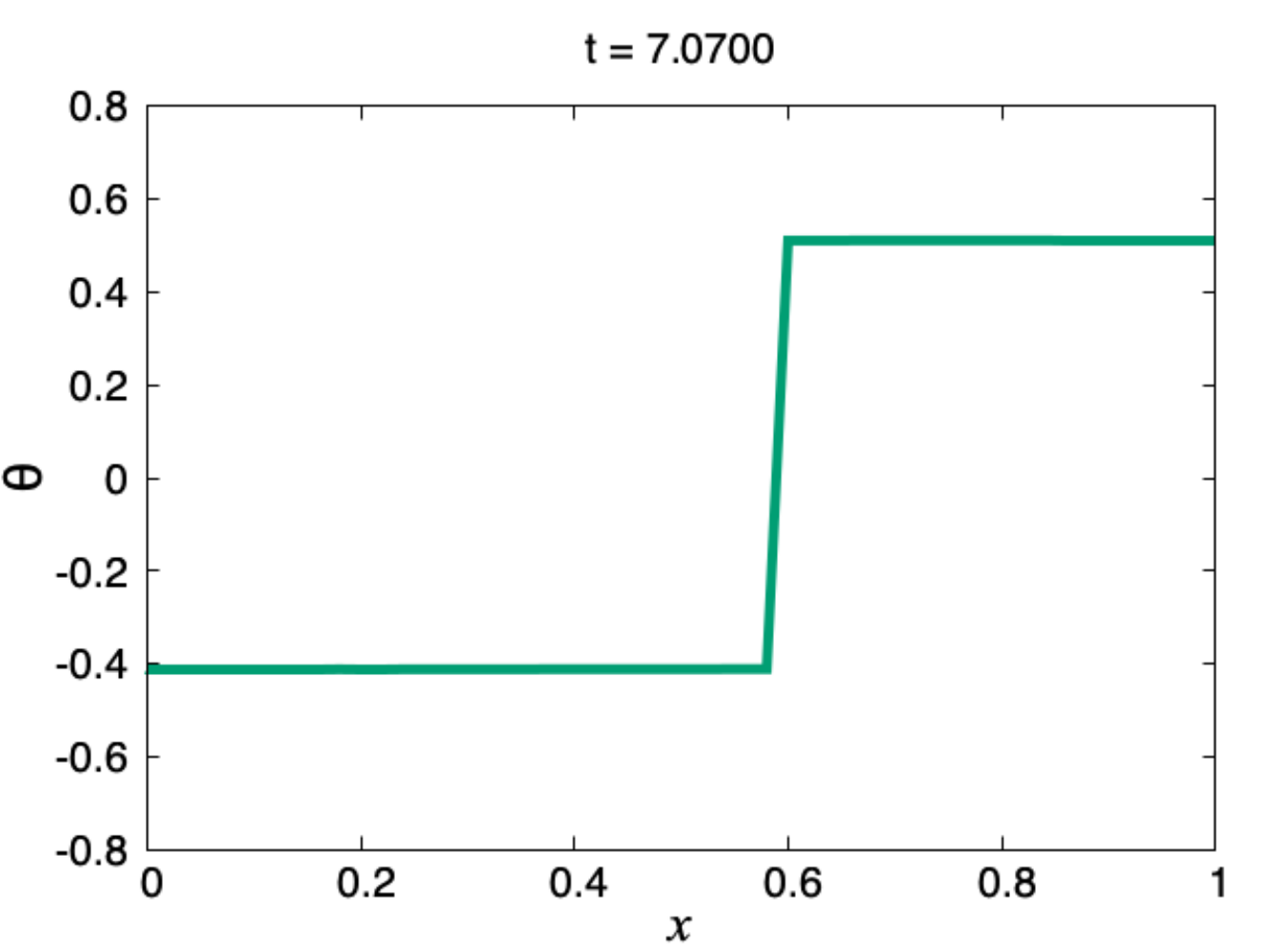}
	\end{minipage}
	\begin{minipage}{0.24\hsize}
		\centering
		\includegraphics[width=38mm]{./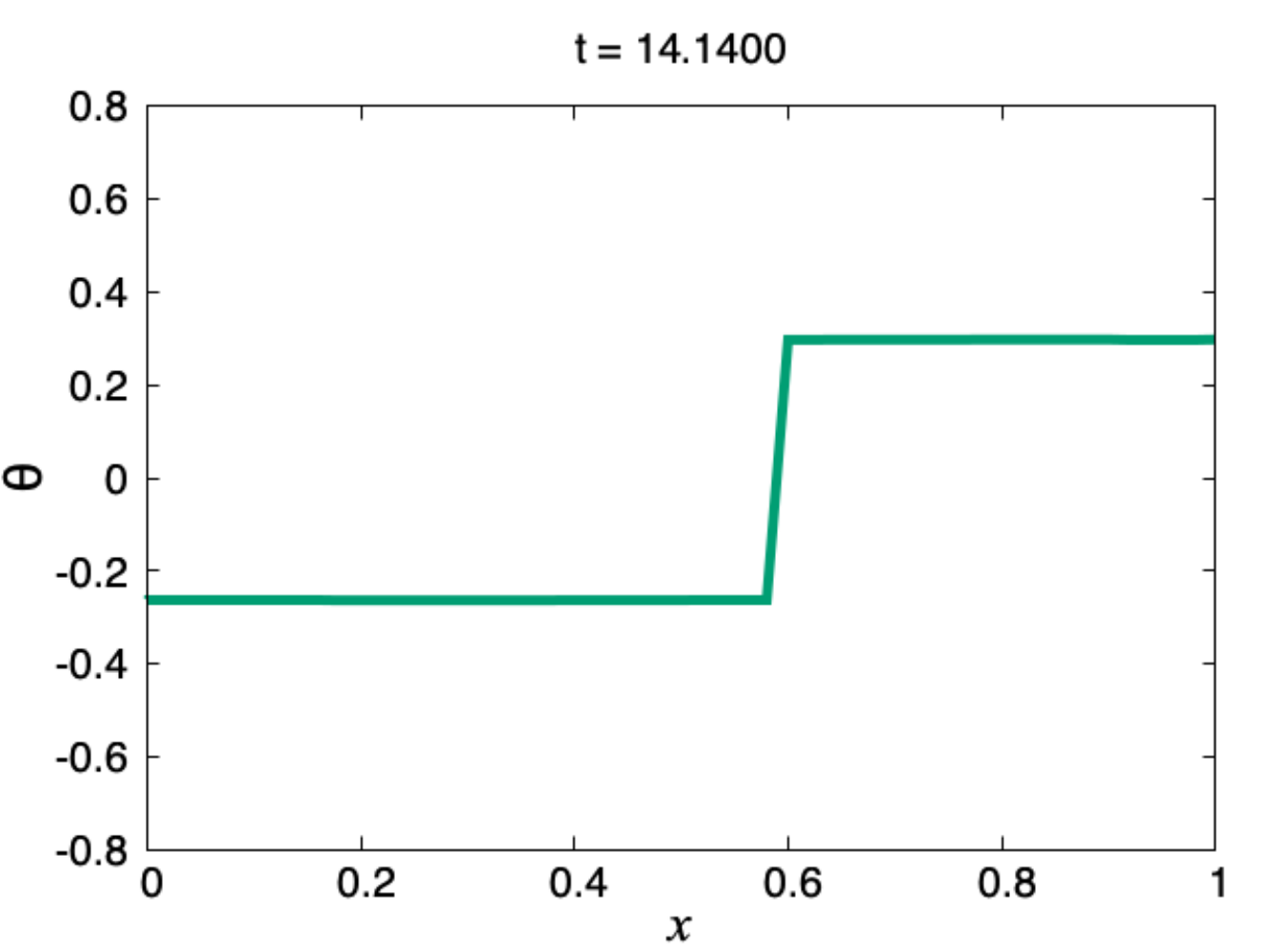}
	\end{minipage}
	\begin{minipage}{0.24\hsize}
		\centering
		\includegraphics[width=38mm]{./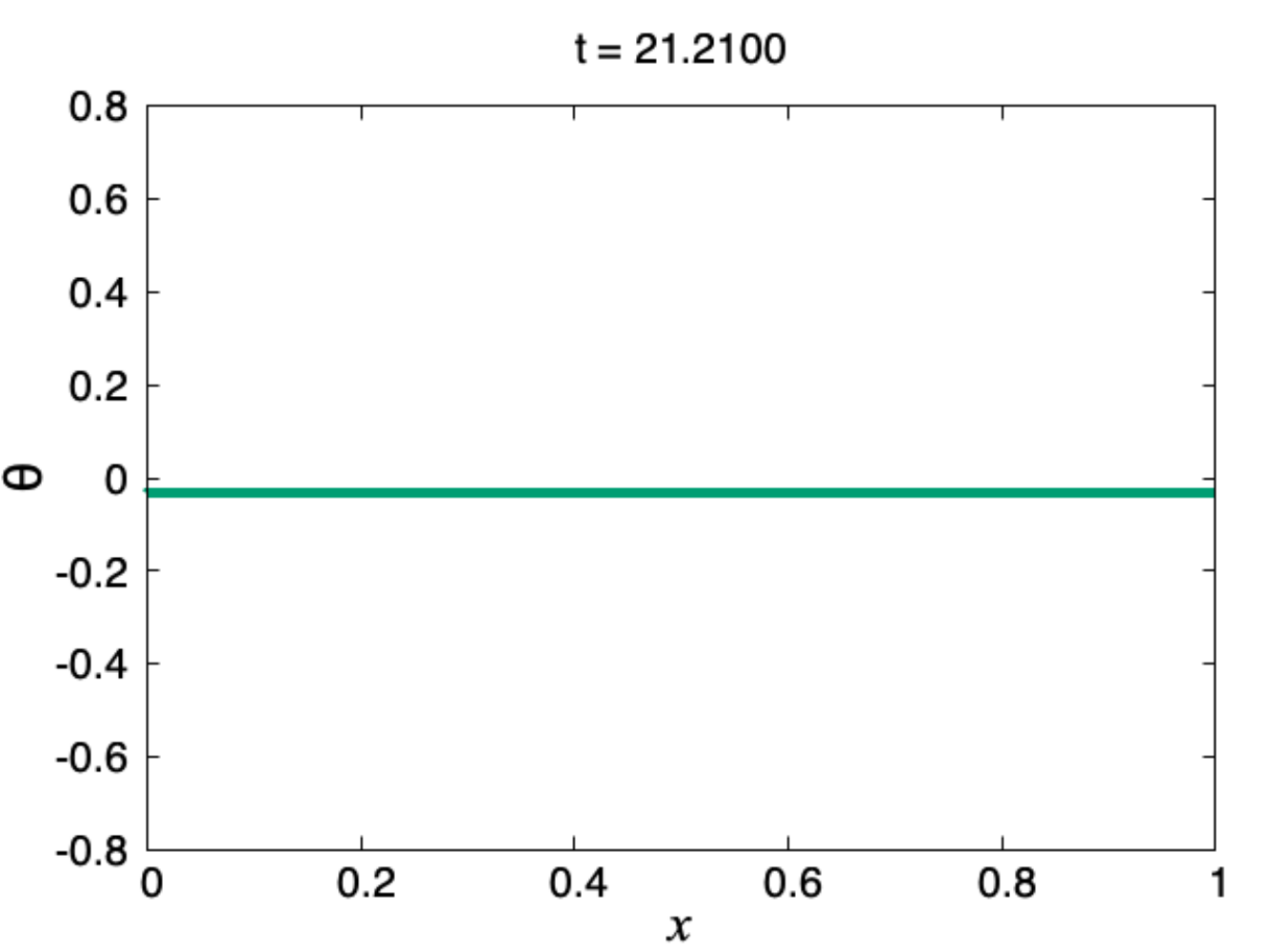}
	\end{minipage}
	\begin{minipage}{0.24\hsize}
		\centering
		\includegraphics[width=38mm]{./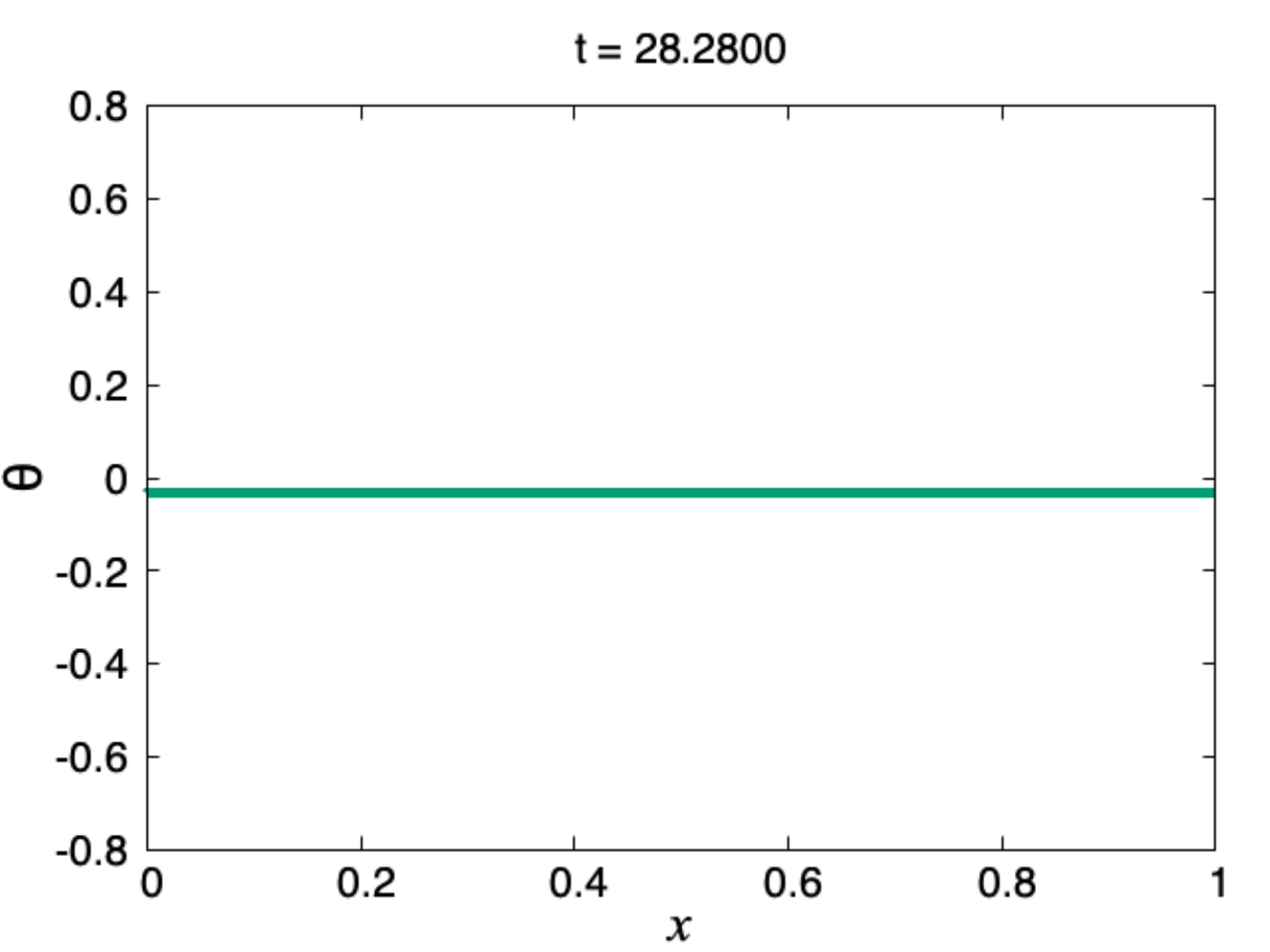}
	\end{minipage}
	\caption{\protect\raggedright Numerical solutions $\bm{\Theta}^{(j)}$ to our scheme at time $t = 7.07$, $t = 14.14$, $t = 21.21$, $t = 28.28$}
	\label{fig:theta_init3}
\end{figure} 

\begin{figure}[H]
	\centering
	\includegraphics[width=60mm]{./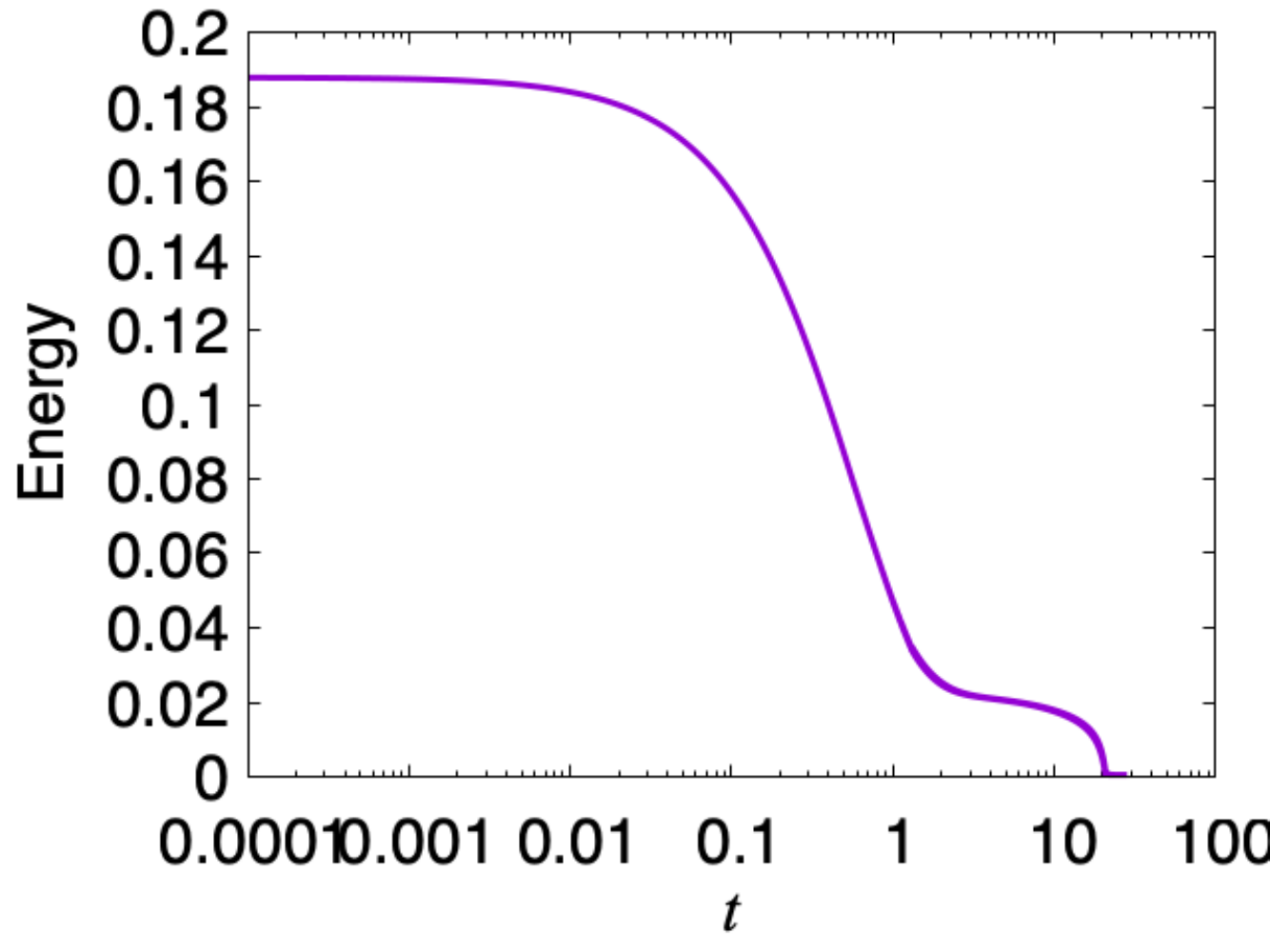}
	\caption{\protect\raggedright Time development of energy (The time axis is in log-scale)}
	\label{fig:energy_init3}
\end{figure}
Through the above computation examples, the following conjectures can be obtained. 
\begin{description}
	\item[(O)]The following convergences hold:
	\begin{description}
	\item[(i)]$ \eta(t) \to 1 $ in $ L^2(\Omega) $, as $ t \to \infty $,
	\item[(ii)]there exists $ \theta_\infty \in \theta_0(\Omega) $ such that  $ \theta(t) $ converges the constant $ \theta_\infty $, at a finite time.
	\end{description}
	\item[(I)]If $ \eta(t_0) $, at a time $ t_0 \geq 0 $, has two neighboring local minima $ 0 < x_0 < x_1 < 1 $, then $ \theta(t) $, for $ t \geq t_0 $, tends to form a piecewise constant structure around the interval $ (x_0, x_1) $.
	\item[(II)]If $ \eta(t_*) $, at a large time $ t_* \gg 0 $, has a unique local minimum (global minimum) $ x_* \in \Omega $, then $ \theta(t) $, for $t \geq t_* $, tends to a piecewise constant structure, with a single discontinuity point $ x_* $.
\end{description}
Regarding (II), all numerical data generally show a tendency consistent with the conjecture. 
However, in example 3, it should be noted that $\eta$ takes a unique minimum on a (single) closed interval. 
In view of this, it is considered necessary to make the following revision to the conjecture (II). 
\begin{description}
	\item[(II)']If $ \eta(t_*) $, at a large time $ t_* \gg 0 $, has a unique local minimum (global minimum) on a closed interval $ I_* \in \Omega $, then $ \theta(t) $, for $t \geq t_* $, tends to a piecewise constant structure, with a discontinuity point in $ I_* $.
\end{description}

\section{Appendix} 
\subsection{Preparation}

\begin{lem}\label{prod}
	The following product rules hold: 
	\begin{gather*}
		\delta_{k}^{+}(f_{k}g_{k}) = (\delta_{k}^{+}f_{k})(\mu_{k}^{+}g_{k}) + (\mu_{k}^{+}f_{k})(\delta_{k}^{+}g_{k}) \quad (k = 0,\ldots, K), \\
		\delta_{k}^{-}(f_{k}g_{k}) = (\delta_{k}^{-}f_{k})(\mu_{k}^{-}g_{k}) + (\mu_{k}^{-}f_{k})(\delta_{k}^{-}g_{k}) \quad (k = 0,\ldots, K)
	\end{gather*}
	for all $\{f_{k}\}_{k=-1}^{K+1}, \{g_{k}\}_{k=-1}^{K+1} \in \mathbb{R}^{K+3}$. 
\end{lem}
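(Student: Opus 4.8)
The plan is to verify both identities by direct substitution of the definitions, since these are purely algebraic facts about the finite-difference and averaging operators with no analytic content.

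First I would establish the forward rule. Substituting $\delta_{k}^{+}f_{k} = (f_{k+1}-f_{k})/\Delta x$ and $\mu_{k}^{+}g_{k} = (g_{k}+g_{k+1})/2$ (and the analogues for $\mu_{k}^{+}f_{k}$, $\delta_{k}^{+}g_{k}$) into the right-hand side, I obtain
\[
\frac{1}{2\Delta x}\bigl[(f_{k+1}-f_{k})(g_{k}+g_{k+1}) + (f_{k}+f_{k+1})(g_{k+1}-g_{k})\bigr].
\]
Expanding the two products and collecting terms, the mixed contributions $f_{k+1}g_{k}$ and $f_{k}g_{k+1}$ each occur once with a plus sign and once with a minus sign, so they cancel, leaving $2(f_{k+1}g_{k+1}-f_{k}g_{k})$. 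Dividing by $2\Delta x$ recovers exactly $\delta_{k}^{+}(f_{k}g_{k}) = (f_{k+1}g_{k+1}-f_{k}g_{k})/\Delta x$, which is the claimed identity.

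For the backward rule I would avoid repeating the computation by using the shift relations $\delta_{k}^{-}f_{k} = \delta_{k-1}^{+}f_{k-1}$ and $\mu_{k}^{-}g_{k} = \mu_{k-1}^{+}g_{k-1}$, both immediate from the definitions. Applying the already-proved forward identity at index $k-1$ to the product $f_{k-1}g_{k-1}$ and then re-expressing each forward operator as the corresponding backward operator at index $k$ yields the second identity; alternatively, the same direct expansion as above works verbatim with the roles of the $+$ and $-$ shifts interchanged.

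There is no substantive obstacle here: the statement is an elementary algebraic identity, and the only point requiring care is the bookkeeping of the cross terms in the expansion, which must cancel exactly for the numerator $f_{k+1}g_{k+1}-f_{k}g_{k}$ to emerge. The range $k=0,\ldots,K$ is handled uniformly because each identity involves only the indices $k-1$, $k$, $k+1$, all of which are available for elements of $\mathbb{R}^{K+3}$.
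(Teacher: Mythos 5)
Your verification is correct: the direct expansion of the right-hand side does produce the telescoping numerator $f_{k+1}g_{k+1}-f_{k}g_{k}$ after the cross terms cancel, and the backward rule follows either by the index shift you describe or by the symmetric computation. The paper states this lemma without proof, treating it as an elementary algebraic identity, so your argument supplies exactly the routine calculation the authors omitted.
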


\begin{lem}\label{sbp1}
The following summation by parts formulas hold:
	\begin{equation*}
	\sum_{k=0}^{K}{}^{\prime\prime}f_{k}(\delta_{k}^{+}g_{k})\Delta x + \sum_{k=0}^{K}{}^{\prime\prime}(\delta_{k}^{-}f_{k})g_{k}\Delta x 
	= \left[\frac{ f_{k}g_{k+1} + f_{k-1}g_{k} }{2}\right]_{0}^{K} 
	\end{equation*}
	for all $\{f_{k}\}_{k=-1}^{K+1}, \{g_{k}\}_{k=-1}^{K+1} \in \mathbb{R}^{K+3}$. 
\end{lem}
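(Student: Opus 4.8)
The plan is to collapse the two discrete sums on the left-hand side into a single sum whose summand is an exact forward difference, and then evaluate the trapezoidal-type summation $\sum_{k=0}^{K}{}^{\prime\prime}$ explicitly so that the telescoping produces the claimed boundary term. The statement is a purely algebraic identity, so the proof is elementary rearrangement; the only subtlety is the bookkeeping of the half-weights at the two endpoints.

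First I would substitute the definitions $\delta_k^{+}g_k = (g_{k+1}-g_k)/\Delta x$ and $\delta_k^{-}f_k = (f_k-f_{k-1})/\Delta x$. The factors of $\Delta x$ cancel, and the two summands combine so that the cross terms $\pm f_k g_k$ annihilate, leaving
\begin{equation*}
\sum_{k=0}^{K}{}^{\prime\prime} f_k(g_{k+1}-g_k)\,\Delta x/\Delta x + \sum_{k=0}^{K}{}^{\prime\prime}(f_k-f_{k-1})g_k = \sum_{k=0}^{K}{}^{\prime\prime}\bigl( f_k g_{k+1} - f_{k-1}g_k \bigr).
\end{equation*}
The key observation is that, setting $b_k := f_{k-1}g_k$, one has $b_{k+1}=f_k g_{k+1}$, so the summand is exactly the forward difference $b_{k+1}-b_k$; the remaining task is therefore just to evaluate $\sum_{k=0}^{K}{}^{\prime\prime}(b_{k+1}-b_k)$.

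Next I would expand the summation operator according to its definition, isolating the half-weighted endpoints:
\begin{equation*}
\sum_{k=0}^{K}{}^{\prime\prime}(b_{k+1}-b_k) = \frac{1}{2}(b_1-b_0) + \sum_{k=1}^{K-1}(b_{k+1}-b_k) + \frac{1}{2}(b_{K+1}-b_K).
\end{equation*}
The interior sum telescopes to $b_K-b_1$, and combining the three contributions the terms $b_1$ and $b_K$ cancel down to half-weights, yielding $\tfrac{1}{2}(b_{K+1}+b_K) - \tfrac{1}{2}(b_1+b_0)$. Substituting back $b_k=f_{k-1}g_k$ gives precisely $\bigl[\,(f_k g_{k+1}+f_{k-1}g_k)/2\,\bigr]_0^K$, which is the asserted right-hand side.

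There is no genuine obstacle here, since everything reduces to a telescoping sum. The single point requiring care is the mismatch between the full-weight telescoping of the interior and the half-weights assigned to $k=0$ and $k=K$ by $\sum_{k=0}^{K}{}^{\prime\prime}$: it is exactly this discrepancy that turns what would otherwise be a plain boundary term $[f_{k-1}g_k]_0^K$ into the symmetric average $[(f_k g_{k+1}+f_{k-1}g_k)/2]_0^K$. I would make sure the weighting is tracked correctly so that the averaged form emerges, and I would note that the same computation applied with the roles of $+$ and $-$ interchanged is what underlies the companion formula (Lemma~\ref{sbp2}) used later.
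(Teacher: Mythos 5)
Your proof is correct. The paper states Lemma~\ref{sbp1} in the appendix without proof, and your argument is exactly the computation one would expect: the cross terms $\pm f_k g_k$ cancel so that the combined summand becomes the forward difference of $b_k := f_{k-1}g_k$, and the half-weights of $\sum_{k=0}^{K}{}^{\prime\prime}$ at $k=0$ and $k=K$ convert the telescoped result into the symmetric averaged boundary term $\bigl[(f_k g_{k+1}+f_{k-1}g_k)/2\bigr]_0^K$; I verified that $\tfrac{1}{2}(b_{K+1}+b_K)-\tfrac{1}{2}(b_1+b_0)$ matches the asserted right-hand side.
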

	
\begin{lem}\label{sbp2}
	The following summation by parts formulas hold:
	\begin{gather*}
	\sum_{k=0}^{K-1}f_{k}(\delta_{k}^{+}g_{k})\Delta x + \sum_{k=0}^{K}{}^{\prime\prime}(\delta_{k}^{-}f_{k})g_{k}\Delta x 
	= \left[\left(\mu_{k}^{-}f_{k}\right)g_{k} \right]_{0}^{K}, \\
	\sum_{k=1}^{K}f_{k}(\delta_{k}^{-}g_{k})\Delta x + \sum_{k=0}^{K}{}^{\prime\prime}(\delta_{k}^{+}f_{k})g_{k}\Delta x 
	= \left[\left(\mu_{k}^{+}f_{k}\right)g_{k} \right]_{0}^{K} 
	\end{gather*}
	for all $\{f_{k}\}_{k=-1}^{K+1}, \{g_{k}\}_{k=-1}^{K+1} \in \mathbb{R}^{K+3}$. 
\end{lem}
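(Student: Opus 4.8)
The plan is to derive both identities from the trapezoidal summation-by-parts formula already established in Lemma \ref{sbp1}; the only additional ingredient is an adjustment of the endpoint weights that converts the trapezoidal sums $\sum_{k=0}^{K}{}^{\prime\prime}$ into the plain sums $\sum_{k=0}^{K-1}$ and $\sum_{k=1}^{K}$ appearing in the present statement.

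First I would record the elementary weight-adjustment identities. Since $\sum_{k=0}^{K}{}^{\prime\prime} h_{k} = \frac{1}{2}h_{0} + \sum_{k=1}^{K-1} h_{k} + \frac{1}{2}h_{K}$, one has
\[
\sum_{k=0}^{K-1} h_{k} = \sum_{k=0}^{K}{}^{\prime\prime} h_{k} + \tfrac{1}{2}h_{0} - \tfrac{1}{2}h_{K}, \qquad \sum_{k=1}^{K} h_{k} = \sum_{k=0}^{K}{}^{\prime\prime} h_{k} - \tfrac{1}{2}h_{0} + \tfrac{1}{2}h_{K},
\]
valid for any array $\{h_{k}\}$. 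Applying the first identity with $h_{k} = f_{k}(\delta_{k}^{+}g_{k})\Delta x$, and the second with $h_{k} = f_{k}(\delta_{k}^{-}g_{k})\Delta x$, reduces each left-hand side of the lemma to the corresponding trapezoidal expression plus two explicit endpoint terms.

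For the first formula, substituting the weight-adjustment identity and invoking Lemma \ref{sbp1} recasts the left-hand side as $\left[\frac{f_{k}g_{k+1}+f_{k-1}g_{k}}{2}\right]_{0}^{K} + \frac{1}{2}f_{0}(\delta_{0}^{+}g_{0})\Delta x - \frac{1}{2}f_{K}(\delta_{K}^{+}g_{K})\Delta x$. Using $(\delta_{k}^{+}g_{k})\Delta x = g_{k+1}-g_{k}$ at $k=0,K$ and collecting terms, the contributions containing $f_{0}g_{1}$ and $f_{K}g_{K+1}$ cancel, and what survives is exactly $\frac{f_{K}+f_{K-1}}{2}g_{K} - \frac{f_{0}+f_{-1}}{2}g_{0} = \left[(\mu_{k}^{-}f_{k})g_{k}\right]_{0}^{K}$. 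The second formula follows in the same way once Lemma \ref{sbp1} is applied with the roles of $f$ and $g$ interchanged, which produces precisely the combination $\sum_{k=0}^{K}{}^{\prime\prime}(\delta_{k}^{+}f_{k})g_{k}\Delta x + \sum_{k=0}^{K}{}^{\prime\prime} f_{k}(\delta_{k}^{-}g_{k})\Delta x$; the weight correction on the $\delta^{-}$-sum, together with $(\delta_{k}^{-}g_{k})\Delta x = g_{k}-g_{k-1}$, then assembles the averaged boundary term $\left[(\mu_{k}^{+}f_{k})g_{k}\right]_{0}^{K}$.

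I expect no genuine obstacle beyond careful bookkeeping: the one point demanding attention is verifying that the ``cross'' contributions at the outward nodes $k=-1$ and $k=K+1$ cancel against the endpoint-weight corrections, so that the surviving boundary terms assemble into the averages $\mu_{k}^{\mp}f_{k}$ rather than into bare nodal values of $f$. As an alternative that avoids citing Lemma \ref{sbp1}, the whole statement can be proved directly by Abel summation --- expanding each sum through $(\delta_{k}^{\pm}g_{k})\Delta x$, telescoping, and reindexing --- at the cost of somewhat more algebra.
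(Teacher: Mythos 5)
Your proposal is correct: both the reduction to Lemma~\ref{sbp1} via the endpoint-weight identities $\sum_{k=0}^{K-1}h_k=\sum_{k=0}^{K}{}^{\prime\prime}h_k+\tfrac12 h_0-\tfrac12 h_K$ and $\sum_{k=1}^{K}h_k=\sum_{k=0}^{K}{}^{\prime\prime}h_k-\tfrac12 h_0+\tfrac12 h_K$, and the alternative direct Abel summation, check out --- I verified that the cross terms $f_0g_1$, $f_Kg_{K+1}$ (resp.\ $f_0g_{-1}$, $f_Kg_{K-1}$) cancel exactly as you claim, leaving $\bigl[(\mu_k^{\mp}f_k)g_k\bigr]_0^K$. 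The paper itself states Lemma~\ref{sbp2} (like the other summation-by-parts identities in the appendix) without proof, so there is no argument to compare against; your first route does presuppose Lemma~\ref{sbp1}, which is likewise unproven in the paper, whereas your Abel-summation alternative is fully self-contained.
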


\begin{lem}\label{sbp5}
	The following summation by parts formulas hold:
	\begin{equation*}
	\sum_{k=0}^{K-1}\left(\delta_{k}^{+}f_{k}\right)\left(\delta_{k}^{+}g_{k}\right)\Delta x = -\sum_{k=0}^{K}{}^{\prime\prime}\left(\delta_{k}^{\langle 2 \rangle}f_{k}\right)g_{k}\Delta x + \left[\left(\delta_{k}^{\langle 1 \rangle}f_{k}\right) g_{k}\right]_{0}^{K} 
	\end{equation*}
	for all $\{f_{k}\}_{k=-1}^{K+1}, \{g_{k}\}_{k=-1}^{K+1} \in \mathbb{R}^{K+3}$. 
\end{lem}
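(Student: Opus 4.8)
The identity is the fully discrete analogue of the Green formula $\int_0^1 f'g'\,dx = -\int_0^1 f''g\,dx + [f'g]_0^1$, so the plan is not to resum everything by hand but to reduce it to the summation-by-parts formula already recorded in Lemma~\ref{sbp2}. Concretely, I would apply the first identity of Lemma~\ref{sbp2} to the sequence $\{\delta_k^+ f_k\}$ placed in the slot occupied there by $\{f_k\}$, while keeping $\{g_k\}$ unchanged. Its left-hand side then produces exactly the target left-hand side $\sum_{k=0}^{K-1}(\delta_k^+ f_k)(\delta_k^+ g_k)\Delta x$ together with the term $\sum_{k=0}^{K}{}^{\prime\prime}(\delta_k^-(\delta_k^+ f_k))g_k\Delta x$, and its right-hand side becomes the boundary term $[(\mu_k^-(\delta_k^+ f_k))g_k]_0^K$.

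What remains is purely algebraic: recognizing the two composite operators. First I would verify $\delta_k^-(\delta_k^+ f_k) = \delta_k^{\langle 2\rangle}f_k$ directly from the definitions, since both equal $(f_{k+1}-2f_k+f_{k-1})/(\Delta x)^2$, and similarly $\mu_k^-(\delta_k^+ f_k) = (f_{k+1}-f_{k-1})/(2\Delta x) = \delta_k^{\langle 1\rangle}f_k$. Substituting these two identities turns the consequence of Lemma~\ref{sbp2} into $\sum_{k=0}^{K-1}(\delta_k^+ f_k)(\delta_k^+ g_k)\Delta x + \sum_{k=0}^{K}{}^{\prime\prime}(\delta_k^{\langle 2\rangle}f_k)g_k\Delta x = [(\delta_k^{\langle 1\rangle}f_k)g_k]_0^K$, and transposing the second sum to the right yields the claim.

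The one point requiring care — and the only real obstacle — is the index range. Setting $F_k := \delta_k^+ f_k$, the quantity $F_k$ is well defined for $k=-1,\dots,K$ from the data $\{f_k\}_{k=-1}^{K+1}$ (in particular $F_{-1}$ and $F_K$ use only $f_{-1}$ and $f_{K+1}$, both available), whereas $F_{K+1}$ would require the unavailable value $f_{K+2}$. I would therefore check that the first identity of Lemma~\ref{sbp2} never references $F_{K+1}$: its two sums range only over $k=0,\dots,K-1$ and $k=0,\dots,K$, and the boundary term involves only $\mu_0^- F_0$ and $\mu_K^- F_K$, so that merely $F_{-1},\dots,F_K$ enter. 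Hence $\{F_k\}$ may be completed to an element of $\mathbb{R}^{K+3}$ by assigning $F_{K+1}$ any arbitrary value, the hypothesis of Lemma~\ref{sbp2} is met, and the formula applies unchanged; this closes the argument.
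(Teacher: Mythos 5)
Your argument is correct and complete. The paper itself states Lemma~\ref{sbp5} without proof (it is listed among the preparatory facts in the appendix), so there is no official proof to compare against; but your reduction is exactly the natural one: applying the first identity of Lemma~\ref{sbp2} to the shifted sequence $F_k:=\delta_k^+f_k$, using the composite-operator identities $\delta_k^-\delta_k^+ = \delta_k^{\langle 2\rangle}$ and $\mu_k^-\delta_k^+ = \delta_k^{\langle 1\rangle}$ (both of which you verify correctly), and transposing one sum. Your attention to the index range is the right instinct, and your resolution --- that the first identity of Lemma~\ref{sbp2} never references $F_{K+1}$, so the sequence may be padded arbitrarily to lie in $\mathbb{R}^{K+3}$ --- closes the only potential gap. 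No issues.
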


\begin{defn}
Let $\Omega$ be a domain in $\mathbb{R}$. 
For a function $F \in C^{2}(\Omega )$, let us define $\bar{F}''$: $\Omega^{4} \to \mathbb{R}$ by 
\begin{numcases}
	{\bar{F}'' (\xi, \tilde{\xi}; \eta, \tilde{\eta}) := \!} 
	\frac{1}{\xi - \tilde{\xi}}\!\left\{\! \left( \frac{dF}{d(\xi, \eta)} + \frac{dF}{d(\xi, \tilde{\eta})} \right) \! - \! \left( \frac{dF}{d(\tilde{\xi}, \eta)} + \frac{dF}{d(\tilde{\xi}, \tilde{\eta})} \right) \!\right\}, & ($\xi \neq \tilde{\xi}$), \nonumber\\
	{\partial}_{\xi} \left. \left( \frac{dF}{d(\xi, \eta)} + \frac{dF}{d(\xi, \tilde{\eta})} \right) \right|_{\xi = \tilde{\xi}} , & ($\xi = \tilde{\xi}$) \nonumber
\end{numcases}
for all $(\xi, \tilde{\xi}, \eta, \tilde{\eta}) \in \Omega^{4}$. 
\end{defn}

\begin{lem}[{\cite[Lemma 2.4]{YOSHIKAWA2017394}}] \label{lem:4.1}
Let $\Omega$ be a domain in $\mathbb{R}$. 
If $F \in C^{2}(\Omega)$, then $\bar{F}'' \in C(\Omega^{4})$. 
Moreover, we have 
\begin{equation*}
\left| \bar{F}'' (\xi, \tilde{\xi}; \eta, \tilde{\eta}) \right| \leq \sup_{\eta, \tilde{\eta} \in \Omega}\sup_{\xi \in \Omega}\left| \frac{\partial}{\partial \xi} \left( \frac{dF}{d(\xi, \eta)} + \frac{dF}{d(\xi, \tilde{\eta})} \right) \right| \leq \sup_{\xi \in \Omega}\left| F'' (\xi) \right| 
\quad \mbox{for all}\ \xi, \tilde{\xi}, \eta, \tilde{\eta} \in \Omega.
\end{equation*} 
\end{lem}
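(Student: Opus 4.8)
The plan is to reduce both assertions to a single Hadamard-type integral representation of the difference quotient and then to iterate it once. First I would record that, for $F \in C^2(\Omega)$,
\begin{equation*}
	\frac{dF}{d(u,v)} = \int_0^1 F'\bigl( v + t(u-v) \bigr)\,dt \qquad \text{for all } u, v \in \Omega,
\end{equation*}
an identity that is valid on the whole of $\Omega^2$, including the diagonal $u = v$, where it returns $F'(v)$ in agreement with the definition of $dF/d(u,v)$. This representation is the device that removes the apparent $0/0$ singularity. Differentiating under the integral sign in the first slot gives
\begin{equation*}
	\partial_\xi \frac{dF}{d(\xi,\eta)} = \int_0^1 t\,F''\bigl( \eta + t(\xi-\eta) \bigr)\,dt,
\end{equation*}
so $\xi \mapsto dF/d(\xi,\eta)$ is $C^1$, its $\xi$-derivative is jointly continuous in $(\xi,\eta)$, and, since $\int_0^1 t\,dt = \tfrac12$, one has the pointwise bound $\bigl| \partial_\xi (dF/d(\xi,\eta)) \bigr| \leq \tfrac12 \sup_{\Omega} |F''|$.

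Next I would fix $\eta, \tilde\eta$ and abbreviate $g(\xi) := dF/d(\xi,\eta) + dF/d(\xi,\tilde\eta)$, which by the previous step is $C^1$ in $\xi$ with derivative $g'$ jointly continuous in $(\xi,\eta,\tilde\eta)$. The key observation is that $\bar{F}''(\xi,\tilde\xi;\eta,\tilde\eta)$ is precisely the difference quotient of $g$ in the pair $(\xi,\tilde\xi)$, with diagonal value $g'(\tilde\xi)$ prescribed by the definition. Applying the same Hadamard identity one level up, now to the $C^1$ function $g$, yields
\begin{equation*}
	\bar{F}''(\xi,\tilde\xi;\eta,\tilde\eta) = \int_0^1 g'\bigl( \tilde\xi + s(\xi-\tilde\xi) \bigr)\,ds.
\end{equation*}
Because the integrand is jointly continuous in $(s,\xi,\tilde\xi,\eta,\tilde\eta)$, continuity of the integral in its parameters (e.g.\ by uniform continuity on compact subsets) gives $\bar{F}'' \in C(\Omega^4)$, which is the first claim.

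Finally, the inequality chain falls out of the same representation. Estimating the integral by the supremum of its integrand gives $|\bar{F}''(\xi,\tilde\xi;\eta,\tilde\eta)| \leq \sup_{\zeta \in \Omega} |g'(\zeta)|$, and since $g' = \partial_\xi ( dF/d(\xi,\eta) + dF/d(\xi,\tilde\eta) )$, bounding further over $\eta,\tilde\eta$ produces the first displayed inequality of the statement. The second inequality then follows from the bound established in the first paragraph: each of the two summands in $g'$ is at most $\tfrac12 \sup_{\Omega}|F''|$ in absolute value, so $|g'| \leq \sup_{\Omega}|F''|$. The only genuine obstacle is the behavior on the coincidence sets $\xi = \tilde\xi$, $\xi = \eta$, $\tilde\xi = \tilde\eta$, and so on, where the algebraic quotient formulas are singular; the integral representation is exactly what handles all of these uniformly and, in particular, certifies that the diagonal values built into the definition agree with the limits of the off-diagonal quotients.
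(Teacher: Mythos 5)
Your argument is correct. Note that the paper does not prove this lemma at all --- it is imported verbatim as \cite[Lemma 2.4]{YOSHIKAWA2017394} --- so there is no internal proof to compare against; what you have written is a valid self-contained derivation along the standard lines. The two-level use of the Hadamard representation $\frac{dF}{d(u,v)} = \int_0^1 F'(v+t(u-v))\,dt$ is exactly the right device: it identifies $\bar{F}''(\xi,\tilde{\xi};\eta,\tilde{\eta})$ with the difference quotient of $g(\xi) = \frac{dF}{d(\xi,\eta)} + \frac{dF}{d(\xi,\tilde{\eta})}$, yields joint continuity from the resulting double integral of $F''$, and gives both inequalities, the factor $\int_0^1 t\,dt = \tfrac{1}{2}$ being what makes the sum of the two terms close up to $\sup_\Omega|F''|$ rather than $2\sup_\Omega|F''|$. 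The only hypothesis you use silently is that the segment joining $u$ and $v$ lies in $\Omega$; since a domain in $\mathbb{R}$ is an open interval, hence convex, this is automatic here, but it is worth stating, as the identity would fail on a disconnected or non-convex set. Justifying the differentiation under the integral sign is routine for $F\in C^2$ (the integrand $t\mapsto F'(\eta+t(\xi-\eta))$ has a $\xi$-derivative continuous in $(t,\xi,\eta)$, so one may invoke uniform continuity on compacta), and with that remark the proof is complete.
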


\begin{lem}[{\cite[Proposition 2.5]{YOSHIKAWA2017394}}] \label{lem:4.2}
Let $\Omega$ be a domain in $\mathbb{R}$ and assume that $F \in C^{2}(\Omega)$. For all $\xi$, $\tilde{\xi}$, $\eta$ and $\tilde{\eta} \in \Omega$, we have
\begin{equation*}
\frac{dF}{d(\xi, \eta)} - \frac{dF}{d(\tilde{\xi}, \tilde{\eta})} = \frac{1}{2}\bar{F}'' (\xi, \tilde{\xi}; \eta, \tilde{\eta})(\xi - \tilde{\xi}) + \frac{1}{2}\bar{F}'' (\eta, \tilde{\eta}; \xi, \tilde{\xi})(\eta - \tilde{\eta}). 
\end{equation*}
\end{lem}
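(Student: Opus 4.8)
The plan is to reduce the identity to a purely algebraic cancellation on the generic set where the difference quotients appearing in $\bar{F}''$ are given by their quotient formula, and then to extend it to the whole of $\Omega^{4}$ by continuity. Throughout I write $D(u,v) := \frac{dF}{d(u,v)}$ for brevity and record the basic symmetry $D(u,v) = D(v,u)$, which is immediate from the definition of the difference quotient: for $u \neq v$ one has $\frac{F(u)-F(v)}{u-v} = \frac{F(v)-F(u)}{v-u}$, while on the diagonal both sides equal the common value $F'(v)$.

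First I would prove the identity on the open dense subset $U := \{ (\xi, \tilde{\xi}, \eta, \tilde{\eta}) \in \Omega^{4} : \xi \neq \tilde{\xi}, \ \eta \neq \tilde{\eta} \}$. On $U$ both occurrences of $\bar{F}''$ fall in the first branch of its definition, so that
\begin{align*}
\bar{F}''(\xi, \tilde{\xi}; \eta, \tilde{\eta})(\xi - \tilde{\xi}) &= D(\xi, \eta) + D(\xi, \tilde{\eta}) - D(\tilde{\xi}, \eta) - D(\tilde{\xi}, \tilde{\eta}), \\
\bar{F}''(\eta, \tilde{\eta}; \xi, \tilde{\xi})(\eta - \tilde{\eta}) &= D(\eta, \xi) + D(\eta, \tilde{\xi}) - D(\tilde{\eta}, \xi) - D(\tilde{\eta}, \tilde{\xi}).
\end{align*}
Applying $D(u,v)=D(v,u)$ to the second line turns it into $D(\xi,\eta)+D(\tilde{\xi},\eta)-D(\xi,\tilde{\eta})-D(\tilde{\xi},\tilde{\eta})$. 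Summing the two lines with weight $\frac{1}{2}$, the two mixed terms $D(\xi,\tilde{\eta})$ and $D(\tilde{\xi},\eta)$ each appear with opposite signs and cancel, leaving exactly $D(\xi,\eta) - D(\tilde{\xi},\tilde{\eta})$, which is the left-hand side. This settles the claim on $U$.

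Next I would pass to the boundary strata $\{\xi = \tilde{\xi}\} \cup \{\eta = \tilde{\eta}\}$ by continuity. The map $D$ admits the integral representation $D(u,v) = \int_{0}^{1} F'(v + t(u-v))\,dt$, valid on and off the diagonal, which shows $D \in C(\Omega^{2})$ whenever $F \in C^{1}$; in particular the left-hand side $D(\xi,\eta) - D(\tilde{\xi},\tilde{\eta})$ is continuous on $\Omega^{4}$. For the right-hand side, Lemma \ref{lem:4.1} guarantees $\bar{F}'' \in C(\Omega^{4})$ for $F \in C^{2}$, and multiplication by the continuous factors $(\xi - \tilde{\xi})$ and $(\eta - \tilde{\eta})$ preserves continuity. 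Since $U$ is dense in $\Omega^{4}$ and both sides are continuous, the identity proved on $U$ extends to all of $\Omega^{4}$.

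The bulk of the work is the sign-bookkeeping in the algebraic cancellation, which is routine once the symmetry of $D$ is in hand. The only genuinely delicate point is the fully degenerate case $\xi = \tilde{\xi}$ and $\eta = \tilde{\eta}$ simultaneously, where both right-hand terms carry a vanishing factor while $\bar{F}''$ is defined through its differentiated second branch; here one must be certain this finite value cannot spoil the limit. This is exactly what the continuity supplied by Lemma \ref{lem:4.1} secures, so the single continuity extension handles every degenerate stratum uniformly and no separate case-by-case verification is needed.
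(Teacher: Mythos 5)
Your proof is correct. Note, however, that the paper does not prove this lemma at all: it is imported verbatim as \cite[Proposition 2.5]{YOSHIKAWA2017394}, so there is no in-paper argument to compare against; your proposal supplies a proof where the paper supplies a citation. Your route — verify the identity on the generic open dense set $U$ where both $\bar{F}''$-factors are given by their quotient branch, using the symmetry $\frac{dF}{d(u,v)} = \frac{dF}{d(v,u)}$ to cancel the mixed terms, then extend to all of $\Omega^4$ by continuity of both sides (the left side via the representation $\frac{dF}{d(u,v)} = \int_0^1 F'(v+t(u-v))\,dt$, legitimate since a domain in $\mathbb{R}$ is an interval; the right side via Lemma \ref{lem:4.1}) — is sound, and every step checks out. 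Still, the continuity machinery buys you nothing here: the degenerate strata can be settled directly from the definitions in two lines. If $\xi = \tilde{\xi}$ and $\eta \neq \tilde{\eta}$, the first term on the right vanishes because the factor $(\xi - \tilde{\xi})$ is zero while $\bar{F}''(\xi,\xi;\eta,\tilde{\eta})$ is finite, and the second term computes directly from the quotient branch:
\begin{equation*}
\frac{1}{2}\bar{F}''(\eta,\tilde{\eta};\xi,\xi)(\eta-\tilde{\eta})
= \frac{1}{2}\left\{ 2\frac{dF}{d(\eta,\xi)} - 2\frac{dF}{d(\tilde{\eta},\xi)} \right\}
= \frac{dF}{d(\xi,\eta)} - \frac{dF}{d(\xi,\tilde{\eta})},
\end{equation*}
which is the left-hand side; the case $\eta = \tilde{\eta}$, $\xi \neq \tilde{\xi}$ is symmetric. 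Moreover, the point you flag as "the only genuinely delicate" one — $\xi=\tilde{\xi}$ and $\eta=\tilde{\eta}$ simultaneously — is in fact the most trivial: both factors vanish, both $\bar{F}''$-values are finite (the second branch is well defined for $F\in C^2$), so the right side is identically $0$, and the left side is $\frac{dF}{d(\xi,\eta)}-\frac{dF}{d(\xi,\eta)}=0$; no limit is taken and nothing can "spoil" it. So the direct case-by-case proof is shorter, avoids invoking Lemma \ref{lem:4.1} and the integral representation entirely, and uses $F\in C^2$ only to make the diagonal branch of $\bar{F}''$ well defined — whereas your version is a clean, slightly heavier argument that trades the two easy boundary cases for one uniform continuity extension.
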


\subsection{Auxiliary lemmas}

\begin{lem}\label{e_eq}
Let us assume (A1) and (A2). 
Then, we obtain the following equations on the errors: 
\begin{gather}
	\begin{split}
	\frac{e_{\eta,k}^{(j+1)} - e_{\eta,k}^{(j)}}{\Delta t} 
		= & \kappa_{0}^{2}\delta_{k}^{\langle 2 \rangle}e_{\eta,k}^{(j+1)} - c e_{\eta,k}^{(j+1)} \\
		& - \frac{\kappa H_{k}^{(j+1)}}{2}\left( \frac{ d\gamma_{\varepsilon}} { d(\delta_{k}^{+}\Theta_{k}^{(j)}, \delta_{k}^{+}\tilde{\theta}_{k}^{(j)}) }\delta_{k}^{+}e_{\theta,k}^{(j)} + \frac{ d\gamma_{\varepsilon} }{ d(\delta_{k}^{-}\Theta_{k}^{(j)}, \delta_{k}^{-}\tilde{\theta}_{k}^{(j)}) }\delta_{k}^{-}e_{\theta,k}^{(j)} \!\right) \\
		& - \! \kappa e_{\eta,k}^{(j+1)}\frac{ \gamma_{\varepsilon}\bigl( \delta_{k}^{+}\tilde{\theta}_{k}^{(j)}\bigr) \! + \! \gamma_{\varepsilon}\bigl( \delta_{k}^{-}\tilde{\theta}_{k}^{(j)}\bigr) }{2} \! + \! \xi_{1-3, k}^{(j+1)} \quad (k = 0,\ldots, K,\ j = 0,\ldots, N \! - \! 1), 
	\end{split} \label{e_eta_eq}
\end{gather}
\begin{gather}
	\begin{split}
	& \alpha_{0,k}^{(j+1)}\frac{e_{\theta,k}^{(j+1)} - e_{\theta,k}^{(j)} }{\Delta t} \\
		= & \frac{\kappa}{2}
		\left[\delta_{k}^{+}
			\left\{
				\frac{H_{k}^{(j+1)} + \eta_{k}^{(j+1)}}{2} \gamma_{\varepsilon}' (\delta_{k}^{-}\Theta_{k}^{(j+1)})e_{\eta,k}^{(j+1)} 
				+ \alpha\left( \eta_{k}^{(j+1)} \right) \frac{ d\gamma_{\varepsilon}' }{ d(\delta_{k}^{-}\Theta_{k}^{(j+1)}, \delta_{k}^{-}\tilde{\theta}_{k}^{(j+1)}) }\delta_{k}^{-}e_{\theta,k}^{(j+1)}
			\right\} \right.\\
		& \left. + \delta_{k}^{-}
			\left\{
				\frac{H_{k}^{(j+1)} + \eta_{k}^{(j+1)}}{2} \gamma_{\varepsilon}' (\delta_{k}^{+}\Theta_{k}^{(j+1)})e_{\eta,k}^{(j+1)} 
				+ \alpha\left( \eta_{k}^{(j+1)} \right) \frac{ d\gamma_{\varepsilon}' }{ d(\delta_{k}^{+}\Theta_{k}^{(j+1)}, \delta_{k}^{+}\tilde{\theta}_{k}^{(j+1)}) }\delta_{k}^{+}e_{\theta,k}^{(j+1)}
			\right\}
		\right] \\
		& + \nu^{2}\delta_{k}^{\langle 2 \rangle}e_{\theta,k}^{(j+1)} + \xi_{4-6, k}^{(j+1)} \quad (k = 0,\ldots, K,\ j = 0,\ldots, N \! - \! 1), 
	\end{split} \label{e_theta_eq}\\
	\delta_{k}^{\langle 1 \rangle}e_{\eta,k}^{(j)} = 0 \quad (k=0,K,\ j=0,\ldots, N), \label{e_eta_nbc}\\
	\delta_{k}^{\langle 1 \rangle}e_{\theta,k}^{(j)} = 0 \quad (k=0,K,\ j=0,\ldots, N), \label{e_theta_nbc}
\end{gather}
where $\xi_{1-3}$ and $\xi_{4-6}$ are defined as follows: 
\begin{gather}
	\xi_{1-3, k}^{(j+1)} := \sum_{i=1}^{3}\xi_{i, k}^{(j+1)} \quad (k=0, \ldots, K), \notag\\
	\xi_{1, k}^{(j+1)} := \partial_{t}\eta_{k}^{( j + 1)} - \frac{\eta_{k}^{(j+1)} - \eta_{k}^{(j)}}{\Delta t} \quad (k=0, \ldots, K), \label{xi1}\\
	\xi_{2, k}^{(j+1)} := \kappa_{0}^{2}\left( \delta_{k}^{\langle 2 \rangle}\tilde{\eta}_{k}^{(j+1)} - \partial_{x}^{2}\eta_{k}^{( j + 1)} \right) \quad (k=0, \ldots, K), \label{xi2}\\
	\xi_{3, k}^{(j+1)} := - \kappa \eta_{k}^{(j+1)}\left(\frac{ \gamma_{\varepsilon}\bigl( \delta_{k}^{+}\tilde{\theta}_{k}^{(j)}\bigr) + \gamma_{\varepsilon}\bigl( \delta_{k}^{-}\tilde{\theta}_{k}^{(j)}\bigr) }{2} - \gamma_{\varepsilon}\bigl( \partial_{x}\theta_{k}^{(j+1)}\bigr) \right) \quad (k=0, \ldots, K), \label{xi3}\\
	\xi_{4-6, k}^{(j+1)} := \sum_{i=4}^{6}\xi_{i, k}^{(j+1)} \quad (k=0, \ldots, K), \notag\\
	\xi_{4, k}^{(j+1)} := - \alpha_{0,k}^{(j+1)}\left(\frac{ \theta_{k}^{(j+1)} - \theta_{k}^{(j)} }{\Delta t} - \partial_{t}\theta_{k}^{(j+1)}\right) \quad (k=0, \ldots, K), \label{xi4}\\
	\xi_{5, k}^{(j+1)} := \nu^{2}\left(\delta_{k}^{\langle 2 \rangle}\tilde{\theta}_{k}^{(j+1)} - \partial_{x}^{2}\theta_{k}^{( j + 1)}\right) \quad (k=0, \ldots, K), \label{xi5}\\
	\begin{split}
	\xi_{6, k}^{(j+1)} 
		& := \frac{\kappa}{2}\left\{ \delta_{k}^{+}\!\left( \alpha\!\left( \tilde{\eta}_{k}^{(j+1)} \right)\!\gamma_{\varepsilon}' (\delta_{k}^{-}\tilde{\theta}_{k}^{(j+1)}) \right) + \delta_{k}^{-}\!\left( \alpha\!\left( \tilde{\eta}_{k}^{(j+1)} \right)\!\gamma_{\varepsilon}'(\delta_{k}^{+}\tilde{\theta}_{k}^{(j+1)})\right) \right\} \\
		& \quad - \kappa\partial_{x}\!\left( \alpha( \eta_{k}^{(j+1)} ) \gamma_{\varepsilon}'(\partial_{x}\theta_{k}^{(j+1)}) \right) \quad (k=0, \ldots, K).
	\end{split} \label{xi6}
\end{gather}
\end{lem}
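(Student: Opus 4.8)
The plan is to obtain the error equations by subtracting, from the scheme \eqref{sc1}--\eqref{sc2}, a ``scheme-consistent'' identity satisfied by the exact extended solution, and then to linearize the resulting nonlinear differences by means of the difference quotient together with the quadratic structure of $\alpha$. Throughout I use freely that $\tilde{\eta}_{k}^{(j)} = \eta_{k}^{(j)}$ and $\tilde{\theta}_{k}^{(j)} = \theta_{k}^{(j)}$ at interior nodes, together with the linearity of $\delta_{k}^{+}$, $\delta_{k}^{-}$, $\delta_{k}^{\langle 1\rangle}$, $\delta_{k}^{\langle 2\rangle}$.

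First I would derive the identity for $\tilde{\eta}$. Evaluating \eqref{1} at the grid point $((j+1)\Delta t, k\Delta x)$ and using \eqref{xi1}, \eqref{xi2}, \eqref{xi3} to trade each continuous operator for its discrete counterpart plus a residual, I obtain that $\tilde{\eta}$ satisfies \eqref{sc1} with $H$ replaced by $\tilde{\eta}$, $\Theta$ replaced by $\tilde{\theta}$, and an added term $-\xi_{1-3,k}^{(j+1)}$. Subtracting this identity from \eqref{sc1} itself, the linear parts immediately yield $\kappa_{0}^{2}\delta_{k}^{\langle 2\rangle}e_{\eta,k}^{(j+1)} - c\,e_{\eta,k}^{(j+1)}$. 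For the reaction term I add and subtract $H_{k}^{(j+1)}\bigl(\gamma_{\varepsilon}(\delta_{k}^{+}\tilde{\theta}_{k}^{(j)}) + \gamma_{\varepsilon}(\delta_{k}^{-}\tilde{\theta}_{k}^{(j)})\bigr)/2$: the resulting $\tilde{\eta}$-difference produces $-\kappa e_{\eta,k}^{(j+1)}\bigl(\gamma_{\varepsilon}(\delta_{k}^{+}\tilde{\theta}_{k}^{(j)}) + \gamma_{\varepsilon}(\delta_{k}^{-}\tilde{\theta}_{k}^{(j)})\bigr)/2$, while the $\gamma_{\varepsilon}$-difference linearizes through the difference quotient as $\gamma_{\varepsilon}(\delta_{k}^{\pm}\Theta_{k}^{(j)}) - \gamma_{\varepsilon}(\delta_{k}^{\pm}\tilde{\theta}_{k}^{(j)}) = \tfrac{d\gamma_{\varepsilon}}{d(\delta_{k}^{\pm}\Theta_{k}^{(j)}, \delta_{k}^{\pm}\tilde{\theta}_{k}^{(j)})}\,\delta_{k}^{\pm}e_{\theta,k}^{(j)}$. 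Collecting the signs (scheme minus exact turns the $-\xi_{1-3,k}^{(j+1)}$ into $+\xi_{1-3,k}^{(j+1)}$) gives exactly \eqref{e_eta_eq}.

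The equation \eqref{e_theta_eq} is obtained in the same way, starting from the $\tilde{\theta}$-analogue of \eqref{sc2} built from \eqref{xi4}, \eqref{xi5}, \eqref{xi6} and subtracting it from \eqref{sc2}. The only delicate point---and the main obstacle---is the algebra inside each $\delta_{k}^{\pm}$ of the flux difference. Consider the $\delta_{k}^{+}$-part, namely $\alpha(H_{k}^{(j+1)})\gamma_{\varepsilon}'(\delta_{k}^{-}\Theta_{k}^{(j+1)}) - \alpha(\tilde{\eta}_{k}^{(j+1)})\gamma_{\varepsilon}'(\delta_{k}^{-}\tilde{\theta}_{k}^{(j+1)})$. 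Here I add and subtract $\alpha(\tilde{\eta}_{k}^{(j+1)})\gamma_{\varepsilon}'(\delta_{k}^{-}\Theta_{k}^{(j+1)})$ and treat the two pieces separately: the $\alpha$-difference factors exactly because $\alpha$ is quadratic, giving $\alpha(H_{k}^{(j+1)}) - \alpha(\tilde{\eta}_{k}^{(j+1)}) = \tfrac{H_{k}^{(j+1)} + \eta_{k}^{(j+1)}}{2}e_{\eta,k}^{(j+1)}$, and the $\gamma_{\varepsilon}'$-difference linearizes through the difference quotient as $\tfrac{d\gamma_{\varepsilon}'}{d(\delta_{k}^{-}\Theta_{k}^{(j+1)}, \delta_{k}^{-}\tilde{\theta}_{k}^{(j+1)})}\,\delta_{k}^{-}e_{\theta,k}^{(j+1)}$. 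The $\delta_{k}^{-}$-part is entirely symmetric, with the roles of $+$ and $-$ exchanged inside. Care must be taken that, after applying the outer difference operators, these pieces reassemble precisely into the bracketed expression in \eqref{e_theta_eq}; this accounting of the two add-and-subtract splittings is where the calculation is most error-prone.

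Finally, the discrete Neumann conditions \eqref{e_eta_nbc} and \eqref{e_theta_nbc} follow by linearity, since at $k = 0, K$ one has $\delta_{k}^{\langle 1\rangle}e_{\eta,k}^{(j)} = \delta_{k}^{\langle 1\rangle}H_{k}^{(j)} - \delta_{k}^{\langle 1\rangle}\tilde{\eta}_{k}^{(j)}$, where the first term vanishes by the scheme boundary condition \eqref{bc1} and the second vanishes by the reflection symmetry built into the extension \eqref{def_ext}; the same argument applies to $e_{\theta}$ using \eqref{bc2}. The remaining work is routine bookkeeping of the residual terms $\xi_{1}$ through $\xi_{6}$.
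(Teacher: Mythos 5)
Your proposal is correct and follows essentially the same route as the paper: both subtract a consistency identity for the extended exact solution from the scheme, split the nonlinear terms by the same add-and-subtract decompositions, linearize the $\gamma_\varepsilon$ and $\gamma_\varepsilon'$ differences via the difference quotient, use the exact quadratic factorization $\alpha(H)-\alpha(\tilde\eta)=\tfrac{H+\tilde\eta}{2}e_\eta$, and obtain the discrete Neumann conditions for the errors from \eqref{bc1}--\eqref{bc2} together with the reflection symmetry of the extensions. The only difference is presentational (you phrase it as ``scheme minus residual identity'' while the paper starts from the error difference quotient and substitutes), which is algebraically the same computation.
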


\begin{proof}
	For any fixed $j=0,1,\ldots, N-1$, from the definition of $\bm{e}_{\eta}$, we have 
	\begin{align*}
	\frac{e_{\eta,k}^{(j+1)} - e_{\eta,k}^{(j)}}{\Delta t} 
		& = \frac{H_{k}^{(j+1)} - H_{k}^{(j)}}{\Delta t} - \frac{\eta_{k}^{(j+1)} - \eta_{k}^{(j)}}{\Delta t} \nonumber\\
		& = \frac{H_{k}^{(j+1)} - H_{k}^{(j)}}{\Delta t} - \partial_{t}\eta_{k}^{( j + 1)} + \partial_{t}\eta_{k}^{( j + 1)} - \frac{\eta_{k}^{(j+1)} - \eta_{k}^{(n)}}{\Delta t} \nonumber\\
		& = \kappa_{0}^{2}\delta_{k}^{\langle 2 \rangle}H_{k}^{(j+1)} - c( H_{k}^{(j+1)} - 1) - \kappa H_{k}^{(j+1)}\frac{ \gamma_{\varepsilon}\bigl( \delta_{k}^{+}\Theta_{k}^{(j)}\bigr) + \gamma_{\varepsilon}\bigl( \delta_{k}^{-}\Theta_{k}^{(j)}\bigr)  }{2} \nonumber\\
		& \quad  - \kappa_{0}^{2}\partial_{x}^{2}\eta_{k}^{( j + 1)} + c (\eta_{k}^{( j + 1)} - 1) + \kappa \eta_{k}^{( j + 1)} \gamma_{\varepsilon}\bigl( \partial_{x}\theta_{k}^{(j+1)}\bigr) 
		+ \xi_{1, k}^{(j+1)}. 
	\end{align*}
	Moreover, by direct calculation, we check that 
	\begin{gather*}
		c(H_{k}^{(j+1)} - 1) - c(\eta_{k}^{( j + 1)} - 1) = ce_{\eta,k}^{(j+1)} \quad (k=0, \ldots, K), \\
		\kappa_{0}^{2}(\delta_{k}^{\langle 2 \rangle}H_{k}^{(j+1)} - \partial_{x}^{2}\eta_{k}^{( j + 1)})
		= \kappa_{0}^{2}\left\{\delta_{k}^{\langle 2 \rangle}( e_{\eta,k}^{(j+1)} + \tilde{\eta}_{k}^{(j+1)} ) - \partial_{x}^{2}\eta_{k}^{( j + 1)} \right\} 
		= \kappa_{0}^{2}\delta_{k}^{\langle 2 \rangle}e_{\eta,k}^{(j+1)} + \xi_{2, k}^{(j+1)} 
	\end{gather*}
	for $k=0, \ldots, K$. 
	Furthermore, it follows from the property of the difference quotient that 
	\begin{align*}
		& \kappa\left\{ H_{k}^{(j+1)}\frac{ \gamma_{\varepsilon}\bigl( \delta_{k}^{+}\Theta_{k}^{(j)}\bigr) + \gamma_{\varepsilon}\bigl( \delta_{k}^{-}\Theta_{k}^{(j)}\bigr) }{2} - \eta_{k}^{( j + 1)} \gamma_{\varepsilon}\bigl( \partial_{x}\theta_{k}^{(j+1)}\bigr) \right\} \\
		= & \kappa\left\{  H_{k}^{(j+1)}\frac{ \gamma_{\varepsilon}\bigl( \delta_{k}^{+}\Theta_{k}^{(j)}\bigr) + \gamma_{\varepsilon}\bigl( \delta_{k}^{-}\Theta_{k}^{(j)}\bigr) }{2} - \eta_{k}^{(j+1)}\frac{ \gamma_{\varepsilon}\bigl( \delta_{k}^{+}\tilde{\theta}_{k}^{(j)}\bigr) + \gamma_{\varepsilon}\bigl( \delta_{k}^{-}\tilde{\theta}_{k}^{(j)}\bigr) }{2} \right. \\
		& \left. + \eta_{k}^{(j+1)}\frac{ \gamma_{\varepsilon}\bigl( \delta_{k}^{+}\tilde{\theta}_{k}^{(j)}\bigr) + \gamma_{\varepsilon}\bigl( \delta_{k}^{-}\tilde{\theta}_{k}^{(j)}\bigr) }{2} - \eta_{k}^{( j + 1)} \gamma_{\varepsilon}\bigl( \partial_{x}\theta_{k}^{(j+1)}\bigr) \right\} \\
		= & \kappa H_{k}^{(j+1)}\frac{ \gamma_{\varepsilon}\bigl( \delta_{k}^{+}\Theta_{k}^{(j)}\bigr) + \gamma_{\varepsilon}\bigl( \delta_{k}^{-}\Theta_{k}^{(j)}\bigr) - \gamma_{\varepsilon}\bigl( \delta_{k}^{+}\tilde{\theta}_{k}^{(j)}\bigr) - \gamma_{\varepsilon}\bigl( \delta_{k}^{-}\tilde{\theta}_{k}^{(j)}\bigr) }{2} \\
		& + \kappa \left(H_{k}^{(j+1)} - \eta_{k}^{(j+1)}\right)\frac{ \gamma_{\varepsilon}\bigl( \delta_{k}^{+}\tilde{\theta}_{k}^{(j)}\bigr) + \gamma_{\varepsilon}\bigl( \delta_{k}^{-}\tilde{\theta}_{k}^{(j)}\bigr) }{2} - \xi_{3, k}^{(j+1)} \\
		= & \kappa\frac{H_{k}^{(j+1)}}{2}\left( \frac{ d\gamma_{\varepsilon} }{ d(\delta_{k}^{+}\Theta_{k}^{(j)}, \delta_{k}^{+}\tilde{\theta}_{k}^{(j)}) }\delta_{k}^{+}e_{\theta,k}^{(j)} + \frac{ d\gamma_{\varepsilon} }{ d(\delta_{k}^{-}\Theta_{k}^{(j)}, \delta_{k}^{-}\tilde{\theta}_{k}^{(j)}) }\delta_{k}^{-}e_{\theta,k}^{(j)} \right) \\
		& + \kappa e_{\eta,k}^{(j+1)}\frac{ \gamma_{\varepsilon}\bigl( \delta_{k}^{+}\tilde{\theta}_{k}^{(j)}\bigr) + \gamma_{\varepsilon}\bigl( \delta_{k}^{-}\tilde{\theta}_{k}^{(j)}\bigr) }{2} - \xi_{3, k}^{(j+1)} \quad (k=0, \ldots, K). 
	\end{align*}
	From the above, we obtain \eqref{e_eta_eq}. 
	Similarly, from the definition of $\bm{e}_{\theta}$, we have 
	\begin{gather*}
		\alpha_{0,k}^{(j+1)}\frac{\Theta_{k}^{(j+1)} - \Theta_{k}^{(j)} }{\Delta t} - \alpha_{0,k}^{(j+1)} \partial_{t}\theta_{k}^{(j+1)} 
		= \alpha_{0,k}^{(j+1)}\frac{e_{\theta,k}^{(j+1)} - e_{\theta,k}^{(j)} }{\Delta t} - \xi_{4, k}^{(j+1)} \quad (k=0, \ldots, K), \\ 
		\nu^{2}( \delta_{k}^{\langle 2 \rangle}\Theta_{k}^{(j+1)} - \partial_{x}^{2}\theta_{k}^{( j + 1)} )
		= \nu^{2}\delta_{k}^{\langle 2 \rangle}e_{\theta,k}^{(j+1)} + \xi_{5, k}^{(j+1)} \quad (k=0, \ldots, K). 
	\end{gather*}
	In addition, by direct calculation, we see that 
	\begin{align*}
	& \frac{\kappa}{2}\!\left\{\! \delta_{k}^{+}\!\left(\! \alpha\!\left(\! H_{k}^{(j+1)} \!\right)\! \gamma_{\varepsilon}' (\delta_{k}^{-}\Theta_{k}^{(j+1)}) \!\right) \! + \! \delta_{k}^{-}\!\left(\! \alpha\!\left(\! H_{k}^{(j+1)} \!\right)\! \gamma_{\varepsilon}'(\delta_{k}^{+}\Theta_{k}^{(j+1)}) \!\right) \!\right\}
	\!\! - \! \kappa\partial_{x}\!\!\left(\! \alpha( \eta_{k}^{(j+1)} ) \gamma_{\varepsilon}'(\partial_{x}\theta_{k}^{(j+1)}) \!\right) \\
		= & \frac{\kappa}{2}\left\{ \delta_{k}^{+}\left( \alpha\left( H_{k}^{(j+1)} \right)\gamma_{\varepsilon}' (\delta_{k}^{-}\Theta_{k}^{(j+1)}) \right) + \delta_{k}^{-}\left( \alpha\left( H_{k}^{(j+1)} \right)\gamma_{\varepsilon}'(\delta_{k}^{+}\Theta_{k}^{(j+1)})\right) \right\} \\
		& - \frac{\kappa}{2}\left\{ \delta_{k}^{+}\left( \alpha\left( \tilde{\eta}_{k}^{(j+1)} \right)\gamma_{\varepsilon}' (\delta_{k}^{-}\tilde{\theta}_{k}^{(j+1)}) \right) + \delta_{k}^{-}\left( \alpha\left( \tilde{\eta}_{k}^{(j+1)} \right)\gamma_{\varepsilon}'(\delta_{k}^{+}\tilde{\theta}_{k}^{(j+1)})\right) \right\} + \xi_{6, k}^{(j+1)} \\
		= & \frac{\kappa}{2}
		\left[\delta_{k}^{+}\!
		\left\{
			\left(\! \alpha\!\left(\! H_{k}^{(j+1)} \!\right) \! - \! \alpha\!\left(\! \tilde{\eta}_{k}^{(j+1)} \!\right) \right)\! \gamma_{\varepsilon}' (\delta_{k}^{-}\Theta_{k}^{(j+1)}) 
			+ \alpha\!\left(\! \tilde{\eta}_{k}^{(j+1)} \!\right) \! \left(\! \gamma_{\varepsilon}' (\delta_{k}^{-}\Theta_{k}^{(j+1)}) - \gamma_{\varepsilon}' (\delta_{k}^{-}\tilde{\theta}_{k}^{(j+1)}) \!\right)
		\right\} \right.\\
		& \left. + \delta_{k}^{-}\!
		\left\{
			\left(\! \alpha\!\left(\! H_{k}^{(j+1)} \!\right) \! - \! \alpha\!\left(\! \tilde{\eta}_{k}^{(j+1)} \!\right) \right)\! \gamma_{\varepsilon}' (\delta_{k}^{+}\Theta_{k}^{(j+1)}) 
			+ \alpha\!\left(\! \tilde{\eta}_{k}^{(j+1)} \!\right) \! \left(\! \gamma_{\varepsilon}' (\delta_{k}^{+}\Theta_{k}^{(j+1)}) - \gamma_{\varepsilon}' (\delta_{k}^{+}\tilde{\theta}_{k}^{(j+1)}) \!\right)
		\right\}
		\right] \\
		& + \xi_{6, k}^{(j+1)}
	\end{align*}
	\begin{align*}
			= & \frac{\kappa}{2}
			\left[\delta_{k}^{+}
			\left\{
				\frac{H_{k}^{(j+1)} + \tilde{\eta}_{k}^{(j+1)}}{2} \gamma_{\varepsilon}' (\delta_{k}^{-}\Theta_{k}^{(j+1)})e_{\eta,k}^{(j+1)} 
				+ \alpha\left( \tilde{\eta}_{k}^{(j+1)} \right) \frac{ d\gamma_{\varepsilon}' }{ d(\delta_{k}^{-}\Theta_{k}^{(j+1)}, \delta_{k}^{-}\tilde{\theta}_{k}^{(j+1)}) }\delta_{k}^{-}e_{\theta,k}^{(j+1)}
			\right\} \right.\\
			& \left. + \delta_{k}^{-}
			\left\{
				\frac{H_{k}^{(j+1)} + \tilde{\eta}_{k}^{(j+1)}}{2} \gamma_{\varepsilon}' (\delta_{k}^{+}\Theta_{k}^{(j+1)})e_{\eta,k}^{(j+1)} 
				+ \alpha\left( \tilde{\eta}_{k}^{(j+1)} \right) \frac{ d\gamma_{\varepsilon}' }{ d(\delta_{k}^{+}\Theta_{k}^{(j+1)}, \delta_{k}^{+}\tilde{\theta}_{k}^{(j+1)}) }\delta_{k}^{+}e_{\theta,k}^{(j+1)}
			\right\}
			\right] \\
			& + \xi_{6, k}^{(j+1)} \quad (k=0, \ldots, K). 
	\end{align*}
	From the above, we obtain \eqref{e_theta_eq}. 
	Lastly, it holds from the definitions of $\bm{e}_{\eta}$ and $\bm{e}_{\theta}$, \eqref{bc1}, \eqref{bc2}, and the definitions of $\tilde{\eta}$ and $\tilde{\theta}$ that 
	\begin{gather*}
		0 = \delta_{k}^{\langle 1 \rangle}H_{k}^{(j)} 
		= \delta_{k}^{\langle 1 \rangle}\!\left(\! e_{\eta,k}^{(j)} + \tilde{\eta}_{k}^{( j )} \!\right) 
		= \delta_{k}^{\langle 1 \rangle}e_{\eta,k}^{(j)} + \delta_{k}^{\langle 1 \rangle}\tilde{\eta}_{k}^{( j )} 
		= \delta_{k}^{\langle 1 \rangle}e_{\eta,k}^{(j)} \quad (k=0,K,\ j=0,1,\ldots, N), \\
		0 = \delta_{k}^{\langle 1 \rangle}\Theta_{k}^{(j)} 
		= \delta_{k}^{\langle 1 \rangle}\!\left( e_{\theta,k}^{(j)} + \tilde{\theta}_{k}^{( j )}\right) 
		= \delta_{k}^{\langle 1 \rangle}e_{\theta,k}^{(j)} + \delta_{k}^{\langle 1 \rangle}\tilde{\theta}_{k}^{( j )} 
		= \delta_{k}^{\langle 1 \rangle}e_{\theta,k}^{(j)} \quad (k=0,K,\ j=0,1,\ldots, N). 
	\end{gather*}
\end{proof}

\begin{lem}\label{e_eta_est}
Denotes the bounds by 
\begin{equation}
	\max_{0 \leq j \leq N}\left\{ \left\| \bm{H}^{(j)}\right\|_{L_{\rm d}^{\infty}}, \left\| \bm{\eta}^{(j)}\right\|_{L_{\rm d}^{\infty}}  \right\} \leq C_{1}, \label{eta_bound}
\end{equation}
where $\bm{\eta}^{(j)} := \{\eta_{k}^{(j)}\}_{k = 0}^{K} = \{\eta(j\Delta t, k\Delta x)\}_{k = 0}^{K} \in \mathbb{R}^{K+1}$.
Then, the following inequality holds:
\begin{align}
	\frac{1}{2\Delta t}\left( \left\| \bm{e}_{\eta}^{(j+1)}\right\|_{ L_{\rm d}^{2} }^{2} - \left\| \bm{e}_{\eta}^{(j)}\right\|_{ L_{\rm d}^{2} }^{2} \right)
	\leq & \left\{ \frac{1}{2}\left(\frac{\kappa C_{1} }{\nu}\right)^{2} - \left(\kappa\varepsilon + \frac{c}{2}\right)\right\} \left\| \bm{e}_{\eta}^{(j+1)}\right\|_{ L_{\rm d}^{2} }^{2} \notag\\
	& + \frac{\nu^{2} }{2}\left\| D\bm{e}_{\theta}^{(j)} \right\|^{2} + \frac{1}{2c}\left\| \bm{\xi}_{1-3}^{(j+1)} \right\|_{ L_{\rm d}^{2} }^{2} \quad (j = 0,\ldots, N-1). \label{err_eta_est2}
\end{align}
\end{lem}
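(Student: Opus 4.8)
The plan is to test the error equation \eqref{e_eta_eq} against $e_{\eta,k}^{(j+1)}$ itself: I would multiply both sides of \eqref{e_eta_eq} by $e_{\eta,k}^{(j+1)}\Delta x$ and sum over $k=0,\ldots,K$ with the trapezoidal operator $\sum_{k=0}^{K}{}^{\prime\prime}$. On the left-hand side I would invoke the elementary inequality \eqref{tool_ineq} with $a=e_{\eta,k}^{(j+1)}$ and $b=e_{\eta,k}^{(j)}$ to obtain the lower bound $\frac{1}{2\Delta t}\bigl(\|\bm{e}_\eta^{(j+1)}\|_{L_{\rm d}^2}^2-\|\bm{e}_\eta^{(j)}\|_{L_{\rm d}^2}^2\bigr)$. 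Since \eqref{e_eta_eq} is an equality, it then suffices to bound the tested right-hand side from above by the three advertised terms.

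The right-hand side splits into four contributions. The diffusion term $\kappa_0^2\sum_{k=0}^{K}{}^{\prime\prime}(\delta_k^{\langle 2 \rangle}e_{\eta,k}^{(j+1)})e_{\eta,k}^{(j+1)}\Delta x$ is handled by the summation-by-parts formula (Lemma \ref{sbp5}) together with the discrete Neumann condition \eqref{e_eta_nbc}: the boundary term vanishes and what remains, $-\kappa_0^2\sum_{k=0}^{K-1}|\delta_k^+ e_{\eta,k}^{(j+1)}|^2\Delta x\le 0$, is simply discarded. The linear term contributes exactly $-c\|\bm{e}_\eta^{(j+1)}\|_{L_{\rm d}^2}^2$, and the term carrying $\gamma_\varepsilon(\delta_k^\pm\tilde{\theta}_k^{(j)})$ is bounded above by $-\kappa\varepsilon\|\bm{e}_\eta^{(j+1)}\|_{L_{\rm d}^2}^2$ using only $\gamma_\varepsilon\ge\varepsilon$. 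The residual term is treated by Young's inequality in the form $\xi_{1-3,k}^{(j+1)}e_{\eta,k}^{(j+1)}\le \frac{1}{2c}|\xi_{1-3,k}^{(j+1)}|^2+\frac{c}{2}|e_{\eta,k}^{(j+1)}|^2$, producing the residual norm $\frac{1}{2c}\|\bm{\xi}_{1-3}^{(j+1)}\|_{L_{\rm d}^2}^2$ together with a controllable excess $\frac{c}{2}\|\bm{e}_\eta^{(j+1)}\|_{L_{\rm d}^2}^2$.

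The delicate contribution is the cross term coupling $\delta_k^\pm e_{\theta,k}^{(j)}$ into the $\eta$-error. Here I would use the uniform bound $|H_k^{(j+1)}|\le C_1$ from \eqref{eta_bound} and the difference-quotient estimate $|d\gamma_\varepsilon/d(a,b)|\le 1$ from \eqref{diff_f} to reduce it to $\frac{\kappa C_1}{2}\sum_{k=0}^{K}{}^{\prime\prime}(|\delta_k^+ e_{\theta,k}^{(j)}|+|\delta_k^- e_{\theta,k}^{(j)}|)|e_{\eta,k}^{(j+1)}|\Delta x$. Applying Young's inequality with the weight $\lambda=\nu^2/(\kappa C_1)$ separates this into a part controlled by $\frac12(\kappa C_1/\nu)^2\|\bm{e}_\eta^{(j+1)}\|_{L_{\rm d}^2}^2$ and a part whose $|\delta_k^+|^2+|\delta_k^-|^2$ structure is converted, via the identity $\sum_{k=0}^{K}{}^{\prime\prime}\frac{|\delta_k^+ f_k|^2+|\delta_k^- f_k|^2}{2}\Delta x=\sum_{k=0}^{K-1}|\delta_k^+ f_k|^2\Delta x$ valid under the discrete Neumann condition \eqref{e_theta_nbc}, into exactly $\frac{\nu^2}{2}\|D\bm{e}_\theta^{(j)}\|^2$. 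Choosing this particular $\lambda$ is what makes the $\|\bm{e}_\eta^{(j+1)}\|_{L_{\rm d}^2}^2$ coefficient come out as $\frac12(\kappa C_1/\nu)^2$; collecting all the $\|\bm{e}_\eta^{(j+1)}\|_{L_{\rm d}^2}^2$ coefficients, namely $-c$, $+\frac12(\kappa C_1/\nu)^2$, $-\kappa\varepsilon$, and $+\frac{c}{2}$, then yields the stated coefficient $\frac12(\kappa C_1/\nu)^2-(\kappa\varepsilon+\frac{c}{2})$. The main obstacle is precisely this balancing: tuning the Young parameter so that the gradient part lands exactly on $\frac{\nu^2}{2}\|D\bm{e}_\theta^{(j)}\|^2$ while the zeroth-order part stays within the target coefficient.
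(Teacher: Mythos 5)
Your proposal is correct and follows essentially the same route as the paper's proof: testing \eqref{e_eta_eq} against $e_{\eta,k}^{(j+1)}$, discarding the nonpositive diffusion term after summation by parts, keeping the $-c$ and $-\kappa\varepsilon$ contributions, and tuning the Young weight on the cross term so that the gradient part matches $\tfrac{\nu^2}{2}\|D\bm{e}_\theta^{(j)}\|^2$ and the remaining part carries the coefficient $\tfrac{1}{2}(\kappa C_1/\nu)^2$. The only cosmetic difference is that the paper applies the bound $|d\gamma_\varepsilon/d(\cdot,\cdot)|\le 1$ after Young's inequality rather than before, which changes nothing.
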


\begin{proof}
	Let us fix $j=0,1,\ldots, N-1$. 
	Also, let us multiply both sides of \eqref{e_eta_eq} by $e_{\eta,k}^{(j+1)}$ and sum each of the resulting equality over $k = 0,1, \ldots, K$. 
	Then, by using \eqref{tool_ineq}, we can estimate the left-hand side of the resulting equality as follows:
	\begin{align*}
	\sum_{k=0}^{K}{}^{\prime\prime}\frac{e_{\eta,k}^{(j+1)} - e_{\eta,k}^{(j)} }{\Delta t} e_{\eta,k}^{(j+1)}\Delta x
		& \geq \frac{1}{2\Delta t}\sum_{k=0}^{K}{}^{\prime\prime}\left| e_{\eta,k}^{(j+1)} \right|^{2}\Delta x
		- \frac{1}{2\Delta t}\sum_{k=0}^{K}{}^{\prime\prime}\left| e_{\eta,k}^{(j)} \right|^{2}\Delta x \\
		& = \frac{1}{2\Delta t}\left( \left\| \bm{e}_{\eta}^{(j+1)}\right\|_{ L_{\rm d}^{2} }^{2} - \left\| \bm{e}_{\eta}^{(j)}\right\|_{ L_{\rm d}^{2} }^{2} \right). 
	\end{align*} 
	Next, we estimate the right-hand side of the resulting equality. 
	It follows from \eqref{e_eta_nbc} and the summation by parts formula (Lemma \ref{sbp5}) that 
	\begin{align*}
	\kappa_{0}^{2}\sum_{k=0}^{K}\!{}^{\prime\prime}\!\left(\! \delta_{k}^{\langle 2 \rangle}e_{\eta,k}^{(j+1)} \!\right)\! e_{\eta,k}^{(j+1)} \! \Delta x
		= & - \kappa_{0}^{2}\sum_{k=0}^{K-1}\!\left| \delta_{k}^{+}e_{\eta,k}^{(j+1)} \right|^{2} \! \Delta x
		+ \kappa_{0}^{2}\left[ \left(\! \delta_{k}^{\langle 1 \rangle}e_{\eta,k}^{(j+1)} \!\right)\! e_{\eta,k}^{(j+1)} \right]_{0}^{K} \\
		= & - \kappa_{0}^{2}\!\left\| D\bm{e}_{\eta}^{(j+1)} \right\|^{2}. 
	\end{align*}
	From the definition of the discrete $L^{2}$-norm, we see that 
	\begin{equation*}
	- c\sum_{k=0}^{K}{}^{\prime\prime}\left| e_{\eta,k}^{(j+1)} \right|^{2} \Delta x
		= - c \left\| \bm{e}_{\eta}^{(j+1)} \right\|_{L_{\rm d}^{2}}^{2}. 
	\end{equation*}
	Furthermore, it holds from the inequality $\gamma_{\varepsilon}(u) = \sqrt{\varepsilon^{2} + u^{2}} \geq \varepsilon$ that 
	\begin{equation*}
	-\kappa\sum_{k=0}^{K}{}^{\prime\prime}\frac{ \gamma_{\varepsilon}\bigl( \delta_{k}^{+}\tilde{\theta}_{k}^{(j)}\bigr) + \gamma_{\varepsilon}\bigl( \delta_{k}^{-}\tilde{\theta}_{k}^{(j)}\bigr) }{2}\left| e_{\eta,k}^{(j+1)} \right|^{2} \Delta x
		\leq -\kappa\varepsilon\left\| \bm{e}_{\eta}^{(j+1)} \right\|_{L_{\rm d}^{2}}^{2}. 
	\end{equation*}
	By using \eqref{eta_bound} and the Young inequality, we have 
	\begin{align*}
	& -\frac{\kappa}{2}\sum_{k=0}^{K}{}^{\prime\prime}H_{k}^{(j+1)}\left( \frac{ d\gamma_{\varepsilon} }{ d(\delta_{k}^{+}\Theta_{k}^{(j)}, \delta_{k}^{+}\tilde{\theta}_{k}^{(j)}) }\delta_{k}^{+}e_{\theta,k}^{(j)} + \frac{ d\gamma_{\varepsilon} }{ d(\delta_{k}^{-}\Theta_{k}^{(j)}, \delta_{k}^{-}\tilde{\theta}_{k}^{(j)}) }\delta_{k}^{-}e_{\theta,k}^{(j)} \right) e_{\eta,k}^{(j+1)}\Delta x \\
		\leq & \frac{\kappa C_{1}}{2}\sum_{k=0}^{K}{}^{\prime\prime}\left| \frac{ d\gamma_{\varepsilon} }{ d(\delta_{k}^{+}\Theta_{k}^{(j)}, \delta_{k}^{+}\tilde{\theta}_{k}^{(j)}) }\delta_{k}^{+}e_{\theta,k}^{(j)} + \frac{ d\gamma_{\varepsilon} }{ d(\delta_{k}^{-}\Theta_{k}^{(j)}, \delta_{k}^{-}\tilde{\theta}_{k}^{(j)}) }\delta_{k}^{-}e_{\theta,k}^{(j)} \right| \left| e_{\eta,k}^{(j+1)} \right| \Delta x \\
		\leq & \frac{\kappa C_{1}}{2} \cdot \frac{\kappa C_{1}}{\nu^{2}}\left\| \bm{e}_{\eta}^{(j+1)} \right\|_{L_{\rm d}^{2}}^{2} \\
		& + \frac{\kappa C_{1}}{2} \cdot \frac{\nu^{2}}{4\kappa C_{1}}\sum_{k=0}^{K}{}^{\prime\prime}\left| \frac{ d\gamma_{\varepsilon} }{ d(\delta_{k}^{+}\Theta_{k}^{(j)}, \delta_{k}^{+}\tilde{\theta}_{k}^{(j)}) }\delta_{k}^{+}e_{\theta,k}^{(j)} + \frac{ d\gamma_{\varepsilon} }{ d(\delta_{k}^{-}\Theta_{k}^{(j)}, \delta_{k}^{-}\tilde{\theta}_{k}^{(j)}) }\delta_{k}^{-}e_{\theta,k}^{(j)} \right|^{2} \Delta x \\
		\leq & \frac{1}{2}\left( \frac{\kappa C_{1}}{\nu}\right)^{2}\left\| \bm{e}_{\eta}^{(j+1)} \right\|_{L_{\rm d}^{2}}^{2} \\
		& + \frac{\nu^{2}}{4}\sum_{k=0}^{K}{}^{\prime\prime}\left( \left| \frac{ d\gamma_{\varepsilon} }{ d(\delta_{k}^{+}\Theta_{k}^{(j)}, \delta_{k}^{+}\tilde{\theta}_{k}^{(j)}) }\right|^{2} \left|\delta_{k}^{+}e_{\theta,k}^{(j)}\right|^{2} + \left|\frac{ d\gamma_{\varepsilon} }{ d(\delta_{k}^{-}\Theta_{k}^{(j)}, \delta_{k}^{-}\tilde{\theta}_{k}^{(j)}) }\right|^{2} \left|\delta_{k}^{-}e_{\theta,k}^{(j)} \right|^{2} \right)\Delta x. 
	\end{align*}
	From \eqref{diff_f} and \eqref{e_theta_nbc}, we observe that 
	\begin{align*}
	& -\frac{\kappa}{2}\sum_{k=0}^{K}{}^{\prime\prime}H_{k}^{(j+1)}\left( \frac{ d\gamma_{\varepsilon} }{ d(\delta_{k}^{+}\Theta_{k}^{(j)}, \delta_{k}^{+}\tilde{\theta}_{k}^{(j)}) }\delta_{k}^{+}e_{\theta,k}^{(j)} + \frac{ d\gamma_{\varepsilon} }{ d(\delta_{k}^{-}\Theta_{k}^{(j)}, \delta_{k}^{-}\tilde{\theta}_{k}^{(j)}) }\delta_{k}^{-}e_{\theta,k}^{(j)} \right) e_{\eta,k}^{(j+1)}\Delta x \\
		\leq & \frac{1}{2}\left( \frac{\kappa C_{1}}{\nu}\right)^{2}\left\| \bm{e}_{\eta}^{(j+1)} \right\|_{L_{\rm d}^{2}}^{2}
		+ \frac{\nu^{2}}{2}\sum_{k=0}^{K}{}^{\prime\prime}\frac{ \left|\delta_{k}^{+}e_{\theta,k}^{(j)}\right|^{2} + \left|\delta_{k}^{-}e_{\theta,k}^{(j)} \right|^{2} }{2}\Delta x \\
		= & \frac{1}{2}\left( \frac{\kappa C_{1}}{\nu}\right)^{2}\left\| \bm{e}_{\eta}^{(j+1)} \right\|_{L_{\rm d}^{2}}^{2}
		+ \frac{\nu^{2}}{2}\left\| D\bm{e}_{\theta}^{(j)} \right\|^{2}. 
	\end{align*}
	Again from the Young inequality, we get 
	\begin{equation*}
	\sum_{k=0}^{K}{}^{\prime\prime}\xi_{1-3, k}^{(j+1)} e_{\eta,k}^{(j+1)}\Delta x
		\leq \frac{1}{2c}\left\| \bm{\xi}_{1-3}^{(j+1)} \right\|_{ L_{\rm d}^{2} }^{2} + \frac{c}{2}\left\| \bm{e}_{\eta}^{(j+1)} \right\|_{L_{\rm d}^{2}}^{2}.
	\end{equation*}
	From the above, we obtain 
	\begin{align*}
	& \frac{1}{2\Delta t}\left( \left\| \bm{e}_{\eta}^{(j+1)}\right\|_{ L_{\rm d}^{2} }^{2} - \left\| \bm{e}_{\eta}^{(j)}\right\|_{ L_{\rm d}^{2} }^{2} \right) \\
		\leq & - \kappa_{0}^{2}\left\| D\bm{e}_{\eta}^{(j+1)} \right\|_{L_{\rm d}^{2}}^{2} - c \left\| \bm{e}_{\eta}^{(j+1)} \right\|_{L_{\rm d}^{2}}^{2}
		- \kappa\varepsilon\left\| \bm{e}_{\eta}^{(j+1)} \right\|_{L_{\rm d}^{2}}^{2} 
		+ \frac{1}{2} \left( \frac{\kappa C_{1}}{\nu} \right)^{2}\left\| \bm{e}_{\eta}^{(j+1)} \right\|_{L_{\rm d}^{2}}^{2} \\
		& + \frac{\nu^{2}}{2}\left\| D\bm{e}_{\theta}^{(j)} \right\|^{2} 
		+\frac{1}{2c}\left\| \bm{\xi}_{1-3}^{(j+1)} \right\|_{ L_{\rm d}^{2} }^{2} + \frac{c}{2}\left\| \bm{e}_{\eta}^{(j+1)} \right\|_{L_{\rm d}^{2}}^{2} \\
		\leq & \left\{ \frac{1}{2}\left(\frac{\kappa C_{1} }{\nu}\right)^{2} - \left(\kappa\varepsilon + \frac{c}{2}\right)\right\} \left\| \bm{e}_{\eta}^{(j+1)}\right\|_{ L_{\rm d}^{2} }^{2}  
		+ \frac{\nu^{2} }{2}\left\| D\bm{e}_{\theta}^{(j)} \right\|^{2} + \frac{1}{2c}\left\| \bm{\xi}_{1-3}^{(j+1)} \right\|_{ L_{\rm d}^{2} }^{2}. 
	\end{align*}
\end{proof}

\begin{lem}\label{e_theta_est}
    Let us define $\tilde{L}_{\alpha_{0}} := L_{\alpha_{0}} + 1 > 0$ with the Lipschitz constant $ L_{\alpha_0} $ of $ \alpha_0 $. 
Then, the following inequality holds:	
\begin{align}
	& \frac{1}{2\Delta t}\left( \left\| \sqrt{\bm{\alpha_{0}}^{(j+1)}}\bm{e}_{\theta}^{(j+1)}\right\|_{ L_{\rm d}^{2} }^{2} - \left\| \sqrt{\bm{\alpha_{0}}^{(j)}}\bm{e}_{\theta}^{(j)}\right\|_{ L_{\rm d}^{2} }^{2} \right) + \frac{\nu^{2}}{2}\left\| D\bm{e}_{\theta}^{(j+1)}\right\|^{2} \notag\\
		\leq &\frac{\tilde{L}_{\alpha_{0}}}{2\delta_{0} } \left\| \sqrt{\bm{\alpha_{0}}^{(j+1)}}\bm{e}_{\theta}^{(j+1)}\right\|_{ L_{\rm d}^{2} }^{2} 
		\! + \! \frac{\tilde{L}_{\alpha_{0}}}{2\delta_{0} } \left\| \sqrt{\bm{\alpha_{0}}^{(j)}}\bm{e}_{\theta}^{(j)}\right\|_{ L_{\rm d}^{2} }^{2} 
		\! + \! \frac{1}{2}\left( \frac{\kappa C_{1} }{\nu} \right)^{2}\left\| \bm{e}_{\eta}^{(j+1)}\right\|_{ L_{\rm d}^{2} }^{2} 
		\! + \! \frac{1}{2\tilde{L}_{\alpha_{0}}}\left\| \bm{\xi}_{4-6}^{(j+1)} \right\|_{L_{\rm d}^{2}}^{2}  \label{err_theta_est2}
\end{align}
	for $j = 0,\ldots, N-1$, where we use the following vector notation: 
	\begin{equation}
		\sqrt{\bm{\alpha_{0}}^{(j+1)}}\bm{e}_{\theta}^{(j+1)} := \left\{ \sqrt{\alpha_{0,k}^{(j+1)}}e_{\theta,k}^{(j+1)} \right\}_{k = 0}^{K} \quad \in \mathbb{R}^{K+1}. \label{vec_a_th}
	\end{equation}
\end{lem}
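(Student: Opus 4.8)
The plan is to mirror the argument of Lemma \ref{e_eta_est}, now testing the discrete error equation \eqref{e_theta_eq} against $e_{\theta,k}^{(j+1)}$. First I would multiply both sides of \eqref{e_theta_eq} by $e_{\theta,k}^{(j+1)}$ and apply $\sum_{k=0}^{K}{}^{\prime\prime}(\,\cdot\,)\Delta x$. For the time-difference term on the left I would invoke \eqref{tool_ineq} with $a=e_{\theta,k}^{(j+1)}$ and $b=e_{\theta,k}^{(j)}$, which yields
\[
\sum_{k=0}^{K}{}^{\prime\prime}\alpha_{0,k}^{(j+1)}\frac{e_{\theta,k}^{(j+1)}-e_{\theta,k}^{(j)}}{\Delta t}e_{\theta,k}^{(j+1)}\Delta x \geq \frac{1}{2\Delta t}\left\|\sqrt{\bm{\alpha_{0}}^{(j+1)}}\bm{e}_{\theta}^{(j+1)}\right\|_{L_{\rm d}^{2}}^{2}-\frac{1}{2\Delta t}\sum_{k=0}^{K}{}^{\prime\prime}\alpha_{0,k}^{(j+1)}\bigl|e_{\theta,k}^{(j)}\bigr|^{2}\Delta x.
\]
The crucial point is that the weight in the last sum is $\alpha_{0,k}^{(j+1)}$, not $\alpha_{0,k}^{(j)}$. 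I would write $\alpha_{0,k}^{(j+1)}=\alpha_{0,k}^{(j)}+(\alpha_{0,k}^{(j+1)}-\alpha_{0,k}^{(j)})$ and control the increment by $L_{\alpha_{0}}\Delta t$ using the Lipschitz continuity of $\alpha_{0}$ from (A1); together with $\alpha_{0,k}^{(j)}\geq\delta_{0}$ this turns the extra contribution into $\frac{L_{\alpha_{0}}}{2\delta_{0}}\|\sqrt{\bm{\alpha_{0}}^{(j)}}\bm{e}_{\theta}^{(j)}\|_{L_{\rm d}^{2}}^{2}$, which is where the level-$j$ weighted norm in \eqref{err_theta_est2} originates.

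For the right-hand side I would treat the three groups separately. The pure diffusion $\nu^{2}\delta_{k}^{\langle 2\rangle}e_{\theta,k}^{(j+1)}$, after summation by parts (Lemma \ref{sbp5}) and using the discrete Neumann condition \eqref{e_theta_nbc} to annihilate the boundary term, contributes $-\nu^{2}\|D\bm{e}_{\theta}^{(j+1)}\|^{2}$; half of this forms the $\frac{\nu^{2}}{2}\|D\bm{e}_{\theta}^{(j+1)}\|^{2}$ kept on the left of \eqref{err_theta_est2}, and the other half I hold in reserve. The nonlinear coupling $\frac{\kappa}{2}\bigl[\delta_{k}^{+}\{\cdots\}+\delta_{k}^{-}\{\cdots\}\bigr]$ I would process with the first-difference summation-by-parts formulas (Lemmas \ref{sbp1}--\ref{sbp2}), exactly as in the proofs of (O)(a)--(O)(b); the discrete boundary contributions vanish because $\gamma_{\varepsilon}'$ is odd and both $\bm{e}_{\theta}^{(j+1)}$ and $\bm{\Theta}^{(j+1)}$ satisfy the discrete Neumann condition. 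This decomposes the coupling into a genuinely dissipative piece $-\frac{\kappa}{2}\sum_{k}\alpha(\eta_{k}^{(j+1)})\frac{d\gamma_{\varepsilon}'}{d(\cdot,\cdot)}\bigl|\delta_{k}^{+}e_{\theta,k}^{(j+1)}\bigr|^{2}\Delta x\leq 0$, nonpositive because $\gamma_{\varepsilon}'$ is nondecreasing (so its difference quotient is nonnegative) and $\alpha\geq\delta_{0}>0$, hence simply discarded; and an indefinite cross term pairing $e_{\eta,k}^{(j+1)}$ with $\delta_{k}^{\pm}e_{\theta,k}^{(j+1)}$. Bounding $|(H_{k}^{(j+1)}+\eta_{k}^{(j+1)})/2|\leq C_{1}$ via \eqref{eta_bound}, $|\gamma_{\varepsilon}'|\leq 1$, and applying Young's inequality calibrated so the gradient part lands on the reserved $\frac{\nu^{2}}{2}\|D\bm{e}_{\theta}^{(j+1)}\|^{2}$ produces the term $\frac{1}{2}(\kappa C_{1}/\nu)^{2}\|\bm{e}_{\eta}^{(j+1)}\|_{L_{\rm d}^{2}}^{2}$ of \eqref{err_theta_est2}.

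Finally, for the residual $\xi_{4-6}$ I would apply Young's inequality with parameter $\tilde{L}_{\alpha_{0}}=L_{\alpha_{0}}+1$, giving $\frac{1}{2\tilde{L}_{\alpha_{0}}}\|\bm{\xi}_{4-6}^{(j+1)}\|_{L_{\rm d}^{2}}^{2}+\frac{\tilde{L}_{\alpha_{0}}}{2}\|\bm{e}_{\theta}^{(j+1)}\|_{L_{\rm d}^{2}}^{2}$, then absorb the second summand into $\frac{\tilde{L}_{\alpha_{0}}}{2\delta_{0}}\|\sqrt{\bm{\alpha_{0}}^{(j+1)}}\bm{e}_{\theta}^{(j+1)}\|_{L_{\rm d}^{2}}^{2}$ using $\alpha_{0,k}^{(j+1)}\geq\delta_{0}$. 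Collecting all pieces and using $L_{\alpha_{0}}\leq\tilde{L}_{\alpha_{0}}$ to upgrade the level-$j$ coefficient from $L_{\alpha_{0}}$ to $\tilde{L}_{\alpha_{0}}$ yields \eqref{err_theta_est2}. The main obstacle I anticipate is the bookkeeping in the summation by parts of the quasilinear coupling: correctly recombining the $\delta^{+}$ and $\delta^{-}$ branches, checking that the discrete boundary terms cancel, and cleanly isolating the sign-definite dissipative part from the indefinite $e_{\eta}$-cross term. This is precisely where the choice $\tilde{L}_{\alpha_{0}}=L_{\alpha_{0}}+1$ is essential, since it guarantees a strictly positive Young parameter even when $\alpha_{0}$ is constant (i.e. $L_{\alpha_{0}}=0$), so that $\tfrac{1}{2\tilde{L}_{\alpha_{0}}}$ remains finite.
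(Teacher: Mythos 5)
Your proposal is correct and follows essentially the same route as the paper's proof: test \eqref{e_theta_eq} with $e_{\theta,k}^{(j+1)}$, handle the weight mismatch $\alpha_{0,k}^{(j+1)}$ versus $\alpha_{0,k}^{(j)}$ by the Lipschitz bound $|\alpha_{0,k}^{(j+1)}-\alpha_{0,k}^{(j)}|\leq L_{\alpha_{0}}\Delta t$, discard the sign-definite part of the quasilinear coupling via $d\gamma_{\varepsilon}'/d(u,v)\geq 0$, bound the cross term by $\kappa C_{1}\|\bm{e}_{\eta}^{(j+1)}\|_{L_{\rm d}^{2}}\|D\bm{e}_{\theta}^{(j+1)}\|$ and calibrate Young's inequality against half of the $\nu^{2}\|D\bm{e}_{\theta}^{(j+1)}\|^{2}$ dissipation, and finally absorb the unweighted $\|\bm{e}_{\theta}\|_{L_{\rm d}^{2}}^{2}$ terms using $\alpha_{0}\geq\delta_{0}$. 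All the key ingredients (Lemmas \ref{sbp2} and \ref{sbp5}, the vanishing of the discrete boundary terms, and the Young parameter $\tilde{L}_{\alpha_{0}}$) are identified exactly as in the paper.
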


\begin{proof}
	Let us fix $j=0,1,\ldots, N-1$. 
	Also, let us multiply both sides of \eqref{e_theta_eq} by $e_{\theta,k}^{(j+1)}$ and sum each of the resulting equality over $k = 0,1, \ldots, K$. 
	Then, by using \eqref{tool_ineq}, we can estimate the left-hand side of the resulting equality as follows:
	\begin{align*}
	& \sum_{k=0}^{K}{}^{\prime\prime}\alpha_{0,k}^{(j+1)}\frac{e_{\theta,k}^{(j+1)} - e_{\theta,k}^{(j)} }{\Delta t} e_{\theta,k}^{(j+1)}\Delta x \\
		\geq & \frac{1}{2\Delta t}\sum_{k=0}^{K}{}^{\prime\prime}\alpha_{0,k}^{(j+1)}\left( \left| e_{\theta,k}^{(j+1)} \right|^{2} - \left| e_{\theta,k}^{(j)} \right|^{2} \right) \Delta x \\
		= & \frac{1}{2\Delta t}\sum_{k=0}^{K}{}^{\prime\prime}\left( 
			\alpha_{0,k}^{(j+1)} \left| e_{\theta,k}^{(j+1)} \right|^{2} 
			- \alpha_{0,k}^{(j+1)} \left| e_{\theta,k}^{(j)} \right|^{2} 
			+ \alpha_{0,k}^{(j)} \left| e_{\theta,k}^{(j)} \right|^{2}  
			- \alpha_{0,k}^{(j)} \left| e_{\theta,k}^{(j)} \right|^{2} 
		\right) \Delta x \\
		\geq & \frac{1}{2\Delta t}\!\left( \left\| \sqrt{\bm{\alpha_{0}}^{(j+1)}}\bm{e}_{\theta}^{(j+1)}\right\|_{ L_{\rm d}^{2} }^{2} \! - \! \left\| \sqrt{\bm{\alpha_{0}}^{(j)}}\bm{e}_{\theta}^{(j)}\right\|_{ L_{\rm d}^{2} }^{2} \right) 
		-\frac{\tilde{L}_{\alpha_{0}}}{2}\left\| \bm{e}_{\theta}^{(j)}\right\|_{ L_{\rm d}^{2} }^{2}. 
	\end{align*} 
	Next, we estimate the right-hand side of the resulting equality. 
	It follows from \eqref{e_theta_nbc} and the summation by parts formula (Lemma \ref{sbp5}) that 
	\begin{equation*}
	\nu^{2}\sum_{k=0}^{K}{}^{\prime\prime}\!\left( \delta_{k}^{\langle 2 \rangle}e_{\theta,k}^{(j+1)} \right)\! e_{\theta,k}^{(j+1)} \Delta x
		= - \nu^{2}\left\| D\bm{e}_{\theta}^{(j+1)} \right\|^{2}. 
	\end{equation*}
	Furthermore, we see from the Young inequality that 
	\begin{equation*}
	\sum_{k=0}^{K}{}^{\prime\prime}\xi_{4-6, k}^{(j+1)} e_{\theta,k}^{(j+1)}\Delta x
		\leq \frac{1}{2\tilde{L}_{\alpha_{0}}}\left\| \bm{\xi}_{4-6}^{(j+1)} \right\|_{ L_{\rm d}^{2} }^{2} + \frac{\tilde{L}_{\alpha_{0}}}{2}\left\| \bm{e}_{\theta}^{(j+1)} \right\|_{L_{\rm d}^{2}}^{2}. 
	\end{equation*}
	For reasons of space limitation, the superscript $(j+1)$ is omitted hereafter. 
	Here, it holds from the summation by parts formula (Lemma \ref{sbp2}) that 
	\begin{align*}
	& \frac{\kappa}{2}\sum_{k=0}^{K}{}^{\prime\prime}\left[\delta_{k}^{+}
	\left\{
		\frac{H_{k} + \eta_{k}}{2} \gamma_{\varepsilon}' (\delta_{k}^{-}\Theta_{k})e_{\eta,k} 
		+ \alpha\left( \eta_{k} \right)  \frac{ d\gamma_{\varepsilon}' }{ d(\delta_{k}^{-}\Theta_{k}, \delta_{k}^{-}\tilde{\theta}_{k}) }\delta_{k}^{-}e_{\theta,k}
	\right\}
	\right]e_{\theta,k}\Delta x \\
		= & - \frac{\kappa}{2}\sum_{k=1}^{K}
		\left(
			\frac{H_{k} + \eta_{k}}{2} \gamma_{\varepsilon}' (\delta_{k}^{-}\Theta_{k})e_{\eta,k} 
			+ \alpha\left( \eta_{k} \right) \frac{ d\gamma_{\varepsilon}' }{ d(\delta_{k}^{-}\Theta_{k}, \delta_{k}^{-}\tilde{\theta}_{k}) }\delta_{k}^{-}e_{\theta,k}
		\right)
		\delta_{k}^{-}e_{\theta,k}\Delta x \\
		& + \underset{\rm =: (d.B.T.1)}{\underline{ \left[\left\{\mu_{k}^{+}
		\left(
			\frac{H_{k} + \eta_{k}}{2} \gamma_{\varepsilon}' (\delta_{k}^{-}\Theta_{k})e_{\eta,k} 
			+ \alpha\left( \eta_{k} \right) \frac{ d\gamma_{\varepsilon}' }{ d(\delta_{k}^{-}\Theta_{k}, \delta_{k}^{-}\tilde{\theta}_{k}) }\delta_{k}^{-}e_{\theta,k}
		\right)
		\right\}
		e_{\theta,k} \right]_{0}^{K}
		}}.
	\end{align*}
	In the same manner as above, we observe that 
	\begin{align*}
	& \frac{\kappa}{2}\sum_{k=0}^{K}{}^{\prime\prime}\left[\delta_{k}^{-}\!
	\left\{
		\frac{H_{k} + \eta_{k}}{2} \gamma_{\varepsilon}' (\delta_{k}^{+}\Theta_{k})e_{\eta,k} 
		+ \alpha\left( \eta_{k} \right) \frac{ d\gamma_{\varepsilon}' }{ d(\delta_{k}^{+}\Theta_{k}, \delta_{k}^{+}\tilde{\theta}_{k}) }\delta_{k}^{+}e_{\theta,k}
	\right\}
	\right]e_{\theta,k}\Delta x \\
		= & - \frac{\kappa}{2}\sum_{k=0}^{K-1}
		\left(
			\frac{H_{k} + \eta_{k}}{2} \gamma_{\varepsilon}' (\delta_{k}^{+}\Theta_{k})e_{\eta,k} 
			+ \alpha\left( \eta_{k} \right) \frac{ d\gamma_{\varepsilon}' }{ d(\delta_{k}^{+}\Theta_{k}, \delta_{k}^{+}\tilde{\theta}_{k}) }\delta_{k}^{+}e_{\theta,k}
		\right)
		\delta_{k}^{+}e_{\theta,k}\Delta x \\
		& + \underset{\rm =: (d.B.T.2)}{\underline{ \left[\left\{\mu_{k}^{-}
		\left(
			\frac{H_{k} + \eta_{k}}{2} \gamma_{\varepsilon}' (\delta_{k}^{+}\Theta_{k})e_{\eta,k} 
			+ \alpha\left( \eta_{k} \right) \frac{ d\gamma_{\varepsilon}' }{ d(\delta_{k}^{+}\Theta_{k}, \delta_{k}^{+}\tilde{\theta}_{k}) }\delta_{k}^{+}e_{\theta,k}
		\right)
		\right\}
		e_{\theta,k} \right]_{0}^{K}
		}}.
	\end{align*}
	It is seen from the direct calculation that 
	\begin{align*}
	& \sum_{k=1}^{K}
	\left\{
		\frac{H_{k} + \eta_{k}}{2} \gamma_{\varepsilon}' (\delta_{k}^{-}\Theta_{k})e_{\eta,k} 
		+ \alpha\left( \eta_{k} \right) \frac{ d\gamma_{\varepsilon}' }{ d(\delta_{k}^{-}\Theta_{k}, \delta_{k}^{-}\tilde{\theta}_{k}) }\delta_{k}^{-}e_{\theta,k}
	\right\}
	\delta_{k}^{-}e_{\theta,k}\Delta x \\
		= &\sum_{k=1}^{K}
		\left\{
			\frac{H_{k} + \eta_{k}}{2} \gamma_{\varepsilon}' (\delta_{k}^{+}\Theta_{k-1})e_{\eta,k} 
			+ \alpha\left( \eta_{k} \right) \frac{ d\gamma_{\varepsilon}' }{ d(\delta_{k}^{+}\Theta_{k-1}, \delta_{k}^{+}\tilde{\theta}_{k-1}) }\delta_{k}^{+}e_{\theta,k-1}
		\right\}
		\delta_{k}^{+}e_{\theta,k-1}\Delta x \\
		= & \sum_{k=0}^{K-1}
		\left\{
			\frac{H_{k+1} + \eta_{k+1}}{2} \gamma_{\varepsilon}' (\delta_{k}^{+}\Theta_{k})e_{\eta,k+1} 
			+ \alpha\left( \eta_{k+1} \right) \frac{ d\gamma_{\varepsilon}' }{ d(\delta_{k}^{+}\Theta_{k}, \delta_{k}^{+}\tilde{\theta}_{k}) }\delta_{k}^{+}e_{\theta,k}
		\right\}
		\delta_{k}^{+}e_{\theta,k}\Delta x. 
	\end{align*}
	Additionally, it follows from \eqref{bc1}--\eqref{bc2}, \eqref{e_eta_nbc}--\eqref{e_theta_nbc}, and the definitions of $\tilde{\eta}$ and $\tilde{\theta}$ that $\mbox{(d.B.C.1)} + \mbox{(d.B.C.2)} = 0$. 
	Thus, we obtain from the following inequalities: $d\gamma_{\varepsilon}'/d(u,v) \geq 0$ for all $u,v \in \mathbb{R}$, $|\gamma_{\varepsilon}'(u)| \leq 1$ for all $u\in \mathbb{R}$, and the H\"{o}lder inequality that  
	\begin{align*}
	& \frac{\kappa}{2}\sum_{k=0}^{K}{}^{\prime\prime}\left[\delta_{k}^{+}
	\left\{
		\frac{H_{k} + \eta_{k}}{2} \gamma_{\varepsilon}' (\delta_{k}^{-}\Theta_{k})e_{\eta,k} 
		+ \alpha\left( \eta_{k} \right) \frac{ d\gamma_{\varepsilon}' }{ d(\delta_{k}^{-}\Theta_{k}, \delta_{k}^{-}\tilde{\theta}_{k}) }\delta_{k}^{-}e_{\theta,k}
	\right\}
	\right]e_{\theta,k}\Delta x \\[-1pt]
	& + \frac{\kappa}{2}\sum_{k=0}^{K}{}^{\prime\prime}\left[\delta_{k}^{-}
	\left\{
		\frac{H_{k} + \eta_{k}}{2} \gamma_{\varepsilon}' (\delta_{k}^{+}\Theta_{k})e_{\eta,k} 
		+ \alpha\left( \eta_{k} \right) \frac{ d\gamma_{\varepsilon}' }{ d(\delta_{k}^{+}\Theta_{k}, \delta_{k}^{+}\tilde{\theta}_{k}) }\delta_{k}^{+}e_{\theta,k}
	\right\}
	\right]e_{\theta,k}\Delta x \\[-1pt]
		= & - \frac{\kappa}{2}\sum_{k=0}^{K-1}
			\left(
				\frac{H_{k+1} + \eta_{k+1}}{2}e_{\eta,k+1} 
				+ \frac{H_{k} + \eta_{k}}{2}e_{\eta,k} 
			\right) \gamma_{\varepsilon}' (\delta_{k}^{+}\Theta_{k})
			\delta_{k}^{+}e_{\theta,k}\Delta x \\[-1pt]
		& - \frac{\kappa}{2}\sum_{k=0}^{K-1}
		\left(
			\alpha\!\left( \eta_{k+1} \right) + \alpha\!\left( \eta_{k} \right)
		\right)\frac{ d\gamma_{\varepsilon}' }{ d(\delta_{k}^{+}\Theta_{k}, \delta_{k}^{+}\tilde{\theta}_{k}) }
		\left| \delta_{k}^{+}e_{\theta,k} \right|^{2} \Delta x \\[-1pt]
		\leq & - \frac{\kappa}{2}\sum_{k=0}^{K-1}
			\left(
				\frac{H_{k+1} + \eta_{k+1}}{2}e_{\eta,k+1} 
				+ \frac{H_{k} + \eta_{k}}{2}e_{\eta,k} 
			\right) \gamma_{\varepsilon}' (\delta_{k}^{+}\Theta_{k})
			\delta_{k}^{+}e_{\theta,k}\Delta x \\[-1pt]
		\leq & \frac{\kappa}{2}\sum_{k=0}^{K-1}
		\left|
			\frac{H_{k+1} + \eta_{k+1}}{2}e_{\eta,k+1} 
			+ \frac{H_{k} + \eta_{k}}{2}e_{\eta,k} 
		\right| \left| \gamma_{\varepsilon}' (\delta_{k}^{+}\Theta_{k}) \right|
		\left| \delta_{k}^{+}e_{\theta,k} \right| \Delta x \\[-1pt]
		\leq & \frac{\kappa}{2}\left\| D\bm{e}_{\theta} \right\|
		\left\{ 
		\sum_{k=0}^{K-1}
		\left|
			\frac{H_{k+1} + \eta_{k+1}}{2}e_{\eta,k+1} 
			+ \frac{H_{k} + \eta_{k}}{2}e_{\eta,k} 
		\right|^{2}\Delta x 
		\right\}^{\frac{1}{2}}. 
	\end{align*}
	Furthermore, by using \eqref{eta_bound}, we obtain 
	\begin{align*}
	& \left\{ 
		\sum_{k=0}^{K-1}
		\left|
			\frac{H_{k+1} + \eta_{k+1}}{2}e_{\eta,k+1} 
			+ \frac{H_{k} + \eta_{k}}{2}e_{\eta,k} 
		\right|^{2}\Delta x 
		\right\}^{\frac{1}{2}} \\[-1pt]
		\leq & \left\{ 
			\frac{1}{2}\sum_{k=0}^{K-1}
			\left(
			\left|
				H_{k+1} + \eta_{k+1}
			\right|^{2}
			\left|
				e_{\eta,k+1} 
			\right|^{2}
			+ \left|
				H_{k} + \eta_{k}
			\right|^{2}
			\left|
				e_{\eta,k} 
			\right|^{2}
			\right)\Delta x 
			\right\}^{\frac{1}{2}} \\[-1pt]
		\leq & \left[ 
			\sum_{k=0}^{K-1}
			\left\{
			\left(
			\left|
				H_{k+1} 
			\right|^{2}
			+ \left|
				\eta_{k+1}
			\right|^{2}
			\right)
			\left|
				e_{\eta,k+1} 
			\right|^{2}
			+ \left(
			\left|
				H_{k} 
			\right|^{2}
			+ \left|
				\eta_{k}
			\right|^{2}
			\right)
			\left|
				e_{\eta,k} 
			\right|^{2}
			\right\} \Delta x 
			\right]^{\frac{1}{2}} \\[-1pt]
		\leq & \left\{ 
			2C_{1}^{2}\sum_{k=0}^{K-1}
			\left(
			\left|
				e_{\eta,k+1}
			\right|^{2}
			+ \left|
				e_{\eta,k} 
			\right|^{2}
			\right) \Delta x 
			\right\}^{\frac{1}{2}} 
		= 2C_{1}\left\| \bm{e}_{\eta}\right\|_{ L_{\rm d}^{2} }. 
	\end{align*}
	Hence, by using the Young inequality, we get 
	\begin{align*}
		& \frac{\kappa}{2}\sum_{k=0}^{K}{}^{\prime\prime}\left[\delta_{k}^{+}\!
		\left\{
			\frac{H_{k} + \eta_{k}}{2} \gamma_{\varepsilon}' (\delta_{k}^{-}\Theta_{k})e_{\eta,k} 
			+ \alpha\left( \eta_{k} \right) \frac{ d\gamma_{\varepsilon}' }{ d(\delta_{k}^{-}\Theta_{k}, \delta_{k}^{-}\tilde{\theta}_{k}) }\delta_{k}^{-}e_{\theta,k}
		\right\}
		\right]e_{\theta,k}\Delta x \\[-1pt]
		& + \frac{\kappa}{2}\sum_{k=0}^{K}{}^{\prime\prime}\left[\delta_{k}^{-}\!
		\left\{
			\frac{H_{k} + \eta_{k}}{2} \gamma_{\varepsilon}' (\delta_{k}^{+}\Theta_{k})e_{\eta,k} 
			+ \alpha\left( \eta_{k} \right) \frac{ d\gamma_{\varepsilon}' }{ d(\delta_{k}^{+}\Theta_{k}, \delta_{k}^{+}\tilde{\theta}_{k}) }\delta_{k}^{+}e_{\theta,k}
		\right\}
		\right]e_{\theta,k}\Delta x \\[-1pt]
		\leq & \kappa C_{1}\left\| \bm{e}_{\eta}\right\|_{ L_{\rm d}^{2} } 
		\left\| D\bm{e}_{\theta} \right\| \\[-1pt]
		\leq & \kappa C_{1} \! \left( \frac{\kappa C_{1}}{2\nu^{2}}\left\| \bm{e}_{\eta}\right\|_{ L_{\rm d}^{2} }^{2} 
		+ \frac{\nu^{2}}{2\kappa C_{1}}\left\| D\bm{e}_{\theta} \right\| \right)
		= \frac{1}{2}\!\left(\frac{\kappa C_{1}}{\nu}\right)^{2}\!\left\| \bm{e}_{\eta}\right\|_{ L_{\rm d}^{2} }^{2} 
		+ \frac{\nu^{2}}{2}\left\| D\bm{e}_{\theta} \right\|.
	\end{align*}
	As a result, we obtain 
	\begin{align*}
	& \frac{1}{2\Delta t}\!\left( \left\| \sqrt{\bm{\alpha_{0}}^{(j+1)}}\bm{e}_{\theta}^{(j+1)}\right\|_{ L_{\rm d}^{2} }^{2} \! - \! \left\| \sqrt{\bm{\alpha_{0}}^{(j)}}\bm{e}_{\theta}^{(j)}\right\|_{ L_{\rm d}^{2} }^{2} \right) \\
		\leq & - \nu^{2}\left\| D\bm{e}_{\theta}^{(j+1)} \right\|^{2} 
		+ \frac{1}{2}\left(\frac{\kappa C_{1}}{\nu}\right)^{2}\left\| \bm{e}_{\eta}^{(j+1)}\right\|_{ L_{\rm d}^{2} }^{2} 
		+ \frac{\nu^{2}}{2}\left\| D\bm{e}_{\theta}^{(j+1)} \right\| \\
		& + \frac{1}{2\tilde{L}_{\alpha_{0}}}\left\| \bm{\xi}_{4-6}^{(j+1)} \right\|_{ L_{\rm d}^{2} }^{2} 
		+ \frac{\tilde{L}_{\alpha_{0}}}{2}\left\| \bm{e}_{\theta}^{(j+1)} \right\|_{L_{\rm d}^{2}}^{2}
		+ \frac{\tilde{L}_{\alpha_{0}}}{2}\left\| \bm{e}_{\theta}^{(j)} \right\|_{L_{\rm d}^{2}}^{2} \\
		\leq & \frac{1}{2}\!\left(\! \frac{\kappa C_{1} }{\nu} \!\right)^{2}\!\left\| \bm{e}_{\eta}^{(j+1)}\right\|_{ L_{\rm d}^{2} }^{2}
		\! + \frac{\tilde{L}_{\alpha_{0}}}{2}\!\left\| \bm{e}_{\theta}^{(j+1)} \right\|_{L_{\rm d}^{2}}^{2}
		\! + \frac{\tilde{L}_{\alpha_{0}}}{2}\!\left\| \bm{e}_{\theta}^{(j)} \right\|_{L_{\rm d}^{2}}^{2}
		\! + \frac{1}{2\tilde{L}_{\alpha_{0}}}\!\left\| \bm{\xi}_{4-6}^{(j+1)} \right\|_{L_{\rm d}^{2}}^{2}
		\! - \frac{\nu^{2}}{2}\!\left\| D\bm{e}_{\theta}^{(j+1)}\right\|^{2}. 
	\end{align*}
	Here, it holds from $\alpha_{0,k}^{(j)} \geq \delta_{0} \ (k=0,\ldots,K,\ j=0,\ldots,N)$ that $1 \leq \alpha_{0,k}^{(j)}/\delta_{0} \ (k=0,\ldots,K,\ j=0,\ldots,N)$. 
	Thus, we observe that 
	\begin{equation*}
	\left\| \bm{e}_{\theta}^{(j)} \right\|_{L_{\rm d}^{2}}^{2} 
		= \sum_{k=0}^{K}{}^{\prime\prime}\left| e_{\theta,k}^{(j)} \right|^{2} \Delta x 
		\leq \sum_{k=0}^{K}{}^{\prime\prime}\frac{\alpha_{0,k}^{(j)}}{\delta_{0}}\left| e_{\theta,k}^{(j)} \right|^{2} \Delta x 
		= \frac{1}{\delta_{0}}\left\| \sqrt{\bm{\alpha_{0}}^{(j)}}\bm{e}_{\theta}^{(j)}\right\|_{ L_{\rm d}^{2} }^{2} \quad (j=0,\ldots, N). 
	\end{equation*}
	Thus, we obtain \eqref{err_theta_est2}. 
\end{proof}

\begin{lem}\label{e_eta_theta_est}
Let $a := \max\{2(\kappa C_{1} /\nu )^{2} - (2\kappa\varepsilon + c), \tilde{L}_{\alpha_{0}}/\delta_{0}\}$.  
If $\Delta t$ satisfies \eqref{dt_ass}, 
then, we have 
\begin{align}
& \left\| \bm{e}_{\eta}^{(j)}\right\|_{ L_{\rm d}^{2} }^{2} 
+ \left\| \sqrt{\bm{\alpha_{0}}^{(j)}}\bm{e}_{\theta}^{(j)}\right\|_{ L_{\rm d}^{2} }^{2} 
+ \frac{\Delta t}{1 - a\Delta t}\nu^{2}\left\| D\bm{e}_{\theta}^{(j)}\right\|^{2} \notag\\
	\leq & \frac{3}{2}e^{3aT}\sum_{i=0}^{j-1}\left( \frac{1}{c}\left\| \bm{\xi}_{1-3}^{(j-i)} \right\|_{L_{\rm d}^{2}}^{2} + \frac{1}{\tilde{L}_{\alpha_{0}}}\left\| \bm{\xi}_{4-6}^{(j-i)} \right\|_{L_{\rm d}^{2}}^{2}\right)\Delta t \quad (j=1,\ldots,N). \label{e_eta_theta_est2}
\end{align} 
\end{lem}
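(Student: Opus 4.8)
The plan is to combine the two a priori error inequalities \eqref{err_eta_est2} and \eqref{err_theta_est2} into a single discrete Gronwall recursion for a suitably augmented energy. First I would add \eqref{err_eta_est2} and \eqref{err_theta_est2}. Writing $G^{(j)} := \|\bm{e}_{\eta}^{(j)}\|_{L_{\rm d}^{2}}^{2} + \|\sqrt{\bm{\alpha_{0}}^{(j)}}\bm{e}_{\theta}^{(j)}\|_{L_{\rm d}^{2}}^{2}$, the left-hand sides produce $\frac{1}{2\Delta t}(G^{(j+1)} - G^{(j)}) + \frac{\nu^{2}}{2}\|D\bm{e}_{\theta}^{(j+1)}\|^{2}$, while on the right-hand side the two coefficients $\frac{1}{2}(\kappa C_{1}/\nu)^{2}$ of $\|\bm{e}_{\eta}^{(j+1)}\|_{L_{\rm d}^{2}}^{2}$ merge and, together with the $-(\kappa\varepsilon + c/2)$ term, give exactly $(\kappa C_{1}/\nu)^{2} - (\kappa\varepsilon + c/2) = \frac{1}{2}\bigl[2(\kappa C_{1}/\nu)^{2} - (2\kappa\varepsilon + c)\bigr] \le a/2$ by the definition of $a$; likewise the $\theta$-coefficient $\tilde{L}_{\alpha_{0}}/(2\delta_{0}) \le a/2$. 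Hence the right-hand side is dominated by $\frac{a}{2}G^{(j+1)} + \frac{a}{2}G^{(j)} + \frac{\nu^{2}}{2}\|D\bm{e}_{\theta}^{(j)}\|^{2} + \frac{1}{2}R^{(j+1)}$, where $R^{(j+1)} := \frac{1}{c}\|\bm{\xi}_{1-3}^{(j+1)}\|_{L_{\rm d}^{2}}^{2} + \frac{1}{\tilde{L}_{\alpha_{0}}}\|\bm{\xi}_{4-6}^{(j+1)}\|_{L_{\rm d}^{2}}^{2}$.

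Multiplying through by $2\Delta t$ and writing $D^{(j)} := \nu^{2}\|D\bm{e}_{\theta}^{(j)}\|^{2}$, this rearranges to
\begin{equation*}
(1 - a\Delta t)G^{(j+1)} + \Delta t\, D^{(j+1)} \le (1 + a\Delta t)G^{(j)} + \Delta t\, D^{(j)} + \Delta t\, R^{(j+1)}.
\end{equation*}
Under \eqref{dt_ass} one has $a\Delta t < 1/3$, so $1 - a\Delta t > 0$ and I may divide by it. Introducing the augmented energy $\Phi^{(j)} := G^{(j)} + \frac{\Delta t}{1 - a\Delta t}D^{(j)}$ --- precisely the quantity appearing on the left of \eqref{e_eta_theta_est2} --- and using $\frac{1 + a\Delta t}{1 - a\Delta t} \ge 1$ to absorb the leftover $\frac{\Delta t}{1 - a\Delta t}D^{(j)}$ term into $\Phi^{(j)}$, I obtain the clean recursion
\begin{equation*}
\Phi^{(j+1)} \le \frac{1 + a\Delta t}{1 - a\Delta t}\,\Phi^{(j)} + \frac{\Delta t}{1 - a\Delta t}\,R^{(j+1)}.
\end{equation*}

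Next I would estimate the two amplification factors using $a\Delta t < 1/3$: the elementary inequalities $\frac{1+x}{1-x} \le e^{3x}$ and $\frac{1}{1-x} \le \frac{3}{2}$ for $x \in (0, 1/3)$ (the first follows from $\log\frac{1+x}{1-x} = 2\sum_{n\ge 0}\frac{x^{2n+1}}{2n+1} \le 3x$, since $\sum_{n\ge0}\frac{x^{2n}}{2n+1} < \frac{1}{1-x^{2}} < \frac{9}{8} < \frac{3}{2}$) give $\Phi^{(j+1)} \le e^{3a\Delta t}\Phi^{(j)} + \frac{3}{2}\Delta t\, R^{(j+1)}$. Since $H_{k}^{(0)} = \tilde{\eta}(0,k\Delta x)$ and $\Theta_{k}^{(0)} = \tilde{\theta}(0,k\Delta x)$, the errors vanish at $j = 0$, so $\Phi^{(0)} = 0$. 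Iterating the linear recursion and bounding each amplification factor $e^{3a\Delta t(j-1-m)} \le e^{3a\Delta t N} = e^{3aT}$ via $N\Delta t = T$ yields, after re-indexing $i = j - 1 - m$,
\begin{equation*}
\Phi^{(j)} \le \frac{3}{2}\,e^{3aT}\sum_{i=0}^{j-1}R^{(j-i)}\,\Delta t,
\end{equation*}
which is exactly \eqref{e_eta_theta_est2}.

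The routine part is the constant-chasing in the first paragraph. The main obstacle is the mismatch of the discrete Dirichlet semi-norm between time levels: \eqref{err_eta_est2} contributes $\|D\bm{e}_{\theta}^{(j)}\|^{2}$ at the previous level, whereas \eqref{err_theta_est2} contributes $\|D\bm{e}_{\theta}^{(j+1)}\|^{2}$ at the current level, so a naive Gronwall argument in $G^{(j)}$ alone does not close. The device that resolves this is to fold the semi-norm term into the augmented energy $\Phi^{(j)}$ with the exact weight $\Delta t/(1 - a\Delta t)$, so that the difference of semi-norms telescopes correctly; getting this weight right is what makes the recursion close, and it is the crux of the argument.
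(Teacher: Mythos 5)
Your proposal is correct and follows essentially the same route as the paper: sum Lemmas \ref{e_eta_est} and \ref{e_theta_est}, absorb the coefficients into $a$, divide by $1-a\Delta t$, fold the Dirichlet semi-norm into the augmented quantity with weight $\Delta t/(1-a\Delta t)$, and iterate the resulting recursion from $\Phi^{(0)}=0$ using $a\Delta t<1/3$. The only cosmetic difference is that you bound $\tfrac{1+a\Delta t}{1-a\Delta t}$ directly by $e^{3a\Delta t}$, whereas the paper first uses $\tfrac{1+a\Delta t}{1-a\Delta t}<1+3a\Delta t$ and passes to $e^{3aT}$ at the end; both are valid.
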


\begin{proof}
	Let us fix $j=0,1,\ldots, N-1$. 
	Summing up the sides of \eqref{err_eta_est2} (Lemma \ref{e_eta_est}) and \eqref{err_theta_est2} (Lemma \ref{e_theta_est}), it yields from the definition of $a$ that 
	\begin{align}
	& (1 - a\Delta t)\left( \left\| \bm{e}_{\eta}^{(j+1)}\right\|_{ L_{\rm d}^{2} }^{2}
	+ \left\| \sqrt{\bm{\alpha_{0}}^{(j+1)}}\bm{e}_{\theta}^{(j+1)}\right\|_{ L_{\rm d}^{2} }^{2}\right) 
	+ \Delta t\nu^{2}\left\| D\bm{e}_{\theta}^{(j+1)}\right\|^{2} \nonumber\\
		\leq & \left\| \bm{e}_{\eta}^{(j)}\right\|_{ L_{\rm d}^{2} }^{2}
		\! + \! (1 \! + \! a\Delta t )\!\left\| \sqrt{\bm{\alpha_{0}}^{(j)}}\bm{e}_{\theta}^{(j)}\right\|_{ L_{\rm d}^{2} }^{2}
		\! + \! \Delta t\nu^{2}\!\left\| D\bm{e}_{\theta}^{(j)} \right\|^{2} 
		\! + \! \Delta t\!\left(\! \frac{1}{c}\!\left\| \bm{\xi}_{1-3}^{(j+1)} \right\|_{ L_{\rm d}^{2} }^{2} 
		\! + \! \frac{1}{\tilde{L}_{\alpha_{0}}}\!\left\| \bm{\xi}_{4-6}^{(j+1)} \right\|_{L_{\rm d}^{2}}^{2} \right). \label{total_est}
	\end{align}
	We see from the assumption $\Delta t < 1/(3a)$ that $1 -3a\Delta t > 0$. 
	Furthermore, we observe that 
	\begin{equation*}
	\frac{1}{1 - a\Delta t}
		< \frac{1 + a\Delta t}{1 - a\Delta t}
		< 1 + 3a\Delta t, \quad 
	\frac{1}{1 - a\Delta t} < \frac{3}{2}. 
	\end{equation*}
	Thus, dividing both sides of \eqref{total_est} by $1 - a\Delta t$, we have 
	\begin{align*}
	&  \left\| \bm{e}_{\eta}^{(j+1)}\right\|_{ L_{\rm d}^{2} }^{2}
	+ \left\| \sqrt{\bm{\alpha_{0}}^{(j+1)}}\bm{e}_{\theta}^{(j+1)}\right\|_{ L_{\rm d}^{2} }^{2}
	+ \frac{\nu^{2}\Delta t}{1 - a\Delta t}\left\| D\bm{e}_{\theta}^{(j+1)}\right\|^{2} \\
		\leq & (1 \! + \! 3a\Delta t )\!\left( \left\| \bm{e}_{\eta}^{(j)}\right\|_{ L_{\rm d}^{2} }^{2}
		\! + \! \left\| \sqrt{\bm{\alpha_{0}}^{(j)}}\bm{e}_{\theta}^{(j)}\right\|_{ L_{\rm d}^{2} }^{2} \right)
		\! + \! \frac{\nu^{2}\Delta t}{1 - a\Delta t}\left\| D\bm{e}_{\theta}^{(j)} \right\|^{2} \\
		& + \! \frac{3}{2}\Delta t\left(\frac{1}{c}\left\| \bm{\xi}_{1-3}^{(j+1)} \right\|_{ L_{\rm d}^{2} }^{2} 
		\! + \! \frac{1}{\tilde{L}_{\alpha_{0}}}\left\| \bm{\xi}_{4-6}^{(j+1)} \right\|_{L_{\rm d}^{2}}^{2}\right) \\ 
		\leq & (1 \! + \! 3a\Delta t )\!\left( \left\| \bm{e}_{\eta}^{(j)}\right\|_{ L_{\rm d}^{2} }^{2}
		\! + \! \left\| \sqrt{\bm{\alpha_{0}}^{(j)}}\bm{e}_{\theta}^{(j)}\right\|_{ L_{\rm d}^{2} }^{2} 
		\! + \! \frac{\nu^{2}\Delta t}{1 - a\Delta t}\left\| D\bm{e}_{\theta}^{(j)} \right\|^{2} \right) \\
		& + \! \frac{3}{2}\Delta t\left(\frac{1}{c}\left\| \bm{\xi}_{1-3}^{(j+1)} \right\|_{ L_{\rm d}^{2} }^{2} 
		\! + \! \frac{1}{\tilde{L}_{\alpha_{0}}}\left\| \bm{\xi}_{4-6}^{(j+1)} \right\|_{L_{\rm d}^{2}}^{2}\right). 
	\end{align*}
	As a result, we obtain 
	\begin{align*}
	&  \left\| \bm{e}_{\eta}^{(j)}\right\|_{ L_{\rm d}^{2} }^{2}
	+ \left\| \sqrt{\bm{\alpha_{0}}^{(j)}}\bm{e}_{\theta}^{(j)}\right\|_{ L_{\rm d}^{2} }^{2}
	+ \frac{\nu^{2}\Delta t}{1 - a\Delta t}\left\| D\bm{e}_{\theta}^{(j)}\right\|^{2} \\
		\leq & (1 \! + \! 3a\Delta t )\!\left( \left\| \bm{e}_{\eta}^{(j-1)}\right\|_{ L_{\rm d}^{2} }^{2}
		\! + \! \left\| \sqrt{\bm{\alpha_{0}}^{(j-1)}}\bm{e}_{\theta}^{(j-1)}\right\|_{ L_{\rm d}^{2} }^{2} 
		\! + \! \frac{\nu^{2}\Delta t}{1 - a\Delta t}\left\| D\bm{e}_{\theta}^{(j-1)} \right\|^{2} \right) \\
		& + \frac{3}{2}\Delta t\left(\frac{1}{c}\left\| \bm{\xi}_{1-3}^{(j)} \right\|_{ L_{\rm d}^{2} }^{2} 
		+ \frac{1}{\tilde{L}_{\alpha_{0}}}\left\| \bm{\xi}_{4-6}^{(j)} \right\|_{L_{\rm d}^{2}}^{2}\right) \\ 
		\leq & (1 \! + \! 3a\Delta t )^{2}\!\left( \left\| \bm{e}_{\eta}^{(j-2)}\right\|_{ L_{\rm d}^{2} }^{2}
		\! + \! \left\| \sqrt{\bm{\alpha_{0}}^{(j-2)}}\bm{e}_{\theta}^{(j-2)}\right\|_{ L_{\rm d}^{2} }^{2} 
		\! + \! \frac{\nu^{2}\Delta t}{1 - a\Delta t}\left\| D\bm{e}_{\theta}^{(j-2)} \right\|^{2} \right) \\
		& + \frac{3}{2}(1 + 3a\Delta t )\Delta t\!\left(\frac{1}{c}\!\left\| \bm{\xi}_{1-3}^{(j-1)} \right\|_{ L_{\rm d}^{2} }^{2} 
		+ \frac{1}{\tilde{L}_{\alpha_{0}}}\!\left\| \bm{\xi}_{4-6}^{(j-1)} \right\|_{L_{\rm d}^{2}}^{2}\right)
		\! + \frac{3}{2}\Delta t \! \left(\frac{1}{c}\!\left\| \bm{\xi}_{1-3}^{(j)} \right\|_{ L_{\rm d}^{2} }^{2} 
		+ \frac{1}{\tilde{L}_{\alpha_{0}}}\!\left\| \bm{\xi}_{4-6}^{(j)} \right\|_{L_{\rm d}^{2}}^{2}\right) \\ 
		\leq & \cdots \quad \text{(by repeating the above argument inductively)} \\
		\leq & (1 \! + \! 3a\Delta t )^{j}\!\left( \left\| \bm{e}_{\eta}^{(0)}\right\|_{ L_{\rm d}^{2} }^{2}
		\! + \! \left\| \sqrt{\bm{\alpha_{0}}^{(0)}}\bm{e}_{\theta}^{(0)}\right\|_{ L_{\rm d}^{2} }^{2} 
		\! + \! \frac{\nu^{2}\Delta t}{1 - a\Delta t}\left\| D\bm{e}_{\theta}^{(0)} \right\|^{2} \right) \\
		& + \frac{3}{2}\sum_{i=0}^{j-1}(1 + 3a\Delta t )^{i}\left(\frac{1}{c}\left\| \bm{\xi}_{1-3}^{(j-i)} \right\|_{ L_{\rm d}^{2} }^{2} 
		+ \frac{1}{\tilde{L}_{\alpha_{0}}}\left\| \bm{\xi}_{4-6}^{(j-i)} \right\|_{L_{\rm d}^{2}}^{2}\right)\Delta t \\
		= & \frac{3}{2}\sum_{i=0}^{j-1}(1 + 3a\Delta t )^{i}\left(\frac{1}{c}\left\| \bm{\xi}_{1-3}^{(j-i)} \right\|_{ L_{\rm d}^{2} }^{2} 
		+ \frac{1}{\tilde{L}_{\alpha_{0}}}\left\| \bm{\xi}_{4-6}^{(j-i)} \right\|_{L_{\rm d}^{2}}^{2}\right)\Delta t \quad (j=1,\ldots, N). 
	\end{align*}
	Here, it holds that $(1 + 3a\Delta t )^{i} \leq (1 + 3a\Delta t )^{N} \leq \exp(N \cdot 3a\Delta t) = e^{3aT} \ (i=0,\ldots, N)$. 
	Therefore, we obtain \eqref{e_eta_theta_est2}. 
\end{proof}

\begin{lem}\label{exist1}
	Let us assume (A1). 
	Then, for any given $(\bm{H}^{(j)},\bm{\Theta}^{(j)})$, there exists a unique soluiton $\bm{H}^{(j + 1)}$ satisfying \eqref{sc1} with \eqref{bc1}--\eqref{bc2}.
\end{lem}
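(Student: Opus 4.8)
The plan is to observe that, once the data $(\bm{H}^{(j)},\bm{\Theta}^{(j)})$ are fixed, the scheme \eqref{sc1} supplemented by the boundary condition \eqref{bc1} is a \emph{linear} system for the unknowns $H_{0}^{(j+1)},\ldots,H_{K}^{(j+1)}$, the artificial values $H_{-1}^{(j+1)},H_{K+1}^{(j+1)}$ being eliminated via \eqref{bc1}. Indeed, setting $g_{k}:=\tfrac{1}{2}\bigl(\gamma_{\varepsilon}(\delta_{k}^{+}\Theta_{k}^{(j)})+\gamma_{\varepsilon}(\delta_{k}^{-}\Theta_{k}^{(j)})\bigr)$, which is a \emph{known} nonnegative quantity determined by the given data (the endpoint values being fixed through \eqref{bc2}), the scheme \eqref{sc1} takes the form $A\bm{H}^{(j+1)}=\bm{b}$ for a fixed square matrix $A$ and a right-hand side $\bm{b}$ assembled from $H_{k}^{(j)}/\Delta t+c$. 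For such a square system, existence and uniqueness are equivalent to $A$ being nonsingular, i.e.\ to the homogeneous problem admitting only the trivial solution.

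Thus I would reduce the statement to the following implication: if $\bm{H}=\{H_{k}\}_{k=-1}^{K+1}$ satisfies
\begin{equation*}
\frac{H_{k}}{\Delta t}=\kappa_{0}^{2}\delta_{k}^{\langle 2 \rangle}H_{k}-cH_{k}-\kappa g_{k}H_{k}\quad(k=0,\ldots,K)
\end{equation*}
together with $\delta_{k}^{\langle 1 \rangle}H_{k}=0$ at $k=0,K$, then $\bm{H}=\bm{0}$. To see this, multiply the displayed identity by $H_{k}$ and sum over $k$ with the trapezoidal rule $\sum_{k=0}^{K}{}^{\prime\prime}$. The diffusion term is treated by the summation-by-parts formula (Lemma \ref{sbp5}): since the discrete Neumann condition makes the boundary term $\bigl[(\delta_{k}^{\langle 1 \rangle}H_{k})H_{k}\bigr]_{0}^{K}$ vanish, one obtains
\begin{equation*}
-\kappa_{0}^{2}\sum_{k=0}^{K}{}^{\prime\prime}\bigl(\delta_{k}^{\langle 2 \rangle}H_{k}\bigr)H_{k}\,\Delta x=\kappa_{0}^{2}\|D\bm{H}\|^{2}\ge 0.
\end{equation*}
The remaining terms carry the strictly positive coefficients $\tfrac{1}{\Delta t}+c+\kappa g_{k}$, where $\Delta t>0$, $c>0$, and $g_{k}\ge\varepsilon>0$ (from $\gamma_{\varepsilon}(u)=\sqrt{\varepsilon^{2}+u^{2}}\ge\varepsilon$). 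Hence
\begin{equation*}
\frac{1}{\Delta t}\|\bm{H}\|_{L_{\rm d}^{2}}^{2}\le\sum_{k=0}^{K}{}^{\prime\prime}\Bigl(\frac{1}{\Delta t}+c+\kappa g_{k}\Bigr)|H_{k}|^{2}\Delta x+\kappa_{0}^{2}\|D\bm{H}\|^{2}=0,
\end{equation*}
which forces $\|\bm{H}\|_{L_{\rm d}^{2}}=0$, i.e.\ $\bm{H}=\bm{0}$. This yields the nonsingularity of $A$, hence the asserted existence and uniqueness of $\bm{H}^{(j+1)}$.

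There is no essential obstacle here; the proof is routine once the structure is identified. The only points deserving care are (i) confirming that the system is genuinely linear in $\bm{H}^{(j+1)}$, which relies crucially on the $\gamma_{\varepsilon}$ terms in \eqref{sc1} being evaluated at the \emph{old} data $\bm{\Theta}^{(j)}$ rather than at $\bm{\Theta}^{(j+1)}$, and (ii) absorbing the ghost values through \eqref{bc1} so that Lemma \ref{sbp5} applies with a vanishing boundary term. Equivalently, one could note that $A$ is strictly diagonally dominant with positive diagonal---the diagonal entry $\tfrac{1}{\Delta t}+\tfrac{2\kappa_{0}^{2}}{(\Delta x)^{2}}+c+\kappa g_{k}$ exceeds the absolute off-diagonal row sum $\tfrac{2\kappa_{0}^{2}}{(\Delta x)^{2}}$ by the positive amount $\tfrac{1}{\Delta t}+c+\kappa g_{k}$---and invoke the Levy--Desplanques theorem; but the energy argument above is cleaner and reuses the paper's discrete summation-by-parts machinery. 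Notably, this argument is unconditional, requiring no smallness of $\Delta t$, in contrast to the fixed-point treatment of the nonlinear $\bm{\Theta}^{(j+1)}$-equation in Lemma \ref{exist2}.
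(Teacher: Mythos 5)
Your proposal is correct, but your primary argument differs from the paper's. The paper proves Lemma \ref{exist1} by writing \eqref{sc1} in matrix form, $\bigl\{I + \Delta t\bigl(-\kappa_0^2 D_2 + cI + \kappa V\bigr)\bigr\}\bm{H}^{(j+1)} = \bm{H}^{(j)} + c\Delta t\,\bm{1}$, and observing that the coefficient matrix is strictly diagonally dominant (the $-\kappa_0^2 D_2$ block contributes $2\kappa_0^2/(\Delta x)^2$ on the diagonal against an off-diagonal row sum of at most $2\kappa_0^2/(\Delta x)^2$, and the $I + \Delta t(cI+\kappa V)$ part adds the strictly positive margin $1 + \Delta t(c + \kappa g_k)$), hence nonsingular --- exactly the alternative you relegate to a closing remark. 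Your main route instead runs a discrete energy argument on the homogeneous system: multiply by $H_k$, sum with the trapezoidal rule, kill the boundary term in Lemma \ref{sbp5} via \eqref{bc1}, and conclude triviality from the sign-definiteness of $\tfrac{1}{\Delta t} + c + \kappa g_k$ and of $\kappa_0^2\|D\bm{H}\|^2$. Both arguments are unconditional in $\Delta t$ and both are complete; the paper's is shorter and purely algebraic, while yours reuses the summation-by-parts machinery that the paper deploys elsewhere and would generalize more readily to situations (e.g.\ variable diffusion coefficients or nonuniform grids) where diagonal dominance can fail even though coercivity survives. Your identification of the two points requiring care --- linearity because $\gamma_\varepsilon$ is evaluated at $\bm{\Theta}^{(j)}$, and elimination of the ghost values through \eqref{bc1} --- is accurate.
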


\begin{proof}
	From now on, we give the matrix expression of \eqref{sc1}. 
	For the purpose, let us define the $(K+1) \times (K+1)$ matrices $D_{2}$ and $V$ by
	\begin{gather*}
		D_{2} := \frac{1}{(\Delta x)^{2}}\left(
		\begin{array}{ccccc} 
		-2 & 2 & & &  \\
		1 & -2 & 1 & &  \\
		& \ddots & \ddots & \ddots & \\
		& & 1 & -2 & 1 \\
		& & & 2 & -2 \\
		\end{array}
		\right) \in \mathbb{R}^{(K+1) \times (K+1)}, \\
		V \! := \! \diag\!\left(\! 
			\frac{ \gamma_{\varepsilon}\bigl( \delta_{k}^{+}\Theta_{0}^{(j)}\bigr) \! + \! \gamma_{\varepsilon}\bigl( \delta_{k}^{-}\Theta_{0}^{(j)}\bigr)  }{2}, 
			\frac{ \gamma_{\varepsilon}\bigl( \delta_{k}^{+}\Theta_{1}^{(j)}\bigr) \! + \! \gamma_{\varepsilon}\bigl( \delta_{k}^{-}\Theta_{1}^{(j)}\bigr)  }{2}, 
			\ldots ,
			\frac{ \gamma_{\varepsilon}\bigl( \delta_{k}^{+}\Theta_{K}^{(j)}\bigr) \! + \! \gamma_{\varepsilon}\bigl( \delta_{k}^{-}\Theta_{K}^{(j)}\bigr)  }{2} 
		\!\right) . 
	\end{gather*}
	Then, we obtain the matrix expression of \eqref{sc1} as follows: 
	\begin{equation*}
		\left\{I + \Delta t\left(- \kappa_{0}^{2}D_{2} + cI + \kappa V \right)\right\}\bm{H}^{(j + 1)} = \bm{H}^{(j)} + c\Delta t\bm{1}. 
	\end{equation*}
	where $I$ is the $(K+1)$-dimensional identity matrix and $\bm{1} := (1,1,\ldots,1)^{\top} \in \mathbb{R}^{K+1}$. 
	For simplicity, let us denote the coefficient matrix of $\bm{H}^{(j + 1)}$ by $A \! := \! I + \Delta t\left(- \kappa_{0}^{2}D_{2} + cI + \kappa V \right)$. 
	By direct calculation, we see that $A$ is a strictly diagonally dominant matrix. 
	Namely, $A$ is nonsingular. 
\end{proof}

\begin{lem}\label{exist2}
	Let us assume (A1). 
	Then, for any given $(\bm{H}^{(j+1)},\bm{\Theta}^{(j)})$, if $\Delta t$ satisfies 
	\begin{equation}
	    \Delta t < \frac{\nu^{2}\varepsilon^{2}}{4\kappa^{2}\delta_{0}\left(\delta_{0} + \dfrac{1}{2}\right)^{2}}(\Delta x)^{2}, \label{exist_cond2}
	\end{equation}
	then there exists a unique soluiton $\bm{\Theta}^{(j + 1)}$ satisfying \eqref{sc2} with \eqref{bc1}--\eqref{bc2}. 
\end{lem}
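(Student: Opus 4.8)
The plan is to construct $\bm{\Theta}^{(j+1)}$ as the fixed point of a contraction map and invoke Banach's fixed point theorem, in the spirit of \cite{MO2018JIAM, MO2020DCDS}. Here $\bm{H}^{(j+1)}$ is already available (it is obtained unconditionally from the linear scheme \eqref{sc1}, cf. Lemma \ref{exist1}) and, by the range preservation \eqref{bound} in (O)(a), satisfies $0 \le H_k^{(j+1)} \le 1$, so that $\delta_0 \le \alpha(H_k^{(j+1)}) \le \delta_0 + \tfrac12$; also $\bm{\Theta}^{(j)}$ is known. First I would define a map $\Phi$ on $\mathbb{R}^{K+1}$ (the artificial values $Z_{-1}, Z_{K+1}$ being fixed through the reflection \eqref{bc2}) by letting $\bm{\Theta} = \Phi(\bm{Z})$ be the solution of the \emph{linear} problem in which the time term and the $\nu^2$-diffusion are kept implicit while the nonlinear flux is frozen at $\bm{Z}$:
\[
\frac{\alpha_{0,k}^{(j+1)}}{\Delta t}\Theta_k - \nu^2 \delta_k^{\langle 2 \rangle}\Theta_k = \frac{\alpha_{0,k}^{(j+1)}}{\Delta t}\Theta_k^{(j)} + \frac{\kappa}{2}\Bigl\{ \delta_k^{+}\bigl(\alpha(H_k^{(j+1)})\gamma_{\varepsilon}'(\delta_k^{-} Z_k)\bigr) + \delta_k^{-}\bigl(\alpha(H_k^{(j+1)})\gamma_{\varepsilon}'(\delta_k^{+} Z_k)\bigr)\Bigr\}.
\]
The left-hand operator is $\tfrac{1}{\Delta t}\diag(\alpha_{0,k}^{(j+1)}) - \nu^2 D_2$, which is strictly diagonally dominant exactly as in the proof of Lemma \ref{exist1} (using $\alpha_{0,k}^{(j+1)} \ge \delta_0 > 0$), hence nonsingular; thus $\Phi$ is well defined, and any fixed point of $\Phi$ solves \eqref{sc2} with \eqref{bc2}.

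The core step is to show $\Phi$ is a contraction on $(\mathbb{R}^{K+1}, \|\cdot\|_{L_{\rm d}^{2}})$ under \eqref{exist_cond2}. For inputs $\bm{Z}, \bm{Z}'$ set $\bm{W} := \Phi(\bm{Z}) - \Phi(\bm{Z}')$ and $\bm{V} := \bm{Z} - \bm{Z}'$; subtracting the two defining relations gives a linear equation for $\bm{W}$ whose right-hand side involves only the increments $\gamma_{\varepsilon}'(\delta_k^{\pm} Z_k) - \gamma_{\varepsilon}'(\delta_k^{\pm} Z_k')$. I would multiply by $W_k$ and sum with $\sum_{k=0}^{K}{}^{\prime\prime}(\cdot)\Delta x$. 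On the left, summation by parts (Lemma \ref{sbp5}) together with \eqref{bc2} produces the coercive lower bound $\tfrac{\delta_0}{\Delta t}\|\bm{W}\|_{L_{\rm d}^{2}}^{2} + \nu^2\|D\bm{W}\|^{2}$. On the right, the same manipulation as in (O)(a) — Lemma \ref{sbp2}, the oddness of $\gamma_{\varepsilon}'$, and the boundary relations — rewrites the contribution as a single sum $-\tfrac{\kappa}{2}\sum_{k=0}^{K-1}\bigl(\alpha(H_{k+1}^{(j+1)}) + \alpha(H_k^{(j+1)})\bigr)\bigl(\gamma_{\varepsilon}'(\delta_k^{+} Z_k) - \gamma_{\varepsilon}'(\delta_k^{+} Z_k')\bigr)\delta_k^{+} W_k\,\Delta x$. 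Using the global Lipschitz bound $|\gamma_{\varepsilon}'(a) - \gamma_{\varepsilon}'(b)| \le \tfrac{1}{\varepsilon}|a-b|$ (from $\sup|\gamma_{\varepsilon}''| \le 1/\varepsilon$), the bound $\alpha(H_k^{(j+1)}) \le \delta_0 + \tfrac12$, and the Cauchy--Schwarz and Young inequalities, this is controlled by a multiple of $\|D\bm{V}\|\,\|D\bm{W}\|$; after absorbing $\|D\bm{W}\|^{2}$ into the coercive term I would obtain $\|\bm{W}\|_{L_{\rm d}^{2}}^{2} \le C\,\Delta t\,\kappa^{2}(\delta_0+\tfrac12)^{2}(\nu\varepsilon)^{-2}\,\|D\bm{V}\|^{2}$.

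Finally I would apply the discrete inverse inequality $\|D\bm{V}\|^{2} \le C'(\Delta x)^{-2}\|\bm{V}\|_{L_{\rm d}^{2}}^{2}$ to turn this into $\|\bm{W}\|_{L_{\rm d}^{2}} \le \Lambda\,\|\bm{V}\|_{L_{\rm d}^{2}}$ with a contraction factor $\Lambda$ proportional to $\kappa^{2}(\delta_0+\tfrac12)^{2}\,\Delta t\,/(\nu^{2}\varepsilon^{2}(\Delta x)^{2})$, up to a constant depending on $\delta_0$; tracking these constants shows that a smallness assumption of the form \eqref{exist_cond2} is sufficient for $\Lambda < 1$. Banach's fixed point theorem then furnishes a unique fixed point of $\Phi$, i.e.\ a unique $\bm{\Theta}^{(j+1)}$ solving \eqref{sc2} with \eqref{bc1}--\eqref{bc2}. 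I expect the main obstacle to be precisely this contraction estimate: the frozen nonlinearity is differenced twice by the operators $\delta_k^{\pm}$, so passing through summation by parts and then the inverse inequality is what generates the factor $\Delta t/(\Delta x)^{2}$ and forces the mesh restriction; as already noted in the remarks, the constant appearing in \eqref{exist_cond2} is only sufficient and is not claimed to be sharp.
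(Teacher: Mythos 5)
Your proposal is correct and follows essentially the same route as the paper: a Banach fixed-point argument for the map that freezes the nonlinear flux at the input and solves the resulting implicit linear problem (well-posed by strict diagonal dominance of $\diag(\alpha_{0,k}^{(j+1)})-\Delta t\,\nu^2 D_2$), with the contraction estimate obtained from summation by parts, a Lipschitz-type bound on increments of $\gamma_{\varepsilon}'$, the Young inequality, and the inverse inequality $\|D\bm{V}\|^{2}\leq C(\Delta x)^{-2}\|\bm{V}\|_{L_{\rm d}^{2}}^{2}$, which is exactly where the restriction $\Delta t/(\Delta x)^2$ small comes from. The only deviations are minor: you prove a global contraction on $\mathbb{R}^{K+1}$ and so dispense with the paper's invariant-ball step $\Psi(X)\subset X$, you use $\sup|\gamma_{\varepsilon}''|\leq 1/\varepsilon$ where the paper bounds the difference quotient of $\gamma_{\varepsilon}'$ by $3/(2\varepsilon)$ via Lemma \ref{lem:4.2}, and you do not track the constants down to the exact coefficient in \eqref{exist_cond2} — acceptable here since that condition is only claimed to be sufficient.
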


\begin{proof}
	For any given $\bm{\Theta}^{(j)} = \{\Theta_{k}^{(j)}\}_{k=-1}^{K+1} \in \mathbb{R}^{K+3}$, we define the mapping $\Psi$: $\{\Theta_{k}\}_{k=-1}^{K+1} \mapsto \{\tilde{\Theta}_{k}\}_{k=-1}^{K+1}$ by 
	\begin{gather}
		\alpha_{0, k}^{(j+1)} \frac{ \tilde{\Theta}_{k} - \Theta_{k}^{(j)} }{\Delta t} 
		= \frac{\kappa}{2}\left\{ \delta_{k}^{+}\left( \alpha\!\left( H_{k}^{(j+1)} \right)\gamma_{\varepsilon}' (\delta_{k}^{-}\Theta_{k}) \right) + \delta_{k}^{-}\left( \alpha\!\left( H_{k}^{(j+1)} \right) \gamma_{\varepsilon}' (\delta_{k}^{+}\Theta_{k}) \right) \right\} + \nu^{2}\delta_{k}^{\langle 2 \rangle}\tilde{\Theta}_{k} \quad (k=0, \ldots, K), \label{map}\\
		\delta_{k}^{\langle 1 \rangle}\tilde{\Theta}_{k} = 0 \quad (k=0, K), \label{map_bc} 
	\end{gather}
	where 
	\begin{equation}
		\Theta_{-1} = \Theta_{1}, \quad \Theta_{K + 1} = \Theta_{K - 1}. \label{map_bc0} 
	\end{equation}
	Firstly, we show that the mapping $\Psi$ is well-defined. 
	From \eqref{map_bc}, $\tilde{\Theta}_{-1}$ and $\tilde{\Theta}_{K+1}$ can be explicitly written as 
	\begin{equation}
		\tilde{\Theta}_{-1} = \tilde{\Theta}_{1}, \quad \tilde{\Theta}_{K + 1} = \tilde{\Theta}_{K - 1}. \label{map_bc2}
	\end{equation}
	Then, the coefficient matrix of $\bm{\tilde{\Theta}}$ by $B := W - \Delta t\nu^{2}D_{2}$, where 
	\begin{equation*}
		W := \diag(\alpha_{0, 0}^{(j+1)}, \alpha_{0, 1}^{(j+1)}, \dots, \alpha_{0, K}^{(j+1)}). 
	\end{equation*}
	By direct calculation, we see that $B$ is a strictly diagonally dominant matrix. 
	Namely, $B$ is nonsingular.
	Hence, the mapping $\Psi$ is well-defined. 
	
	Next, we prove the existence and uniqueness of the solution of the proposed scheme by the fixed-point theorem for a contraction mapping. 
	From \eqref{map_bc0} and \eqref{map_bc2}, it is sufficient to show the existence of a $K+1$-vector $\bm{\Theta}$ that satisfies $\tilde{\Theta}_{k} = \Theta_{k} \ (k=0, \ldots, K)$. 
	Here, 
	let 
	\begin{equation*}
		X := \{\bm{f} \in \mathbb{R}^{K+3}; \|\bm{f} \|_{L_{\rm d}^{2}} \leq \sqrt{2} M \}, 
	\end{equation*}
	where $M := \|\bm{\Theta}^{(j)}\|_{L_{\rm d}^{2}}$. 
	We show that the mapping $\Psi$ is a contraction mapping on $X$. 
	If $\Psi$ is a contraction mapping, $\Psi$ has a unique fixed-point $\bm{\Theta}^{\ast}$ in the closed ball $X$ from the fixed-point theorem for a contraction mapping.
	This $\bm{\Theta}^{\ast}$ is the solution $\bm{\Theta}^{(j+1)}$ to the scheme \eqref{sc2} with \eqref{bc1}--\eqref{bc2}. 
	Firstly, we show $\Psi(X) \subset X$. 
	For any fixed $\bm{\Theta} \in X$, we have 
	\begin{align}
		& \sum _ {k=0}^{K}{}^{\prime \prime} \alpha_{0, k}^{(j+1)} \frac{ \tilde{\Theta}_{k} - \Theta_{k}^{(j)} }{\Delta t}\tilde{\Theta}_ {k}\Delta x \notag\\
		= & \frac{\kappa}{2}\sum _ {k=0}^{K}{}^{\prime \prime}\!\left\{ 
			\delta_{k}^{+}\left( \alpha\!\left( H_{k}^{(j+1)} \right)\gamma_{\varepsilon}' (\delta_{k}^{-}\Theta_{k}) \right) 
			+ \delta_{k}^{-}\left( \alpha\!\left( H_{k}^{(j+1)} \right) \gamma_{\varepsilon}' (\delta_{k}^{+}\Theta_{k}) \right) 
		\right\} \tilde{\Theta}_ {k}\Delta x 
		+ \nu^{2}\sum _ {k=0}^{K}{}^{\prime \prime}\left(\delta_{k}^{\langle 2 \rangle}\tilde{\Theta}_{k}\right) \tilde{\Theta}_ {k}\Delta x. \label{tool1}
	\end{align}
	Firstly, by using \eqref{tool_ineq}, we evaluate the left-hand side of \eqref{tool1} as follows: 
	\begin{equation*}
		\frac{1}{\Delta t}\sum _ {k=0}^{K}{}^{\prime \prime}\alpha_{0, k}^{(j+1)}\!\left(\tilde{\Theta}_ {k} \! - \! \Theta_{k}^{(j)}\right)\!\tilde{\Theta}_ {k}\Delta x
		\geq \frac{1}{2\Delta t}\sum _ {k=0}^{K}{}^{\prime \prime}\alpha_{0, k}^{(j+1)}\!\left\{\tilde{\Theta}_{k}^{2} \! - \! \left(\Theta_{k}^{(j)}\right)^{\! 2} \right\}\Delta x 
		\geq \frac{1}{2\delta_{0}\Delta t}\left( \left\|\bm{\tilde{\Theta}}\right\|_{L_{\rm d}^2}^{2} - \left\|\bm{\Theta}^{(j)}\right\|_{L_{\rm d}^2}^{2} \right).
	\end{equation*}
	Next, we consider the right-hand side of \eqref{tool1}. 
	It holds from the summation by parts formula (Lemma \ref{sbp5}) and \eqref{map_bc2} that 
	\begin{equation*}
		\nu^{2}\sum _ {k=0}^{K}{}^{\prime \prime}\left(\delta_{k}^{\langle 2 \rangle}\tilde{\Theta}_{k}\right) \tilde{\Theta}_ {k}\Delta x
		= -\nu^{2}\sum _ {k=0}^{K-1}\left|\delta_{k}^{+}\tilde{\Theta}_{k}\right|^{2} \Delta x 
		+ \nu^{2}\left[\left(\delta_{k}^{\langle 1 \rangle}\tilde{\Theta}_{k}\right) \tilde{\Theta}_ {k}\right]_{0}^{K} 
		= -\nu^{2}\left\| D\bm{\tilde{\Theta}} \right\|^{2}.
	\end{equation*}
	Similarly, we see from the summation by parts formula (Lemma \ref{sbp2}), \eqref{bc1}, and \eqref{map_bc0} that 
	\begin{align*}
		& \frac{\kappa}{2}\sum _ {k=0}^{K}{}^{\prime \prime}\left\{ \delta_{k}^{+}\left( \alpha\!\left( H_{k}^{(j+1)} \right)\gamma_{\varepsilon}' (\delta_{k}^{-}\Theta_{k}) \right) + \delta_{k}^{-}\left( \alpha\!\left( H_{k}^{(j+1)} \right) \gamma_{\varepsilon}' (\delta_{k}^{+}\Theta_{k}) \right) \right\} \tilde{\Theta}_ {k}\Delta x \\
		= & - \frac{\kappa}{2}\sum _ {k=1}^{K}\alpha\left( H_{k}^{(j+1)} \right)\gamma_{\varepsilon}' (\delta_{k}^{-}\Theta_{k}) \delta_{k}^{-}\tilde{\Theta}_ {k}\Delta x 
		- \frac{\kappa}{2}\sum _ {k=0}^{K-1}\alpha\left( H_{k}^{(j+1)} \right)\gamma_{\varepsilon}' (\delta_{k}^{+}\Theta_{k}) \delta_{k}^{+}\tilde{\Theta}_ {k}\Delta x \\
		& + \frac{\kappa}{2}\left[ 
			\left\{ 
				\mu_{k}^{+}\left( \alpha\left( H_{k}^{(j+1)} \right)\gamma_{\varepsilon}' (\delta_{k}^{-}\Theta_{k}) \right) 
				+ \mu_{k}^{-}\left( \alpha\left( H_{k}^{(j+1)} \right)\gamma_{\varepsilon}' (\delta_{k}^{+}\Theta_{k}) \right) 
			\right\} \tilde{\Theta}_ {k} 
		\right]_{0}^{K} \\
		= & - \frac{\kappa}{2}\sum _ {k=0}^{K-1}\alpha\left( H_{k+1}^{(j+1)} \right)\gamma_{\varepsilon}' (\delta_{k}^{-}\Theta_{k+1}) \delta_{k}^{-}\tilde{\Theta}_ {k+1}\Delta x 
		- \frac{\kappa}{2}\sum _ {k=0}^{K-1}\alpha\left( H_{k}^{(j+1)} \right)\gamma_{\varepsilon}' (\delta_{k}^{+}\Theta_{k}) \delta_{k}^{+}\tilde{\Theta}_ {k}\Delta x \\
		= & - \kappa\sum _ {k=0}^{K-1}\frac{\alpha\left( H_{k+1}^{(j+1)} \right) + \alpha\left( H_{k}^{(j+1)} \right)}{2}\gamma_{\varepsilon}' (\delta_{k}^{+}\Theta_{k}) \delta_{k}^{+}\tilde{\Theta}_ {k}\Delta x. 
	\end{align*}
	As a result, we observe from the Young inequality that 
	\begin{align*}
		\frac{1}{2\delta_{0}\Delta t}\left( \left\|\bm{\tilde{\Theta}}\right\|_{L_{\rm d}^2}^{2} - \left\|\bm{\Theta}^{(j)}\right\|_{L_{\rm d}^2}^{2} \right) 
		\leq & -\nu^{2}\left\| D\bm{\tilde{\Theta}} \right\|^{2} 
		- \kappa\sum _ {k=0}^{K-1}\frac{\alpha\left( H_{k+1}^{(j+1)} \right) + \alpha\left( H_{k}^{(j+1)} \right)}{2}\gamma_{\varepsilon}' (\delta_{k}^{+}\Theta_{k}) \delta_{k}^{+}\tilde{\Theta}_ {k}\Delta x \\[-1pt]
		\leq & \frac{\kappa^{2}}{4\nu^{2}}\sum _ {k=0}^{K-1}\left|\frac{\alpha\left( H_{k+1}^{(j+1)} \right) + \alpha\left( H_{k}^{(j+1)} \right)}{2}\right|^{2}\left|\gamma_{\varepsilon}' (\delta_{k}^{+}\Theta_{k})\right|^{2} \Delta x \\[-1pt]
		= & \frac{\kappa^{2}}{4\nu^{2}}\sum _ {k=0}^{K-1}\left|\frac{\alpha\left( H_{k+1}^{(j+1)} \right) + \alpha\left( H_{k}^{(j+1)} \right)}{2}\right|^{2}\frac{\left|\delta_{k}^{+}\Theta_{k}\right|^{2}}{\left|\gamma_{\varepsilon} (\delta_{k}^{+}\Theta_{k})\right|^{2}} \Delta x.
	\end{align*}
	Here, we see from Main Theorem 1 (O)(a) that 
	\begin{equation}
		\left|\frac{\alpha\!\left( H_{k+1}^{(j+1)} \right) \! + \! \alpha\!\left( H_{k}^{(j+1)} \right)}{2}\right|^{2}
		\! \leq \! \frac{\left|\alpha\!\left( H_{k+1}^{(j+1)} \right)\right|^{2} \! + \! \left|\alpha\!\left( H_{k}^{(j+1)} \right)\right|^{2} }{2} 
		\! \leq \! \left(\! \delta_{0} + \frac{1}{2} \!\right)^{2} \quad (k = 0, \dots, K). \label{al_bd}
	\end{equation}
	Using \eqref{al_bd} and the following inequality: 
	\begin{equation*}
		0 \leq \frac{1}{\gamma_{\varepsilon}(u)} = \frac{1}{\sqrt{\varepsilon^{2} + u^{2}}} \leq \frac{1}{\varepsilon}, 
	\end{equation*}
	we have
	\begin{align*}
		\frac{1}{2\delta_{0}\Delta t}\left( \left\|\bm{\tilde{\Theta}}\right\|_{L_{\rm d}^2}^{2} - \left\|\bm{\Theta}^{(j)}\right\|_{L_{\rm d}^2}^{2} \right) 
		\leq \frac{\kappa^{2}}{4\nu^{2}\varepsilon^{2}}\left(\delta_{0} + \frac{1}{2}\right)^{2}\sum _ {k=0}^{K-1}\left|\delta_{k}^{+}\Theta_{k}\right|^{2}\Delta x 
		= & \frac{\kappa^{2}}{4\nu^{2}\varepsilon^{2}}\left(\delta_{0} + \frac{1}{2}\right)^{2}\sum _ {k=0}^{K-1}\left|\frac{\Theta_{k + 1} - \Theta_{k}}{\Delta x}\right|^{2}\Delta x \\[-1pt]
		\leq & \frac{\kappa^{2}}{2\nu^{2}\varepsilon^{2}}\left(\delta_{0} + \frac{1}{2}\right)^{2}\sum _ {k=0}^{K-1}\frac{\left|\Theta_{k + 1}\right|^{2} + \left|\Theta_{k}\right|^{2}}{(\Delta x)^{2}}\Delta x \\[-1pt]
		= & \frac{1}{(\Delta x)^{2}}\frac{\kappa^{2}}{\nu^{2}\varepsilon^{2}}\left(\delta_{0} + \frac{1}{2}\right)^{2}\sum _ {k=0}^{K}{}^{\prime \prime}\left|\Theta_{k}\right|^{2}\Delta x \\[-1pt]
		= & \frac{1}{(\Delta x)^{2}}\frac{\kappa^{2}}{\nu^{2}\varepsilon^{2}}\left(\delta_{0} + \frac{1}{2}\right)^{2}\left\|\bm{\Theta}\right\|_{L_{\rm d}^2}^{2}.
	\end{align*}
	Therefore, from the assumption \eqref{exist_cond}, we conclude that   
	\begin{equation*}
		\left\|\bm{\tilde{\Theta}}\right\|_{L_{\rm d}^2}^{2} 
		\leq \left\|\bm{\Theta}^{(j)}\right\|_{L_{\rm d}^2}^{2} 
		+ \frac{2\Delta t}{(\Delta x)^{2}}\frac{\kappa^{2}\delta_{0}}{\nu^{2}\varepsilon^{2}}\left(\delta_{0} + \frac{1}{2}\right)^{2}\left\|\bm{\Theta}\right\|_{L_{\rm d}^2}^{2} 
		\leq \left\{
			1 + \frac{4\Delta t}{(\Delta x)^{2}}\frac{\kappa^{2}\delta_{0}}{\nu^{2}\varepsilon^{2}}\left(\delta_{0} + \frac{1}{2}\right)^{2}
		\right\}M^{2} 
		\leq 2M^{2}. 
	\end{equation*} 
	Namely, $\Psi(\bm{\Theta}) = \bm{\tilde{\Theta}} \in X$. 
	
	Next, we show that $\Psi$ is a contraction mapping. 
	For any fixed $\bm{\Theta}_{1}, \bm{\Theta}_{2} \in X$ which satisfy \eqref{map_bc0}, 
	the vector $\{\tilde{\Theta}_{i,k}\}_{k=-1}^{K+1} = \{\Psi_{k}(\bm{\Theta}_{i})\}_{k=-1}^{K+1}$ satisfies \eqref{map}--\eqref{map_bc} ($i=1,2$) from the definition of $\Psi$, 
	where $\Psi_{k}( \bm{\Theta})$ is the $k$-th element of $\Psi( \bm{\Theta} )$. 
	Subtracting these relations, we obtain 
	\begin{gather}
		\begin{split}
			\alpha_{0, k}^{(j+1)} \frac{ \tilde{\Theta}_{1, k} - \tilde{\Theta}_{2, k} }{\Delta t} 
			= & \frac{\kappa}{2} \delta_{k}^{+}\left\{
					\alpha\!\left( H_{k}^{(j+1)} \right)\left( \gamma_{\varepsilon}' (\delta_{k}^{-}\Theta_{1,k}) - \gamma_{\varepsilon}' (\delta_{k}^{-}\Theta_{2,k}) \right) 
			\right\} \\
			& + \frac{\kappa}{2}\delta_{k}^{-}\left\{
					\alpha\!\left( H_{k}^{(j+1)} \right)\left( \gamma_{\varepsilon}' (\delta_{k}^{+}\Theta_{1,k}) - \gamma_{\varepsilon}' (\delta_{k}^{+}\Theta_{2,k}) \right) 
			\right\} + \nu^{2}\delta_{k}^{\langle 2 \rangle}\left(\tilde{\Theta}_{1, k} - \tilde{\Theta}_{2, k}\right) \quad (k=0, \ldots, K), 
		\end{split} \label{tool2}\\
		\delta_{k}^{\langle 1 \rangle}\left(\tilde{\Theta}_{1, k} - \tilde{\Theta}_{2, k}\right) = 0 \quad (k=0, K). \label{tool3}
	\end{gather}
	Firstly, we see from \eqref{tool2} that 
	\begin{align*}
		\frac{1}{\delta_{0}\Delta t}\left\|\bm{\tilde{\Theta}}_{1} - \bm{\tilde{\Theta}}_{2}\right\|_{L_{\rm d}^{2}}^{2}
		\leq & \frac{1}{\Delta t}\sum _ {k=0}^{K}{}^{\prime \prime}\alpha_{0, k}^{(j+1)} \left| \tilde{\Theta}_{1, k} - \tilde{\Theta}_{2, k} \right|^{2} \Delta x \\
		= & \frac{\kappa}{2}\sum _ {k=0}^{K}{}^{\prime \prime} \delta_{k}^{+}\!\left\{\!
			\alpha\!\left( H_{k}^{(j+1)} \right)\!\left( \gamma_{\varepsilon}' (\delta_{k}^{-}\Theta_{1,k}) - \gamma_{\varepsilon}' (\delta_{k}^{-}\Theta_{2,k}) \right) 
		\!\right\}\!\left(\! \tilde{\Theta}_{1, k} \! - \! \tilde{\Theta}_{2, k} \!\right)\!\Delta x \\
		& + \frac{\kappa}{2}\sum _ {k=0}^{K}{}^{\prime \prime} \delta_{k}^{-}\!\left\{\!
				\alpha\!\left( H_{k}^{(j+1)} \right)\!\left( \gamma_{\varepsilon}' (\delta_{k}^{+}\Theta_{1,k}) - \gamma_{\varepsilon}' (\delta_{k}^{+}\Theta_{2,k}) \right) 
		\!\right\}\!\left(\! \tilde{\Theta}_{1, k} \! - \! \tilde{\Theta}_{2, k} \!\right)\!\Delta x \\
		& + \nu^{2}\sum _ {k=0}^{K}{}^{\prime \prime}\left\{\delta_{k}^{\langle 2 \rangle}\left(\tilde{\Theta}_{1, k} - \tilde{\Theta}_{2, k}\right)\right\}\left( \tilde{\Theta}_{1, k} - \tilde{\Theta}_{2, k} \right)\Delta x.
	\end{align*}
	Next, by the summation by parts formula (Lemma \ref{sbp5}) and \eqref{tool3}, we have  
	\begin{equation*}
		\nu^{2}\sum _ {k=0}^{K}{}^{\prime \prime}\left\{\!\delta_{k}^{\langle 2 \rangle}\!\!\left(\tilde{\Theta}_{1,k} - \tilde{\Theta}_ {2,k}\right)\!\right\}\!\left(\tilde{\Theta}_ {1,k} - \tilde{\Theta}_ {2,k}\right)\Delta x 
		= -\nu^{2}\left\| D\left(\bm{\tilde{\Theta}}_{1} - \bm{\tilde{\Theta}}_{2}\right)\right\|^{2}.
	\end{equation*}
	Similarly, we observe from the summation by parts formul (Lemma \ref{sbp2}), \eqref{bc1}, and \eqref{bc2} that 
	\begin{align*}
		& \frac{\kappa}{2}\sum _ {k=0}^{K}{}^{\prime \prime} \delta_{k}^{+}\left\{
			\alpha\!\left( H_{k}^{(j+1)} \right)\left( \gamma_{\varepsilon}' (\delta_{k}^{-}\Theta_{1,k}) - \gamma_{\varepsilon}' (\delta_{k}^{-}\Theta_{2,k}) \right) 
		\right\}\left( \tilde{\Theta}_{1, k} - \tilde{\Theta}_{2, k} \right)\Delta x \\[-1pt]
		& + \frac{\kappa}{2}\sum _ {k=0}^{K}{}^{\prime \prime} \delta_{k}^{-}\left\{
				\alpha\!\left( H_{k}^{(j+1)} \right)\left( \gamma_{\varepsilon}' (\delta_{k}^{+}\Theta_{1,k}) - \gamma_{\varepsilon}' (\delta_{k}^{+}\Theta_{2,k}) \right) 
		\right\}\left( \tilde{\Theta}_{1, k} - \tilde{\Theta}_{2, k} \right)\Delta x \\[-1pt]
		= & - \frac{\kappa}{2}\sum _ {k=1}^{K}\alpha\left( H_{k}^{(j+1)} \right)
			\left( \gamma_{\varepsilon}' (\delta_{k}^{-}\Theta_{1,k}) - \gamma_{\varepsilon}' (\delta_{k}^{-}\Theta_{2,k}) \right)  
			\delta_{k}^{-}\left( \tilde{\Theta}_{1, k} - \tilde{\Theta}_{2, k} \right)\Delta x \\[-1pt]
		& - \frac{\kappa}{2}\sum _ {k=0}^{K-1}\alpha\left( H_{k}^{(j+1)} \right)
			\left( \gamma_{\varepsilon}' (\delta_{k}^{+}\Theta_{1,k}) - \gamma_{\varepsilon}' (\delta_{k}^{+}\Theta_{2,k}) \right)  
			\delta_{k}^{+}\left( \tilde{\Theta}_{1, k} - \tilde{\Theta}_{2, k} \right)\Delta x \\[-1pt]
		& + \frac{\kappa}{2}\left[ 
			\left\{ 
				\mu_{k}^{+}\left(
					\alpha\!\left( H_{k}^{(j+1)} \right)\left( \gamma_{\varepsilon}' (\delta_{k}^{-}\Theta_{1,k}) - \gamma_{\varepsilon}' (\delta_{k}^{-}\Theta_{2,k}) \right) 
				\right)
		\right.\right. \\[-1pt] 
		& \qquad \left.\left. 
			+ \mu_{k}^{-}\left(
					\alpha\!\left( H_{k}^{(j+1)} \right)\left( \gamma_{\varepsilon}' (\delta_{k}^{+}\Theta_{1,k}) - \gamma_{\varepsilon}' (\delta_{k}^{+}\Theta_{2,k}) \right) 
				\right)
			\right\} 
			\left( \tilde{\Theta}_{1, k} - \tilde{\Theta}_{2, k} \right) 
		\right]_{0}^{K} \\[-1pt]
		= & - \frac{\kappa}{2}\sum _ {k=0}^{K - 1}\alpha\left( H_{k + 1}^{(j+1)} \right)
			\left( \gamma_{\varepsilon}' (\delta_{k}^{-}\Theta_{1,k + 1}) - \gamma_{\varepsilon}' (\delta_{k}^{-}\Theta_{2,k + 1}) \right)  
			\delta_{k}^{-}\left( \tilde{\Theta}_{1, k + 1} - \tilde{\Theta}_{2, k + 1} \right)\Delta x \\[-1pt]
		& - \frac{\kappa}{2}\sum _ {k=0}^{K-1}\alpha\left( H_{k}^{(j+1)} \right)
			\left( \gamma_{\varepsilon}' (\delta_{k}^{+}\Theta_{1,k}) - \gamma_{\varepsilon}' (\delta_{k}^{+}\Theta_{2,k}) \right)  
			\delta_{k}^{+}\left( \tilde{\Theta}_{1, k} - \tilde{\Theta}_{2, k} \right)\Delta x \\[-1pt]
		= & - \kappa\sum _ {k=0}^{K-1}\frac{\alpha\left( H_{k+1}^{(j+1)} \right) + \alpha\left( H_{k}^{(j+1)} \right)}{2}
		\left( \gamma_{\varepsilon}' (\delta_{k}^{+}\Theta_{1,k}) - \gamma_{\varepsilon}' (\delta_{k}^{+}\Theta_{2,k}) \right)  
			\delta_{k}^{+}\left( \tilde{\Theta}_{1, k} - \tilde{\Theta}_{2, k} \right)
		\Delta x. 
	\end{align*}
	Thus, using the Young inequality and Lemma \ref{lem:4.2}, we obtain 
	\begin{align*}
		& \frac{1}{\delta_{0}\Delta t}\left\|\bm{\tilde{\Theta}}_{1} - \bm{\tilde{\Theta}}_{2}\right\|_{L_{\rm d}^{2}}^{2} \\
		\leq & -\nu^{2}\left\| D\!\left(\! \bm{\tilde{\Theta}}_{1} - \bm{\tilde{\Theta}}_{2} \!\right)\right\|^{2} 
		- \kappa\sum _ {k=0}^{K-1}\frac{\alpha\!\left(\! H_{k+1}^{(j+1)} \!\right) \! + \! \alpha\!\left(\! H_{k}^{(j+1)} \!\right)}{2}
			\left( \gamma_{\varepsilon}' (\delta_{k}^{+}\Theta_{1,k}) - \gamma_{\varepsilon}' (\delta_{k}^{+}\Theta_{2,k}) \right)  
			\delta_{k}^{+}\!\left( \tilde{\Theta}_{1, k} - \tilde{\Theta}_{2, k} \right)
		\Delta x \\
		\leq & -\nu^{2}\left\| D\left(\bm{\tilde{\Theta}}_{1} - \bm{\tilde{\Theta}}_{2}\right)\right\|^{2} \\
		& + \kappa\sum _ {k=0}^{K-1}\left\{
			\frac{\kappa}{4\nu^{2}}
			\frac{ \left|\alpha\!\left( H_{k+1}^{(j+1)} \right) + \alpha\!\left( H_{k}^{(j+1)} \right) \right|^{2}}{4}
			\left| \gamma_{\varepsilon}' (\delta_{k}^{+}\Theta_{1,k}) - \gamma_{\varepsilon}' (\delta_{k}^{+}\Theta_{2,k}) \right|^{2} + \frac{\nu^{2}}{\kappa}
			\left| \delta_{k}^{+}\left( \tilde{\Theta}_{1, k} - \tilde{\Theta}_{2, k} \right) \right|^{2}
		\right\}\Delta x \\
		\leq & \frac{\kappa^{2}}{4\nu^{2}}\sum _ {k=0}^{K-1}
			\frac{ \left|\alpha\!\left( H_{k+1}^{(j+1)} \right) \right|^{2} + \left|\alpha\!\left( H_{k}^{(j+1)} \right) \right|^{2}}{2}
			\left| \frac{d\gamma_{\varepsilon}''}{d( \delta_{k}^{+}\Theta_{1,k} , \delta_{k}^{+}\Theta_{2,k} )} \right|^{2} 
			\left| \delta_{k}^{+}\left( \Theta_{1,k} - \Theta_{2,k} \right) \right|^{2}
		\Delta x. 
	\end{align*}
	Meanwhile, we see from $|uv| \leq \sqrt{\varepsilon^{2} + u^{2}}\sqrt{\varepsilon^{2} + v^{2}}$ that 
	\begin{align*}
		0 \leq \frac{d\gamma_{\varepsilon}''}{d(u,v)} 
		= & \frac{\varepsilon^{2} + \sqrt{\varepsilon^{2} + u^{2}}\sqrt{\varepsilon^{2} + v^{2}} - uv }{\sqrt{\varepsilon^{2} + u^{2}}\sqrt{\varepsilon^{2} + v^{2}}\left(\sqrt{\varepsilon^{2} + u^{2}} + \sqrt{\varepsilon^{2} + v^{2}}\right)} \\ 
		\leq & \frac{\varepsilon^{2} + 2\sqrt{\varepsilon^{2} + u^{2}}\sqrt{\varepsilon^{2} + v^{2}} }{\sqrt{\varepsilon^{2} + u^{2}}\sqrt{\varepsilon^{2} + v^{2}}\left(\sqrt{\varepsilon^{2} + u^{2}} + \sqrt{\varepsilon^{2} + v^{2}}\right)} \\
		\leq & \frac{\varepsilon^{2} }{\sqrt{\varepsilon^{2} + u^{2}}\sqrt{\varepsilon^{2} + v^{2}}\left(\sqrt{\varepsilon^{2} + u^{2}} + \sqrt{\varepsilon^{2} + v^{2}}\right)} 
		+ \frac{2}{\sqrt{\varepsilon^{2} + u^{2}} + \sqrt{\varepsilon^{2} + v^{2}}} \\
		\leq & \frac{\varepsilon^{2}}{\varepsilon \cdot \varepsilon (\varepsilon + \varepsilon)} + \frac{2}{\varepsilon + \varepsilon} = \frac{3}{2\varepsilon},
	\end{align*}
	Furthermore, by \eqref{al_bd} and the above inequality, we conclude that 
	\begin{align*}
		\frac{1}{\delta_{0}\Delta t}\left\|\bm{\tilde{\Theta}}_{1} - \bm{\tilde{\Theta}}_{2}\right\|_{L_{\rm d}^{2}}^{2} 
		\leq & \frac{9\kappa^{2}}{16\nu^{2}\varepsilon^{2}}\left(\delta_{0} + \frac{1}{2}\right)^{2}
		\sum _ {k=0}^{K-1}
			\left| \delta_{k}^{+}\left( \Theta_{1,k} - \Theta_{2,k} \right) \right|^{2}
		\Delta x \\
		= & \frac{9\kappa^{2}}{16\nu^{2}\varepsilon^{2}}\left(\delta_{0} + \frac{1}{2}\right)^{2}
		\sum _ {k=0}^{K-1}
			\frac{\left| \left( \Theta_{1,k+1} - \Theta_{2,k+1} \right) - \left( \Theta_{1,k} - \Theta_{2,k} \right) \right|^{2}}{(\Delta x)^{2}}
		\Delta x \\
		\leq & \frac{9}{8(\Delta x)^{2}}\frac{\kappa^{2}}{\nu^{2}\varepsilon^{2}}\left(\delta_{0} + \frac{1}{2}\right)^{2}
		\sum _ {k=0}^{K-1}
			\left( \left|\Theta_{1,k+1} - \Theta_{2,k+1}\right|^{2} + \left|\Theta_{1,k} - \Theta_{2,k}\right|^{2} \right) 
		\Delta x \\
		= & \frac{9}{4(\Delta x)^{2}}\frac{\kappa^{2}}{\nu^{2}\varepsilon^{2}}\left(\delta_{0} + \frac{1}{2}\right)^{2}
		\left\|\bm{\tilde{\Theta}}_{1} - \bm{\tilde{\Theta}}_{2}\right\|_{L_{\rm d}^{2}}^{2}. 
	\end{align*}
	Namely, we obtain 
	\begin{equation*}
		\left\|\bm{\tilde{\Theta}}_{1} - \bm{\tilde{\Theta}}_{2}\right\|_{L_{\rm d}^{2}}^{2} 
		\leq \frac{9\Delta t}{4(\Delta x)^{2}}\frac{\kappa^{2}\delta_{0}}{\nu^{2}\varepsilon^{2}}\left(\delta_{0} + \frac{1}{2}\right)^{2}
		\left\|\bm{\tilde{\Theta}}_{1} - \bm{\tilde{\Theta}}_{2}\right\|_{L_{\rm d}^{2}}^{2}. 
	\end{equation*}
	Since 
	\begin{equation*}
		0 \leq \frac{9\Delta t}{4(\Delta x)^{2}}\frac{\kappa^{2}\delta_{0}}{\nu^{2}\varepsilon^{2}}\left(\delta_{0} + \frac{1}{2}\right)^{2} < \frac{4\Delta t}{(\Delta x)^{2}}\frac{\kappa^{2}\delta_{0}}{\nu^{2}\varepsilon^{2}}\left(\delta_{0} + \frac{1}{2}\right)^{2} <1
	\end{equation*}
	from the assumption \eqref{exist_cond2}, the mapping $\Phi$ is contraction into $X$. 
	This completes the proof.  
\end{proof}


\end{document}